\documentclass[10pt]{amsart}

\usepackage{amsmath}
\usepackage{amsthm}
\usepackage{amsopn}
\usepackage{amssymb}
\usepackage[all]{xy}
\usepackage{lscape,xcolor}
\usepackage{graphicx}
\usepackage{hyperref}
\usepackage{mathtools}
\usepackage{stmaryrd}
\usepackage{enumitem}

\parskip 0.7pc
\parindent 0pt
\setlist{itemsep=1em}

\allowdisplaybreaks[1]

\newcommand{\dotequiv}{\overset{\scriptstyle{\centerdot}}{\equiv}}

\newcommand{\mc}[1]{\mathcal{#1}}
\newcommand{\ul}[1]{\underline{#1}}
\newcommand{\mb}[1]{\mathbb{#1}}
\newcommand{\mr}[1]{\mathrm{#1}}
\newcommand{\mbf}[1]{\mathbf{#1}}
\newcommand{\mit}[1]{\mathit{#1}}

\newcommand{\bra}[1]{\langle #1 \rangle}

\newcommand{\td}[1]{\widetilde{#1}}

\newcommand{\ZZ}{\mathbb{Z}}
\newcommand{\RR}{\mathbb{R}}
\newcommand{\CC}{\mathbb{C}}
\newcommand{\QQ}{\mathbb{Q}}

\newcommand{\FF}{\mathbb{F}}
\newcommand{\GG}{\mathbb{G}}

\def \HF2{\mr{H}\FF_2}

\newcommand{\MU}{\mr{MU}}
\newcommand{\KU}{\mr{KU}}

\newcommand{\BP}{\mr{BP}}

\newcommand{\Sp}{\mr{Sp}}

 \newtheorem{thm}[equation]{Theorem}
 \newtheorem{cor}[equation]{Corollary}
 \newtheorem{lem}[equation]{Lemma}
 \newtheorem{prop}[equation]{Proposition}

 \newtheorem*{thm*}{Theorem}
 \newtheorem*{cor*}{Corollary}
 \newtheorem*{lem*}{Lemma}
 \newtheorem*{prop*}{Proposition}

 \theoremstyle{definition}
 \newtheorem{defn}[equation]{Definition}
 \newtheorem{ex}[equation]{Example}
 \newtheorem{exs}[equation]{Examples}
 \newtheorem{rmk}[equation]{Remark}

 \newtheorem{question}[equation]{Question}
 \newtheorem{conjecture}[equation]{Conjecture}
 \newtheorem{algorithm}[equation]{Algorithm}


\newtheorem*{defn*}{Definition}
\newtheorem*{ex*}{Example}
\newtheorem*{exs*}{Examples}
\newtheorem*{rmk*}{Remark}
\newtheorem*{claim*}{Claim}

\numberwithin{equation}{section}
\numberwithin{figure}{section}
\DeclareMathOperator{\Ext}{Ext}
\DeclareMathOperator{\Hom}{Hom}

\DeclareMathOperator{\im}{im}

\DeclareMathOperator{\Map}{Map}

\DeclareMathOperator{\Spf}{Spf}

\DeclareMathOperator{\Spec}{Spec}

\DeclareMathOperator{\Spc}{Spc}

\DeclareMathOperator{\Sub}{Sub}
\DeclareMathOperator{\Ind}{Ind}

\newcommand{\MFG}[1]{\mc{M}^{#1}_{\mit{fg}}}

\newcommand{\Nbar}{\overline{\mb{N}}}
\newcommand{\vv}{\bar{\mbf{v}}}
\newcommand{\eitem}{\stepcounter{equation}\item}
\newcommand{\eeta}{\pmb{\eta}}
\newcommand{\kappabar}{\bar{\kappa}}
\newcommand{\kkappabar}{\pmb{\bar{\kappa}}}

\setcounter{tocdepth}{1}

\title{Periodic phenomena in equivariant stable homotopy theory}
\author{Mark Behrens}
\author{Jack Carlisle}

\date{\today}

\begin{document}

\begin{abstract}
Building off of many recent advances in the subject by many different researchers, we describe a picture of $A$-equivariant  chromatic homotopy theory which mirrors the now classical non-equivariant picture of Morava, Miller-Ravenel-Wilson, and Devinatz-Hopkins-Smith, where $A$ is a finite abelian $p$-group.  Specifically, we review the structure of the Balmer spectrum of the category of $A$-spectra, and the work of Hausmann-Meier connecting this to $\MU_A$ and equivariant formal group laws.  Generalizing work of Bhattacharya-Guillou-Li, we introduce equivariant analogs of $v_n$-self maps, and generalizing work of Carrick and Balderrama, we introduce equivariant analogs of the chromatic tower, and give equivariant analogs of the smash product and chromatic convergence theorems.  The equivariant monochromatic theory is also discussed.  We explore computational examples of this theory in the case of $A = C_2$, where we connect equivariant chromatic theory with redshift phenomena in Mahowald invariants. 
\end{abstract}

\maketitle
\tableofcontents

\section{Introduction}

The nilpotence and periodicity theorems of Devinatz-Hopkins-Smith \cite{DHS}, \cite{HS} famously established that periodic phenomena in the stable homotopy category is detected by complex cobordism.  Through Quillen's Theorem \cite{Quillen}, this periodic structure is reflected in the structure of the moduli stack of formal groups.  The goal of this paper is to discuss how this picture generalizes when $\MU$ is replaced by the equivariant complex cobordism spectrum $\MU_G$.  We restrict attention to the case where $G = A$, a finite abelian group.

Significant work has already been done in this regard.  Balmer introduced the notion of the \emph{spectrum} $\Spc(\mc{C})$ of a tensor-triangulated category $\mc{C}$ \cite{BalmerSpc}, a topological space which in the case of the stable homotopy category of finite spectra, specializes to the content of the Hopkins-Smith Thick Subcategory Theorem \cite{HS}, classifying finite $p$-local spectra by \emph{types} in $\Nbar := \mb{N} \cup \{\infty\}$.

Balmer and Sanders \cite{BalmerSanders} showed that the underlying set of the Balmer spectrum $\Spc(\Sp^A_{\omega})$ of the stable homotopy category of finite genuine $A$-equivariant spectra can be computed in terms of the non-equivariant case (see also the insightful notes of Strickland \cite{Stricklandslides}).  The topology on this Balmer spectrum was determined by Barthel, Hausmann, Naumann, Nikolaus, Noel and Stapleton \cite{sixauthor}.  The \emph{types} of finite $A$-spectra are parameterized by the \emph{admissible closed subsets} of $\Spc(\Sp^A_{\omega})$.

Barthel, Greenlees, and Hausmann \cite{BGH}, in addition to extending these Balmer spectrum computations to the case of compact abelian Lie groups, also show that an equivariant analog of the Devinatz-Hopkins-Smith Nilpotence Theorem \cite{DHS} can be deduced from the non-equivariant counterpart.  They prove that the equivariant complex cobordism spectrum $\MU_A$ detects nilpotence in $\Sp^A$. 

Cole, Greenlees, and Kriz \cite{CGK} introduced the notion of \emph{equivariant formal group laws}, and conjectured that $\pi^A_*\MU_A$ classifies such, generalizing Quillen's Theorem \cite{Quillen}. These equivariant formal group laws were further studied by Strickland \cite{Multicurves}, who introduced equivariant versions of Morava K-theory and Morava E-theory.  The Cole-Greenlees-Kriz conjecture was verified recently by Hausmann \cite{Hausmann}.  Hausmann and Meier \cite{HM} expanded on this picture, giving an explicit homeomorphism between the invariant prime ideal spectrum of $\pi^A_*\MU_A$ and the Balmer spectrum of $\Sp^A_{\omega}$, and relating this to an appropriate height stratification on the moduli stack of equivariant formal group laws.

Examples of periodic self maps in the case where $A = C_2$ were studied by Crabb \cite{Crabb}, Quigley \cite{Quigley} and Bhattacharya-Guillou-Li \cite{BGL}, \cite{BGL2}.  Finite smashing localizations were studied by Hill \cite{Hill}. Periodic smashing localizations (generalizing the $E(n)$-localizations in the non-equivariant context) in the case of $A$ a cyclic $p$-group were introduced by Carrick \cite{Carrick} and in the case of $A$ elementary abelian by Balderrama \cite{BalderramaPower}.  The $RO(C_2)$-graded homotopy groups of the $\KU_{C_2}/2$-local sphere were computed by Balderrama \cite{Balderrama} and computations of the integer graded homotopy groups of the $\KU_{G}$-local sphere ($G$ a finite $p$-group, $p$ odd) were completed by by Bonventre, Carawan, Field, Guillou, Mehrle, and Stapleton \cite{BGS}, \cite{fiveauthor}.

The goal of this paper is to assemble these various threads into a cohesive picture, which relates the geometry of equivariant formal groups to periodic localizations of $\Sp^A$ and connects to explicit periodic phenomena in the equivariant stable stems.

To simplify the discussion, we localize everything at a fixed prime $p$, and assume that $A$ is a finite abelian $p$-group.  The types of finite $p$-local $A$-spectra are parameterized by \emph{admissible type functions}.  These are functions
$$ \ul{n}: \Sub(A) \to \Nbar $$
satisfying
$$ \ul{n}(B) \le \ul{n}(C) + \mr{rk}_p(C/B) $$
for $B \le C \le A$.
Here $\Sub(A)$ denotes the set of subgroups of $A$, and $\mr{rk}_p$ denotes the $p$-rank.  A finite $p$-local spectrum is \emph{type $\ul{n}$} if for all $B \le A$, 
$$ X^{\Phi B} \: \text{is type $\ul{n}(B)$}. $$
Here, $(-)^{\Phi B}$ denotes the geometric fixed points.

A finite $p$-local $A$-spectrum $X$ of type $\ul{n}$ is eligible to admit \emph{$v^S_{\ul{n}}$}-self maps 
$$ v: \Sigma^\gamma X \to X $$
(where $S \subseteq \Sub(A)$ and $\gamma \in RO(A)$).  These are self-maps with the property that 
$$ v^{\Phi B} = 
\begin{cases}
\text{a $v_{\ul{n}(B)}$-self map}, & B \in S, \\
\text{nilpotent}, & B \not\in S.
\end{cases}
$$

We prove many analogs of theorems of Hopkins and Smith \cite{HS} concerning the properties of such $v^S_{\ul{n}}$-self maps, including their asymptotic uniqueness and centrality (Corollary~\ref{cor:vnselfmap}).  We are only able to prove a very limited analog of the Hopkins-Smith Periodicity Theorem (Theorem~\ref{thm:vnB}) which establishes the existence of $v^B_n$-self maps (the case where $S = \{B\}$) from the non-equivariant periodicity theorem.\footnote{Forthcoming work of Burklund, Hausmann, Levy and Meier addresses the existence of more general $v_{\underline{n}}$-self maps.}

Strickland \cite{Multicurves} defined equivariant Morava $K$-theories $K(B,n)$ ($B \le A$). These equivariant Morava K-theories are in bijective correspondence with the primes of the Balmer spectrum.  We extend Strickland's construction to define equivariant Johnson-Wilson theories $E(\ul{n})$ ($\ul{n}$ admissible) which are in bijective correspondence with admissible open subspaces of the Balmer spectrum.  One of our main results is the Equivariant Smash Product Theorem (Theorem~\ref{thm:smashing}) which states that for $\ul{n}$ admissible, $E(\ul{n})$-localization is smashing, and 
$$ (X_{E(\ul{n})})^{\Phi B} \simeq (X^{\Phi B})_{E(\ul{n}(B))}. $$
This generalizes the work of Carrick \cite{Carrick}, who proved this in the case where $A$ is a cyclic $p$-group, and Balderrama \cite{BalderramaPower}, in the case where $A$ is an elementary abelian $p$-group and $\ul{n}(B) = n-\mr{rk}_p(B)$.

As $\ul{n}$ varies over the poset of admissible type functions, the localizations
$$ X_{E(\ul{n})} $$
form an equivariant chromatic tower.  We prove a chromatic fracture theorem (Theorem~\ref{thm:fracture}), which establishes that these $E(\ul{n})$-localizations can be assembled inductively from the $K(B,n)$-localizations.  These ``monochromatic layers'' can be accessed quite directly from the $\mc{G}_n$-homotopy fixed points of the $K(n)$-local Borel $A/B$-spectrum
$$ F(E(A/B)_+, E_n) \in \Sp^{B(A/B)}_{K(n)} $$
(see Remark~\ref{rmk:monochromatic}).
Here $E_n$ is the $n$th Morava $E$-theory spectrum, and $\mc{G}_n$ is the corresponding Morava stabilizer group.  The theories described above are essentially the equivariant Morava E-theories introduced by Strickland in \cite{Multicurves}.

Balderrama proved the Equivariant Chromatic Convergence Theorem in the case where $A$ is an elementary abelian $p$-group.  The proof is quite subtle, and relies on both the Equivariant Smash Product Theorem, and the convergence of the non-equivariant chromatic tower for $\Sigma^\infty_+ BA$, which can be deduced by combining the results of Johnson-Wilson \cite{JohnsonWilson} and Barthel \cite{Barthel}.  Balderrama directed us to an unpublished proof of chromatic convergence for $BA$ for general finite abelian $p$-groups due to J.~Hahn (see \cite{BalderramaHahn}) and explained how our more general Equivariant Smash Product Theorem allows us to prove equivariant chromatic convergence in the case where $A$ is a finite abelian $p$-group.  Carrick independently explained to the authors how one could deduce equivariant chromatic convergence using Hahn's result.  We gratefully include these arguments here. 

We end this paper with some explicit computations and examples of this theory in the case where $A = C_2$.  This is the only group where any significant computations of the equivariant stable stems have been carried out (see \cite{AI}, \cite{BI}, \cite{BGI}, \cite{GI}).  We identify explicit elements of $\pi_\star^{C_2}\MU_{C_2}$ which detect $v_{\ul{n}}$-self maps.  We construct explicit examples of $v_{\ul{n}}$-self maps, and compare with those studied by Crabb \cite{Crabb}, Quigley \cite{Quigley} and Bhattacharya-Guillou-Li \cite{BGL}.\footnote{One of our self-maps was independently also discovered by Balderrama, Hou, and Zhang \cite{BHZ}, who have many more examples in their forthcoming work.}  We explain how the conjectural redshift property of the Mahowald invariant is related to $C_2$-equivariant $v_{\ul{n}}$-periodicity.

\subsection*{Organization of the paper}

In Section~\ref{sec:Balmer} we review the structure of the Balmer spectrum
$$ \Spc(A) := \Spc(\Sp^A_\omega) $$
in the case where $A$ is finite abelian.  The Balmer spectrum describes the shape of the chromatic structure of $\Sp^A$.  We explore how the topology of the Balmer spectrum is related to Greenlees-May isotropy separation, and discuss the resulting decomposition of the equivariant stable homotopy category into Borel strata following Abram-Kriz \cite{AbramKriz} and Ayala-Mazel-Gee-Rozenblyum \cite{AMGR}.

In Section~\ref{sec:FGL} we review the Cole-Greenlees-Kriz \cite{CGK} notion of an equivariant formal group, with an emphasis on the coordinate-free perspective of Strickland \cite{Multicurves} and Hausmann-Meier \cite{HM}.  We review how inverting and modding out euler classes is related to the structure of the associated equivariant formal law.

These equivariant formal groups are related to homotopy theory in Section~\ref{sec:MUA} by Hausmann's equivariant Quillen theorem, which states that $\MU_A$ carries the universal $A$-equivariant formal group law.  We review the Abram-Kriz isotropy decomposition of $\MU_A$, and their associated presentation of $\pi_*^A\MU_A$ \cite{AbramKriz}.  We end this section with a discussion of the $p$-local theory $\BP_A$, following Wisdom \cite{Wisdom}.

\emph{Starting in Section~\ref{sec:HM}, we assume for the rest of the paper that $A$ is a finite abelian $p$-group, and restrict our work to the $p$-local context.}  We flesh out the connection between $\MU_A$ and equivariant chromatic theory by  introducing various notions of \emph{height} one can associate to an equivariant formal group.  Associated to these heights are the irreducible closed substacks
$$ V_{(B,n)} \subseteq (\MFG{A})_{(p)} $$ 
of the moduli stack of $A$-equivariant formal groups studied by Hausmann-Meier \cite{HM}.  Hausmann and Meier showed that the space of irreducible closed substacks of $(\MFG{A})_{(p)}$ is homeomorphic to the Balmer spectrum $\Spc_{(p)}(A)$ of $(\Sp^A_\omega)_{(p)}$.  We introduce the notion of a \emph{$v_{\ul{n}}$-generator} in $\pi_*^A(\MU_A)_{(p)}$ associated to a \emph{height function}
$$ \ul{n}: \Sub(A) \to \Nbar_- := \mb{N} \cup \{ \infty \} \cup \{ -1 \}. $$
We end this section by discussing how Hausmann-Meier construct $v_{\ul{n}}$-generators, and how their theory relates such generators to the topology of the Balmer spectrum.  In particular, we introduce the notion of an \emph{admissible height function}, and explain how Hausmann-Meier's theory implies that $v_{\ul{n}}$-generators only exist for admissible height functions $\ul{n}$.

We move on to a discussion of equivariant non-nilpotent self-maps in Section~\ref{sec:periodicity}.  We review the observation of Barthel-Greenlees-Hausmann \cite{BGH} that the equivariant nilpotence theorem ($\MU_A$ detects nilpotence) follows formally from the non-equivariant nilpotence theorem.  Following the work of Bhattacharya-Guillou-Li in the case of $A = C_2$, we introduce the notion of a $v_{\ul{n}}$-self map for a height function $\ul{n}$.  We explain the Barthel-Greenless-Hausmann notion of a type $\ul{n}$ complex, where $\ul{n}$ is an admissible \emph{type function}, and the constraints type places on the heights of self-maps.  Following Hopkins and Smith \cite{HS}, we deduce the asymptotic uniqueness and centrality of $v_{\ul{n}}$-self maps from the equivariant nilpotence theorem.  We explain how the equivariant complexes which admit $v_{\ul{n}}$-self maps form a thick tensor ideal, and use this to deduce the existence of a basic class of such self-maps.

Chromatic localizations are studied in Section~\ref{sec:loc}.  We define the chromatic tower of $E(\ul{n})$-localizations associated to admissible height functions $\ul{n}$, and prove that these localizations are smashing, generalizing results of Carrick \cite{Carrick} and Balderrama \cite{BalderramaPower}.  The relationship between these $E(\ul{n})$-localizations, and their associated finite variants, is briefly discussed.  We deduce a chromatic fracture theorem, which builds the chromatic tower out of the $K(n,B)$-localizations, and we explain the how 
the equivariant homotopy groups of the $K(n,B)$-local sphere can be computed from the homotopy fixed points of the Morava $E$-theory of Thom spectra on classifying spaces.  As explained to us by Balderrama and Carrick, the chromatic convergence theorem is deduced from a theorem of Hahn \cite{BalderramaHahn}.

The computational examples in the case of $A = C_2$, as well as the connection to Mahowald red-shift, is the subject of Section~\ref{sec:C2}.

\subsection*{Acknowledgments}
The authors would like to thank Bert Guillou and Dan Isaksen for sharing so much of their computational knowledge and providing helpful comments, as well as Guoqi Yan, who has been a constant resource throughout this project.  The authors are very grateful Robert Burklund, Markus Hausmann, Ishan Levy, and Lennart Meier, for communicating their recent work on the Equivariant Periodicity Theorem. They would like to express their special thanks to William Balderrama, for providing corrections, explaining his recent work on equivariant chromatic homotopy theory, especially his joint work with Hou and Zhang on self-maps, and for explaining to the authors how the Smash Product Theorem of this paper combines with Hahn's unpublished work to extend his proof of the Chromatic Convergence Theorem. The authors similarly would like to thank Christian Carrick for his comments, and in particular for his alternative explanation of how the Equivariant Chromatic Convergence Theorem can be deduced from the Smash Product Theorem, the tom Dieck splitting, and Hahn's result.  

The authors are also grateful to Shangjie Zhang, for pointing out to the authors the need of Definition~\ref{defn:dir}, to Tobias Barthel, for explaining how finite smashing localizations naturally arise from the Balmer spectrum, and to Naren Ezhilmaran, for helpful corrections. When this paper was first written, the authors were surprisingly unaware of Strickland's extensive work on equivariant formal group laws and his definition of the associated equivariant Morava $K$- and $E$-theories, which coincided exactly with ours, but were introduced over a decade earlier \cite{Multicurves}.  The authors are grateful to Noah Wisdom and Gabrielle Li for pointing this out to them. The first author benefited greatly from his discussions with J.D. Quigley on equivariant periodicity and Nikolai Konovalov on families and equivariant formal group laws.  The authors benefited greatly from the suggestions and comments of the referee, especially for suggesting Proposition~\ref{prop:completions}.  The first author was supported by NSF grant DMS-2005476 and the second author was supported by an NSF RTG postdoc funded by DMS-2135884.

\subsection*{Notation and conventions}

\begingroup
\begin{align*}
A & = \text{a finite abelian group} \\
e & = \text{the trivial group} \\
A^\vee & = \Hom(A,\CC^\times), \: \text{the group of characters of $A$} \\
K_\alpha & = \ker(\alpha: A \to \CC^\times), \: \text{the kernel of a character $\alpha \in A^\vee$} \\
R(A) & = \ZZ\{A^\vee\}, \: \text{the complex representation ring of $A$} \\
RO(A) & = \text{the real representation ring of $A$} \\
|\gamma| & \in \ZZ, \: \text{the virtual dimension of $\gamma \in RO(A)$} \\
\mc{U_A} & = \text{complete $A$-universe} \\
\Sub(A) & = \text{the set of subgroups of $A$, given a topology where families are closed} \\
\Sp & = \text{the $\infty$-category of spectra} \\
\Sp^A & = \text{the $\infty$-category of genuine $A$-spectra} \\
\Sp^{BA} & = \text{the $\infty$-category of Borel $A$-spectra} \\
\Spc(A) & = \text{the Balmer spectrum of $\Sp^A_{\omega}$} \\
\Spc_{(p)}(A) & = \text{the Balmer spectrum of $\Sp^A_{(p),\omega}$} \\
\pi^{B}_{\star}E & = [S^\star,E]^B, \: \text{for $E \in \Sp^A$, $B \le A$, and $\star \in RO(B)$} \\
\pi^B_*E & = \pi^B_{\star} E \: \text{for $\star$ in the subring $\ZZ \subseteq RO(B)$} \\
a_\gamma & = \text{the inclusion $S^0 \hookrightarrow S^\gamma$, for an $A$-representation $\gamma$,} \\
& \qquad \text{regarded as an element of $\pi^A_{-\gamma}S$} \\
E^{\wedge}_{\mc{F}} & = F(E\mc{F}_+, E), \: \text{the completion of $E \in \Sp^A$ at a family $\mc{F}$} \\
E^h & = E^{\wedge}_{\{e\}} = F(EA_+, E), \: \text{the Borel completion (completion at the trivial family)} \\
E[\mc{F}^{-1}] & = E \wedge \widetilde{E\mc{F}}, \: \text{the localization of $E \in \Sp^A$ away from a family $\mc{F}$} \\
\mc{F}_{\subseteq B} & = \text{the family of subgroups of $A$ which are contained in $B$} \\
\mc{F}_{B \nsubseteq} & = \text{the family of subgroups of $A$ with do not contain $B$} \\
\Phi^B E & = (E[\mc{F}^{-1}_{B \nsubseteq}])^B \in \Sp^{A/B}, \: \text{the $B$-geometric fixed points of $E \in \Sp^A$,} \\
& \qquad \text{regarded as an $A/B$-spectrum} \\
E^{\Phi B} & = (\Phi^B E)^e \in \Sp, \: \text{the $B$-geometric fixed points of $E \in \Sp^A$,} \\
  & \qquad \text{regarded as a spectrum} \\
  \doteq & \qquad \text{equal up to a unit} \\
\dotequiv & \qquad \text{equivalent up to a unit} \\
\Nbar & = \mb{N} \cup \{ \infty \} \\
\Nbar_- & = \mb{N} \cup \{-1\} \cup \{ \infty\}
\end{align*}
\endgroup
For $B \le A$ there are adjoint functors
\begin{align*}
 i_B^*: \Sp^{A} & \leftrightarrows \Sp^B : \Ind^A_B  \\
 q_B^*: \Sp^{A/B} & \leftrightarrows \Sp^A: (-)^B
\end{align*}
where
\begin{align*}
i_B^* & = \text{restriction along the inclusion $i_B: B \hookrightarrow A$}, \\
q_{B}^* & = \text{restriction along the quotient map $q_{B}: A \twoheadrightarrow A/B$ (a.k.a. \emph{inflation})}, \\
\Ind^A_B & = \Map_B(A, -) \simeq A_+\wedge_B -, \: \text{(co)induction along $i_B$}, \\
(-)^B & = \text{$B$-fixed points}.
\end{align*}
In particular $\Ind_B^A$ is also left adjoint to $i_B^*$.\footnote{Inflation does not have a left adjoint (see \cite{Sanders}).}  As a special case, $q_A^*$ is the functor
$$ (-)_{\mr{triv}}: \Sp \to \Sp^A $$
which endows a spectrum with the ``trivial $A$-action.'' For a general group homomorphism $f: A \to A'$, we denote the associated restriction functor as
$$ f^*: \Sp^{A'} \to \Sp^A. $$

If $\mc{C}$ is a presentably symmetric monoidal stable $\infty$-category, then associated to every object $E \in \mc{C}$ is a Bousfield localization
$$ X \to X_E $$
where $X_E$ is $E$-local.  We denote the associated localization of $\mc{C}$ which inverts the $E$-equivalences by $\mc{C}_E$.

We shall use the term \emph{homotopy ring spectrum} to refer to an equivariant or non-equivariant spectrum which is an associative monoid in the homotopy category. 
All stacks are regarded with respect to the fpqc topology.  All formal groups are implicitly $1$-dimensional.

\section{The Balmer spectrum and families}\label{sec:Balmer}

\subsection*{The Balmer spectrum of $\pmb{\Sp^A_\omega}$} 

The $\infty$-category of $A$-spectra has a structure that resembles the derived category of quasi-coherent sheaves on a scheme, which is made precise by the notion of the Balmer spectrum.

Let $K_p(n)$ denote the $p$-primary $n$th Morava K-theory spectrum (where we define $K_p(0) = H\QQ$ and $K_p(\infty) = H\FF_p$).  Recall \cite{BalmerSanders}, \cite{sixauthor}, \cite{BalderramaKuhn} that the Balmer spectrum of $\Sp_\omega^A$ is given by
$$ \mr{Spc}(\Sp_\omega^A) = \{ \mc{P}_{(B,p,n)} \: : \: B \le A, \: \text{$p$ prime}, \: 0 \le n \le \infty \}/(\mc{P}_{(B,p,0)} \sim \mc{P}_{(B,q,0)}) $$
where\footnote{Note that our notation differs slightly from that of \cite{BalmerSanders}, who define $\mc{P}_{(B,p,n)}$ to be the $K_p(\pmb{n-1})$ acyclic finite $A$-spectra for $\pmb{1 \le n} \le \infty$.}
$$ \mc{P}_{(B,p,n)} := \{ X \in \Sp^A_\omega \: : \: K_p(n) \wedge X^{\Phi B} \simeq \ast \}. $$
To streamline notation, we will use the abbreviation
$$ \Spc(A) := \Spc(\Sp^A_\omega). $$
There is also a $p$-local variant, which only involves $\mc{P}_{(B,p,n)}$ for a fixed prime $p$:
$$ \Spc_{(p)}(A) := \Spc(\Sp^A_{(p),\omega}). $$ 
Because $\mc{P}_{(B,p,0)}$ does not depend on $p$, we will sometimes denote 
$$ \mc{P}_{(B,0)} := \mc{P}_{(B,p,0)}. $$
The inclusions of these primes is generated by the inclusions
$$ \mc{P}_{(B,p,n+1)} \subset \mc{P}_{(B,p,n)}, \quad 0 \le n $$
and the inclusions
$$ \mc{P}_{(B,p,n+r)} \subset \mc{P}_{(C,p,n)},  \quad 0 \le n \le \infty, \: B < C, \:\text{$C/B$ is a $p$-group of rank $r$}. $$
Figure~\ref{fig:spcspa} gives a visual depiction of the inclusion structure of $\Spc(A)$.  Note that in this lattice diagram, if $\mc{P}$ lies above $\mc{Q}$, that means that $\mc{P} \subset \mc{Q}$.  We orient things this way both to conform to what has become the standard way to graphically represent the Balmer spectrum, and to mesh with our discussion of the chromatic tower.

\begin{rmk}
Note that if one is comparing the Balmer spectrum with the Zariski spectrum of a ring, it is important to note that the correspondence between the prime ideals of the tt-category of perfect complexes and the prime ideals of a ring reverses inclusions \cite[Sec.~4]{Balmer}.  
\end{rmk}

\begin{figure}
\centering
\includegraphics[width=\linewidth]{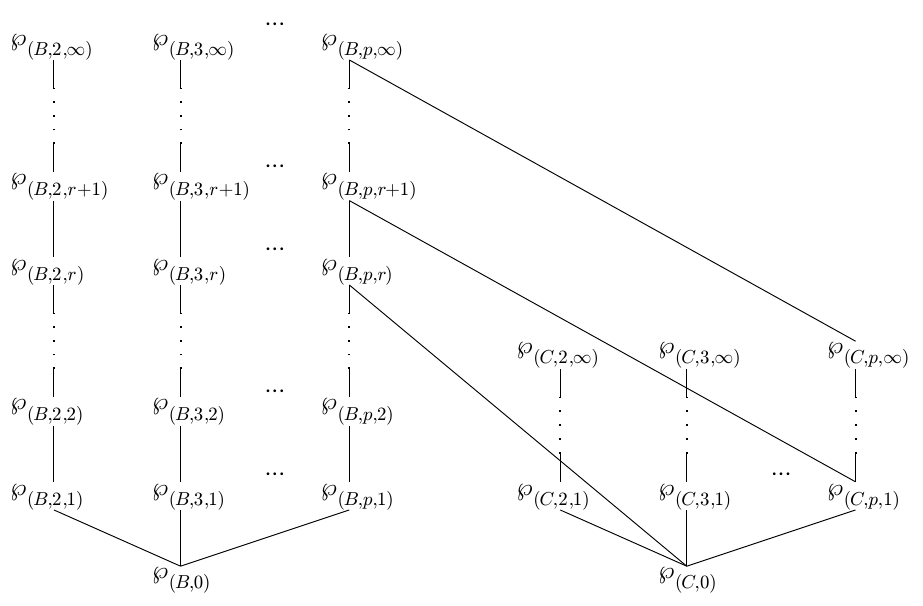}
\caption{Structure of $\Spc(A)$ in the vicinity of $B < C \le A$ with $C/B$ a $p$-group of rank $r$.}
\label{fig:spcspa}
\end{figure}

The topology on $\Spc(A)$ is determined from these inclusions as follows.
The irreducible closed subsets of $\mr{Spc}(A)$ are of the form 
$$ V(\mc{P}) = \{ \mc{Q} \in \Spc(A) \: : \: \mc{Q} \subseteq \mc{P} \}. $$ 
A general closed subset is a finite union of $V(\mc{P})$'s \cite[Cor.~8.19]{BalmerSanders}.  

Conceptually, the topology on $\Spc(A)$ has a basis of closed subsets given by \emph{supports} of objects $X \in \Sp^A_\omega$ \cite[Defn~2.6]{Balmer}:
\begin{equation}\label{eq:supp}
\begin{split}
 \mr{supp}(X) & := \{ \mc{P}_{(B,p,n)} \: : \: X \not\in \mc{P}_{(B,p,n)} \} \\
 & = \{ \mc{P}_{(B,p,n)} \: : \: K_p(n) \wedge X^{\Phi B} \not\simeq \ast \} \subseteq \Spc(A).
 \end{split}  
\end{equation}

The structure and topology of $\Spc_{(p)}(A)$ is much easier to understand than that of $\Spc(A)$.  
For a fixed prime $p$, we define for $0 \le n \le \infty$
$$ \mc{P}_{(B,n)} = \{ X \in \Sp^A_{(p),\omega} \: : \: K_p(n) \wedge X^{\Phi B} = 0\} $$
and we have
$$ \Spc_{(p)}(A) = \{ \mc{P}_{(B,n)} \: : \: B \le A, \: 0 \le n \le \infty \}.
$$
The lattice structure and topology is obtained by simply regarding $\Spc_{(p)}(A)$ as a subspace of $\Spc(A)$ via the map
\begin{align*}
\Spc_{(p)}(A) & \hookrightarrow \Spc(A) \\
(B,n) & \mapsto (B,p,n)
\end{align*} 

With respect to the lattice diagram of $\Spc_{(p)}$, the closed subsets are precisely those subsets which are upwards closed, and the open subsets are precisely those subsets which are downwards closed.  Figure~\ref{fig:SpcCp} depicts the lattice $\Spc_{(p)}(A)$ in the example where $A = C_p$, and shows some typical examples of open and closed subsets therein.

\begin{figure}
\centering
\includegraphics[height=.3\textheight]{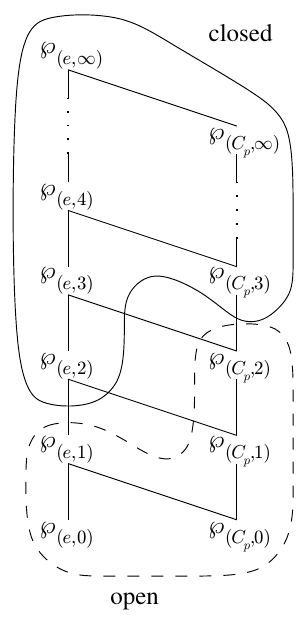}
\caption{The lattice structure of $\Spc_{(p)}(C_p)$, and some examples of open and closed subsets.}
\label{fig:SpcCp}
\end{figure}

\subsection*{Families of subgroups}

The Balmer spectrum of $\Sp^A_\omega$ naturally maps to the subgroup lattice of $A$
\begin{equation}\label{eq:spctosub}
\begin{split}
 \Spc(A) & \to \mr{Sub}(A) \\
 \mc{P}_{(B,p,n)} & \mapsto B.
\end{split}
\end{equation}
If we give $\mr{Sub}(A)$ the topology where the closed subsets are the \emph{families} of subgroups $\mc{F}$, then this map is continuous.\footnote{Note this topology is unrelated to the topology of $\Sub(A)$ used in \cite{BGH}, where $A$ is compact Lie.}
Since the closed subsets of $\mr{Sub}(A)$ are given by families, and therefore characterized by being closed under taking subgroups
$$ B \in \mc{F}, C \le B \Rightarrow C \in \mc{F}, $$
the open subsets of $\mr{Sub}(A)$ are given by complements of families  
$\mc{F}^c$, and are therefore characterized by being closed under enlargement:
$$ B \in \mc{F}^c, B \le C \Rightarrow C \in \mc{F}^c. $$
In order to make the picture compatible with Figure~\ref{fig:spcspa}, we encourage the reader to draw the subgroup lattice ``upside down'' so that the trivial subgroup lies at the top and the full group lies on the bottom.  Figure~\ref{fig:suba} depicts the example of $\mr{Sub}(C_2 \times C_2)$ where
$$ C_2 \times C_2 = \bra{a,b}. $$

\begin{figure}
\centering
\includegraphics[width=0.5\linewidth]{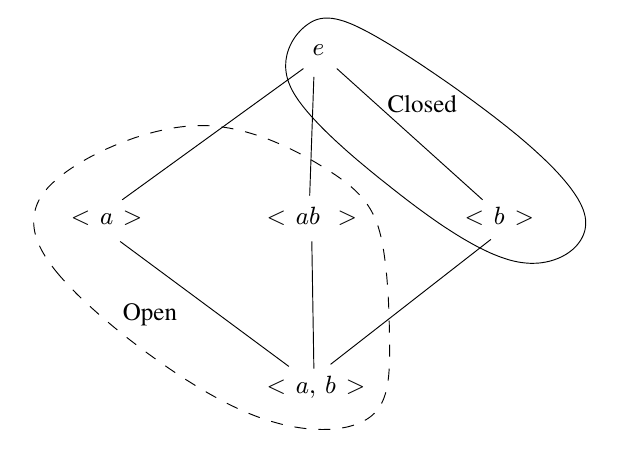}
\caption{Closed and open subsets of $\mr{Sub}(C_2 \times C_2)$.}
\label{fig:suba}
\end{figure}

The most important families to consider are the families which, for each $C \le A$, are given by
\begin{align*}
\mc{F}_{\subseteq C} & := \{ B \le A \: : \: B \le C \}, \\
\mc{F}_{C \nsubseteq} & := \{ B \le A \: : \: C \nsubseteq B \}. 
\end{align*}
Conceptually: 
\begin{enumerate}[label=(\theequation)]
\eitem The closed subsets $\mc{F}_{\subseteq C}$ consist of all of the subgroups lying \emph{above} $C$ (including $C$ itself) in the subgroup lattice of $A$, and every closed subset $\mc{F}$ of $\mr{Sub}(A)$ satisfies
$$ \mc{F} = \bigcup_{C \in \mc{F}} \mc{F}_{\subseteq C}. $$

\eitem The open subsets $\mc{F}^c_{C \nsubseteq}$ consist of all of the subgroups lying \emph{below} $C$ (including $C$ itself) in the subgroup lattice of $A$, and every open subset $\mc{F}^c$ of $\mr{Sub}(A)$ satisfies
$$ \mc{F}^c = \bigcup_{C \in \mc{F}^c} \mc{F}^c_{C \nsubseteq}. $$
\end{enumerate}

In particular, the inverse image of a family $\mc{F}$ under (\ref{eq:spctosub}) gives a closed subset 
\begin{equation}\label{eq:V(F)}
 V(\mc{F}) := \{ \mc{P}_{(B,p,n)} \: : \: B \in \mc{F} \} \subseteq \Spc(A). 
 \end{equation}
We clearly have
\begin{align*}
 V(\mc{F} \cup \mc{F}') & = V(\mc{F}) \cup V(\mc{F}'), \\
 V(\mc{F} \cap \mc{F}') & = V(\mc{F}) \cap V(\mc{F}'). \\
\end{align*}  

The following proposition (c.f. \cite{BalmerSanders}) gives a conceptual interpretation of these closed subsets.

\begin{prop}
For a family $\mc{F}$ of subgroups of $A$, we have
$$
V(\mc{F}) = \bigcup_{C \in \mc{F}} \mr{supp}(\Sigma^{\infty}_+ A/C).  
$$
\end{prop}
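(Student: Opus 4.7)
The plan is to reduce the statement to the computation of the support of a single orbit $\Sigma^\infty_+ A/C$, and then to exploit the decomposition $\mc{F} = \bigcup_{C \in \mc{F}} \mc{F}_{\subseteq C}$ recorded above together with the evident compatibility of $V(-)$ with unions.

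First, I would unpack $\mr{supp}(\Sigma^\infty_+ A/C)$ using the geometric fixed points description in \eqref{eq:supp}. The key input is that geometric fixed points commute with suspension spectra of $A$-spaces, so
\[ (\Sigma^\infty_+ A/C)^{\Phi B} \simeq \Sigma^\infty_+ (A/C)^B. \]
Because $A$ is abelian, the $B$-fixed points of $A/C$ consist of the cosets $aC$ with $B \subseteq C$, so
\[ (A/C)^B = \begin{cases} A/C, & B \le C, \\ \emptyset, & B \nleq C. \end{cases} \]
Since $A/C$ is finite and nonempty in the first case, $\Sigma^\infty_+ A/C$ is a finite nonzero wedge of copies of $S^0$, so $K_p(n) \wedge (\Sigma^\infty_+ A/C)^{\Phi B}$ is nonzero for every prime $p$ and every $0 \le n \le \infty$ when $B \le C$, and is zero otherwise.

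Consequently,
\[ \mr{supp}(\Sigma^\infty_+ A/C) = \{ \mc{P}_{(B,p,n)} \: : \: B \le C\} = V(\mc{F}_{\subseteq C}), \]
using the notation of \eqref{eq:V(F)}. Taking the union over $C \in \mc{F}$ and using that $V(-)$ sends unions of families to unions and the identity $\mc{F} = \bigcup_{C \in \mc{F}} \mc{F}_{\subseteq C}$ from the enumerated observation above, we conclude
\[ \bigcup_{C \in \mc{F}} \mr{supp}(\Sigma^\infty_+ A/C) = \bigcup_{C \in \mc{F}} V(\mc{F}_{\subseteq C}) = V\!\Bigl(\bigcup_{C \in \mc{F}} \mc{F}_{\subseteq C}\Bigr) = V(\mc{F}). \]

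The only real content is the geometric fixed points calculation $(\Sigma^\infty_+ A/C)^{\Phi B} \simeq \Sigma^\infty_+ (A/C)^B$; everything else is formal manipulation of families. Since this identity for suspension spectra is standard, I expect no serious obstacle.
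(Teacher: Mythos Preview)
Your proof is correct and follows essentially the same route as the paper: compute $(A/C)^B$ to identify $\mr{supp}(\Sigma^\infty_+ A/C) = V(\mc{F}_{\subseteq C})$, then take the union over $C \in \mc{F}$ using $\mc{F} = \bigcup_{C \in \mc{F}} \mc{F}_{\subseteq C}$ and the compatibility of $V(-)$ with unions. The only difference is that you make the step $(\Sigma^\infty_+ A/C)^{\Phi B} \simeq \Sigma^\infty_+ (A/C)^B$ explicit, whereas the paper leaves it implicit.
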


\begin{proof}
First note that since for $B,C \le A$ we have
$$
(A/C)^B =
\begin{cases}
\emptyset & B \nsubseteq C, \\
A/C & B \subseteq C
\end{cases}
$$
it follows that
$$ V(\mc{F}_{\subseteq C}) = \mr{supp}(\Sigma^\infty_+ A/C). $$
We therefore have
\begin{align*}
V(\mc{F}) & = V\left(\bigcup_{C \in \mc{F}} \mc{F}_{\subseteq C}\right) \\
& = \bigcup_{C \in \mc{F}} V( \mc{F}_{\subseteq C}) \\
& = \bigcup_{C \in \mc{F}} \mr{supp}(\Sigma^{\infty}_+ A/C).
\end{align*}
\end{proof}

We let
\begin{equation}\label{eq:U(Fc)}
 U(\mc{F}^c) := V(\mc{F})^c
\end{equation}
denote the associated open subset of $\Spc(A)$.  We have
\begin{align*}
 U((\mc{F} \cup \mc{F}')^c) & = U(\mc{F}^c) \cap U((\mc{F}')^c), \\
 U((\mc{F} \cap \mc{F}')^c) & = U(\mc{F}^c) \cup U((\mc{F}')^c). \\
\end{align*}  

\subsection*{Localization and completion functors associated to families}

Given a family $\mc{F}$ of subgroups of $A$, there is a unique (up to equivariant equivalence) $A$-space $E\mc{F}$ such that for $B \in \Sub(A)$ we have
$$ E\mc{F}^B \simeq 
\begin{cases}
\ast, & B \in \mc{F}, \\
\emptyset, & B \not\in \mc{F}. 
\end{cases}
$$
The cofiber sequence
$$ E\mc{F}_+ \to S^0 \to \widetilde{E\mc{F}} $$
and
associated completion and localization functors on $\Sp^A$
\begin{align*}
E & \mapsto E^{\wedge}_\mc{F} := F(E\mc{F}_+, E), \\
E & \mapsto E[\mc{F}^{-1}] := E \wedge \widetilde{E\mc{F}}
\end{align*} 
have been studied extensively, see \cite{GreenleesMay} and \cite{MNN}.

\begin{enumerate}[label = (\theequation)]
\eitem The $\mc{F}$-completion functor
$$ E \mapsto E^{\wedge}_\mc{F} $$
is localization with respect to the $\mc{F}$-\emph{equivalences}, the maps in $\Sp^A$ which are equivalences on $B$-fixed points for all $B \in \mc{F}$.  Equivalently, an $\mc{F}$-equivalence is a map which induces an equivalence on $B$-geometric fixed points for all $B \in \mc{F}$.  One may think of $E^{\wedge}_\mc{F}$ as the completion of $E$ at the closed subset $V(\mc{F}) \subseteq \Spc(A)$:
$$ E^{\wedge}_{\mc{F}} \sim E^{\wedge}_{V(\mc{F})}. $$
For $B \le A$, we have
$$ (E^{\wedge}_{\{e\}})^B \simeq E^{hB} $$
and the restriction $i^*_B$ factors through the $\mc{F}_{\subseteq B}$-completion
$$
\xymatrix{
\Sp^A \ar[rr]^{i^*_B} \ar[dr] && \Sp^B \\
& (\Sp^A)^{\wedge}_{\mc{F}_{\subseteq B}} \ar@{.>}[ur]_{(i^*_B)^{\wedge}}
}
$$
where the factorization $(i^*_B)^{\wedge}$ is conservative.\footnote{In fact, there is an equivalence of $\infty$-categories $(\Sp^A)^{\wedge}_{\mc{F}_{\subseteq B}} \simeq (\Sp^B)^{hA/B}$ \cite[Cor.~6.34]{MNN}}  The induction is $\mc{F}_{\subseteq B}$-complete --- for $E \in \Sp^B$ we have
$$
\Ind_B^A E \in (\Sp^{A})^{\wedge}_{\mc{F}_{\subseteq B}} \subseteq \Sp^A. 
$$
In fact, if we let $E_{A/B_+}$ denote Bousfield localization with respect to the ring spectrum $\Sigma^\infty A/B_+ \simeq D(A/B_+) \in \Sp^A$, then we have \cite[Defn.~6.2, Prop.~6.6]{MNN}
$$ E^{\wedge}_{\mc{F}_{\subseteq B}} \simeq E_{A/B_+}. $$
More generally, completion at $\mc{F}$ is Bousfield localization with respect to $\bigvee_{B \in \mc{F}} A/B_+$.  In the special case where $B = e$, we have
$$ E^{\wedge}_{\{e\}} = F(EA_+, E) = E^h, $$
the Borel completion of $E$, and there is an equivalence of $\infty$-categories
$$ (\Sp^A)^{\wedge}_{\{ e\}} \simeq \Sp^{BA}. $$
\label{eq:Fcompletion}

\eitem
The $\mc{F}^{-1}$-localization functor 
$$ E \mapsto E[\mc{F}^{-1}] $$
is localization with respect to the $\mc{F}^{-1}$-\emph{equivalences}, the maps in $\Sp^A$ which are equivalences on $B$-geometric fixed points for all $B \in \mc{F}^c$.  One may think of $E[\mc{F}^{-1}]$ as the restriction of $E$ to the open subset $U(\mc{F}^c) \subseteq \Spc(A)$
$$ E[\mc{F}^{-1}] \sim E\bigg\vert_{U(\mc{F}^c)}. $$
For $B \le A$, we have 
$$ E[\mc{F}_{B \nsubseteq}^{-1}]^{B} \simeq E^{\Phi B} $$
and the geometric fixed point functor factors through the $\mc{F}_{B \nsubseteq}$-localization
$$
\xymatrix{
\Sp^A \ar[rr]^{\Phi^B} \ar[dr] && \Sp^{A/B} \\
& \Sp^A[\mc{F}_{B \nsubseteq}^{-1}] \ar@{.>}[ur]_{(-)^{B}}^{\simeq}
}
$$ 
where in the above diagram the $B$-fixed point functor $(-)^B$ is an equivalence \cite[Cor.~II.9.6]{LMS}, \cite[Rmk.~2.12]{AMGR}.  The inverse to this equivalence is the localization of the inflation:
\begin{align*}
\Sp^{A/B} & \xrightarrow{\simeq} \Sp^A[\mc{F}^{-1}_{B \nsubseteq}] \\
E & \mapsto (q_B^* E)[\mc{F}^{-1}_{B \nsubseteq}]
\end{align*}
The localization $E \mapsto E[\mc{F}^{-1}]$ is a smashing localization, and is therefore Bousfield localization with respect to the spectrum $S[\mc{F}^{-1}] \in \Sp^A$.
For $B \le A$, we have
\begin{equation}\label{eq:geomfp}
 E[\mc{F}^{-1}]^{\Phi B} \simeq \begin{cases}
E^{\Phi B}, & B \ne \mc{F}, \\
\ast, & B \in \mc{F}. \end{cases}
\end{equation}
\label{eq:geomloc}
\end{enumerate}

\begin{prop}\label{prop:pullbacks}
We have pullbacks
$$
\xymatrix{
E^{\wedge}_{\mc{F} \cup \mc{F}'} \ar[r] \ar[d] & E^{\wedge}_{\mc{F}'} \ar[d] 
\\
E^\wedge_{\mc{F}} \ar[r] & E^\wedge_{\mc{F} \cap \mc{F}'}
}
$$
and 
$$
\xymatrix{
E[(\mc{F} \cap \mc{F}')^{-1}] \ar[r] \ar[d] & E[(\mc{F}')^{-1}] \ar[d]
\\
E[\mc{F}^{-1}] \ar[r] & E[(\mc{F} \cup \mc{F}')^{-1}]
}
$$
\end{prop}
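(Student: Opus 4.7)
The plan is to derive both pullback squares from a single pushout square of $A$-spaces via formal adjoint manipulations. The key preliminary step is to identify universal spaces of the intersection and union: since $(E\mc{F} \times E\mc{F}')^B = (E\mc{F})^B \times (E\mc{F}')^B$ is a point precisely when $B \in \mc{F} \cap \mc{F}'$ and is empty otherwise, the product $E\mc{F} \times E\mc{F}'$ is a model for $E(\mc{F} \cap \mc{F}')$. A case-by-case fixed-point check over the four regions $B \in \mc{F} \cap \mc{F}'$, $B \in \mc{F} \setminus \mc{F}'$, $B \in \mc{F}' \setminus \mc{F}$, and $B \notin \mc{F} \cup \mc{F}'$ then shows that the natural square
$$
\xymatrix{
E(\mc{F} \cap \mc{F}') \ar[r] \ar[d] & E\mc{F}' \ar[d] \\
E\mc{F} \ar[r] & E(\mc{F} \cup \mc{F}')
}
$$
is a pushout of unpointed $A$-spaces. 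Applying $\Sigma^\infty(-)_+$ (a left adjoint) yields the corresponding pushout square of $E\mc{F}_+$'s in $\Sp^A$.

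For the first pullback, I would simply apply the function spectrum construction $F(-,E)$ to the pushout square of $E\mc{F}_+$'s. Since $F(-,E)$ converts pushouts in its contravariant argument to pullbacks, the result is the asserted pullback square of $E^{\wedge}_{\mc{F}}$'s, by definition of $\mc{F}$-completion.

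For the second pullback, I would compare the pushout square of $E\mc{F}_+$'s with the (trivially pushout) constant square on $S^0$. Taking cofibers entrywise yields a square of $\widetilde{E\mc{F}}$'s, and since cofibers of maps of pushout squares are again pushouts (colimits commute with colimits), this square is a pushout in $\Sp^A$. Smashing with $E$ preserves pushouts, producing a pushout square of $E[\mc{F}^{-1}] = E \wedge \widetilde{E\mc{F}}$'s, which by stability of $\Sp^A$ is equivalently a pullback.

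The main obstacle, such as it is, is the initial identification of the pushout $E\mc{F} \cup_{E(\mc{F} \cap \mc{F}')} E\mc{F}'$ with $E(\mc{F} \cup \mc{F}')$; once this is in hand, everything else is formal manipulation with adjoints and stability. No nilpotence or genuine equivariant input is needed, only the defining fixed-point universal property of $E\mc{F}$ and the fact that left/right adjoints preserve the appropriate (co)limits.
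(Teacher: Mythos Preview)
The paper states this proposition without proof, treating it as a standard fact about universal spaces for families. Your argument is correct and is exactly the standard proof: the key observation that $E\mc{F} \times E\mc{F}' \simeq E(\mc{F}\cap\mc{F}')$ and the fixed-point verification of the homotopy pushout square are precisely what is needed, and the deductions via $F(-,E)$, cofibers, and smashing are formal. There is nothing to compare against in the paper itself.
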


The following isotropy separation theorem is due to Greenlees-May \cite{GreenleesMay}.

\begin{thm}[Greenlees-May]\label{thm:GreenleesMay}
We have pullbacks
$$
\xymatrix{
E \ar[r] \ar[d] & E[\mc{F}^{-1}] \ar[d]
\\
E^{\wedge}_{\mc{F}} \ar[r] & E^{\wedge}_{\mc{F}}[\mc{F}^{-1}]
}
$$
\end{thm}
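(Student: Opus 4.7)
The strategy is to verify that the square is a pullback by comparing the fibers of the two horizontal maps. Recall that we have a cofiber sequence $E\mc{F}_+ \to S^0 \to \widetilde{E\mc{F}}$ in $A$-spaces, and hence a fiber sequence $E\mc{F}_+ \to S^0 \to \widetilde{E\mc{F}}$ in $\Sp^A$. Smashing with $E$ identifies the fiber of the top row $E \to E[\mc{F}^{-1}] = E \wedge \widetilde{E\mc{F}}$ with $E \wedge E\mc{F}_+$, and smashing with $E^\wedge_\mc{F}$ identifies the fiber of the bottom row $E^\wedge_\mc{F} \to E^\wedge_\mc{F}[\mc{F}^{-1}]$ with $E^\wedge_\mc{F} \wedge E\mc{F}_+$. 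The square is a pullback if and only if the natural map induced by the completion unit
$$ \eta \wedge E\mc{F}_+ : E \wedge E\mc{F}_+ \to E^\wedge_\mc{F} \wedge E\mc{F}_+ $$
is an equivalence.

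The main step, then, is to prove that smashing with $E\mc{F}_+$ turns $\mc{F}$-equivalences into genuine equivalences. For this, first note that any map $X \to Y$ in $\Sp^A$ that is an equivalence on $B$-fixed points for every $B \in \mc{F}$ (equivalently, an $\mc{F}$-equivalence, as recalled in (\ref{eq:Fcompletion})) becomes an equivalence after smashing with $A/C_+$ for each $C \in \mc{F}$; indeed, by Wirthmüller we have $A/C_+ \wedge X \simeq \Ind^A_C i^*_C X$, and the functor $\pi_*^A \Ind^A_C(-) \cong \pi_*^C(-)$ only depends on the $C$-equivariant homotopy type of $X$, which is recovered from the fixed-point data for subgroups of $C$, all of which lie in $\mc{F}$. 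The space $E\mc{F}_+$ admits an $A$-CW structure built entirely from cells of the form $A/C_+ \wedge D^n$ with $C \in \mc{F}$, so taking (homotopy) colimits shows that $(-) \wedge E\mc{F}_+$ preserves $\mc{F}$-equivalences-to-equivalences. Since the unit $E \to E^\wedge_\mc{F} = F(E\mc{F}_+, E)$ is by construction the Bousfield localization at the $\mc{F}$-equivalences (equivalently, localization with respect to $\bigvee_{B \in \mc{F}} A/B_+$, as noted in (\ref{eq:Fcompletion})), smashing it with $E\mc{F}_+$ yields an equivalence, which is what we need.

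The hardest part of this argument is psychological rather than technical: one must confidently identify the completion map $E \to E^\wedge_\mc{F}$ as an $\mc{F}$-equivalence (so that it is killed by $(-)\wedge E\mc{F}_+$), yet simultaneously recognize that the same functor $(-)\wedge E\mc{F}_+$ is highly nontrivial on both source and target. Once the equivalence on fibers is in hand, the pullback conclusion is formal: in the stable $\infty$-category $\Sp^A$, a commutative square with equivalent fibers of the horizontal maps is automatically cartesian.
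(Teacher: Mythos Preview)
Your argument is correct. The paper takes a different, more abstract route: immediately after stating the theorem it observes that the square is a special case of Bauer's generalized arithmetic fracture square (Proposition~\ref{prop:fracture}). One applies that proposition with $X = E$, with Bauer's ``$E$'' taken to be $\bigvee_{B\in\mc{F}} A/B_+$ and Bauer's ``$F$'' taken to be $S[\mc{F}^{-1}]$; then $X_{E\vee F} \simeq X$ since a map which is simultaneously an $\mc{F}$-equivalence and an $\mc{F}^{-1}$-equivalence is an equivalence on all geometric fixed points, and the acyclicity hypothesis $\bigl(\bigvee_{B \in \mc{F}} A/B_+\bigr) \wedge X[\mc{F}^{-1}] \simeq \ast$ is exactly what~(\ref{eq:geomfp}) provides. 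Your approach is instead a direct fiber comparison using the $A$-CW structure of $E\mc{F}_+$, and is self-contained: it does not invoke the general Bousfield-localization fracture machinery, which the paper quotes without proof. The tradeoff is that the paper's deduction is a single sentence once Proposition~\ref{prop:fracture} is available, whereas yours unpacks the mechanism by hand. One small point of phrasing: your justification that $A/C_+ \wedge f$ is an equivalence for $C \in \mc{F}$ is stated in terms of $\pi^A_*$, but what you actually need (and what your Wirthm\"uller identification $A/C_+ \wedge X \simeq \Ind^A_C i_C^* X$ immediately gives) is that $i_C^* f$ is a genuine $C$-equivalence, whence $\Ind^A_C i_C^* f$ is an $A$-equivalence.
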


This last theorem is really a special case of the following generalized Sullivan arithmetic square, using (\ref{eq:geomfp}).

\begin{prop}[Bauer {\cite[Prop.~2.2]{Bauer}}]\label{prop:fracture}
Suppose that $\mc{C}$ is a presentable stable symmetric monoidal $\infty$-category.  For $E,F,X \in \mc{C}$ with $E\otimes X_F \simeq 0$, the canonical square
$$
\xymatrix{X_{E \vee F} \ar[r] \ar[d] & X_E \ar[d] \\
X_F \ar[r] & X_{E,F} }
$$
is a pullback (where $X_{E,F}$ denotes the iterated localization $(X_E)_F$).
\end{prop}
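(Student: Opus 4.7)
The plan is to identify $X_{E \vee F}$ with the literal pullback $P := X_E \times_{X_{E,F}} X_F$ via the canonical comparison map. First, $P$ is $(E \vee F)$-local: each of $X_E$, $X_F$, $X_{E,F}$ is either $E$-local or $F$-local, and each of these two properties implies $(E \vee F)$-locality (any $(E \vee F)$-acyclic object is simultaneously $E$-acyclic and $F$-acyclic), so $P$ is a finite limit of $(E \vee F)$-local objects. The canonical map $X \to P$, induced from $X \to X_E$ and $X \to X_F$ compatibly over $X_{E,F}$, factors uniquely through $X_{E \vee F}$ by the universal property of localization.

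It now suffices to show $X \to P$ is an $(E \vee F)$-equivalence, since both $X_{E \vee F}$ and $P$ are $(E \vee F)$-local. Equivalently, $E \otimes (X \to P)$ and $F \otimes (X \to P)$ must each be equivalences; since tensoring preserves finite limits in the stable setting, both reduce to computations on the smashed square. The $F$-case is automatic: the right vertical $X_E \to X_{E,F}$ is by construction the $F$-localization of $X_E$, hence becomes an equivalence after $F \otimes -$, collapsing the pullback to $F \otimes P \simeq F \otimes X_F \simeq F \otimes X$.

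For the $E$-case, the hypothesis $E \otimes X_F \simeq 0$ annihilates the bottom-left corner of the smashed square, so $E \otimes P \simeq \mathrm{fib}(E \otimes X_E \to E \otimes X_{E,F})$. Since $X \to X_E$ is an $E$-equivalence, $E \otimes X_E \simeq E \otimes X$, and matching this with $E \otimes P$ amounts to verifying the further vanishing $E \otimes X_{E,F} \simeq 0$. This is the main obstacle. To establish it, I would apply the $F$-localization functor to the defining fiber sequence $C_E X \to X \to X_E$ (where $C_E X$ denotes the fiber of the $E$-localization and is therefore $E$-acyclic), obtaining a fiber sequence $(C_E X)_F \to X_F \to X_{E,F}$; smashing with $E$ and invoking the hypothesis $E \otimes X_F \simeq 0$ reduces the vanishing of $E \otimes X_{E,F}$ to that of $E \otimes (C_E X)_F$, i.e., to showing that $F$-localization preserves the $E$-acyclicity of $C_E X$. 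In the common smashing situation $X_F \simeq X \otimes \mathbf{1}_F$, the hypothesis applied to the unit forces $\mathbf{1}_F$ itself to be $E$-acyclic and the vanishing is immediate; in general one propagates the argument through the $(E \vee F)$-local recollement together with the orthogonality between $(E \vee F)$-local and $(E \vee F)$-acyclic objects.
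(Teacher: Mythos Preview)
The paper does not supply its own proof of this proposition; it simply attributes it to Bauer. So there is no in-paper argument to compare against, and the question is whether your argument stands on its own.

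Your overall architecture is correct: $P$ is $(E\vee F)$-local, the $F$-case goes through cleanly, and you correctly isolate the $E$-case as the assertion $E\otimes X_{E,F}\simeq 0$, equivalently $E\otimes (C_EX)_F\simeq 0$. That is exactly the crux.

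There are two problems with how you finish.

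First, your smashing-case sentence is not right as written. You say ``the hypothesis applied to the unit forces $\mathbf{1}_F$ itself to be $E$-acyclic.'' But the hypothesis is $E\otimes X_F\simeq 0$ for \emph{this particular} $X$; nothing lets you substitute the unit. The correct (and easier) argument when $L_F$ is smashing is simply
\[
E\otimes (C_EX)_F \;\simeq\; E\otimes C_EX\otimes \mathbf{1}_F \;\simeq\; 0\otimes\mathbf{1}_F \;\simeq\; 0,
\]
using only that $C_EX$ is $E$-acyclic. No appeal to the hypothesis is needed here at all.

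Second, and more seriously, your general-case sentence (``propagate through the $(E\vee F)$-local recollement\ldots'') is not a proof. Unwinding what you need: after replacing $X$ by $X_{E\vee F}$, the total fiber of the square is $C_FC_EX$, which is visibly $F$-acyclic and $(E\vee F)$-local; the remaining claim is that it is $E$-acyclic, equivalently that the $(E\vee F)$-local, $E$-acyclic object $C_EX$ is already $F$-local. This is \emph{not} a formal consequence of $(E\vee F)$-locality plus $E$-acyclicity (there is no recollement identifying $E$-acyclics with $F$-locals inside the $(E\vee F)$-local category in general), and the single-object hypothesis $E\otimes X_F\simeq 0$ does not obviously force it either: from the fiber sequence $(C_EX)_F\to X_F\to (X_E)_F$ you only learn that $E\otimes(C_EX)_F\simeq \Omega\bigl(E\otimes(X_E)_F\bigr)$, which is circular. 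So as written you have a genuine gap in the non-smashing case, and you should either supply the missing argument or explicitly invoke the stronger hypothesis that $L_F$ preserves $E$-acyclics (which is how this is often stated and is what your recollement intuition actually requires).

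It is worth noting that every application of this proposition in the paper (Greenlees--May isotropy separation, the isotropy fracture square, and the chromatic fracture square) takes the ``$F$'' to be a smashing localization, so your corrected smashing-case argument already suffices for everything the paper needs.
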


We might regard $\Sub(A)$ as a kind of ``coarse Balmer spectrum,'' which decomposes $\Sp^A$ in terms of isotropy in a manner similar to the way that classical chromatic theory decomposes $\Sp$ into its $K(n)$-local components.  This analogy is made precise in \cite{AbramKriz} (see also \cite{AMGR} and \cite{BalderramaPower}).  We briefly explain this perspective.

We shall require the following, which basically asserts that if $V$ is closed, $U$ is open, and $V \cap U = \emptyset$, then if $E \in \Sp^A$ is $U$-local, then its completion at $V$ (restriction to an infinitesimal neighborhood of $V$) is trivial.

\begin{lem}\label{lem:nullcompletion}
If $\mc{F}$, $\mc{F}' \subseteq \Sub(A)$ are families and $\mc{F}' \cap \mc{F}^c = \emptyset$, then for all $E \in \Sp^A$ we have
$$ E[\mc{F}^{-1}]^{\wedge}_{\mc{F}'} \simeq \ast. $$ 
\end{lem}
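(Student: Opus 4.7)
The plan is to unwind the hypothesis and then reduce to the formula (\ref{eq:geomfp}) for geometric fixed points of the $\mc{F}^{-1}$-localization. First, I would reinterpret the hypothesis $\mc{F}' \cap \mc{F}^c = \emptyset$: it simply says that $\mc{F}' \subseteq \mc{F}$. So the content of the lemma is that the $\mc{F}'$-completion of the $\mc{F}^{-1}$-localization vanishes whenever $\mc{F}' \subseteq \mc{F}$.

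Next, I would use the characterization of $\mc{F}'$-completion recalled in item (\ref{eq:Fcompletion}) of the preceding discussion. Since $E \mapsto E^{\wedge}_{\mc{F}'}$ is localization at the $\mc{F}'$-equivalences, an object $X \in \Sp^A$ satisfies $X^{\wedge}_{\mc{F}'} \simeq \ast$ if and only if $X \to \ast$ is an $\mc{F}'$-equivalence, if and only if $X^{\Phi B} \simeq \ast$ for every $B \in \mc{F}'$. Therefore it suffices to verify that $(E[\mc{F}^{-1}])^{\Phi B} \simeq \ast$ for all $B \in \mc{F}'$.

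The last step is a direct appeal to the formula (\ref{eq:geomfp}), which says
$$
E[\mc{F}^{-1}]^{\Phi B} \simeq
\begin{cases}
E^{\Phi B}, & B \notin \mc{F},\\
\ast, & B \in \mc{F}.
\end{cases}
$$
Because $\mc{F}' \subseteq \mc{F}$, every $B \in \mc{F}'$ satisfies $B \in \mc{F}$, so the geometric $B$-fixed points of $E[\mc{F}^{-1}]$ vanish. Combining this with the previous step, $E[\mc{F}^{-1}]^{\wedge}_{\mc{F}'} \simeq \ast$, as required.

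There is no real obstacle here: the lemma is essentially a bookkeeping consequence of the two characterizations of $\mc{F}$-completion and $\mc{F}^{-1}$-localization reviewed in items (\ref{eq:Fcompletion}) and (\ref{eq:geomloc}). The only thing to be careful about is matching the set-theoretic hypothesis $\mc{F}' \cap \mc{F}^c = \emptyset$ with the containment $\mc{F}' \subseteq \mc{F}$ that makes the geometric fixed point computation give zero. Conceptually, this is the statement that an object supported on an open $U \subseteq \Spc(A)$ has trivial completion at any closed subset disjoint from $U$.
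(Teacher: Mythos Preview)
Your proposal is correct and follows essentially the same approach as the paper: reinterpret the hypothesis as $\mc{F}' \subseteq \mc{F}$, then observe that $E[\mc{F}^{-1}] \to \ast$ is an $\mc{F}'$-equivalence because its $B$-fixed points vanish for $B \in \mc{F}$. The only cosmetic difference is that the paper phrases the vanishing in terms of categorical fixed points $E[\mc{F}^{-1}]^B$ rather than geometric fixed points, but as noted in item (\ref{eq:Fcompletion}) these give equivalent characterizations of $\mc{F}'$-equivalences.
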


\begin{proof}
Observe that the condition $\mc{F}' \cap \mc{F}^c = \emptyset$ is equivalent to $\mc{F}' \subseteq \mc{F}$.  Since for all $B \in \mc{F}' \subseteq \mc{F}$, we have
$$ E[\mc{F}^{-1}]^B \simeq \ast, $$
it follows that the map
$$ E[\mc{F}^{-1}] \to \ast $$
is an $\mc{F}'$-equivalence, and therefore the $\mc{F}'$-completion of $E[\mc{F}^{-1}]$ is trivial.
\end{proof}

%
%

\subsection*{Euler classes}

Just as closed hypersurfaces of a variety are parameterized by line bundles, there are closed subspaces $V_\alpha \subseteq \Spc(A)$ associated to characters $\alpha \in A^\vee$. The associated localization and completion functors admit explicit formulas associated to the Euler classes of these representations.

Given a representation $\gamma$ of $A$, there is a cofiber sequence
$$ S(\gamma)_+ \to S^0 \xrightarrow{a_\gamma} S^\gamma $$
where $a_\gamma$ is the \emph{Euler class}.  We regard $a_\gamma$ as an element of the $RO(A)$-graded stable stems
$$ a_\gamma \in \pi^A_{-\gamma}S. $$

For a character $\alpha \in A^\vee$, there is an associated family
$$ \mc{F}_\alpha := \{ B \le A \: : \: \alpha\big\vert_B = 1 \}. $$
Note that $\mc{F}_\alpha = \mc{F}_{\subseteq K_\alpha}$, where $K_\alpha$ is the kernel of $\alpha$:
$$ 1 \to K_\alpha \to A \xrightarrow{\alpha} \CC^\times. $$
We let $V_\alpha = V(\mc{F}_\alpha)$ denote the associated closed subspace of $\Spc(A)$.

\begin{lem}
We have 
$$ E\mc{F}_\alpha \simeq S(\infty\alpha). $$
\end{lem}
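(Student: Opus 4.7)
The plan is to verify that $S(\infty\alpha)$ satisfies the defining universal property of $E\mc{F}_\alpha$, namely that its $B$-fixed points are contractible for $B \in \mc{F}_\alpha$ and empty for $B \notin \mc{F}_\alpha$. Since $E\mc{F}_\alpha$ is characterized up to $A$-equivalence by this fixed-point condition, exhibiting this on $S(\infty\alpha)$ will suffice.

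First I would recall that $S(\infty\alpha)$ is the colimit of the finite-dimensional unit spheres $S(n\alpha)$ and is realized as the unit sphere inside the $A$-representation $\infty\alpha = \bigoplus_{i \ge 1} \alpha$. Taking $B$-fixed points commutes with the colimit and with the unit-sphere construction, so
\[
S(\infty\alpha)^B \;\simeq\; S\bigl((\infty\alpha)^B\bigr) \;\simeq\; S\bigl(\infty \cdot \alpha^B\bigr).
\]
Now the key case distinction comes from the fact that $\alpha$ is a one-dimensional complex representation, so $\alpha^B$ is either all of $\CC$ or $0$ depending on whether $B \subseteq K_\alpha$ or not.

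If $B \in \mc{F}_\alpha$, i.e.\ $\alpha|_B = 1$, then $\alpha^B = \alpha$ and $(\infty\alpha)^B$ is infinite-dimensional; its unit sphere is contractible (standard fact: the unit sphere of an infinite-dimensional Hilbert space, or equivalently the sequential colimit of the $S^{2n-1}$'s, is contractible). If $B \notin \mc{F}_\alpha$, then $\alpha|_B$ is a nontrivial character of $B$, so $\alpha^B = 0$, hence $(\infty\alpha)^B = 0$ and its unit sphere is empty. This matches the required property of $E\mc{F}_\alpha$ on the nose, so $S(\infty\alpha) \simeq E\mc{F}_\alpha$ by uniqueness.

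There is essentially no obstacle; the only thing to be careful about is the compatibility of fixed points with the infinite colimit defining $S(\infty\alpha)$, which is immediate since fixed points commute with filtered colimits along closed inclusions of $A$-CW complexes. The proof is really just unpacking definitions, and one could even give it as a one-line remark, but spelling out the two cases makes the role of $K_\alpha$ (and hence the identification $\mc{F}_\alpha = \mc{F}_{\subseteq K_\alpha}$) transparent.
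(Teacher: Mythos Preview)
Your argument is correct and is exactly the standard verification one would give. The paper in fact states this lemma without proof, treating it as well-known; your write-up supplies the obvious fixed-point check that justifies it.
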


\begin{cor}\label{cor:formula}
We have
$$ E^{\wedge}_{\mc{F}_\alpha} \simeq E^\wedge_{a_\alpha} $$
and 
$$ E[\mc{F}_\alpha^{-1}] \simeq E[a^{-1}_{\alpha}]. $$
\end{cor}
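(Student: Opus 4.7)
The plan is to convert both family-theoretic constructions into Euler-class-theoretic ones via the defining cofiber sequence
$$ E\mc{F}_{\alpha+} \to S^0 \to \widetilde{E\mc{F}_\alpha}. $$
By the preceding lemma, $E\mc{F}_\alpha \simeq S(\infty\alpha)$, and unwinding definitions identifies $\widetilde{E\mc{F}_\alpha}$ with the infinite representation sphere $S^{\infty\alpha}$. The crucial observation is that this sphere is the telescope
$$ S^{\infty\alpha} \simeq \hocolim\bigl( S^0 \xrightarrow{a_\alpha} S^\alpha \xrightarrow{a_\alpha} S^{2\alpha} \xrightarrow{a_\alpha} \cdots \bigr), $$
i.e., exactly the localization $S^0[a_\alpha^{-1}]$ of the sphere at the Euler class. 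This is the one real piece of content in the argument; everything else is formal.

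For the localization statement, I would conclude immediately from the smashing identity
$$ E[\mc{F}_\alpha^{-1}] = E \wedge \widetilde{E\mc{F}_\alpha} \simeq E \wedge S^0[a_\alpha^{-1}] \simeq E[a_\alpha^{-1}], $$
using only that smashing with $E$ commutes with sequential homotopy colimits.

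For the completion statement, I would apply $F(-,E)$ to the cofiber sequence above to obtain
$$ F(\widetilde{E\mc{F}_\alpha}, E) \to E \to F(E\mc{F}_{\alpha+}, E) = E^\wedge_{\mc{F}_\alpha}. $$
Passing $F(-,E)$ through the telescope identification of $\widetilde{E\mc{F}_\alpha}$ converts the colimit into a limit, giving
$$ F(\widetilde{E\mc{F}_\alpha}, E) \simeq \holim\bigl( \cdots \xrightarrow{a_\alpha} \Sigma^{-2\alpha} E \xrightarrow{a_\alpha} \Sigma^{-\alpha} E \xrightarrow{a_\alpha} E \bigr). $$
Thus $E^\wedge_{\mc{F}_\alpha}$ is the cofiber of the canonical map $\holim \Sigma^{-n\alpha} E \to E$, which is precisely the standard presentation of the derived $a_\alpha$-adic completion: the homotopy limits of the cofiber sequences $\Sigma^{-n\alpha}E \xrightarrow{a_\alpha^n} E \to E/a_\alpha^n$ assemble to exhibit $\holim E/a_\alpha^n \simeq E^\wedge_{a_\alpha}$ as this same cofiber.

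The main (and rather mild) obstacle is simply to be consistent about the meaning of the notation $E^\wedge_{a_\alpha}$ — i.e., to record that it is defined as $\holim E/a_\alpha^n$, or equivalently as the cofiber of $\holim \Sigma^{-n\alpha}E \to E$. Once this is pinned down, both identifications in the corollary reduce entirely to the telescope presentation of $S^{\infty\alpha}$ provided by the previous lemma.
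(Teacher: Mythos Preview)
Your proposal is correct and is precisely the argument the paper leaves implicit: the corollary is stated without proof, immediately following the lemma $E\mc{F}_\alpha \simeq S(\infty\alpha)$, and your telescope identification $\widetilde{E\mc{F}_\alpha} \simeq S^{\infty\alpha} \simeq S^0[a_\alpha^{-1}]$ together with the formal manipulations of $E\wedge(-)$ and $F(-,E)$ is exactly the intended deduction.
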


\begin{cor}
$\mc{F}_\alpha$-completion is Bousfield localization with respect to $S/a_\alpha$, and $\mc{F}^{-1}_\alpha$-localization is Bousfield localization with respect to $S[a^{-1}_\alpha]$.
\end{cor}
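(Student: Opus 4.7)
The plan is to deduce this directly from Corollary~\ref{cor:formula}, which already identifies $\mc{F}_\alpha$-completion and $\mc{F}_\alpha^{-1}$-localization with $a_\alpha$-adic completion and $a_\alpha$-inversion respectively. What remains is the purely formal task of recognizing the latter two functors as Bousfield localizations with respect to the prescribed spectra.

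For the localization statement, I would observe that $E[a_\alpha^{-1}] \simeq E \wedge S[a_\alpha^{-1}]$ by construction (one may realize $S[a_\alpha^{-1}]$ as the sequential colimit of multiplication-by-$a_\alpha$ maps on suitably shifted spheres). Hence the functor $E \mapsto E[a_\alpha^{-1}]$ is smashing, and every smashing localization is tautologically Bousfield localization with respect to the localized unit. Combined with the equivalence $E[\mc{F}_\alpha^{-1}] \simeq E[a_\alpha^{-1}]$ of Corollary~\ref{cor:formula}, this gives the second assertion.

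For the completion statement, I would unwind the definition $E^{\wedge}_{a_\alpha} = \holim_n E/a_\alpha^n$, where $E/a_\alpha^n$ is the cofiber of multiplication by $a_\alpha^n$. The key observation is that $S/a_\alpha$ and $S/a_\alpha^n$ generate the same Bousfield class, since each $S/a_\alpha^n$ is built from $S/a_\alpha$ by $n-1$ iterated cofiber sequences. Consequently the class of $a_\alpha$-adic equivalences (maps inducing equivalences after smashing with all $S/a_\alpha^n$) coincides with the class of $S/a_\alpha$-equivalences, and the inverse-limit construction produces an $S/a_\alpha$-local object. Hence $E \mapsto E^{\wedge}_{a_\alpha}$ is Bousfield localization with respect to $S/a_\alpha$, and the first assertion follows from the equivalence $E^{\wedge}_{\mc{F}_\alpha} \simeq E^{\wedge}_{a_\alpha}$ of Corollary~\ref{cor:formula}.

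No significant obstacle is anticipated; the only mildly nonroutine point is checking that the inverse system $\{S/a_\alpha^n\}$ has the same Bousfield class as $S/a_\alpha$, which is a standard consequence of thick subcategory reasoning applied to the fiber sequences $S/a_\alpha^{n-1} \to S/a_\alpha^n \to S/a_\alpha$.
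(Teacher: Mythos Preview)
Your proposal is correct and matches the paper's approach: the paper gives no explicit proof of this corollary, treating it as an immediate consequence of Corollary~\ref{cor:formula} together with the standard identifications of $a_\alpha$-completion and $a_\alpha$-inversion as Bousfield localizations, which is exactly what you supply. The only minor addition worth noting is that the paper has already recorded earlier (in the discussion around \ref{eq:Fcompletion}) that $\mc{F}$-completion is Bousfield localization with respect to $\bigvee_{B \in \mc{F}} A/B_+$, so one could alternatively deduce the completion statement by observing that $S/a_\alpha \simeq \Sigma S(\alpha)_+$ has the same Bousfield class as $A/K_{\alpha+}$; but your direct argument via the tower $\{S/a_\alpha^n\}$ is equally valid and self-contained.
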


Corollary~\ref{cor:formula} and Proposition~\ref{prop:pullbacks} allow for an explicit inductive computation of the localization away from and the completion at any family $\mc{F}$.  In particular we have the following.

\begin{prop}\label{prop:eulerloc}
For $B \le A$ we have 
$$ E^{\wedge}_{\mc{F}_{\subseteq B}} \simeq E^{\wedge}_{(a_\alpha\: : \: \alpha|_B = 1)} $$
and we have 
$$ E[\mc{F}^{-1}_{B\nsubseteq}] \simeq E[a_\alpha^{-1} \: : \: \alpha\big|_B \ne 1]. $$
\end{prop}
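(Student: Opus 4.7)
The plan is to reduce both statements to the single-character case of Corollary~\ref{cor:formula} by exhibiting explicit models of the universal spaces $E\mc{F}_{\subseteq B}$ and $\widetilde{E\mc{F}_{B \nsubseteq}}$ in terms of the unit spheres $S(\infty\alpha)$ and representation spheres $S^{\infty\alpha}$. The starting point is two identities of families that come from Pontryagin duality for finite abelian groups:
\[ \mc{F}_{\subseteq B} = \bigcap_{\alpha|_B = 1} \mc{F}_{\alpha}, \qquad \mc{F}_{B \nsubseteq} = \bigcup_{\alpha|_B \ne 1} \mc{F}_{\alpha}. \]
The first holds because $C \le B$ iff every character of $A$ trivial on $B$ is also trivial on $C$ (nonzero classes in $A/B$ are separated by $(A/B)^\vee \hookrightarrow A^\vee$); the second is the dual separation statement, obtained by choosing $b \in B \setminus C$ and a character of $A/C$ nontrivial on $b$.

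For the completion formula, I would check that
\[ E\mc{F}_{\subseteq B} \simeq \prod_{\alpha|_B = 1} S(\infty\alpha) \]
as $A$-spaces: by the lemma preceding Corollary~\ref{cor:formula}, the $C$-fixed points of each factor are contractible if $\alpha|_C = 1$ and empty otherwise, so by the first identity above the whole product has contractible $C$-fixed points exactly when $C \le B$. Adjoining a disjoint basepoint turns a product of unbased $A$-spaces into a smash of pointed $A$-spaces, and then
\[ E^{\wedge}_{\mc{F}_{\subseteq B}} = F\!\left( \bigwedge_{\alpha|_B = 1} S(\infty\alpha)_+,\, E\right), \]
which the smash-hom adjunction expresses as an iterated function spectrum over the characters $\alpha$ with $\alpha|_B = 1$. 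Each individual factor $F(S(\infty\alpha)_+, -) = (-)^{\wedge}_{\mc{F}_\alpha}$ is $a_\alpha$-completion by Corollary~\ref{cor:formula}, so the composite is the iterated $a_\alpha$-completion, as claimed.

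For the localization formula, I would instead identify $E\mc{F}_{B \nsubseteq}$ with the iterated join of the spaces $S(\infty\alpha)$ over characters $\alpha$ nontrivial on $B$: the $C$-fixed points of a join are empty iff every factor has empty $C$-fixed points, and by the second set-theoretic identity this recovers exactly the family $\mc{F}_{B \nsubseteq}$. The join formula $S(V) \ast S(W) \simeq S(V \oplus W)$, together with the cofiber sequence $S(V)_+ \to S^0 \to S^V$, then gives
\[ \widetilde{E\mc{F}_{B \nsubseteq}} \simeq \bigwedge_{\alpha|_B \ne 1} S^{\infty\alpha}. \]
Because $\mc{F}^{-1}$-localization is smashing, smashing $E$ factorwise with this identifies $E[\mc{F}^{-1}_{B \nsubseteq}]$ with the iterated inversion of the Euler classes $a_\alpha$, again by the single-character form of Corollary~\ref{cor:formula}.

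The main obstacle is the geometric identification of $E\mc{F}_{\subseteq B}$ as a product and of $E\mc{F}_{B \nsubseteq}$ as a join of representation spheres, together with the corresponding translation into smashes and (iterated) function spectra. The Pontryagin-duality separations are essential but routine; once those two universal spaces are identified, the proposition follows formally from Corollary~\ref{cor:formula}.
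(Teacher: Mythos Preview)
Your argument is correct. The paper does not actually prove this proposition; it only remarks beforehand that ``Corollary~\ref{cor:formula} and Proposition~\ref{prop:pullbacks} allow for an explicit inductive computation of the localization away from and the completion at any family $\mc{F}$,'' and then states the result. Your proof is a clean way to fill this in.

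There is a mild difference in packaging. The paper's hint points to the pullback squares of Proposition~\ref{prop:pullbacks} as the inductive mechanism. You instead bypass those squares entirely and go straight to identifying the universal spaces: $E\mc{F}_{\subseteq B}$ as the finite product $\prod_{\alpha|_B=1} S(\infty\alpha)$ and $E\mc{F}_{B\nsubseteq}$ as the finite join of the $S(\infty\alpha)$ for $\alpha|_B \ne 1$. This directly yields that completion at an intersection of families is iterated completion and that localization away from a union is iterated localization, after which Corollary~\ref{cor:formula} finishes the job. The two routes are equivalent in content (the product/join identifications are exactly what underlies the pullback squares), but your version is more self-contained for this particular statement and makes the role of Pontryagin duality explicit.
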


Figure~\ref{fig:suba2} gives an example of some closed subspaces of $\mr{Sub}(A)$ corresponding to Euler classes of some characters of $C_2 \times C_2 = \langle a, b \rangle$ defined by:
\begin{gather*}
 \alpha(a) = \beta(b) = 1, \\
 \alpha(b) = \beta(a) = -1.
\end{gather*}
I.e., $K_\alpha = \langle a \rangle$ and $K_\beta = \bra{b}$.

\begin{figure}
\centering
\includegraphics[width=0.5\linewidth]{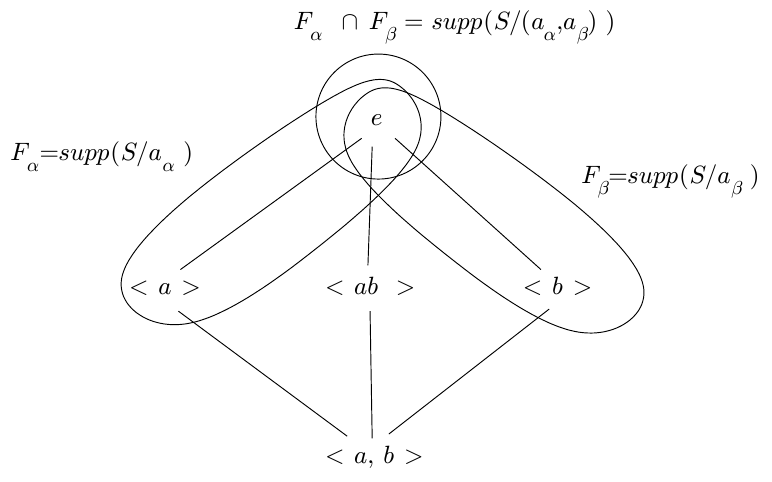}
\caption{Some closed subsets of $\mr{Sub}(C_2 \times C_2)$ determined by characters.}
\label{fig:suba2}
\end{figure}

\subsection*{The Isotropy Tower}
 
Associated to the poset of open subsets $U = \mc{F}^c \subseteq \Sub(A)$ there is a diagram of smashing localizations $E[\mc{F}^{-1}]$, where if $\mc{F}_1^c \subseteq \mc{F}_2^c$, there are maps
\begin{gather*}
E[\mc{F}_2^{-1}] \\
\downarrow \\
E[\mc{F}_1^{-1}]
\end{gather*}
One may think of this as a sort of coarse chromatic tower which we will call the \emph{isotropy tower of $E$}.  Following \cite{AbramKriz}, \cite{AMGR}, and \cite{BalderramaPower}, we will explain how the isotropy tower decomposes the homotopy type of $E$ into its ``monoisotropic layers.''

\begin{defn}
For each $B \le A$, let $T(B) \in \Sp^A$ denote the $A$-spectrum
$$ T(B) := \Sigma^{\infty}A/B_+[\mc{F}^{-1}_{B \not\subseteq}], $$
so a map in $\Sp^A$ is a $T(B)$-equivalence if and only if it is an equivalence on $B$-geometric fixed points.
The \emph{$B$-monoisotropic localization of $E \in \Sp^A$} is the the Bousfield localization of $E$ with respect to $T(B)$.
\end{defn}

The following proposition relates $T(B)$-localization to our localization and completion functors associated to families.  The formulation below was suggested by the referee.

\begin{prop}\label{prop:completions}
If $\mc{F}, \mc{F}' \subseteq \Sub(A)$ are families, then for $E \in \Sp^A$ the map 
\begin{equation}\label{eq:locmap}
 E \to E[\mc{F}^{-1}]^{\wedge}_{\mc{F}'}
\end{equation}
is localization with respect to 
$$ T = \bigvee_{B \in \mc{F}' \cap \mc{F}^c} T(B). $$
Here, the $T$-equivalences are the $\mc{F}'\cap \mc{F}^c$-\emph{equivalences}: the maps in $\Sp^A$ which are equivalences on $B$-geometric fixed points for all $B \in \mc{F}' \cap \mc{F}^c$. 
\end{prop}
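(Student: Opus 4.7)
The plan is to check the two defining properties of Bousfield localization: that (\ref{eq:locmap}) is a $T$-equivalence, and that its target is $T$-local. I would begin by unpacking the assertion about $T$-equivalences. Smashing $T(B) = \Sigma^\infty A/B_+[\mc{F}_{B\nsubseteq}^{-1}]$ with $(-)^{\Phi C}$ and combining (\ref{eq:geomfp}) with the (abelian) calculation $(A/B)^C \neq \emptyset \iff C \le B$, one finds that $T(B)^{\Phi C}$ is nontrivial (and equal to $\Sigma^\infty (A/B)_+$) precisely when $C = B$, so $T(B)$-equivalences coincide with $\Phi^B$-equivalences. Consequently, $T$-equivalences are precisely the maps that are $\Phi^B$-equivalences for every $B \in \mc{F}' \cap \mc{F}^c$.

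For the first property, I factor (\ref{eq:locmap}) as $E \to E[\mc{F}^{-1}] \to E[\mc{F}^{-1}]^\wedge_{\mc{F}'}$. By (\ref{eq:geomloc}) the first map is a $\Phi^B$-equivalence for every $B \in \mc{F}^c$, and by the characterization of $\mc{F}'$-equivalences in (\ref{eq:Fcompletion}) the second map is a $\Phi^B$-equivalence for every $B \in \mc{F}'$. Composing, the whole map is a $\Phi^B$-equivalence for every $B \in \mc{F}' \cap \mc{F}^c$, i.e., a $T$-equivalence.

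For the second property, suppose $X \in \Sp^A$ is $T$-acyclic, i.e., $X^{\Phi B} \simeq \ast$ for all $B \in \mc{F}' \cap \mc{F}^c$; I will show $\Map(X, E[\mc{F}^{-1}]^\wedge_{\mc{F}'}) \simeq \ast$. Using $E^\wedge_{\mc{F}'} = F(E\mc{F}'_+, E)$ and the $(\wedge, F)$-adjunction,
$$ \Map(X, E[\mc{F}^{-1}]^\wedge_{\mc{F}'}) \simeq \Map(X \wedge E\mc{F}'_+, E[\mc{F}^{-1}]). $$
Since $[\mc{F}^{-1}]$-localization is smashing, $E[\mc{F}^{-1}]$ is $\mc{F}^{-1}$-local, so it suffices to prove that $X \wedge E\mc{F}'_+$ is $\mc{F}^{-1}$-acyclic. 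Combining (\ref{eq:geomfp}) with the joint conservativity of the geometric fixed point functors, $\mc{F}^{-1}$-acyclicity is equivalent to the vanishing of $\Phi^B$ for every $B \in \mc{F}^c$. A direct computation of fixed points yields $(E\mc{F}'_+)^{\Phi B} \simeq S^0$ when $B \in \mc{F}'$ (since $(E\mc{F}')^B$ is contractible) and $(E\mc{F}'_+)^{\Phi B} \simeq \ast$ when $B \notin \mc{F}'$ (only the disjoint basepoint survives), so $(X \wedge E\mc{F}'_+)^{\Phi B} \simeq X^{\Phi B}$ for $B \in \mc{F}'$ and is trivial otherwise. The required vanishing for $B \in \mc{F}^c$ is therefore exactly the hypothesis $X^{\Phi B} \simeq \ast$ on $B \in \mc{F}' \cap \mc{F}^c$.

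The argument is largely formal; the only delicate point is the computation of $(E\mc{F}'_+)^{\Phi B}$, where one must keep straight the distinction between a contractible fixed-point space (contributing $S^0$) and an empty one (contributing only the added basepoint). As a sanity check, when $\mc{F}' \cap \mc{F}^c = \emptyset$ the wedge $T$ is empty, $T$-localization is the zero object, and we recover Lemma~\ref{lem:nullcompletion}.
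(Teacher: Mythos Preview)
Your proof is correct. The first step (the map is a $T$-equivalence) and the identification of $T$-equivalences with $\Phi^B$-equivalences on $\mc{F}'\cap\mc{F}^c$ match the paper's argument exactly.

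For the locality step you take a genuinely different, and somewhat more direct, route than the paper. The paper argues as follows: given $X$ with $X^{\Phi B}\simeq\ast$ for $B\in\mc{F}'\cap\mc{F}^c$, one observes that $X[\mc{F}^{-1}]^{\Phi B}\simeq\ast$ for all $B\in\mc{F}'$, so $[X[\mc{F}^{-1}],E[\mc{F}^{-1}]^\wedge_{\mc{F}'}]^A=0$; it then remains to show that $E[\mc{F}^{-1}]^\wedge_{\mc{F}'}$ is itself $\mc{F}^{-1}$-local, which the paper establishes via the Greenlees--May isotropy square (Theorem~\ref{thm:GreenleesMay}) together with Lemma~\ref{lem:nullcompletion}. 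Your argument sidesteps this by using the $(\wedge,F)$-adjunction to rewrite $\Map(X,E[\mc{F}^{-1}]^\wedge_{\mc{F}'})\simeq\Map(X\wedge E\mc{F}'_+,E[\mc{F}^{-1}])$ and then checking directly on geometric fixed points that $X\wedge E\mc{F}'_+$ is $\mc{F}^{-1}$-acyclic. This avoids invoking Theorem~\ref{thm:GreenleesMay} altogether and keeps the argument at the level of elementary fixed-point computations; the paper's route, on the other hand, yields the extra byproduct that $E[\mc{F}^{-1}]^\wedge_{\mc{F}'}$ is $\mc{F}^{-1}$-local, a fact you do not establish (and do not need). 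One small remark: the smashing property of $\mc{F}^{-1}$-localization is not needed to know that $E[\mc{F}^{-1}]$ is $\mc{F}^{-1}$-local (any localization is local); where you do use it is in identifying $\mc{F}^{-1}$-acyclicity of $Y$ with $Y[\mc{F}^{-1}]\simeq\ast$, hence with vanishing of $Y^{\Phi B}$ on $\mc{F}^c$.
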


\begin{proof}
The map (\ref{eq:locmap}) is the composite
$$ E \to E[\mc{F}^{-1}] \to E[\mc{F}^{-1}]^{\wedge}_{\mc{F}'} $$
of $\mc{F}'\cap \mc{F}^c$-equivalences, hence is a $\mc{F}' \cap \mc{F}^c$-equivalence.  We just need to show that ${E}[\mc{F}^{-1}]^{\wedge}_{\mc{F}'}$ is $T$-local.  Suppose that $X \in \Sp^A$ is $T$-acyclic, so for all $B\in \mc{F}' \cap \mc{F}^c$ we have 
$$ X^{\Phi B} \simeq \ast. $$
We wish to show that
$$ [X, E[\mc{F}^{-1}]^{\wedge}_{\mc{F}'}]^A = 0. $$
By (\ref{eq:geomfp}) we have 
$$ X[\mc{F}^{-1}]^{\Phi B} \simeq \ast $$
for all $B \in \mc{F}'$.  It follows that we have 
$$ [X[\mc{F}^{-1}], E[\mc{F}^{-1}]^{\wedge}_{\mc{F}'}]^A = 0. $$
The result would follow if we could show that $E[\mc{F}^{-1}]^{\wedge}_{\mc{F}'}$ is $\mc{F}^{-1}$-local. By Theorem~\ref{thm:GreenleesMay} there is a pullback
$$
\xymatrix{
E[\mc{F}^{-1}]^{\wedge}_{\mc{F}'} \ar[r] \ar[d] &
E[\mc{F}^{-1}]^{\wedge}_{\mc{F}'\cap \mc{F}} \ar[d] \\
E[\mc{F}^{-1}]^{\wedge}_{\mc{F}'}[\mc{F}^{-1}] \ar[r] &
E[\mc{F}^{-1}]^{\wedge}_{\mc{F}'\cap \mc{F}}[\mc{F}^{-1}]
}
$$
But Lemma~\ref{lem:nullcompletion} implies that
$$ E[\mc{F}^{-1}]^{\wedge}_{\mc{F}'\cap \mc{F}} \simeq \ast. $$
The result follows.
\end{proof}

\begin{cor}\label{cor:T(B)equiv}
We have
$$ E_{T(B)} \simeq E[\mc{F}^{-1}_{B \not\subseteq}]^{\wedge}_{\mc{F}_{\subseteq B}}. $$ 
\end{cor}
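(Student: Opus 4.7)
The plan is to apply Proposition~\ref{prop:completions} directly with the choices $\mc{F} = \mc{F}_{B \nsubseteq}$ and $\mc{F}' = \mc{F}_{\subseteq B}$. By definition of localization, it suffices to verify that the right-hand side
$$ E[\mc{F}^{-1}_{B \nsubseteq}]^{\wedge}_{\mc{F}_{\subseteq B}} $$
is the $T(B)$-localization of $E$, which is precisely the content of Proposition~\ref{prop:completions} once we identify the wedge of $T(C)$'s appearing there as $T(B)$ itself.

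First, I would compute the intersection $\mc{F}' \cap \mc{F}^c$ under these choices. By definition, $\mc{F}_{\subseteq B} = \{C \le A : C \le B\}$ and $\mc{F}_{B \nsubseteq}^c = \{C \le A : B \le C\}$. Hence
$$ \mc{F}_{\subseteq B} \cap \mc{F}_{B \nsubseteq}^c = \{C \le A : C \le B \text{ and } B \le C\} = \{B\}. $$
Thus the wedge appearing in Proposition~\ref{prop:completions} collapses to the single summand $T(B)$.

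Applying Proposition~\ref{prop:completions} with this data, the map
$$ E \to E[\mc{F}^{-1}_{B \nsubseteq}]^{\wedge}_{\mc{F}_{\subseteq B}} $$
is Bousfield localization with respect to $T(B)$. Therefore the target is canonically equivalent to $E_{T(B)}$, which is exactly the asserted equivalence. No substantive obstacle is expected here; the entire content is the bookkeeping check that the two families cut out exactly $\{B\}$, and the real work has already been done in Proposition~\ref{prop:completions}.
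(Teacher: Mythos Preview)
Your proposal is correct and is exactly the intended argument: the corollary is an immediate specialization of Proposition~\ref{prop:completions} with $\mc{F} = \mc{F}_{B \nsubseteq}$ and $\mc{F}' = \mc{F}_{\subseteq B}$, and the paper does not give a separate proof beyond stating it as a corollary. Your computation that $\mc{F}_{\subseteq B} \cap \mc{F}_{B \nsubseteq}^c = \{B\}$ is the only thing to check, and you have done so.
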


One interesting consequence of Proposition~\ref{prop:completions} is that if $U \subseteq \Sub(A)$ is open, and $E \in \Sp^A$ is $U$-local, then its completion a closed $V \subseteq \Sub(A)$ only depends on $V \cap U$.  Said differently, we have the following corollary.

\begin{cor}
If $\mc{F}, \mc{F}', \mc{F}'' \subseteq \Sub(A)$ are families, and 
$$ \mc{F}' \cap \mc{F}^c = \mc{F}'' \cap \mc{F}^c. $$
Then there is a canonical equivalence 
$$ E[\mc{F}^{-1}]^{\wedge}_{\mc{F}'} \simeq E[\mc{F}^{-1}]^{\wedge}_{\mc{F}''}. $$
\end{cor}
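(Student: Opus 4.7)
The plan is to apply Proposition~\ref{prop:completions} directly. That proposition identifies the map
$$ E \to E[\mc{F}^{-1}]^{\wedge}_{\mc{F}'} $$
as Bousfield localization with respect to
$$ T_{\mc{F}'} := \bigvee_{B \in \mc{F}' \cap \mc{F}^c} T(B), $$
and similarly
$$ E \to E[\mc{F}^{-1}]^{\wedge}_{\mc{F}''} $$
is Bousfield localization with respect to
$$ T_{\mc{F}''} := \bigvee_{B \in \mc{F}'' \cap \mc{F}^c} T(B). $$

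By hypothesis $\mc{F}' \cap \mc{F}^c = \mc{F}'' \cap \mc{F}^c$, so these two localizing objects are literally equal: $T_{\mc{F}'} = T_{\mc{F}''}$. Hence both maps are Bousfield localizations of $E$ with respect to the same object, and therefore factor as each other's inverses uniquely. Concretely, since $E[\mc{F}^{-1}]^{\wedge}_{\mc{F}''}$ is $T_{\mc{F}'}$-local and the map $E \to E[\mc{F}^{-1}]^{\wedge}_{\mc{F}''}$ is a $T_{\mc{F}'}$-equivalence, the universal property of the $T_{\mc{F}'}$-localization $E \to E[\mc{F}^{-1}]^{\wedge}_{\mc{F}'}$ produces a unique map
$$ E[\mc{F}^{-1}]^{\wedge}_{\mc{F}'} \to E[\mc{F}^{-1}]^{\wedge}_{\mc{F}''} $$
under $E$, and by the symmetric argument a unique inverse going the other way. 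The composites are the identity by uniqueness, giving a canonical equivalence as claimed.

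There is no real obstacle here; the entire content is packaged in Proposition~\ref{prop:completions}. The only subtlety worth remarking on is that the equivalence is canonical (i.e.\ natural in $E$) precisely because it is obtained from the universal property of a Bousfield localization, which is what makes the statement a meaningful corollary rather than a tautology.
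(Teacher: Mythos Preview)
Your proof is correct and is exactly the intended argument: the paper states this corollary immediately after Proposition~\ref{prop:completions} with no proof, introducing it as ``one interesting consequence of Proposition~\ref{prop:completions},'' so the entire content is precisely the observation that both completions are Bousfield localization at the same object $\bigvee_{B \in \mc{F}' \cap \mc{F}^c} T(B) = \bigvee_{B \in \mc{F}'' \cap \mc{F}^c} T(B)$.
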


Note the following:
\begin{enumerate}[label = (\theequation)]
\eitem $\mc{F}^{-1}$-localization is localization with respect to
$$ \bigvee_{B \in \mc{F}^c} T(B). $$
\eitem By Corollary~\ref{cor:formula}, there is a equivalence
$$ T(B) \simeq \Sigma^{\infty}A/B_+[a_\beta^{-1} \: : \: \beta \big\vert_B \ne 1]. $$
We may think of $T(B)$ as a telescopic localization of a complex with support $\mc{F}_{\subseteq B}$ with respect to all non-trivial Euler classes.  Thus $B$-monoisotropic localization is analogous to $T(n)$-localization in the chromatic setting.
\eitem For $E \in \Sp^A$, Corollary~\ref{cor:formula} implies that we have
$$ E_{T(B)} \simeq E[a^{-1}_\beta \: : \: \beta\big\vert_B \ne 1]^{\wedge}_{(a_\alpha \: : \: \alpha\vert_B = 1)}. $$
\eitem In particular, the spectrum $E_{T(B)}$ is the inverse limit of spectra
$$  E[a^{-1}_\beta \: : \: \alpha\big\vert_B \ne 1]/(a^{i_\alpha}_\alpha \: : \: \alpha \big\vert_B = 1) $$
and the $RO(A)$-graded homotopy groups of these spectra exhibit  $a_\alpha$-periodicity for $\alpha \big\vert_B \ne 1$ (tautologically) and $u_\alpha$-periodicity for $\alpha \big\vert_B = 1$ by the work of Balderrama \cite{Balderramauself}. 
\eitem The fixed points of $E_{T(B)}$ are given by
$$ E^C_{T(B)} \simeq \begin{cases}
(\Phi^B E)^{hC/B}, & B \subseteq C, \\
\ast, & B \not\subseteq C
\end{cases}
$$
for $C \le A$.
\eitem It follows from Corollary~\ref{cor:T(B)equiv}, \ref{eq:geomloc}, and \ref{eq:Fcompletion} that there is an equivalence of $\infty$-categories 
$$
 \Sp^A_{T(B)} \simeq \Sp^{B(A/B)}.
$$
 where the right-hand side is the $\infty$-category of Borel $A/B$-spectra.
 \label{eq:TBlocal}
\eitem Lemma~\ref{lem:nullcompletion} implies that if $B_1 \subsetneq B_2$, then
$$ E_{T(B_2),T(B_1)} \simeq \ast. $$
This mimics the behavior of iterated $K(n)$-localization in chromatic homotopy theory, if one follows our practice of drawing subgroup lattices upside down.
\end{enumerate}

\begin{prop}\label{prop:isotropyfracture}
Suppose that $\mc{F}_1, \mc{F}_2 \subseteq \Sub(A)$ are families with
$$ \mc{F}_2^c = \mc{F}_1^c \cup \{ B \}. $$
Then the square
$$
\xymatrix{
E[\mc{F}_2^{-1}] \ar[d] \ar[r] &
E_{T(B)} \ar[d] \\
E[\mc{F}_1^{-1}] \ar[r] &
E_{T(B)}[\mc{F}_1^{-1}]
}
$$
is a pullback.
\end{prop}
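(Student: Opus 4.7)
The plan is to derive the square as a direct consequence of Bauer's arithmetic fracture (Proposition~\ref{prop:fracture}). Since $\mc{F}^{-1}$-localization of $E$ is Bousfield localization with respect to $\bigvee_{C \in \mc{F}^c} T(C)$, and since $\mc{F}_2^c = \mc{F}_1^c \cup \{B\}$, writing $F := \bigvee_{C \in \mc{F}_1^c} T(C)$ one has $E_F \simeq E[\mc{F}_1^{-1}]$ and $E_{T(B) \vee F} \simeq E[\mc{F}_2^{-1}]$. Applying Proposition~\ref{prop:fracture} to $X = E$ with Bauer's pair of localizations being $T(B)$ and $F$ then produces a pullback square whose four corners are, after these identifications, exactly the four terms of the square in question --- the lower-right corner being the iterated localization $(E_{T(B)})_F \simeq E_{T(B)}[\mc{F}_1^{-1}]$.

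What remains is the hypothesis of Proposition~\ref{prop:fracture}, namely
$$ T(B) \wedge E[\mc{F}_1^{-1}] \simeq \ast. $$
This is the substantive step. Since the geometric fixed point functors $\{\Phi^C : C \le A\}$ are jointly conservative and each is strong symmetric monoidal, it suffices to check that
$$ T(B)^{\Phi C} \wedge E[\mc{F}_1^{-1}]^{\Phi C} \simeq \ast $$
for every $C \le A$. For $C \in \mc{F}_1$ (which includes $B$, since $\mc{F}_2 = \mc{F}_1 \setminus \{B\}$), the second factor vanishes by~(\ref{eq:geomfp}). Otherwise $C \in \mc{F}_1^c$, and in particular $C \ne B$; I would then show the first factor vanishes. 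From $T(B) \simeq \Sigma^\infty_+ A/B[\mc{F}^{-1}_{B \not\subseteq}]$ and~(\ref{eq:geomfp}), the case $B \not\subseteq C$ kills the localization factor, while the case $B \subsetneq C$ kills the suspension factor, since $A$ being abelian forces $(A/B)^C = \emptyset$ whenever $C \not\subseteq B$.

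The main obstacle is just this vanishing check; modulo it, the result is a formal application of Bauer's fracture together with the identifications above.
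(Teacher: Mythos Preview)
Your proof is correct, and it takes a genuinely different route from the paper's. The paper obtains the square by pasting two pullbacks: the first from Proposition~\ref{prop:pullbacks} applied to the pair $\mc{F}_{B\not\subseteq}$ and $\mc{F}_1$ (using that $\mc{F}_2 = \mc{F}_{B\not\subseteq} \cap \mc{F}_1$ under the hypothesis), the second from Greenlees--May (Theorem~\ref{thm:GreenleesMay}) applied to $E[\mc{F}^{-1}_{B\not\subseteq}]$ and the family $\mc{F}_1$. It then invokes Proposition~\ref{prop:completions} to identify the resulting corner $E[\mc{F}^{-1}_{B\not\subseteq}]^{\wedge}_{\mc{F}_1}$ with $E_{T(B)}$, since $\mc{F}_1 \cap \mc{F}^c_{B\not\subseteq} = \{B\}$. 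Your argument is more direct: a single application of Bauer's fracture (Proposition~\ref{prop:fracture}), with the acyclicity hypothesis $T(B) \wedge E[\mc{F}_1^{-1}] \simeq \ast$ checked by the case analysis on geometric fixed points you describe. Your route is shorter and sidesteps the completion identification of Proposition~\ref{prop:completions} entirely; the paper's route, by contrast, makes the underlying isotropy-separation picture (localize away from $\mc{F}_{B\not\subseteq}$, then complete at $\mc{F}_1$) more visible, which fits the narrative of the surrounding section.
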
   

The following picture illustrates the situation described in Proposition~\ref{prop:isotropyfracture}.

\begin{center}
\includegraphics[width=0.3\linewidth]{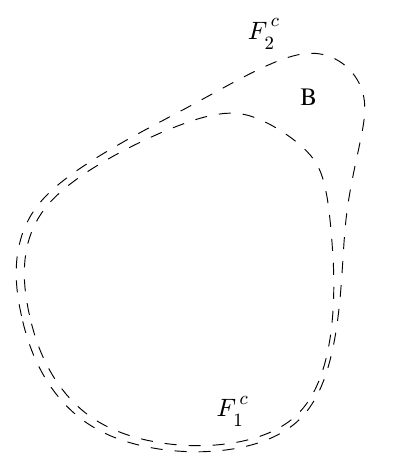}
\end{center}

\begin{proof}[Proof of Proposition~\ref{prop:isotropyfracture}]
The pullback is achieved as a composite of pullbacks
$$
\xymatrix{
E[\mc{F}_2^{-1}]\ar[d] \ar[r] & 
E[\mc{F}^{-1}_{B \not\subseteq}] \ar@{=}[r] \ar[d] &
E[\mc{F}^{-1}_{B \not\subseteq}] \ar[r] \ar[d] &
E[\mc{F}^{-1}_{B \not\subseteq}]^{\wedge}_{\mc{F}_1} \ar[d]
\\
E[\mc{F}_1^{-1}] \ar[r] & 
E[(\mc{F}_{B \not\subseteq} \cup \mc{F}_1)^{-1}] \ar@{=}[r] &
E[\mc{F}^{-1}_{B \not\subseteq}][\mc{F}^{-1}_1] \ar[r] &
E[\mc{F}^{-1}_{B \not\subseteq}]^{\wedge}_{\mc{F}_1}[\mc{F}_1^{-1}]
}
$$
where the left pullback is Proposition~\ref{prop:pullbacks} and the right pullback is Theorem~\ref{thm:GreenleesMay}.  The proposition follows from Proposition~\ref{prop:completions}, which implies that there is an equivalence
$$ E[\mc{F}^{-1}_{B \not\subseteq}]^{\wedge}_{\mc{F}_1}  \simeq E_{T(B)}. $$
\end{proof}

Proposition~\ref{prop:isotropyfracture} implies that the homotopy type of $E \in \Sp^A$ can be inductively built out of the monoisotropic layers
$$ \{ E_{T(B)} \: : \:  \quad B \le A \} $$
by choosing a maximal chain of opens
$$ \{ A \} = \mc{F}_0^c \subsetneq \mc{F}_1^c \subsetneq \mc{F}_2^c \subsetneq \cdots \subsetneq \mc{F}_k^c = \Sub(A) $$
and considering the associated isotropy tower.  An example of such a tower in the case of $A = C_2 \times C_2$ is shown below (compare with G.~Yan's approach in \cite{Yan}):

\begin{center}
\includegraphics[width=\linewidth]{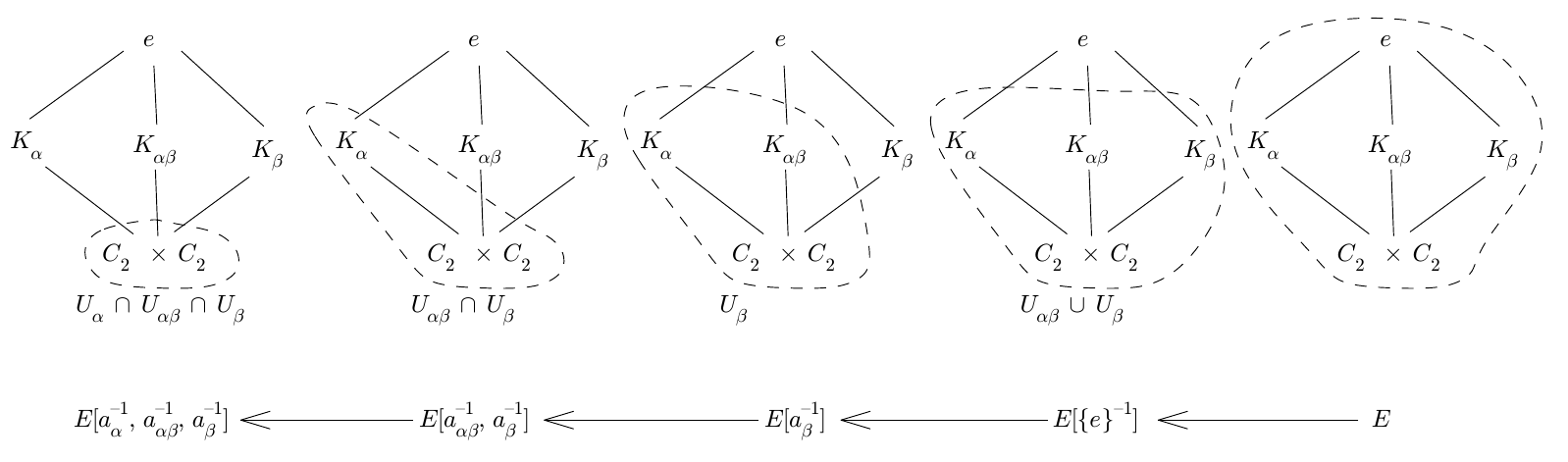}
\end{center}

\section{Equivariant formal group laws}\label{sec:FGL}

\subsection*{Equivariant formal groups}

Let $k$ be a commutative ring, and let 
$A^\vee_k = \Spec(k^{A^\vee}) = A^\vee \times \Spec(k)$
denote the \'etale group-scheme over $k$ associated to the group $A^\vee$.  

\begin{defn}\label{def:EFG}
An \emph{$A$-equivariant formal group} over $k$ is a pair $(\GG, \varphi)$ where
\begin{enumerate}
\item $\GG$ is a formal commutative group scheme over $k$, and 
\item $\varphi: A^\vee_k \to \GG$ is a homomorphism, such that
\item $\varphi(A^\vee_k) \hookrightarrow \GG$ is an infinitesimal thickening\footnote{By this, we mean that the map of the completion $\GG^{\wedge}_{\im \varphi}$ to $\GG$ is an isomorphism of formal schemes.}, and 
\item the completion $\GG_{\varphi_1}^{\wedge}$ at the identity $\varphi_1 = \{1\} \times \Spec(k) \hookrightarrow \GG$ is a $1$-dimensional formal group over $k$. 
\end{enumerate}  
\end{defn}

\begin{rmk}
We will use multiplicative notation for the group $A^\vee$, and additive notation for the formal group scheme $\GG$.  
\end{rmk}

\begin{exs}
We give some examples of equivariant formal groups.
\begin{enumerate}
\item If $\GG$ is a formal group over $k$, and $\varphi: A_k^\vee \to \GG$ is the trivial homomorphism, then $(\GG,\varphi)$ is an $A$-equivariant formal group.
\item If $\GG_1$ is a formal group over $k$, $\GG = A^\vee \times \GG_1$, and 
$$ \varphi: A^\vee \hookrightarrow A^\vee \times \GG_1 $$
is the canonical inclusion, then $(\GG,\varphi)$ is an $A$-equivariant formal group.
\item If $A$ is a $p$-group, $\GG$ is a $1$-dimensional $p$-divisible group over $k$, and 
$$ \varphi: A^\vee_k \to \GG $$ is a homomorphism, then $(\GG^{\wedge}_{\im \varphi}, \varphi )$ is an $A$-equivariant formal group law.  In particular, if $k' = \mc{O}_{\Hom(A_k^\vee, \GG)}$, then $\GG$ pulls back to a $p$-divisible group $\GG'$ over $k'$ with a canonical homomorphism
$$ \varphi': A^\vee_{k'} \to \GG' $$
and $((\GG')^{\wedge}_{\im \varphi'}, \varphi')$ is an $A$-equivariant formal group --- this is the equivariant formal group associated to a tempered cohomology theory (see \cite{Multicurves}, \cite{Lurie}, \cite{HKS}, \cite{GepnerMeier}, \cite{Davies}).
\end{enumerate}
\end{exs}

Let $X$ denote $\Spec(k)$.
For an $A$-equivariant formal group $(\GG, \varphi)$, for each $\alpha \in A^\vee$ we will let
$$ \varphi_\alpha = \{\alpha\} \times X \hookrightarrow \GG$$
denote the corresponding subscheme, and we will define
$$ \GG_\alpha := \GG^\wedge_{\varphi_\alpha}. $$

\subsection*{Equivariant formal group laws}

\begin{defn}
A \emph{(global) coordinate} on an $A$-equivariant formal group $(\GG,\varphi)$ is a regular function $x \in \mc{O}_\GG$ which generates the ideal $I_{\varphi_1} \subset \mc{O}_\GG$.  An \emph{$A$-equivariant formal group law} over $k$ is a triple $(\GG,\varphi, x)$ consisting of an  $A$-equivariant formal group over $k$, and a choice of coordinate.
\end{defn}

\begin{rmk}
For general $(\GG,\varphi)$ over $X$, coordinates only exist Zariski locally in $X$. 
\end{rmk}

For each $\alpha \in A^\vee$ we will let 
$$ t_\alpha: \GG \to \GG $$
denote the translation by $\alpha$ map induced by $\varphi$.  These give isomorphisms of formal schemes
$$ t_{\alpha}: \GG_\beta \xrightarrow{\cong} \GG_{\alpha\beta}. $$

Given a global coordinate $x$ of $(\GG,\varphi)$, we define translation coordinates  
$$ x_\alpha := t^*_{\alpha^{-1}}x \in \mc{O}_\GG. $$
Note that $(x_\alpha) = I_{\varphi_\alpha}$.
We define \emph{Euler classes}
$$ e_\alpha := x_\alpha\bigg|_{\varphi_1} = x_1 \bigg|_{\varphi_{\alpha^{-1}}} \in k. $$

\begin{rmk}
Our conventions are those of \cite{CGK}, and differ from Hausmann-Meier \cite{HM} --- the $y_\alpha$ of \cite{HM} is our $x_{\alpha^{-1}}$ and the $e_\alpha$ of \cite{HM} corresponds to our $e_{\alpha^{-1}}$. 
\end{rmk}

Note that we have
$$ \varphi_1 \cap \varphi_\alpha = X\bigg\vert_{e_\alpha = 0}. $$
\begin{figure}
\centering
\includegraphics[width=.8\linewidth]{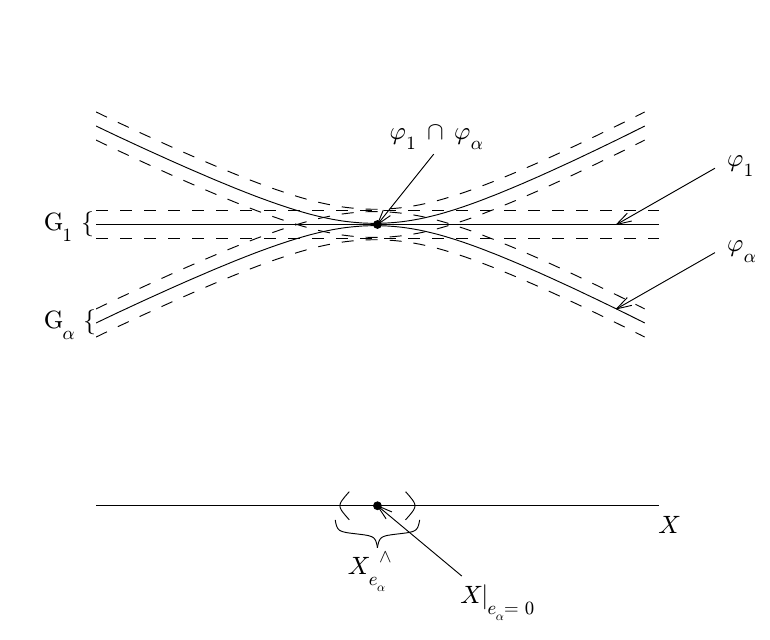}
\caption{Visualization of an equivariant formal group law $(\mathbb{G}, \varphi)$ over $X$.  This particular picture is meant to represent a $C_3$-equivariant formal group. The three curved lines represent the image $\im \varphi$, and $\GG$ is represented by the dotted ``neighborhood'' of $\im \varphi$ in this visualization.  The straight curve and the bottom curve represent the closed subsechemes $\varphi_1$ and $\varphi_\alpha$, respectively, where $\alpha \in C_3^\vee$ is a non-trivial character.  The dotted ``neighborhoods of these two lines represent the formal subschemes $\GG_1$ and $\GG_\alpha$, respectively.}
\label{fig:afgl}
\end{figure}
Figure~\ref{fig:afgl} gives a visualization of the geometric structure of an equivariant formal group law.

Given a global coordinate $x$, we define local coordinates 
\begin{equation}\label{eq:localcoord}
 z_\alpha := x_\alpha\bigg|_{\GG_\alpha} \in \mc{O}_{\GG_\alpha}. \end{equation}
We then have 
$$ \GG_\alpha = \Spf(k[[z_\alpha]]). $$
In particular, for $i \ge 1$ there exist $b^\alpha_i \in k$ such that 
\begin{equation}\label{eq:balpha}
x_\alpha\bigg\vert_{\GG_1} = e_\alpha + b_1^{\alpha} z_1 + b_2^\alpha z_1^2 + \cdots.
\end{equation}
We will find it convenient to denote this power series as $b^\alpha(z)$, where
\begin{equation}\label{eq:balpha(z)}
 b^\alpha(z) := e_\alpha + b_1^{\alpha} z + b_2^\alpha z^2 + \cdots
 \end{equation}
It follows that we have 
\begin{equation}\label{eq:balphabeta}
 x_{\alpha}\bigg\vert_{\GG_\beta} = b^{\alpha\beta^{-1}}(z_\beta).
 \end{equation}
Let $a_{i,j}$ denote the coefficients of the formal group law $(\GG_1, z_1)$:
$$ z_1 +_{\GG_1} z'_1 = \sum_{i,j} a_{i,j} z_1^i (z'_1)^j. $$
Then the multiplication on $\GG$ restricts to give a local multiplication
$$ \GG_\alpha \widehat{\times} \GG_\beta \to \GG_{\alpha\beta} $$
which on local coordinate rings is expressed as 
$$ z_{\alpha\beta} \mapsto z_\alpha +_{\GG_1} z_\beta. $$

\begin{lem}\label{lem:intersection}
We have 
$$ \GG_\alpha \cap \GG_\beta = \GG_\alpha \bigg\vert_{X^{\wedge}_{e_{\alpha\beta^{-1}}}} = \GG_\beta \bigg\vert_{X^{\wedge}_{e_{\alpha\beta^{-1}}}}. $$ 
\end{lem}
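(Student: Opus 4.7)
The plan is to identify both sides of the claimed equality with the formal completion of $\GG$ along the scheme-theoretic intersection $\varphi_\alpha\cap\varphi_\beta$, and then use the local coordinate on $\GG_\alpha$ to recognize this completion as the pullback of $\GG_\alpha$ along $X^{\wedge}_{e_{\alpha\beta^{-1}}}\to X$.

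First I would compute the function $x_\alpha|_{\varphi_\beta}$ on $\varphi_\beta\cong X$. From the definition $x_\gamma=t_{\gamma^{-1}}^{*}x$ and the composition law $t_{\alpha^{-1}}\circ t_\beta=t_{\beta\alpha^{-1}}$, one obtains $t_\beta^{*}x_\alpha=x_{\alpha\beta^{-1}}$; restricting to $\varphi_1$ gives $x_\alpha|_{\varphi_\beta}=e_{\alpha\beta^{-1}}$ under the identification $\varphi_\beta\cong X$ induced by $t_\beta$. Since $\varphi_\alpha=V(x_\alpha)$ in $\GG$, this yields
\[
\varphi_\alpha\cap\varphi_\beta \;=\; V(x_\alpha)\cap\varphi_\beta \;=\; X\big|_{e_{\alpha\beta^{-1}}=0},
\]
and by the symmetric computation $\varphi_\alpha\cap\varphi_\beta=X|_{e_{\beta\alpha^{-1}}=0}$. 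In particular $(e_{\alpha\beta^{-1}})=(e_{\beta\alpha^{-1}})$ as ideals of $k$.

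Next I would interpret $\GG_\alpha\cap\GG_\beta$ as the fiber product $\GG_\alpha\times_{\GG}\GG_\beta$ in formal schemes. On an affine chart $\GG=\Spf R$ with $\varphi_\alpha=V(f_\alpha)$ and $\varphi_\beta=V(f_\beta)$, this fiber product is $\Spf\bigl(R^{\wedge}_{f_\alpha}\,\hat{\otimes}_R\,R^{\wedge}_{f_\beta}\bigr)=\Spf(R^{\wedge}_{(f_\alpha,f_\beta)})$, so $\GG_\alpha\cap\GG_\beta=\GG^{\wedge}_{\varphi_\alpha\cap\varphi_\beta}$. Since $\varphi_\alpha\cap\varphi_\beta\subseteq\varphi_\alpha$, transitivity of formal completion then identifies this with $(\GG_\alpha)^{\wedge}_{\varphi_\alpha\cap\varphi_\beta}$.

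Finally, using the local coordinate presentation $\GG_\alpha=\Spf(\mc{O}_X[[z_\alpha]])$, the subscheme $\varphi_\beta\subset\GG_\alpha$ is cut out by $x_\beta|_{\GG_\alpha}=b^{\beta\alpha^{-1}}(z_\alpha)=e_{\beta\alpha^{-1}}+b^{\beta\alpha^{-1}}_1 z_\alpha+\cdots$ by (\ref{eq:balphabeta}). Thus the ideal defining $\varphi_\alpha\cap\varphi_\beta\subset\GG_\alpha$ is
\[
(z_\alpha,\,b^{\beta\alpha^{-1}}(z_\alpha)) \;=\; (z_\alpha,\,e_{\beta\alpha^{-1}}) \;=\; (z_\alpha,\,e_{\alpha\beta^{-1}}),
\]
using the ideal equality from the first step. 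Since $\mc{O}_X[[z_\alpha]]$ is already $z_\alpha$-adically complete, completing at this ideal only completes the coefficient ring $\mc{O}_X$ at $(e_{\alpha\beta^{-1}})$, yielding $\mc{O}_{X^{\wedge}_{e_{\alpha\beta^{-1}}}}[[z_\alpha]]$, i.e.\ the formal scheme $\GG_\alpha|_{X^{\wedge}_{e_{\alpha\beta^{-1}}}}$. The equality with $\GG_\beta|_{X^{\wedge}_{e_{\alpha\beta^{-1}}}}$ follows by the same computation using the local coordinate $z_\beta$ on $\GG_\beta$.

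The main technical point is the identification of $\GG_\alpha\cap\GG_\beta$ with $\GG^{\wedge}_{\varphi_\alpha\cap\varphi_\beta}$ (equivalently, with $(\GG_\alpha)^{\wedge}_{\varphi_\alpha\cap\varphi_\beta}$) via fiber products of completions; once this is in place, the rest is bookkeeping with translation coordinates and the Euler class identity $x_\alpha|_{\varphi_\beta}=e_{\alpha\beta^{-1}}$.
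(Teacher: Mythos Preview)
Your proof is correct and follows essentially the same approach as the paper: both arguments identify $\GG_\alpha\cap\GG_\beta$ with $(\GG_\alpha)^{\wedge}_{x_\beta}=\Spf(k[[z_\alpha]]^{\wedge}_{x_\beta})$ and then use formula~(\ref{eq:balphabeta}) to recognize this as the completion at the Euler class. Your version is more explicit on two points the paper leaves implicit --- the interpretation of the intersection as the formal completion along $\varphi_\alpha\cap\varphi_\beta$ via fiber products, and the ideal equality $(e_{\alpha\beta^{-1}})=(e_{\beta\alpha^{-1}})$ in $k$ --- but the underlying argument is the same.
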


\begin{proof}
We have 
$$
\GG_\alpha \cap \GG_\beta = (\GG_\alpha)^{\wedge}_{x_\beta} = \Spf(k[[z_\alpha]]^{\wedge}_{x_\beta})
$$
and we have
\begin{align*}
\GG_\alpha \bigg\vert_{X^{\wedge}_{e_{\alpha\beta^{-1}}}} & = \Spf(k[[z_\alpha]]^{\wedge}_{e_{\alpha\beta^{-1}}}), \\
\GG_\beta \bigg\vert_{X^{\wedge}_{e_{\alpha\beta^{-1}}}} & = \Spf(k[[z_\beta]]^{\wedge}_{e_{\alpha\beta^{-1}}}).
\end{align*}

Thus it suffices to show that
$$ k[[z_\alpha]]^{\wedge}_{x_\beta} = k[[z_\alpha]]^{\wedge}_{e_{\alpha\beta^{-1}}} = k[[z_\beta]]^{\wedge}_{e_{\alpha\beta^{-1}}}. $$
This follows from (\ref{eq:balphabeta}).
\end{proof}

\begin{prop}\label{prop:xalphaloc}
On $\GG_\alpha \cap \GG_\beta = \GG_\beta\bigg\vert_{X^{\wedge}_{e_{\alpha\beta^{-1}}}}$, we have 
$$ x_\alpha = e_{\alpha\beta^{-1}} +_{\GG_1} x_\beta. $$
\end{prop}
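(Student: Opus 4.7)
The plan is to exploit the local multiplication formula
$$m: \GG_\gamma \widehat{\times} \GG_\beta \to \GG_\alpha,$$
where $\gamma := \alpha\beta^{-1}$, which the excerpt gives in local coordinates as $z_\alpha \mapsto z_\gamma +_{\GG_1} z_\beta$, by restricting it along the identity section $\varphi_1$ in the first factor, after base change to $X^{\wedge}_{e_\gamma}$.

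The key observation is that after this base change, the identity section $\varphi_1: X \to \GG$ factors through $\GG_\gamma$. Indeed, the pullback of the defining ideal $(x_\gamma)$ of $\varphi_\gamma$ under $\varphi_1$ is the ideal $(x_\gamma|_{\varphi_1}) = (e_\gamma)$, which is topologically nilpotent in $\mc{O}_{X^{\wedge}_{e_\gamma}} = k^{\wedge}_{e_\gamma}$, so $\varphi_1$ extends to a section $X^{\wedge}_{e_\gamma} \to \GG_\gamma|_{X^{\wedge}_{e_\gamma}}$ whose $z_\gamma$-coordinate equals $e_\gamma$. Combining this with the identification $\GG_\alpha \cap \GG_\beta = \GG_\beta|_{X^{\wedge}_{e_\gamma}}$ from Lemma~\ref{lem:intersection}, one obtains a closed immersion
$$\varphi_1 \widehat{\times}\, \mr{id}: \GG_\alpha \cap \GG_\beta \hookrightarrow \GG_\gamma|_{X^{\wedge}_{e_\gamma}} \widehat{\times} \GG_\beta,$$
whose composite with $m$ is simply the natural inclusion $\GG_\alpha \cap \GG_\beta \hookrightarrow \GG_\alpha$, since $\varphi_1 \cdot Q = Q$.

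Pulling $z_\alpha$ back through this composite in two ways then yields, on one hand, $x_\alpha|_{\GG_\alpha \cap \GG_\beta}$ (via the inclusion into $\GG_\alpha$), and on the other hand (via $m$, followed by the substitution $z_\gamma \mapsto e_\gamma$ coming from the section $\varphi_1$) the expression $e_\gamma +_{\GG_1} z_\beta = e_{\alpha\beta^{-1}} +_{\GG_1} x_\beta|_{\GG_\alpha \cap \GG_\beta}$. Equating these two expressions proves the proposition.

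The only real subtlety is justifying the extension of $\varphi_1$ to a section of $\GG_\gamma$ over $X^{\wedge}_{e_\gamma}$; once that is granted, the remainder of the argument is a direct and routine manipulation of the multiplication formula already recorded in the excerpt.
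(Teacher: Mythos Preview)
Your proof is correct and rests on the same underlying idea as the paper's: both identify $x_\alpha$ on the overlap by expressing translation (equivalently, multiplication by the identity section) in terms of the formal group law of $\GG_1$. The packaging differs slightly. The paper first treats the special case $\beta = 1$ by noting that $t_{\alpha^{-1}}$ restricts to a self-map of $\GG_1 \cap \GG_\alpha$ given by addition of the point $\varphi_{\alpha^{-1}}$ (whose $x$-coordinate is $e_\alpha$), yielding $x_\alpha = e_\alpha +_{\GG_1} x$; it then deduces the general case by applying $t^*_{\beta^{-1}}$ to the relation $x_{\alpha\beta^{-1}} = e_{\alpha\beta^{-1}} +_{\GG_1} x$. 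You instead invoke directly the local multiplication formula $z_\alpha \mapsto z_\gamma +_{\GG_1} z_\beta$ (already recorded just before Lemma~\ref{lem:intersection}) and restrict along the identity section in the first factor, handling arbitrary $\beta$ in one stroke. Your route is marginally more streamlined since it avoids the two-step reduction; the paper's route is perhaps more self-contained in that it unpacks the translation explicitly rather than citing the multiplication formula. Either way, the ``only real subtlety'' you flag --- that $\varphi_1$ extends to a section of $\GG_\gamma$ over $X^{\wedge}_{e_\gamma}$ with $z_\gamma$-coordinate $e_\gamma$ --- is exactly the content of the paper's observation that $\varphi_{\alpha^{-1}} \hookrightarrow \GG$ sends $x \mapsto e_\alpha$.
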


\begin{proof}
We first assume $\beta = 1$.  The inclusion
$$ \Spec(k) = \varphi_{\alpha^{-1}} \hookrightarrow \GG $$
on representing rings sends 
$$ x \mapsto e_{\alpha}. $$
The map
$$ t_{\alpha^{-1}}: \GG_{\alpha} \to \GG_1 $$
restricts over $X^{\wedge}_{e_\alpha}$ to a map
$$ t_{\alpha^{-1}}: \GG_1 \cap \GG_\alpha \to \GG_{1} \cap \GG_\alpha. $$
On $\GG_1 \cap \GG_\alpha$, the group law of $\GG$ restricts to the restriction of the formal group law of $\GG_1$, and the result for $\beta = 1$ follows.  The general result follows by writing
$$ x_{\alpha\beta^{-1}} = e_{\alpha\beta^{-1}} +_{\GG_1} x $$
and then precomposing with $t^*_{\beta^{-1}}$.
\end{proof}

Setting $\beta = 1$ in the above proposition immediately gives:

\begin{cor}\label{cor:bsum}
On $\GG_\alpha \cap \GG_1 = \GG\bigg\vert_{X^{\wedge}_{e_\alpha}}$ we have
$$ b^\alpha(z_1) = e_\alpha +_{\GG_1} z_1. $$
\end{cor}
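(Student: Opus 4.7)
The plan is to specialize Proposition~\ref{prop:xalphaloc} to the case $\beta = 1$ and then re-express the resulting identity in terms of the local coordinate $z_1$ and the power series $b^\alpha(z)$.

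First, I would apply Proposition~\ref{prop:xalphaloc} with $\beta = 1$. Since $\alpha\beta^{-1} = \alpha$ in that case, the proposition yields the identity
$$ x_\alpha = e_\alpha +_{\GG_1} x_1 $$
on the intersection $\GG_\alpha \cap \GG_1 = \GG_1\big\vert_{X^{\wedge}_{e_\alpha}}$, where by Lemma~\ref{lem:intersection} this intersection may equally well be described as $\GG\big\vert_{X^{\wedge}_{e_\alpha}}$ (the restriction to the vanishing locus completion of $e_\alpha$). The formal sum $+_{\GG_1}$ makes sense because on this intersection the $\GG$-multiplication restricts to the formal group law on $\GG_1$, as noted in the proof of Proposition~\ref{prop:xalphaloc}.

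Next, I would translate this identity into an identity of power series in $z_1$. By the definition (\ref{eq:localcoord}) of the local coordinate, $x_1\big\vert_{\GG_1} = z_1$, so the right-hand side restricts to $e_\alpha +_{\GG_1} z_1$. By the defining expansion (\ref{eq:balpha}) and the notational convention (\ref{eq:balpha(z)}), the left-hand side restricts to $x_\alpha\big\vert_{\GG_1} = b^\alpha(z_1)$. Restricting both sides of the preceding identity to $\GG_\alpha \cap \GG_1$ therefore gives
$$ b^\alpha(z_1) = e_\alpha +_{\GG_1} z_1, $$
as desired.

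There is no real obstacle here: the content of the corollary is entirely packaged in Proposition~\ref{prop:xalphaloc}, and the only thing to check is that the restriction of $x_\alpha$ to $\GG_1$ is the power series $b^\alpha(z_1)$, which is how $b^\alpha$ was defined in the first place. The value of the corollary lies in recording an explicit formula for the coefficients $b^\alpha_i$ in terms of the Euler class $e_\alpha$ and the formal group law of $\GG_1$, which will presumably be used downstream.
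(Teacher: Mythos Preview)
Your proof is correct and follows exactly the paper's approach: the paper simply notes that setting $\beta = 1$ in Proposition~\ref{prop:xalphaloc} immediately gives the result, and your argument spells out precisely this specialization together with the identifications $x_1|_{\GG_1} = z_1$ and $x_\alpha|_{\GG_1} = b^\alpha(z_1)$.
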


\begin{cor}\label{cor:eb(z)}
On $X^{\wedge}_{e_\alpha}$, we have 
$$ b^{\alpha\beta}(z_1) = b^\beta(e_\alpha +_{\GG_1} z_1). $$
In particular, we have
$$ e_{\alpha\beta} = b^{\beta}(e_\alpha). $$
\end{cor}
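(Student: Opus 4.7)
The plan is to derive the main formula $b^{\alpha\beta}(z_1) = b^\beta(e_\alpha +_{\GG_1} z_1)$ by a short manipulation combining (\ref{eq:balphabeta}) with Corollary~\ref{cor:bsum}.  The ``in particular'' statement will then be immediate by substituting $z_1 = 0$ and recalling from (\ref{eq:balpha(z)}) that $b^{\alpha\beta}(0) = e_{\alpha\beta}$.

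First I would specialize (\ref{eq:balphabeta}) under the substitution $\alpha \mapsto \alpha\beta$, $\beta \mapsto \alpha$, which yields
$$ x_{\alpha\beta}\bigg\vert_{\GG_\alpha} = b^\beta(z_\alpha) $$
as an equality of functions on $\GG_\alpha$.  Next I would further restrict both sides to the intersection $\GG_\alpha \cap \GG_1$, which by Lemma~\ref{lem:intersection} is $\GG_1\bigg\vert_{X^{\wedge}_{e_\alpha}}$.  On this common formal neighborhood the local coordinate $z_\alpha = x_\alpha\vert_{\GG_\alpha}$ and the power series $b^\alpha(z_1) = x_\alpha\vert_{\GG_1}$ both represent the single function $x_\alpha$, hence agree as elements of the coordinate ring.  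Substituting, and using $x_{\alpha\beta}\vert_{\GG_1} = b^{\alpha\beta}(z_1)$, one obtains
$$ b^{\alpha\beta}(z_1) = b^\beta\bigl(b^\alpha(z_1)\bigr) $$
on $X^{\wedge}_{e_\alpha}$.  A final application of Corollary~\ref{cor:bsum}, which asserts $b^\alpha(z_1) = e_\alpha +_{\GG_1} z_1$ on this neighborhood, completes the derivation of both formulas.

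I do not anticipate a substantial obstacle; the argument is essentially a symbolic manipulation of identities already in hand.  The only delicate point is bookkeeping --- correctly identifying the intersection $\GG_\alpha \cap \GG_1$ with $\GG_1\vert_{X^{\wedge}_{e_\alpha}}$ via Lemma~\ref{lem:intersection}, and tracking which local coordinate ($z_1$, $z_\alpha$, or $z_{\alpha\beta}$) each power series $b^\gamma$ takes as its argument at each stage of the calculation.
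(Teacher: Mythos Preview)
Your proposal is correct and follows essentially the same route as the paper's proof: both start from the instance $x_{\alpha\beta}\vert_{\GG_\alpha} = b^\beta(z_\alpha)$ of (\ref{eq:balphabeta}), use Lemma~\ref{lem:intersection} to identify $\GG_\alpha$ with $\GG_1$ over $X^{\wedge}_{e_\alpha}$ so that $z_\alpha$ agrees with $b^\alpha(z_1)$, and then apply Corollary~\ref{cor:bsum}. The only cosmetic difference is that you pause to record the intermediate identity $b^{\alpha\beta}(z_1) = b^\beta(b^\alpha(z_1))$ before invoking Corollary~\ref{cor:bsum}, whereas the paper folds that step into a single chain of equalities.
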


\begin{proof}
Equation~(\ref{eq:balphabeta}) implies that 
\begin{equation}\label{eq:eb(z)}
 x_{\alpha\beta}\bigg\vert_{\GG_\alpha} = b^\beta(z_\alpha).
 \end{equation}
Lemma~\ref{lem:intersection} implies that over $X^{\wedge}_{e_\alpha}$ we have $\GG_\alpha = \GG_1$, therefore over $X^{\wedge}_{e_\alpha}$, we have 
$$ z_\alpha = x_\alpha|_{\GG_{\alpha}} = x_{\alpha}|_{\GG_1} = b^\alpha(z_1) = e_\alpha+_{\GG_1} z_1, $$
where the last equality is by Corollary~\ref{cor:bsum}. 
Therefore, over $X^{\wedge}_{e_\alpha}$, we may restrict (\ref{eq:eb(z)}) to $\GG_1$ to get the desired result.
\end{proof}

\begin{cor}
On $X^{\wedge}_{(e_\alpha, e_\beta)}$ we have 
$$ e_{\alpha\beta} = e_\alpha+_{\GG_1} e_\beta $$
\end{cor}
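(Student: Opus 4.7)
The plan is to combine the two preceding corollaries. By Corollary~\ref{cor:eb(z)}, on $X^{\wedge}_{e_\alpha}$ we already have the identity
\[ e_{\alpha\beta} = b^{\beta}(e_\alpha), \]
so the task reduces to evaluating the power series $b^{\beta}(z)$ at $z = e_\alpha$ and identifying the result with $e_\alpha +_{\GG_1} e_\beta$ after further completing along $e_\beta$.

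To do this, I would apply Corollary~\ref{cor:bsum} with $\alpha$ and $\beta$ swapped: on $\GG_\beta \cap \GG_1 = \GG\bigl\vert_{X^{\wedge}_{e_\beta}}$ we have an equality of power series in $z_1$
\[ b^{\beta}(z_1) = e_\beta +_{\GG_1} z_1. \]
Both sides are power series in the single variable $z_1$ whose coefficients live in the ring of functions on $X^{\wedge}_{e_\beta}$. The variable $z_1$ is a coordinate on $\GG_1$; passing to $X^{\wedge}_{(e_\alpha,e_\beta)}$ the class $e_\alpha \in k$ becomes topologically nilpotent, so it is a legitimate substitution into any power series of the form $\sum c_i z_1^i$ convergent in the $(e_\alpha)$-adic topology. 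Substituting $z_1 = e_\alpha$ therefore yields
\[ b^{\beta}(e_\alpha) = e_\beta +_{\GG_1} e_\alpha \]
as elements of the ring of functions on $X^{\wedge}_{(e_\alpha,e_\beta)}$. Combining this with the identity from Corollary~\ref{cor:eb(z)} and using commutativity of the formal group law $\GG_1$ (which follows from commutativity of the group scheme $\GG$) produces the desired equality.

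The only subtlety to justify carefully is the legitimacy of the substitution $z_1 = e_\alpha$: one must check that the power series $b^\beta(z_1) = e_\beta + b_1^\beta z_1 + b_2^\beta z_1^2 + \cdots$ and $e_\beta +_{\GG_1} z_1$ both converge under this substitution on $X^{\wedge}_{(e_\alpha,e_\beta)}$, which is immediate from the definition of the double completion. Beyond this, the argument is a direct chain of three substitutions, so I expect no further obstacle.
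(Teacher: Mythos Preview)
Your proof is correct and follows essentially the same underlying idea as the paper's. The paper applies Proposition~\ref{prop:xalphaloc} twice directly---writing $x_{\alpha\beta} = e_\alpha +_{\GG_1} x_\beta = e_\alpha +_{\GG_1} e_\beta +_{\GG_1} x$ on the triple intersection $\GG_{\alpha\beta}\cap\GG_\beta\cap\GG_1$ and then evaluating at $x=0$---whereas you package the same two applications via the already-proven Corollaries~\ref{cor:bsum} and~\ref{cor:eb(z)}; the arguments are equivalent and your version is arguably slightly tidier for reusing the intermediate results.
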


\begin{proof}
The Euler classes $e_\gamma$ only depend on the restrictions of $x_\gamma$ to $\GG_1$.  By Lemma~\ref{lem:intersection} we have
$$ \GG_1\bigg|_{X^{\wedge}_{(e_\alpha, e_\beta)}} = \GG_{\alpha\beta} \cap \GG_\beta \cap \GG_1. $$
By Proposition~\ref{prop:xalphaloc}, on $\GG_1 \bigg|_{X^{\wedge}_{(e_\alpha, e_\beta)}}$, we have 
\begin{align*}
x_{\alpha\beta} & = e_{\alpha} +_{\GG_1} x_\beta \\
& = e_{\alpha} +_{\GG_1} e_{\beta} +_{\GG_1} x.
\end{align*}
The result follows from evaluating this expression at $x = 0$.
\end{proof}

\begin{cor}
Suppose $\alpha \in A^{\vee}$ has order $n$.  Then on $X^{\wedge}_{e_\alpha}$, we have
$$ [n]_{\GG_1}(e_\alpha) = 0. $$
\end{cor}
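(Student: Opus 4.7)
The plan is to induct on $k$ to prove that on $X^{\wedge}_{e_\alpha}$ one has $e_{\alpha^k} = [k]_{\GG_1}(e_\alpha)$, and then specialize to $k = n$ after noting that $e_1 = 0$. The fact $e_1 = 0$ is immediate from the definitions: $e_1 = x_1\bigl\vert_{\varphi_1}$, but $x_1 = x$ generates $I_{\varphi_1}$, so it restricts to zero on $\varphi_1$.

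The base case $k = 1$ is trivial. For the inductive step, assume that on $X^{\wedge}_{e_\alpha}$ we have $e_{\alpha^{k-1}} = [k-1]_{\GG_1}(e_\alpha)$. Since the $(k-1)$-series $[k-1]_{\GG_1}(z) \in \mc{O}(\GG_1)$ is a power series in $z$ with zero constant term, this expression lies in the ideal $(e_\alpha)$ after completion. Consequently the ideals $(e_\alpha, e_{\alpha^{k-1}})$ and $(e_\alpha)$ have the same adic topology, so the canonical map $X^{\wedge}_{(e_\alpha, e_{\alpha^{k-1}})} \to X^{\wedge}_{e_\alpha}$ is an isomorphism. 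We may therefore apply the previous corollary with $\beta = \alpha^{k-1}$ to obtain
$$ e_{\alpha^k} \;=\; e_\alpha +_{\GG_1} e_{\alpha^{k-1}} \;=\; e_\alpha +_{\GG_1} [k-1]_{\GG_1}(e_\alpha) \;=\; [k]_{\GG_1}(e_\alpha) $$
on $X^{\wedge}_{e_\alpha}$, as desired.

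Finally, taking $k = n$ gives $[n]_{\GG_1}(e_\alpha) = e_{\alpha^n} = e_1 = 0$ on $X^{\wedge}_{e_\alpha}$, which is the claim. The only subtlety in this argument — and thus the main point to verify carefully — is the matching of completions in the inductive step: one needs that completing further at $e_{\alpha^{k-1}}$ does nothing beyond what completing at $e_\alpha$ already accomplishes, and this is exactly what the vanishing of the constant term of $[k-1]_{\GG_1}$ provides.
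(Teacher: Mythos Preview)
Your proof is correct and is essentially the argument the paper has in mind: the corollary is stated without proof immediately after the identity $e_{\alpha\beta} = e_\alpha +_{\GG_1} e_\beta$ on $X^{\wedge}_{(e_\alpha,e_\beta)}$, and the intended deduction is precisely the iteration you carry out. Your careful handling of the completion issue (showing that $X^{\wedge}_{(e_\alpha,e_{\alpha^{k-1}})} \cong X^{\wedge}_{e_\alpha}$ via the inductive hypothesis and the vanishing constant term of $[k-1]_{\GG_1}$) makes explicit the one point the paper leaves to the reader.
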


\subsection*{Moduli of equivariant formal groups}

Let $\mc{M}^A_\mit{fg}$ denote the moduli stack of $A$-equivariant formal groups, and $\mc{M}^A_\mit{fgl}$ the moduli stack of $A$-equivariant formal group laws.  We have
$$ \mc{M}^A_\mit{fgl} = \Spec(L_A) $$
for $L_A$ the equivariant Lazard ring.  There is a third variant, $\mc{M}^{A,1}_\mit{fg}$, which is the moduli stack of $(\GG, \phi, v)$ where $v$ is a nowhere vanishing section of the relative tangent bundle $T_{\varphi_1}\GG$.  Note that two $A$-equivariant formal group laws determine isomorphic points of $\mc{M}^{A,1}_\mit{fg}$ if and only if they are strictly isomorphic.  The multiplicative group $\GG_m$ acts on $\mc{M}^{A,1}_\mit{fg}$ by acting on the vector field $v$, giving the functions on $\mc{M}^{A,1}_\mit{fg}$ the structure of a grading.

Over $\mc{M}^A_\mit{fgl}[e_\alpha^{-1}\: : \: \alpha \in A^\vee - 1]$ we have $\varphi_\alpha \cap \varphi_1 = \emptyset$ for all $\alpha \ne 1$, and therefore
$$ \GG \cong A^\vee \times \GG_1 = \coprod_{\alpha \in A^\vee} \GG_\alpha. $$
The $A$-equivariant formal group is therefore determined by the formal group $\GG_1$.  A coordinate consists of the choice of a coordinate on $\GG_1$ (which is the local coordinate $z_1$, which determines local coordinates $z_\alpha$ on $\GG_\alpha$ by translation) plus an extension $x$ of $z_1$ to all of $\GG$ subject only to the constraint that $x$ is non-vanishing on $\GG_\alpha$ for $\alpha \ne 1$.  This extension is specified by specifying, for $\alpha \ne 1$ 
$$ x\bigg\vert_{\GG_\alpha} = e_\alpha + \sum_{i \ge 1} b_i^\alpha z_\alpha $$
with $e_\alpha$ invertible.  Thus we have
$$ L_A[e_\alpha^{-1}\: : \: \alpha \ne 1] \cong \MU_*[e_\alpha^{\pm}, b_i^\alpha\: : \: \alpha \ne 1, i \ge 1]. $$

Over $(\mc{M}^A_\mit{fgl})^{\wedge}_{(e_\alpha\: : \: \alpha \in A^\vee)}$ we have
$$ \GG_1 \xrightarrow{\cong} \GG. $$
The homomorphism
$$ \varphi: A^\vee \to \GG_1 $$
is then determined by the Euler classes $e_{\alpha}$, and we have
$$ (L_A)^{\wedge}_{(e_\alpha\: : \: \alpha\in A^\vee)} \cong \frac{\MU_*[[e_{\alpha} \: : \: \alpha \in A^\vee]]}{(e_{\alpha}+_{\GG_1} e_\beta = e_{\alpha\beta})}. $$

More generally, for $B \le A$:
\begin{enumerate}[label=(\theequation)]
\eitem $\mc{M}^{A}_\mit{fgl}[e_\alpha^{-1}\: : \: \alpha\big\vert_B \ne 1] = \{(\GG,\varphi) \: : \: \ker \varphi \subseteq (A/B)^\vee\}$
is the locus where
$$ \GG \cong A^\vee \times_{(A/B)^\vee} \GG' $$
for the $A/B$-equivariant formal group $\GG' = \GG^{\wedge}_{\varphi((A/B)^\vee)}$.  Choosing a set theoretic section
$$ s: B^\vee \to A^\vee $$
of the surjection $A^\vee \twoheadrightarrow B^\vee$, 
there is an isomorphism \cite{tomDieck}, \cite[Cor. 10.4]{Greenlees}
$$ L_A[e_\alpha^{-1}\: : \: \alpha\big\vert_B \ne 1] \cong L_{A/B}[e_{s(\beta)}^\pm, b_i^{s(\beta)} \: : \: 1 \ne \beta \in B^\vee, \: i \ge 1]. $$
\label{eq:MFGAloc}

\eitem $\mc{M}^A_{\mit{fgl}}\bigg\vert_{(e_\alpha = 0\: : \: \alpha|_B = 1)} = \{(\GG,\varphi) \: : \: (A/B)^\vee \subseteq \ker \varphi\}$
is the locus where $\phi\big\vert_{(A/B)^\vee} = 1$ ($\phi$ factors through $B^\vee$)
and there is an isomorphism
$$ L_A/(e_\alpha \: : \: \alpha\vert_B = 1) \cong L_B. $$

\eitem $(\mc{M}^A_{\mit{fgl}})^{\wedge}_{(e_\alpha\: : \: \alpha|_B = 1)}$ is the locus where $\GG_\alpha = \GG_1$ for all $\alpha\big|_B = 1$.
\label{eq:MFGAcompl}
\end{enumerate}

\section{$\MU_A$ and the equivariant Lazard ring}\label{sec:MUA}

\subsection*{Hausmann's Theorem}

Given a complex orientable homotopy commutative ring object $E \in \Sp^A$, $\GG_E = \Spf(E^*\CC P^\infty_A)$ has the structure of an equivariant formal group law coming from the tensor product of $A$-equivariant line bundles.  A choice of complex orientation of $E$ is the same thing as a choice of coordinate on $\GG_E$.

The following fundamental theorem was conjectured by \cite{CGK} and is due to Hausmann \cite{Hausmann} (see also \cite{KrizLu} for another approach, and \cite{Hanke}, who first proved this in the case of $A = C_2$).

\begin{thm}[Hausmann \cite{Hausmann}]\label{thm:Quillen}
The canonical complex orientation on $\GG_{MU_A}$ induces an isomorphism
$$ L_A \xrightarrow{\cong} \pi^A_*\MU_A. $$
Moreover, the stackification of the Hopf algebroid $(\pi^A_{2*}\MU_A, \pi^A_{2*}\MU_A \wedge \MU_A)$ is $\mc{M}^{A,1}_{FG}$, and under this correspondence the graded structure of the former coincides with the $\GG_m$-action on the latter.
\end{thm}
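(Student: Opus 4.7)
The plan is to construct the comparison map $\phi\colon L_A \to \pi^A_*\MU_A$ via the universal property of $L_A$, then prove it is an isomorphism by induction on $|A|$ using the Greenlees--May isotropy separation square (Theorem~\ref{thm:GreenleesMay}), which splits the problem into a geometric localization and an Euler-class completion that can each be matched against the algebraic decomposition of $L_A$ from (\ref{eq:MFGAloc})--(\ref{eq:MFGAcompl}). The canonical complex orientation of $\MU_A$ makes $\GG_{\MU_A} = \Spf(\MU_A^*\CC P^\infty_A)$ into an $A$-equivariant formal group law over $\pi^A_*\MU_A$, which by the universal property of $L_A$ produces $\phi$. The base case $A = e$ is Quillen's classical theorem. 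For the inductive step, fix $A$ finite abelian and assume the theorem for all proper quotients $A/B$.

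Next, apply Theorem~\ref{thm:GreenleesMay} with $\mc{F} = \mc{F}_{A\nsubseteq}$ (whose complement is $\{A\}$) and compare term by term with the algebraic pullback square for $L_A$. On the localization corner, taking $A$-fixed points yields $\Phi^A\MU_A$, whose homotopy is computed from $\pi_*\MU$ by extending the universal orientation along the expansions (\ref{eq:balpha}) to yield $\MU_*[e_\alpha^\pm, b^\alpha_i]$ with $\alpha\in A^\vee\setminus\{1\}$ and $i \ge 1$, matching $L_A[e_\alpha^{-1}: \alpha \ne 1]$ via (\ref{eq:MFGAloc}). On the completion corner, Corollary~\ref{cor:formula} identifies $\MU_A{}^\wedge_{\mc{F}_{A\nsubseteq}}$ with $\MU_A{}^\wedge_{(a_\alpha: \alpha\ne 1)}$, and the completion theorem for $\MU$-theory on finite abelian groups gives
\[
\pi^A_*\MU_A{}^\wedge_{(a_\alpha)} \;\cong\; \MU_*[[e_\alpha]]/(e_\alpha +_{\GG_1} e_\beta = e_{\alpha\beta}),
\]
matching $(L_A)^\wedge_{(e_\alpha)}$ by (\ref{eq:MFGAcompl}). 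The mixed corner is controlled by the inductive hypothesis applied to $A/B$ for proper $B < A$, via the description of its $B$-geometric fixed points. Since both squares are pullbacks and $\phi$ is an isomorphism on three corners, it is an isomorphism on the fourth.

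For the Hopf algebroid statement, $\MU_A \wedge \MU_A$ carries two canonical complex orientations, hence two equivariant formal group laws on its common formal group together with a canonical strict isomorphism between them. This exhibits $(\pi^A_{2*}\MU_A, \pi^A_{2*}\MU_A\wedge\MU_A)$ as the Hopf algebroid parameterizing $A$-equivariant formal group laws with strict isomorphisms, whose stackification is precisely $\mc{M}^{A,1}_{\mit{fg}}$ (which parameterizes triples $(\GG,\varphi,v)$). The topological grading matches the $\GG_m$-action on $v$ because $\GG_m$-rescaling of the tangent vector $v$ corresponds to rescaling of the coordinate on $\GG_1$, which is exactly the internal $\ZZ$-grading coming from topological degree shifts of the orientation class.

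The main obstacle will be the completion corner: identifying $\pi^A_*\MU_A{}^\wedge_{(a_\alpha)}$ with the explicit power series ring displayed above requires the full strength of the Greenlees--May completion theorem (or equivalently a careful analysis of $\MU^*(BA)$ via L\"offler's splitting) together with a verification that the algebraic Euler-class ideal corresponds to the topological family ideal. A secondary concern is the sharpness of the fracture square on coefficient rings, which reduces to checking that the Euler classes act as non--zero-divisors in the relevant localized and completed pieces so that the pullback of rings really does recover $L_A$ and $\pi^A_*\MU_A$ on the nose.
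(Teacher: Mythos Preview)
The paper does not actually prove this theorem; it is stated with attribution to Hausmann \cite{Hausmann} and not argued further. So there is no ``paper's own proof'' to compare against. That said, the paper does remark that an essential component of Hausmann's approach is the use of the \emph{global} structure on equivariant complex cobordism, which is quite different from the isotropy-separation induction you outline. Your strategy is closer in spirit to the Abram--Kriz computation of $\pi^A_*\MU_A$ (Theorem~\ref{thm:AbramKriz}), which works by a fracture diagram but does not directly establish the universal property.

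There is a genuine gap in your completion corner. You take $\mc{F} = \mc{F}_{A\nsubseteq}$, the family of all \emph{proper} subgroups, and then identify $(\MU_A)^\wedge_{\mc{F}}$ with the Borel completion $(\MU_A)^h$, invoking Proposition~\ref{prop:Borel}. But these coincide only when $A$ is cyclic of prime order: in general $\mc{F}_{A\nsubseteq}$ strictly contains $\{e\}$, and completion at a larger family is a weaker operation. Concretely, Corollary~\ref{cor:formula} and Proposition~\ref{prop:eulerloc} give $(\MU_A)^\wedge_{\{e\}} \simeq (\MU_A)^\wedge_{(a_\alpha\,:\,\alpha\ne 1)}$, but $(\MU_A)^\wedge_{\mc{F}_{A\nsubseteq}}$ is instead governed by the iterated pullback of Proposition~\ref{prop:pullbacks} over the maximal subgroups, and its $A$-homotopy is not the power-series ring you write down. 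Your inductive step therefore does not close for $|A| > p$. You would need either to compute $\pi^A_*(\MU_A)^\wedge_{\mc{F}_{A\nsubseteq}}$ by a further induction over the monoisotropic layers (which is essentially the full Abram--Kriz argument, and already presupposes delicate exactness on $\pi^A_*$), or to switch to a different fracture scheme. Either way, this is where Hausmann's global argument---which sidesteps the fracture analysis entirely via regularity of the global Lazard ring---gains real traction.
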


Theorem~\ref{thm:Quillen} gives a fundamental connection between $\MFG{A}$ and equivariant homotopy theory, generalizing the connection between $\MFG{}$ and stable homotopy theory established by Quillen \cite{Quillen}.

An essential component of Hausmann's approach is his use of the structure of equivariant complex cobordism a \emph{global spectrum} \cite{Schwede}. In particular, for each $B \le A$
there is an associated equivalence
$$ i_B^* \MU_A \simeq \MU_B $$ 
which gives an isomorphism
\begin{equation}\label{eq:piMUA}
 \pi^{B}_*\MU_A \cong L_B.
\end{equation}
The restriction maps associated to the inclusions $i_B: B \hookrightarrow A$, under the isomorphisms (\ref{eq:piMUA}), are given by the maps
$$ i_B^*: L_A \to L_B $$
which represent the maps
\begin{align*}
 \MFG{B} & \to \MFG{A} \\
 \left( B^\vee \xrightarrow{\varphi} \GG \right) & \mapsto \left(A^\vee \twoheadrightarrow B^\vee \xrightarrow{\varphi} \GG \right). 
\end{align*} 
This global structure also associates to 
the quotient map $q_B: A \to A/B$ 
a map
$$ q_B^*\MU_{A/B} \to \MU_A. $$
The induced map
$$ q_B^*: L_{A/B} \cong \pi^{A/B}_* \MU_{A/B} \to \pi^A_* \MU_A \cong L_{A} $$
represents the map
\begin{align*}
 \MFG{A} & \to \MFG{A/B} \\
 \left( A^\vee \xrightarrow{\varphi} \GG \right) & \mapsto \left((A/B)^\vee \xrightarrow{\varphi\big|_{(A/B)^\vee}} \GG^{\wedge}_{\varphi((A/B)^\vee)} \right). 
\end{align*} 

\subsection*{Elements of $\pmb{\pi^A_\star\MU_A}$}

Because the spectrum $MU_A$ is complex orientable, for every character $\alpha$ there is an invertible orientation class
$$ u_{\alpha} \in \pi^A_{2-\alpha} \MU_A. $$
These orientation classes are determined by the canonical complex orientation of $\MU_A$.  The canonical complex orientation corresponds to a coordinate $x$ of $\GG_{MU_A}$, which gives rise to Euler classes
$$ e_\alpha \in \pi^A_{-2}\MU_A $$
and elements
$$ b_i^\alpha \in \pi^A_{2i-2} \MU_A $$
for each $\alpha \in A^\vee$.
The orientation classes $u_\alpha$ are related to the Euler classes $e_\alpha$ by the formula
$$ a_\alpha = u_\alpha e_{\alpha}. $$
The following observations are well known (see, for example, \cite{Greenlees}).

\begin{lem}
There are equivalences
$$ \MU_A[e^{-1}_\alpha] \simeq \MU_A[\mc{F}_\alpha^{-1}] $$
and 
$$ (\MU_A)^{\wedge}_{e_\alpha} \simeq (\MU_A)^{\wedge}_{\mc{F}_\alpha}. $$
\end{lem}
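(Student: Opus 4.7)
The plan is to combine Corollary~\ref{cor:formula}, which identifies $\mc{F}_\alpha$-completion and $\mc{F}^{-1}_\alpha$-localization with $a_\alpha$-completion and $a_\alpha$-inversion respectively, with the basic relation
$$ a_\alpha = u_\alpha\, e_\alpha \in \pi^A_{-\alpha}\MU_A, $$
where $u_\alpha \in \pi^A_{2-\alpha}\MU_A$ is an invertible orientation class. The key observation is that $a_\alpha$ and $e_\alpha$ generate the same principal ideal in $\pi^A_\star\MU_A$, so any construction that depends only on this ideal cannot distinguish them.

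For the localization, Corollary~\ref{cor:formula} gives $\MU_A[\mc{F}_\alpha^{-1}] \simeq \MU_A[a_\alpha^{-1}]$. Modeling this as the telescope of iterated multiplication by $a_\alpha$, and using that $a_\alpha^n = u_\alpha^n e_\alpha^n$ with $u_\alpha^n$ a unit, one sees that the telescope along $a_\alpha$ is equivalent to the telescope along $e_\alpha$; hence $\MU_A[a_\alpha^{-1}] \simeq \MU_A[e_\alpha^{-1}]$. (Equivalently, since $\mc{F}_\alpha^{-1}$-localization is Bousfield localization with respect to $S[a_\alpha^{-1}]$, and inverting $a_\alpha$ on an $\MU_A$-module is the same as inverting $e_\alpha$, the two Bousfield localizations agree on $\MU_A$.)

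For the completion, Corollary~\ref{cor:formula} gives $(\MU_A)^\wedge_{\mc{F}_\alpha} \simeq (\MU_A)^\wedge_{a_\alpha}$, and we model the right-hand side as the homotopy limit $\lim_n \MU_A/a_\alpha^n$ of cofibers of multiplication by $a_\alpha^n$. Because $a_\alpha^n$ and $e_\alpha^n$ differ by the unit $u_\alpha^n$, the cofiber sequences
$$ \MU_A \xrightarrow{a_\alpha^n} \Sigma^{-n\alpha}\MU_A \to \MU_A/a_\alpha^n $$
and
$$ \MU_A \xrightarrow{e_\alpha^n} \Sigma^{2n-n\alpha}\MU_A \to \MU_A/e_\alpha^n $$
are equivalent compatibly in $n$ (the unit $u_\alpha^n$ gives the comparison), so the inverse limits coincide. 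Equivalently, viewing both completions as Bousfield localizations at $S/a_\alpha$ and $S/e_\alpha$ respectively and observing that $\MU_A/a_\alpha \simeq \MU_A/e_\alpha$ as $\MU_A$-modules, the two localized categories agree on $\MU_A$.

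There is no serious obstacle: the proof is entirely driven by the unit relation $a_\alpha \doteq e_\alpha$ in $\pi^A_\star\MU_A$, and the only thing to check is that the relevant localization/completion constructions depend only on the principal ideal generated by the element, which is immediate from their definitions.
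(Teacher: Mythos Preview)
Your proof is correct. The paper does not give its own proof of this lemma: it simply records the statement as ``well known'' with a reference to Greenlees. Your argument---applying Corollary~\ref{cor:formula} to $E = \MU_A$ to pass from $\mc{F}_\alpha$ to $a_\alpha$, and then using the unit relation $a_\alpha = u_\alpha e_\alpha$ in $\pi^A_\star\MU_A$ to pass from $a_\alpha$ to $e_\alpha$---is exactly the intended one, and there is nothing more to say.
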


\begin{cor}
For $\alpha \in A^\vee$ we have
$$ \pi^A_*\MU_A[\mc{F}^{-1}_\alpha] = L_A[e^{-1}_\alpha]. $$
\end{cor}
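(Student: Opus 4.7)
The plan is to deduce the corollary directly from the preceding lemma together with Hausmann's equivariant Quillen theorem (Theorem~\ref{thm:Quillen}). By the lemma we have an equivalence of $A$-spectra $\MU_A[\mc{F}_\alpha^{-1}] \simeq \MU_A[e_\alpha^{-1}]$, so it suffices to identify the $A$-equivariant homotopy of $\MU_A[e_\alpha^{-1}]$ with $L_A[e_\alpha^{-1}]$.

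First I would model $\MU_A[e_\alpha^{-1}]$ as the sequential homotopy colimit of the telescope
$$ \MU_A \xrightarrow{e_\alpha} \Sigma^{2}\MU_A \xrightarrow{e_\alpha} \Sigma^{4}\MU_A \xrightarrow{e_\alpha} \cdots $$
where $e_\alpha \in \pi^A_{-2}\MU_A$ is viewed as a degree $-2$ self-map. Since the functor $\pi^A_*$ commutes with filtered (in particular, sequential) homotopy colimits in $\Sp^A$, we obtain
$$ \pi^A_*\MU_A[e_\alpha^{-1}] \;\cong\; \colim_n \pi^A_{*+2n}\MU_A \;=\; (\pi^A_*\MU_A)[e_\alpha^{-1}]. $$

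Next I would invoke Theorem~\ref{thm:Quillen}, which identifies $\pi^A_*\MU_A$ with $L_A$ in such a way that the topologically-defined class $e_\alpha \in \pi^A_{-2}\MU_A$ (coming from the canonical complex orientation) matches the algebraic Euler class $e_\alpha \in L_A$ of the universal $A$-equivariant formal group law. Inverting $e_\alpha$ on both sides of this isomorphism yields the desired identification $(\pi^A_*\MU_A)[e_\alpha^{-1}] \cong L_A[e_\alpha^{-1}]$, completing the argument.

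There is essentially no substantive obstacle here: the corollary is a purely formal consequence of the preceding lemma, the commutation of $\pi^A_*$ with filtered colimits in $\Sp^A$, and Hausmann's theorem. The only point that deserves a line of justification is that the Euler class living in $\pi^A_{-2}\MU_A$ corresponds under Hausmann's isomorphism to the Euler class in $L_A$, which is immediate from the way the isomorphism is built out of the canonical complex orientation and its associated coordinate on $\GG_{\MU_A}$.
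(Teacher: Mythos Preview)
Your proposal is correct and follows exactly the approach the paper has in mind: the corollary is stated without proof precisely because it is the immediate combination of the preceding lemma with Hausmann's theorem (Theorem~\ref{thm:Quillen}) and the fact that homotopy groups commute with the telescope construction. One tiny cosmetic point: in your telescope $\MU_A \to \Sigma^2\MU_A \to \Sigma^4\MU_A \to \cdots$, applying $\pi^A_*$ to the $n$th term gives $\pi^A_{*-2n}\MU_A$ rather than $\pi^A_{*+2n}\MU_A$, but this indexing slip does not affect the argument.
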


\begin{cor}
For $B \le A$, letting $s: B^\vee \to A^\vee$ be a set theoretic section of $A^\vee \twoheadrightarrow B^\vee$, we have
\begin{align*}
\pi^{A/B}_* \Phi^B
 \MU_A
& \cong L_A[e^{-1}_\alpha \: : \: \alpha \big\vert_B \ne 1] \\
& \cong L_{A/B}[e^{\pm}_{s(\beta)}, b_i^{s(\beta)} \: : \: 1 \ne \beta \in B^\vee, \: i \ge 1].
\end{align*}
\end{cor}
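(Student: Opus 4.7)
The plan is to chain together three identifications. First, I unwind the definition of geometric fixed points: by construction $\Phi^B \MU_A = (\MU_A[\mc{F}_{B \nsubseteq}^{-1}])^B$ as an $A/B$-spectrum. Then the adjunction $q_B^* \dashv (-)^B$, together with the fact that $q_B^* S^n = S^n$ with trivial $A$-action, gives for each integer $n$
$$\pi^{A/B}_n \Phi^B \MU_A \;\cong\; [S^n, \MU_A[\mc{F}_{B \nsubseteq}^{-1}]]^A \;=\; \pi^A_n\bigl(\MU_A[\mc{F}_{B \nsubseteq}^{-1}]\bigr).$$

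Second, I apply Proposition~\ref{prop:eulerloc} to rewrite $\MU_A[\mc{F}_{B \nsubseteq}^{-1}] \simeq \MU_A[a_\alpha^{-1} : \alpha|_B \ne 1]$. Since $u_\alpha \in \pi^A_{2-\alpha}\MU_A$ is invertible and $a_\alpha = u_\alpha e_\alpha$, inverting $a_\alpha$ is the same as inverting the integer-graded Euler class $e_\alpha$. Because inverting a homogeneous element is computed by a filtered colimit, $\pi^A_*$ commutes with this localization, and Hausmann's Theorem then yields
$$\pi^A_*\bigl(\MU_A[e_\alpha^{-1} : \alpha|_B \ne 1]\bigr) \;\cong\; L_A[e_\alpha^{-1} : \alpha|_B \ne 1].$$
This is the first displayed isomorphism.

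For the second isomorphism I simply invoke (\ref{eq:MFGAloc}), which identifies $L_A[e_\alpha^{-1} : \alpha|_B \ne 1]$ with the ring classifying $A$-equivariant formal group laws $(\GG,\varphi)$ such that $\ker\varphi \subseteq (A/B)^\vee$. Such a datum factors uniquely through the quotient $A^\vee \twoheadrightarrow B^\vee$ on the formal-group side, decomposing $\GG$ as a disjoint union of formal-group translates $\GG_{s(\beta)}$ of the $A/B$-equivariant formal group $\GG' = \GG^\wedge_{\varphi((A/B)^\vee)}$ along the chosen section. The remaining freedom in a coordinate on $\GG$ is exactly a choice, for each nontrivial $\beta \in B^\vee$, of an invertible Euler class $e_{s(\beta)}$ and power-series coefficients $b_i^{s(\beta)}$ ($i \ge 1$) governing the restrictions $x|_{\GG_{s(\beta)}}$, as recorded in (\ref{eq:balpha}).

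The argument is essentially bookkeeping; the only mildly delicate point is the independence of the final presentation from the chosen set-theoretic section $s$, which is handled by the moduli-theoretic discussion in Section~\ref{sec:FGL} (translation along $\varphi$ freely permutes the components $\GG_\alpha$ over the locus where all $e_\alpha$ with $\alpha|_B \ne 1$ are inverted). With that in hand, the two isomorphisms above combine to give the corollary.
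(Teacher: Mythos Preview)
Your argument is correct and matches the paper's approach: the corollary is stated there without proof, as it follows immediately from the preceding lemma and corollary (identifying $\MU_A[\mc{F}_\alpha^{-1}]$ with $\MU_A[e_\alpha^{-1}]$ and computing its $\pi^A_*$) together with Proposition~\ref{prop:eulerloc} and the algebraic identification (\ref{eq:MFGAloc}). Your write-up simply makes these steps explicit.
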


Since the restriction along $A \to A/B$ makes $\Phi^B \MU_A$ an $\MU_{A/B}$-module, and $\pi_*^{A/B} \Phi^B \MU_A$ is free over $\pi_* \MU_{A/B}$, we have the following.

\begin{cor}
The spectrum $\Phi^B \MU_{A} \in \Sp^{A/B}$ has the homotopy type of a wedge of even suspensions of $\MU_{A/B}$. 
\end{cor}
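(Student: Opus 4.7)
The plan is to construct an $\MU_{A/B}$-module map from a wedge of even integer suspensions of $\MU_{A/B}$ to $\Phi^B\MU_A$ that realizes a free $L_{A/B}$-basis of $\pi_*^{A/B}\Phi^B\MU_A$, then verify it is a weak equivalence by checking homotopy groups at every subgroup of $A/B$.

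Fix a set-theoretic section $s : B^\vee \to A^\vee$ of $A^\vee \twoheadrightarrow B^\vee$. By the preceding corollary, $\pi_*^{A/B}\Phi^B\MU_A$ is free over $L_{A/B}$ on the set of monomials
$$\{m_\lambda\} = \Bigl\{ \prod_{1\ne\beta} e_{s(\beta)}^{n_\beta} \prod_{i,\beta}(b_i^{s(\beta)})^{k_{i,\beta}} \,:\, n_\beta\in\ZZ,\, k_{i,\beta}\ge 0 \Bigr\},$$
each of some even integer degree $2d_\lambda$. Lifting each $m_\lambda$ to a map $\tilde m_\lambda : S^{2d_\lambda} \to \Phi^B\MU_A$ in $\Sp^{A/B}$ and extending via the canonical $\MU_{A/B}$-module structure on $\Phi^B\MU_A$ (induced by the global-spectrum ring map $q_B^*\MU_{A/B} \to \MU_A$ passed through $B$-geometric fixed points), I would assemble an $\MU_{A/B}$-module map
$$f : \bigvee_\lambda \Sigma^{2d_\lambda}\MU_{A/B} \longrightarrow \Phi^B\MU_A$$
which is an isomorphism on $\pi_*^{A/B}$ by construction.

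To finish, I would verify that $i_C^* f$ is a $\pi_*$-isomorphism for every $C \le A/B$. Setting $C' := q_B^{-1}(C)$, so $C'/B \cong C$, the global-spectrum structure of $\MU$ gives $i_C^*\MU_{A/B} \simeq \MU_C$ and $i_C^*\Phi^B\MU_A \simeq \Phi^B\MU_{C'}$; moreover the composite $s' := (A^\vee \twoheadrightarrow (C')^\vee)\circ s$ is a section of $(C')^\vee \twoheadrightarrow B^\vee$, so the preceding corollary applied to the inclusion $B \le C'$ with $s'$ identifies $\pi_*^C\Phi^B\MU_A$ as the free $L_C$-module on the monomials in $e_{s'(\beta)}^{\pm 1}$ and $b_i^{s'(\beta)}$. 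The restriction maps of the global spectrum $\MU_\bullet$ send $e_{s(\beta)} \mapsto e_{s'(\beta)}$ and $b_i^{s(\beta)} \mapsto b_i^{s'(\beta)}$, so each $i_C^*\tilde m_\lambda$ represents the corresponding basis element at level $C$. Hence $i_C^* f$ carries a free basis to a free basis and is a $\pi_*$-isomorphism, so $f$ is an equivalence of $A/B$-spectra.

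The main obstacle is precisely this last compatibility step: verifying that the single lift $f$ constructed using the section $s$ at the top level $A/B$ simultaneously realizes a free basis at every intermediate level $C$. This is governed by the global-spectrum structure of $\MU_\bullet$, under which Euler classes $e_\alpha$ and power-series coefficients $b_i^\alpha$ transport correctly via restriction of characters from $A$ to $C'$. Once this compatibility is in hand, the desired splitting $\Phi^B\MU_A \simeq \bigvee_\lambda \Sigma^{2d_\lambda}\MU_{A/B}$ follows immediately.
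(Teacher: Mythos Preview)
Your argument is correct and follows the same underlying idea as the paper, which simply records that $\Phi^B\MU_A$ is an $\MU_{A/B}$-module whose $\pi_*^{A/B}$ is free over $L_{A/B}$ and deduces the splitting from that. You have supplied the step the paper leaves implicit: in the equivariant setting a map inducing an isomorphism on $\pi_*^{A/B}$ is not automatically an equivalence, so one must check it at every $C\le A/B$. Your use of the global structure to identify $i_C^*\Phi^B\MU_A \simeq \Phi^B\MU_{C'}$ with $C'=q_B^{-1}(C)$, together with the observation that the restricted section $s' = (A^\vee\twoheadrightarrow (C')^\vee)\circ s$ is again a section and that Euler classes and the $b_i^\alpha$ restrict compatibly, is exactly the right way to make this rigorous; the same monomial set then gives a free $L_C$-basis at each level and $i_C^*f$ is a $\pi_*$-isomorphism.
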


\begin{cor}\label{cor:MUAPhiB}
We have
$$ \pi_* (\MU_A)^{\Phi B} \cong \MU_*[e_{s(\beta)}^{\pm}, b_i^{s(\beta)} \: : \: 1 \ne \beta \in B^\vee, \: i \ge 1], $$
and $(\MU_A)^{\Phi B} \in \Sp$ has the homotopy type of a wedge of even suspensions of $\MU$.
\end{cor}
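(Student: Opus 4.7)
The plan is to deduce this corollary directly from the preceding corollary, which asserts that $\Phi^B \MU_A \in \Sp^{A/B}$ is equivalent to a wedge of even suspensions of $\MU_{A/B}$. Specifically, since by definition
$$ (\MU_A)^{\Phi B} = (\Phi^B \MU_A)^e \in \Sp, $$
I would apply the underlying (non-equivariant) spectrum functor $(-)^e : \Sp^{A/B} \to \Sp$ to this wedge decomposition. This functor commutes with arbitrary wedges (it is a left adjoint) and with suspensions by integers, and it satisfies $(\MU_{A/B})^e \simeq \MU$ (this is the global structure isomorphism $i^*_e \MU_{A/B} \simeq \MU$ from the preamble to equation~(\ref{eq:piMUA}), with $B$ replaced by the trivial subgroup of $A/B$). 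Hence $(\MU_A)^{\Phi B}$ acquires the structure of a wedge of even suspensions of $\MU$, proving the second claim.

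To pin down the homotopy groups, I would reread the proof sketch of the preceding corollary: the wedge decomposition there is indexed by a $\pi_*\MU_{A/B} = L_{A/B}$-basis of $\pi^{A/B}_* \Phi^B \MU_A$, and by the earlier corollary this $L_{A/B}$-module is free on the monomials in the generators $e_{s(\beta)}^{\pm 1}$ and $b_i^{s(\beta)}$ ($1 \ne \beta \in B^\vee$, $i \ge 1$). Applying $(-)^e$ to this wedge thus yields a wedge of even suspensions of $\MU$ indexed by the same set of monomials, and taking homotopy groups gives a free $\MU_*$-module on these monomials, which is exactly the displayed ring
$$ \MU_*[e_{s(\beta)}^{\pm}, b_i^{s(\beta)} \: : \: 1 \ne \beta \in B^\vee, \: i \ge 1]. $$

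The only mild subtlety is to justify that the multiplicative (ring) structure on $\pi_*(\MU_A)^{\Phi B}$ matches the polynomial ring structure asserted on the right-hand side. This is handled by observing that $(-)^{\Phi B} : \Sp^A \to \Sp$ is a symmetric monoidal functor, so the ring structure on $(\MU_A)^{\Phi B}$ is inherited from the one on $\MU_A$, and the classes $e_{s(\beta)}, b_i^{s(\beta)} \in \pi^A_* \MU_A$ map to the corresponding classes in $\pi_*(\MU_A)^{\Phi B}$ by naturality. Combined with the previous paragraph's identification of the additive structure, this yields the desired ring isomorphism. I do not anticipate a genuine obstacle here: the entire argument is a clean application of the preceding corollary together with the symmetric monoidality of the underlying/geometric-fixed-points functors.
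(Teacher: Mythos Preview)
Your proposal is correct and matches the paper's intended reasoning: the paper states this as an immediate corollary of the preceding one without giving any proof, and your argument---applying the underlying-spectrum functor $(-)^e$ to the wedge decomposition $\Phi^B\MU_A \simeq \bigvee \Sigma^{2k}\MU_{A/B}$ and using $(\MU_{A/B})^e \simeq \MU$---is exactly the deduction that makes this a corollary. Your discussion of the multiplicative structure via symmetric monoidality of $(-)^{\Phi B}$ is more detail than the paper supplies, but it is correct and fills in what the paper leaves implicit.
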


Finally, we recall the following well-known computation \cite{Landweber}.

\begin{prop}\label{prop:Borel}
The Borel completion (completion with respect to the family $\mc{F} = \{e\}$) of $\MU_A$ has homotopy groups
$$ \pi^A_*(\MU_A)^h = \MU^{-*}(BA) = \frac{\MU_*[[e_\alpha \: : \: \alpha \in A^\vee]]}{(e_\alpha +_{\GG_1} e_\beta = e_{\alpha\beta})} \cong (L_A)^\wedge_{(e_\alpha\: : \: \alpha \in A^\vee)}. $$
\end{prop}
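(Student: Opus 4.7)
The plan is to verify the string of equalities by identifying all three rings with the nonequivariant $\MU$-cohomology of $BA$.

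First, I would establish that $\pi^A_*(\MU_A)^h = \MU^{-*}(BA)$. By definition $(\MU_A)^h = F(EA_+, \MU_A)$, so by adjunction $\pi^A_*(\MU_A)^h$ is the Borel $A$-equivariant cohomology $\MU_A^{-*}(EA_+)$. Because $EA$ is a free $A$-CW complex and the underlying nonequivariant spectrum satisfies $i_e^*\MU_A \simeq \MU$, standard change-of-fixed-points identifies this with $\MU^{-*}(EA_+/A) = \MU^{-*}(BA_+)$. Under this isomorphism, for each $\alpha \in A^\vee$, the Euler class $e_\alpha \in \pi^A_{-2}\MU_A$ is sent to the first $\MU$-Chern class of the complex line bundle $L_\alpha$ over $BA$ associated to $\alpha$.

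Second, I would invoke Landweber's classical computation of $\MU^*(BA)$ \cite{Landweber}. Decomposing $A \cong \prod_i C_{n_i}$ reduces this to computing $\MU^*(BC_n)$ via the fibration $BC_n \to BS^1 \xrightarrow{n} BS^1$, which yields $\MU^*(BC_n) \cong \MU^*[[x]]/[n]_{\GG_1}(x)$ since $\MU^*(BS^1) = \MU^*[[x]]$. A Künneth argument (each $\MU^*(BC_{n_i})$ is pro-free over $\MU^*$) then gives the product formula. The tensor product of line bundles $L_\alpha \otimes L_\beta \cong L_{\alpha\beta}$ translates under the formal group law structure on $\GG_{\MU} = \GG_1$ to the relation $e_\alpha +_{\GG_1} e_\beta = e_{\alpha\beta}$, yielding
\[
\MU^{-*}(BA) \cong \frac{\MU_*[[e_\alpha \: : \: \alpha \in A^\vee]]}{(e_\alpha +_{\GG_1} e_\beta = e_{\alpha\beta})}.
\]

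Third, the identification with $(L_A)^\wedge_{(e_\alpha \: : \: \alpha \in A^\vee)}$ was essentially recorded in point (\ref{eq:MFGAcompl}) of the preceding section: the completion of $\mc{M}^A_\mit{fgl}$ along the ideal of Euler classes parameterizes equivariant formal group laws with $\GG_\alpha = \GG_1$ for all $\alpha$, whose moduli is generated by the $e_\alpha$ subject to exactly the relations $e_\alpha +_{\GG_1} e_\beta = e_{\alpha\beta}$. Under Hausmann's Theorem~\ref{thm:Quillen}, $L_A \cong \pi_*^A \MU_A$, and completing the restriction map $\pi_*^A \MU_A \to \pi_*^A(\MU_A)^h$ at $(e_\alpha)$ intertwines the algebraic and topological Euler classes.

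The main obstacle is really just notational: verifying that the algebraic Euler classes of the universal equivariant formal group law on $L_A$ match the topological Chern classes of line bundles $L_\alpha \to BA$ under Hausmann's theorem, and that the $e_\alpha$-adic completion of $L_A$ corresponds on the topological side to the Borel completion $(-)^h = (-)^\wedge_{\{e\}}$. This last compatibility follows from Proposition~\ref{prop:eulerloc} applied to $B = e$: the Borel completion is completion with respect to all Euler classes $a_\alpha$ (equivalently, in the complex orientable context, with respect to all $e_\alpha$), matching the algebraic completion ideal on the nose.
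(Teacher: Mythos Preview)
The paper does not actually prove this proposition: it is stated as a ``well-known computation'' with a citation to Landweber \cite{Landweber}, and the final isomorphism with $(L_A)^\wedge_{(e_\alpha)}$ was already recorded in Section~\ref{sec:FGL} in the discussion of moduli of equivariant formal groups. Your outline is correct and is precisely the standard argument underlying that citation, so there is nothing to compare on the level of strategy.

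One small comment: your reference to \ref{eq:MFGAcompl} points to the general $B \le A$ statement; the specific isomorphism you want (the case $B = A$) is the displayed formula for $(L_A)^{\wedge}_{(e_\alpha\: : \: \alpha \in A^\vee)}$ appearing just before the enumerated list in that subsection. Otherwise your proposal is sound.
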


\subsection*{The Abram-Kriz Theorem}

Abram and Kriz \cite{AbramKriz} observe 
that Corollary~\ref{cor:MUAPhiB} and Proposition~\ref{prop:Borel} generalizes to give a computation of the monoisotropic layers $\pi^A_*(\MU_A)_{T(B)}$.

\begin{prop}[Abram-Kriz]\label{prop:AbramKriz}
For $B \le A$, given a set-theoretic section $s$ of $A^\vee \twoheadrightarrow B^\vee$, there is an isomorphism
\begin{align*}
 \pi^{A}_*((\MU_A)_{T(B)}) & \cong \frac{\MU_*[e_{s(\beta)}^{\pm}, b_i^{s(\beta)} \: : \: 1 \ne \beta \in B^\vee, i \ge 1][[e_\alpha]]_{\alpha \in (A/B)^\vee}}{(e_\alpha +_{\GG_1} e_{\alpha'} = e_{\alpha\alpha'}\: : \: \alpha, \alpha' \in (A/B)^\vee)} \\
 & \cong L_A[e^{-1}_\beta \: : \: \beta\big\vert_B \ne 1]^{\wedge}_{(e_\alpha\: : \: \alpha\vert_B = 1)}.
 \end{align*}
\end{prop}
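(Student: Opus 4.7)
The plan is to compute $(\MU_A)_{T(B)}$ by unpacking the monoisotropic localization via Corollary~\ref{cor:T(B)equiv}, and then to compute its homotopy by combining the freeness of $\Phi^B \MU_A$ over $\MU_{A/B}$ with the Borel completion formula of Proposition~\ref{prop:Borel}. Combining Corollary~\ref{cor:T(B)equiv} with Proposition~\ref{prop:eulerloc} (and using that $a_\gamma = u_\gamma e_\gamma$ with $u_\gamma$ invertible on $\MU_A$), we obtain
$$ (\MU_A)_{T(B)} \simeq (\MU_A)[\mc{F}^{-1}_{B \nsubseteq}]^{\wedge}_{\mc{F}_{\subseteq B}} \simeq (\MU_A)[e_\beta^{-1} : \beta|_B \ne 1]^{\wedge}_{(e_\alpha : \alpha|_B = 1)}. $$

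I would first address the localization. By the corollaries preceding Corollary~\ref{cor:MUAPhiB}, combined with identification~(\ref{eq:MFGAloc}), we have
$$ \pi^{A/B}_* \Phi^B \MU_A \cong L_A[e^{-1}_\beta : \beta|_B \ne 1] \cong L_{A/B}[e^\pm_{s(\beta)}, b_i^{s(\beta)} : 1 \ne \beta \in B^\vee, i \ge 1], $$
and $\Phi^B \MU_A$ has the homotopy type of a wedge of even suspensions of $\MU_{A/B}$ as an $A/B$-spectrum. Under the equivalence $\Sp^A_{T(B)} \simeq \Sp^{B(A/B)}$ of~(\ref{eq:TBlocal}), the residual $\mc{F}_{\subseteq B}$-completion becomes Borel completion on the $A/B$-side, giving $((\MU_A)_{T(B)})^A \simeq (\Phi^B \MU_A)^{hA/B}$.

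The Borel completion of $\Phi^B \MU_A$ then reduces, by its freeness over $\MU_{A/B}$, to that of $\MU_{A/B}$; Proposition~\ref{prop:Borel} yields
$$ (L_{A/B})^{\wedge}_{(e_\alpha : \alpha \in (A/B)^\vee)} \cong \MU_*[[e_\alpha]]_{\alpha \in (A/B)^\vee}/(e_\alpha +_{\GG_1} e_{\alpha'} = e_{\alpha\alpha'}), $$
and extension of scalars along the generators $e^\pm_{s(\beta)}, b_i^{s(\beta)}$ produces the first displayed expression. The second expression follows by algebraically reinterpreting this completion as $L_A[e^{-1}_\beta : \beta|_B \ne 1]^{\wedge}_{(e_\alpha : \alpha|_B = 1)}$ via~(\ref{eq:MFGAloc}). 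The main obstacle is verifying that the topological $\mc{F}_{\subseteq B}$-completion agrees with the algebraic completion on homotopy; this reduces, via the freeness of $\Phi^B \MU_A$ over $\MU_{A/B}$, to the analogous statement for $\MU_{A/B}$ underlying Proposition~\ref{prop:Borel}, and the concentration of $\pi_* \MU_{A/B}$ in even degrees ensures that the relevant homotopy fixed point spectral sequence collapses without $\lim^1$ obstructions.
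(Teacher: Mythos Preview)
Your proof is correct and follows exactly the approach the paper indicates: the paper does not give a detailed argument here but simply attributes the result to Abram--Kriz and remarks that it is obtained by combining Corollary~\ref{cor:MUAPhiB} with Proposition~\ref{prop:Borel}, which is precisely what you do. Your handling of the one genuine subtlety---that the spectrum-level $\mc{F}_{\subseteq B}$-completion agrees with algebraic completion on homotopy---via the freeness of $\Phi^B\MU_A$ over $\MU_{A/B}$ and the even concentration (hence collapse and vanishing $\varprojlim{}^1$) of the homotopy fixed point spectral sequence is the right justification, and in fact makes explicit a point the paper leaves implicit.
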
 

\begin{rmk}
Geometrically, the ring in Proposition~\ref{prop:AbramKriz} classifies the universal example of an equivariant formal group
$$ \varphi: A^\vee \to \GG $$
over a ring $k$ complete with respect to an ideal $I$ where
\begin{enumerate}
\item $\ker \varphi  \subseteq (A/B)^\vee$,
\item $\ker \varphi\bigg\vert_{\Spec(k/I)} = (A/B)^\vee$.
\end{enumerate}
\end{rmk}

\begin{rmk}
The algebro-geometric interpretation of Proposition~\ref{prop:AbramKriz} above allows one to calculate the Euler classes of all of the characters in $A^\vee$, and explains the dependence of the isomorphism on the choice of a set theoretic section
$$ s: B^\vee \to A^\vee $$
of the surjection $A^\vee \twoheadrightarrow B^\vee$. Namely given $\gamma \in A^\vee$, write it as 
$$ \gamma = s(\beta)+\alpha, \quad \beta \in B^\vee, \: \alpha \in (A/B)^\vee. $$
Then by Corollary~\ref{cor:eb(z)}, we have 
$$ e_\gamma = b^{s(\beta)}(e_\alpha). $$
\end{rmk}

Abram and Kriz \cite{AbramKriz} observe that the isotropy fracture cube that arises from iteratively applying Proposition~\ref{prop:isotropyfracture} to the isotropy tower of $\MU_A$ is particularly well behaved, and this results in a very concrete general description of $\pi_*^A\MU_A$ as a limit of an explicit diagram of rings.  This diagram is indexed by the category
$$ \mr{Sd}\Sub(A) $$
which is obtained by applying barycentric subdivision to the Hasse diagram of the poset of subgroups of $A$.  Specifically, there are two types of objects of $\mr{Sd}\Sub(A)$:
\begin{enumerate}
\item objects $(B)$ for $B \le A$,
\item objects $(B_1 < B_2)$ for $B_1 \subsetneq B_2$ for which there does not exist a distinct intermediate subgroup $B_1 \subsetneq C \subsetneq B_2$.
\end{enumerate}
The non-identity morphisms are of the form
$$ (B_i) \to (B_1 < B_2), \quad i \in \{0,1\}. $$

We consider the diagram of graded rings
$$ \pi_*^A\Gamma(\MU_A): \mr{Sd}\Sub(A) \to \mr{GrRings} $$
which on objects is given by  
\begin{align*}
(B) & \mapsto \pi_*^A(\MU_A)_{T(B)}, \\
(B_1 < B_2) & \mapsto \pi_*^A (\MU_A)_{T(B_1), T(B_2)}
\end{align*} 
and on morphisms is given by the corresponding localization maps.

\begin{thm}[Abram-Kriz \cite{AbramKriz}]\label{thm:AbramKriz}
The canonical map
$$ \pi_*^A \MU_A \to \varprojlim_{\rm{Sd}\Sub(A)} \pi_*^A\Gamma(\MU_A) $$
is an isomorphism.
\end{thm}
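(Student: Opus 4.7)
My plan is to lift the statement from homotopy groups to $\Sp^A$ itself, and then verify that passing to $\pi_*^A$ is compatible with the resulting homotopy limit. Specifically, I would first construct a diagram $\Gamma(\MU_A) : \mr{Sd}\Sub(A) \to \Sp^A$ realizing the data in the theorem and prove that $\MU_A \simeq \varprojlim_{\mr{Sd}\Sub(A)} \Gamma(\MU_A)$. The spectrum-level limit is produced by iterating the isotropy fracture square of Proposition~\ref{prop:isotropyfracture}: fix a linear extension of opens
$$ \emptyset = \mc{F}_0^c \subsetneq \mc{F}_1^c \subsetneq \cdots \subsetneq \mc{F}_k^c = \Sub(A), \qquad \mc{F}_i^c \setminus \mc{F}_{i-1}^c = \{B_i\}, $$
and glue in one monoisotropic layer at a time. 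This presents $\MU_A$ as the total homotopy limit of a punctured $k$-cube of iterated monoisotropic localizations of the form $(\MU_A)_{T(B_{i_1}), \ldots, T(B_{i_j})}$.

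The essential content is then to show that, for $\MU_A$ specifically, this punctured cube collapses to its $1$-skeleton, which is precisely $\mr{Sd}\Sub(A)$. Two facts suffice. First, the double layer $(\MU_A)_{T(B_1), T(B_2)}$ is trivial unless $(B_1 < B_2)$ is a covering pair; in that case it is described by the natural extension of Proposition~\ref{prop:AbramKriz}, obtained by inverting the Euler classes $e_\alpha$ needed to pass from $\mc{F}_{B_1 \nsubseteq}$ to $\mc{F}_{B_1 \nsubseteq} \cup \mc{F}_{B_2 \nsubseteq}$ while retaining the completion at $\mc{F}_{\subseteq B_1}$. The vanishing in the non-covering cases follows from Lemma~\ref{lem:nullcompletion} applied to the relevant pairs of families. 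Second, all triple and higher iterated layers either vanish or reduce to a double layer already recorded in the $1$-skeleton, by repeated application of Lemma~\ref{lem:nullcompletion}. Both assertions rely on the explicit ring structure of $\pi_*^A \MU_A = L_A$ and on the concrete description of the monoisotropic layers provided by Proposition~\ref{prop:AbramKriz}.

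With $\MU_A \simeq \varprojlim_{\mr{Sd}\Sub(A)} \Gamma(\MU_A)$ established at the spectrum level, applying $\pi_*^A$ requires the vanishing of the higher derived limits in the Bousfield-Kan spectral sequence. Proposition~\ref{prop:AbramKriz} and its analogue for the double layers from the previous step show that every value $\pi_*^A (\MU_A)_{T(B)}$ and $\pi_*^A (\MU_A)_{T(B_1), T(B_2)}$ is concentrated in even degrees and is polynomial (hence flat) over $\MU_*$. Since $\mr{Sd}\Sub(A)$ has cohomological dimension one, the only possible obstruction is a $\lim^1$-term, and even concentration forces it to vanish. The spectral sequence degenerates, yielding the asserted isomorphism.

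The hardest step is the middle one: the collapse of the iterated fracture cube to $\mr{Sd}\Sub(A)$. General $A$-spectra admit no such collapse, and the simplification here rests on the very particular structure of $L_A$ as a ring whose presentation via Euler classes mirrors the Hasse structure of $\Sub(A)$. Once that collapse is in hand, the remainder of the proof is formal, and one obtains the claimed identification of $\pi_*^A \MU_A$ with the stated limit.
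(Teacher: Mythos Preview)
The paper does not give its own proof of this theorem; it is quoted as a result of Abram--Kriz \cite{AbramKriz}. So there is no ``paper's proof'' to compare against, and I will evaluate your proposal on its own terms.

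Your overall architecture (lift to a spectrum-level homotopy limit via iterated isotropy fracture, then argue that $\pi_*^A$ commutes with the limit by even-degree concentration and the one-dimensionality of $\mr{Sd}\Sub(A)$) is a reasonable shape for an argument. The final step is fine: the nerve of $\mr{Sd}\Sub(A)$ really is one-dimensional, and Proposition~\ref{prop:AbramKriz} together with the analogous computation (\ref{eq:MUATB1TB2}) show all values are even, so the Bousfield--Kan obstructions vanish.

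The genuine gap is in your ``collapse'' step. Two of your key assertions are false:
\begin{itemize}
\item You claim $(\MU_A)_{T(B_1),T(B_2)} \simeq \ast$ whenever $B_1 < B_2$ is not a covering pair. This is not true: equation~(\ref{eq:MUATB1TB2}) computes $\pi_*^A(\MU_A)_{T(B_1),T(B_2)}$ for \emph{any} $B_1 \subsetneq B_2$ and the answer is visibly nonzero. Lemma~\ref{lem:nullcompletion} does not apply here, since the relevant families do not satisfy its containment hypothesis when $B_1 \subsetneq B_2$.
\item You claim the triple and higher iterated layers vanish or reduce to double layers. For a chain $B_1 < B_2 < B_3$ the iterated localization $(\MU_A)_{T(B_1),T(B_2),T(B_3)}$ is again nonzero (one is simply inverting more Euler classes and completing further), so the punctured cube does not truncate.
\end{itemize}
Relatedly, the ``$1$-skeleton of the punctured $k$-cube'' is not $\mr{Sd}\Sub(A)$: the former records \emph{all} two-element subsets $\{B_i,B_j\}$, whereas $\mr{Sd}\Sub(A)$, being the subdivision of the \emph{Hasse diagram}, contains only covering pairs.

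What actually makes the reduction to $\mr{Sd}\Sub(A)$ work is not vanishing but an algebraic cofinality/sheaf argument specific to $L_A$: for a non-covering pair $B_1 < B_2$ with intermediate $C$, the ring $\pi_*^A(\MU_A)_{T(B_1),T(B_2)}$ is recovered as the pushout (in commutative rings) of $\pi_*^A(\MU_A)_{T(B_1),T(C)}$ and $\pi_*^A(\MU_A)_{T(C),T(B_2)}$ over $\pi_*^A(\MU_A)_{T(C)}$, and similarly for longer chains. Equivalently, the assignment $B \mapsto \pi_*^A(\MU_A)_{T(B)}$ satisfies a Zariski-type gluing along the Hasse diagram, so the limit over the full cube agrees with the limit over $\mr{Sd}\Sub(A)$. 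Your proposal would need to replace the incorrect vanishing claims with this kind of argument.
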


To maximize the effectiveness of this theorem, we explain how Abram and Kriz use Proposition~\ref{prop:AbramKriz} to explicitly compute the diagram $\pi_*^A\Gamma(\MU_A)$.
We deduce from Proposition~\ref{prop:AbramKriz} that for $B_1 \subsetneq B_2$, and $s_1$ a section of $A^\vee \twoheadrightarrow B_1^\vee$, we have 
\begin{equation}\label{eq:MUATB1TB2}
 \pi_*^A (\MU_A)_{T(B_1),T(B_2)} \cong \left( \frac{\MU_*[e_{s_1(\beta)}^{\pm}, b_i^{s_1(\beta)}]_{1 \ne \beta \in B_1}[[e_\alpha]]_{\alpha \in (A/B_1)^\vee}}{(e_\alpha +_{\GG_1} e_{\alpha'} = e_{\alpha\alpha'})}[e^{-1}_{\beta'}]_{\beta'|_{B_2} \ne 1}\right)^\wedge_{(e_{\alpha''}\: : \: \alpha''|_{B_2}=1)}.
 \end{equation}
The map
$$ \pi_*^A(\MU_A)_{T(B_1)} \to \pi_*^A(\MU_A)_{T(B_1),T(B_2)} $$
is the obvious map from $\pi_*^A(\MU_A)_{T(B_1)}$ to the completion of its localization.
The map 
$$ \pi_*^A(\MU_A)_{T(B_2)} \to \pi^A_*(\MU_A)_{T(B_1),T(B_2)}, $$
whose explicit source and target are given as
\begin{multline}\label{eq:AbramKrizmap}
  \frac{\MU_*[e_{s_2(\beta)}^{\pm}, b_i^{s_2(\beta)}]_{1 \ne \beta \in B_2^\vee}[[e_\alpha]]_{\alpha \in (A/B_2)^\vee}}{(e_\alpha +_{\GG_1} e_{\alpha'} = e_{\alpha\alpha'})} \rightarrow \\
   \left( \frac{\MU_*[e_{s_1(\beta)}^{\pm}, b_i^{s_1(\beta)}]_{1 \ne \beta \in B_1^\vee}[[e_\alpha]]_{\alpha \in (A/B_1)^\vee}}{(e_\alpha +_{\GG_1} e_{\alpha'} = e_{\alpha\alpha'})}[e^{-1}_{\beta'}]_{\beta'|_{B_2} \ne 1}\right)^\wedge_{(e_{\alpha''}\: : \: \alpha''|_{B_2}=1)},
\end{multline} 
(where $s_i$ are sections of $A^\vee \twoheadrightarrow B_i^\vee$) is more subtle, and relies on the formulas which govern an equivariant formal group law.  For $\beta \in B_2^\vee$, write
$$ s_2(\beta) = s_1(\beta_1) + \alpha_1 $$
where $\alpha_1 \in (A/B_1)^\vee$.  Then, by Corollary~\ref{cor:eb(z)}, the map (\ref{eq:AbramKrizmap}) is given by
\begin{align*}
e_{s_2(\beta)} & \mapsto b^{s_1(\beta_1)}(e_{\alpha_1}) \\
b_i^{s_2(\beta)} & \mapsto \mr{coef}_{z^i}(b^{s_1(\beta_1)}(e_{\alpha_1}+_{\GG_1} z)) \\
e_\alpha & \mapsto e_\alpha, \quad \alpha \in (A/B_2)^\vee.
\end{align*}
Note that in the codomain of (\ref{eq:AbramKrizmap}), $b^{s_1(\beta_1)}(e_{\alpha_1})$ is invertible since $e_{s_1(\beta_1)}$ is invertible, the elements $e_\alpha$ for $\alpha \in (A/B_2)^\vee$ are topologically nilpotent, and the appropriate relation holds, so the map described above indeed exists.

\begin{ex}
In the case of $A = C_p$ with $p$ prime, Theorem~\ref{thm:AbramKriz} was proven by Kriz \cite{Kriz}.  In this case, Theorem~\ref{thm:AbramKriz} is asserting that the map on homotopy groups
$$
\xymatrix{
\pi_*^{C_p} \MU_{C_p} \ar[r] \ar[d] & 
\pi_* \MU^{\Phi C_p}_{C_p} \ar[d] 
\\
\pi_* \MU^{hC_p}_{C_p} \ar[r] & 
\pi_* \MU^{tC_p}_{C_p}
}
$$
is a pullback.
\end{ex}

\subsection*{Equivariant BP}

There is also an $A$-equivariant analog $BP_A$ of $BP$, introduced by May \cite{May} and studied in detail by Wisdom in \cite{Wisdom}.  May shows that $\MU_A$-modules are tensored over $\MU$-modules, and defines 
$$ \BP_A := \MU_A \wedge_\MU \BP. $$
Wisdom (implementing a proposal of Strickland) defines a \emph{$p$-typical equivariant formal group law} to be a an equivariant formal group law whose underlying non-equivariant formal group law is $p$-typical.  He observes that $p$-typicalization realizes to an idempotent of $(\MU_A)_{(p)}$, and the resulting summand is equivalent to $\BP_A$, and we have
$$ (\BP_A)_*X \simeq (\MU_A)_*X \otimes_{\MU_*} \BP_*. $$
In particular, all of the computations of this section carry over to $\BP_A$ by systematically replacing $\MU$ with $\BP$ everywhere.

\section{The Hausmann-Meier classification of invariant prime ideals}\label{sec:HM}

The classical chromatic picture linking the Balmer spectrum of $\Sp_{(p),\omega}$ to the prime ideals of the moduli stack $(\MFG{})_{(p)}$ of formal groups over $\Spec(\ZZ_{(p)})$ is predicated on two important theorems:
\begin{enumerate}
\item The Landweber Filtration Theorem \cite{Landweberfilt}, and
\item The Hopkins-Smith Thick Subcategory Theorem \cite{HS}. 
\end{enumerate}  
The $A$-equivariant analog of (2) is the computation of the Balmer spectrum $\Spc(A)$ by \cite{BalmerSanders}, \cite{sixauthor} discussed in Section~\ref{sec:Balmer}.  Recently, Hausmann and Meier \cite{HM} generalized the Landweber Filtration Theorem to the $A$-equivariant context, by computing the invariant prime ideals of $L_A$.  

\emph{For simplicity, we focus on the localization $L_{A,(p)}$, and for the remainder of the paper we assume $A$ is a $p$-group.}


\subsection*{The height filtration on $\pmb{(\MFG{A})_{(p)}}$}

Much like in the non-equivariant case, the classification of invariant prime ideals relies on the notion of \emph{height}.  For an abelian group-scheme $\GG$ we let $\GG[p]$ denote the sub-group scheme of $p$-torsion points.

\begin{defn}
If $\GG$ is a formal group or equivariant formal group over a field $k$ such that $p | \mr{char}(k)$, then we say $\GG$ has \emph{height} $n$ ($0 \le n \le \infty$) and write 
$$ \mr{ht}(\GG) = n $$
if the order of the group-scheme $\GG[p]$ is given by
$$ |\GG[p]| = p^{n}. $$
If $(\GG,\varphi)$ is an $A$-equivariant formal group over $k$ as above, then we define its $A$-\emph{height} by
$$ \rm{ht}_A(\GG,\varphi) := \rm{ht}(\GG_1) + \mr{rk}_p(A^\vee) - \mr{rk}_p(\ker \varphi). $$
\end{defn}

We make the following remarks (with $\GG$, $k$, as above).
\begin{enumerate}[label = (\theequation)]
\eitem The notion of height is compatible with the notion of height for a $p$-divisible group.

\eitem If $\GG$ is a formal group and $\mr{char}(k) = 0$ then $\mr{ht}(\GG) = 0$.
\eitem If $\GG$ is a formal group then $\mr{ht}(\GG) \ge 1$ if and only if $\mr{char}(k) = p$, and in this case $\mr{ht}(\GG) = \infty$ if and only if $\GG$ is additive.
\eitem If $(\GG, \varphi)$ is an $A$-equivariant formal group, then 
$$ \mr{ht}(\GG) = \mr{ht}(\GG_1) + \mr{rk}_p(B) $$
where $\ker \varphi = (A/B)^\vee \le A^\vee$.  This is analogous to a well-known formula for height of $p$-divisible groups.
\label{item:heightsum}
\eitem If $(\GG,\varphi)$ is an $A$-equivariant formal group, then
$$ \mr{ht}_A(\GG,\varphi) \le \mr{ht}(\GG) $$
and if
$$ \mr{rk}_p(A^\vee) = \mr{rk}_p(\ker \varphi) + \mr{rk}_p(\im \varphi)$$
then we have
$$ \mr{ht}_A(\GG, \varphi) = \mr{ht}(\GG). $$
In particular, this always holds if $A$ is an elementary abelian $p$-group.
\eitem If $B \le A$ and $(A/B)^\vee \subseteq \ker \varphi$, then $\varphi$ factors through $B^\vee$
$$ 
\xymatrix{
A^\vee \ar[rr]^\varphi \ar@{->>}[dr] && \GG \\
& B^\vee \ar@{.>}[ur]_{\varphi'}
}  
$$
and we may regard $(\GG,\varphi')$ as a $B$-equivariant formal group.  We define the $B$-\emph{height} of such $(\GG,\varphi)$ to be the $B$ height of $(\GG,\varphi')$:
$$ \mr{ht}_B(\GG,\varphi) := \mr{ht}_B(\GG,\varphi'). $$ 
If we have
$$ \mr{rk}_p(B^\vee) = \mr{rk}_p(\ker \varphi') + \mr{rk}_p(\im \varphi')$$
then we have
$$ \mr{ht}_B(\GG, \varphi) = \mr{ht}(\GG). $$
In particular, this always holds if $\ker \varphi = (A/B)^\vee$.
\end{enumerate}

We phrase the classification of invariant prime ideals of $L_{A,(p)}$ in terms of irreducible reduced closed substacks of $(\MFG{A})_{(p)}$.  In the non-equivariant case, Landweber \cite{Landweberfilt} proved that the irreducible reduced closed substacks of $(\MFG{})_{(p)}$ are given by
$$ V_n = \{ \GG \: : \: \mr{ht}(\GG) \ge n \}. $$

\begin{thm}[Hausmann-Meier {\cite[Thm.~4.7]{HM}}]\label{thm:HM}
The non-empty irreducible reduced closed substacks of $(\MFG{A})_{(p)}$ are given by 
$$ V_{(B,n)} := \left\{(\GG,\varphi) \: : \: 
\begin{array}{l}
(A/B)^\vee \subseteq \ker \varphi, \\
\mr{ht}_B(\GG) \ge n + \mr{rk}_p(B)
\end{array} 
\right\} $$
for $B \le A$ and $0 \le n \le \infty$.
\end{thm}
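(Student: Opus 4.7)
The plan is to prove both halves of the classification: first, that each $V_{(B,n)}$ is a non-empty irreducible reduced closed substack, and second, that every such substack has this form. I would work primarily through the presentation via the equivariant Lazard ring $L_{A,(p)}$ and the invariant-ideal computations of Section~\ref{sec:FGL}.

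For the first half, the condition $(A/B)^\vee \subseteq \ker\varphi$ is equivalent to the vanishing of all Euler classes $e_\alpha$ with $\alpha|_B = 1$, and the quotient $L_A / (e_\alpha \: : \: \alpha|_B = 1) \cong L_B$ recorded in Section~\ref{sec:FGL} identifies this closed substack with $(\MFG{B})_{(p)}$. On the dense open locus where the restricted $\varphi' : B^\vee \to \GG$ is injective, $\mr{ht}(\GG) = \mr{ht}(\GG_1) + \mr{rk}_p(B)$, so the defining condition $\mr{ht}_B(\GG) \ge n + \mr{rk}_p(B)$ reduces to the non-equivariant $\mr{ht}(\GG_1) \ge n$. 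By the classical Landweber Filtration Theorem, this cuts out an irreducible reduced closed substack, non-empty thanks to Honda formal groups of height $n$ equipped with the prescribed $\varphi$. Taking closures preserves irreducibility and reducedness, yielding $V_{(B,n)}$.

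For the second half, stratify $(\MFG{A})_{(p)}$ by the locally closed strata $S_B := \{(\GG,\varphi) \: : \: \ker \varphi = (A/B)^\vee\}$, each identified with an open substack of $(\MFG{B})_{(p)}$. Given an irreducible reduced closed substack $V$, its generic point lies in a unique $S_B$; applying Landweber to the underlying non-equivariant $\GG_1$ on this open then shows $V \cap S_B = \{\mr{ht}(\GG_1) \ge n\}$ for a unique $0 \le n \le \infty$. It follows that $V = \overline{V \cap S_B}$, which one identifies with $V_{(B,n)}$.

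The main technical obstacle lies in this last identification: verifying that $V_{(B,n)}$ as defined in the statement really equals the closure in $(\MFG{A})_{(p)}$ of $\{\ker \varphi = (A/B)^\vee,\, \mr{ht}(\GG_1) \ge n\}$. The statement allows boundary points with $\ker \varphi \supsetneq (A/B)^\vee$, and the threshold $n + \mr{rk}_p(B)$ must interact correctly with the rank drop across deeper strata: on $S_{B'}$ for $B' \subsetneq B$, the condition $\mr{ht}_B(\GG) \ge n + \mr{rk}_p(B)$ becomes $\mr{ht}(\GG_1) \ge n + \mr{rk}_p(B/B')$. Using Corollary~\ref{cor:eb(z)} together with the relation $e_{\alpha\beta} = e_\alpha +_{\GG_1} e_\beta$, one must show that this extra height requirement is precisely what emerges upon specializing the Euler classes $e_\alpha$ for $\alpha \in (A/B')^\vee \setminus (A/B)^\vee$ to zero from a height-$n$ point of $S_B$. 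This is a deformation analysis in $L_{A,(p)}$ along the vanishing loci of these Euler classes.
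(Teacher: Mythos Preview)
The paper does not give its own proof of this theorem: it is stated with attribution to Hausmann--Meier \cite[Thm.~4.7]{HM} and is not proved here. There is therefore nothing in the present paper to compare your argument against.

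As a standalone sketch, your outline has the right architecture --- reduce to $(\MFG{B})_{(p)}$ via the quotient $L_A/(e_\alpha : \alpha|_B = 1) \cong L_B$, invoke the non-equivariant Landweber classification on the stratum where $\ker\varphi = (A/B)^\vee$, and then identify the closure --- but you correctly identify that the substantive content lies in the closure computation, and you do not actually carry it out. In particular, the assertion that the closure of $\{\ker\varphi = (A/B)^\vee,\ \mr{ht}(\GG_1) \ge n\}$ inside $(\MFG{A})_{(p)}$ is exactly described by the $B$-height inequality $\mr{ht}_B(\GG) \ge n + \mr{rk}_p(B)$ across all deeper strata $S_{B'}$ with $B' \lneq B$ is the heart of the matter, and your proposal ends precisely where this needs to begin. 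A genuine proof would require either an explicit deformation argument (producing, for each point of $S_{B'}$ satisfying the height bound, a one-parameter family limiting to it from $S_B$) or a direct computation of the radical of the relevant invariant ideal in $L_{A,(p)}$; neither is supplied. So while nothing you wrote is wrong, what you have is a plan rather than a proof, and the step you flag as an obstacle is in fact the entire difficulty.
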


Note that it is elementary to verify from the definition that  
$$ V_{(B,n)} \le V_{(C,m)} $$
if and only if $B \le C$ and $n \ge m + \mr{rk}_p(C/B)$.  Thus Theorem~\ref{thm:HM} gives a homeomorphism 
$$ \Spec^{\rm{inv}}(L_{A,(p)}) \approx \Spc_{(p)}(A) $$
between the spectrum of invariant prime ideals of $L_{A,(p)}$ and the $p$-local Balmer spectrum. Henceforth we will implicitly identify these two spectra.

The notion of \emph{$B$-height} is a somewhat contrived notion introduced to describe the topology of the spectrum of invariant prime ideals of $L_{A,(p)}$.  The notion of height, by contrast, is a completely natural intrinsic aspect of an equivariant formal group law, extending the notion of height for $p$-divisible groups.  We end this subsection by noting that height provides a decreasing filtration of $(\MFG{A})_{(p)}$ by reduced closed substacks.  If $A$ is not elementary abelian, these are not irreducible.

\begin{prop}\label{prop:height}
The notion of height defines for $0 \le n < \infty$ reduced closed substacks $V^A_n \subseteq (\MFG{A})_{(p)}$ defined by
$$ V^A_n := \{ (\GG,\varphi) \: : \: \mr{ht}(\GG) \ge n \}. $$ 
\end{prop}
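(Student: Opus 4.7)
The plan is to express $V^A_n$ as a finite union of the irreducible reduced closed substacks $V_{(B,m)}$ classified by the Hausmann-Meier theorem (Theorem~\ref{thm:HM}). Specifically, I would establish the set-theoretic identity
$$V^A_n \;=\; \bigcup_{B \le A}\, V_{(B,\,\max(0,\,n - \mr{rk}_p(B)))}.$$
Once this is proven, closedness follows since the right-hand side is a finite union of the closed substacks produced by Theorem~\ref{thm:HM}, and we give $V^A_n$ the reduced structure of the resulting scheme-theoretic union.

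The key bookkeeping step is the identity $\mr{ht}_B(\GG,\varphi') = \mr{ht}(\GG)$ on the locus $(A/B)^\vee \subseteq \ker\varphi$. To see this, let $B_0 \le A$ be the unique subgroup with $\ker \varphi = (A/B_0)^\vee$, so that item~\ref{item:heightsum} gives $\mr{ht}(\GG) = \mr{ht}(\GG_1) + \mr{rk}_p(B_0)$. For $B \ge B_0$, Pontryagin duality yields $\ker \varphi' = (B/B_0)^\vee$, which has $p$-rank $\mr{rk}_p(B) - \mr{rk}_p(B_0)$. Plugging into the definition of $B$-height gives
$$\mr{ht}_B(\GG,\varphi') = \mr{ht}(\GG_1) + \mr{rk}_p(B) - (\mr{rk}_p(B) - \mr{rk}_p(B_0)) = \mr{ht}(\GG).$$
Consequently, $(\GG,\varphi) \in V_{(B,m)}$ if and only if $B \ge B_0$ and $\mr{ht}(\GG) \ge m + \mr{rk}_p(B)$.

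With this in hand, the inclusion $\supseteq$ in the displayed identity is immediate: any point in $V_{(B,\max(0, n - \mr{rk}_p(B)))}$ has $\mr{ht}(\GG) \ge \max(0, n - \mr{rk}_p(B)) + \mr{rk}_p(B) = \max(n, \mr{rk}_p(B)) \ge n$. For the reverse inclusion, given $(\GG,\varphi)$ with $\mr{ht}(\GG) \ge n$, I would take $B = B_0$ in the union: since $\mr{ht}(\GG) \ge \mr{rk}_p(B_0)$ (again by item~\ref{item:heightsum}) and $\mr{ht}(\GG) \ge n$ by hypothesis, we obtain $\mr{ht}(\GG) \ge \max(n, \mr{rk}_p(B_0)) = \max(0, n - \mr{rk}_p(B_0)) + \mr{rk}_p(B_0)$, placing $(\GG,\varphi)$ in $V_{(B_0,\, \max(0, n - \mr{rk}_p(B_0)))}$.

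The main technical obstacle is keeping straight the two different notions of height, especially the subtle compatibility $\mr{ht}_B = \mr{ht}$ on the relevant locus; once this is secured, everything else is a short combinatorial verification and an appeal to Theorem~\ref{thm:HM}. In particular, irreducibility of the individual $V_{(B,m)}$ is not preserved under union, which is why the proposition only claims $V^A_n$ is a reduced closed substack, not an irreducible one.
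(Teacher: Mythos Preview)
Your overall strategy is sound and close to the paper's, but there is a genuine error in your ``key bookkeeping step.'' You claim that $\mr{rk}_p((B/B_0)^\vee) = \mr{rk}_p(B) - \mr{rk}_p(B_0)$, and hence that $\mr{ht}_B(\GG,\varphi') = \mr{ht}(\GG)$ whenever $(A/B)^\vee \subseteq \ker\varphi$. This is false for non-elementary abelian $p$-groups: take $B = \ZZ/p^2$ and $B_0 = \ZZ/p$, so $\mr{rk}_p(B) = \mr{rk}_p(B_0) = \mr{rk}_p(B/B_0) = 1 \ne 0$. In general one only has the inequality $\mr{ht}_B(\GG,\varphi') \le \mr{ht}(\GG)$ (this is remark~(5.5) applied to $B$), with equality when $\ker\varphi' = e$, i.e.\ when $B = B_0$ (this is remark~(5.6)). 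Consequently your ``if and only if'' characterization of membership in $V_{(B,m)}$ is wrong in the ``if'' direction.

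Fortunately your proof of the displayed union identity survives: for the inclusion $\supseteq$ you only need $\mr{ht}(\GG) \ge \mr{ht}_B(\GG,\varphi')$, and for $\subseteq$ you only invoke $B = B_0$, where the equality $\mr{ht}_{B_0} = \mr{ht}$ does hold. So the argument is easily repaired by weakening the bookkeeping claim accordingly.

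The paper takes an essentially equivalent but more economical route: rather than expressing $V^A_n$ as a union of $V_{(B,m)}$'s, it passes to the homeomorphic Balmer spectrum $\Spc_{(p)}(A)$, identifies $V^A_n$ with the subset $\{(B,m) : m \ge n - \mr{rk}_p(B)\}$ (using \ref{item:heightsum} at the generic point, where $\ker\varphi = (A/B)^\vee$ exactly), and checks directly that this subset is upward closed in the lattice via the inequality $\mr{rk}_p(B) \le \mr{rk}_p(B') + \mr{rk}_p(B/B')$. This avoids having to juggle $\mr{ht}_B$ versus $\mr{ht}$ at non-generic points altogether.
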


\begin{proof}
We simply must show that the associated subset of $V^A_n \subseteq \Spc_{p}(A)$ is closed.  
Recall from \ref{item:heightsum} that for $(\GG, \varphi)$ over a field we have
$$ \mr{ht}(\GG) = \mr{ht}(\GG_1) + \mr{rk}_p(B) $$
where $\ker \varphi = (A/B)^\vee$.  It follows that 
$$ V^A_n = \{ (B,m) \: : \: m \ge n - \mr{rk}_p(B) \} \subseteq \Spc_{(p)}(A). $$
To see that this is closed, we have to show that if $(B, m) \in V_n^A$ and $B' \le B$, then 
$$ (B',m+\mr{rk}_p(B/B')) \in V^A_n. $$
That is, we need to show
$$ m+\mr{rk}_p(B/B') \ge n - \mr{rk}_p(B'). $$ 
This follows from the inequalities
\begin{align*}
m & \ge n-\mr{rk}_p(B) \\
\mr{rk}_p(B) & \le \mr{rk}_p(B') + \mr{rk}_p(B/B').
\end{align*}
\end{proof}


\subsection*{$\pmb{v_{\ul{n}}}$-generators}

In the non-equivariant setting, the invariant prime ideal $I_n \unlhd L_{(p)} = \pi_*\MU_{(p)}$ corresponding to $V_n$ is generated by the regular sequence
\begin{equation}\label{eq:Ingen}
 I_n = (v_0, v_1, \ldots, v_{n-1}).
 \end{equation}
Here, the element $v_i \in \pi_{2(p^i-1)}\MU_{(p)}$ is non-canonical, but is determined up to a unit modulo $(v_0,v_1, \ldots, v_{i-1})$ by the formula
$$ [p]_{\GG_{\MU}}(x) \dotequiv v_i x^{p^i} + \cdots \mod (v_0, \cdots, v_{i-1}) $$ 
where $\GG_{\MU}$ is the universal formal group law.  In particular, $v_0 \doteq p$.

\begin{defn}\label{defn:Lvnelt}
Suppose that $R$ is a $L_{(p)}$-algebra, and let $0 \le n < \infty$.  We shall say that $x \in R$ is a \emph{$v_n$-generator} if
$$ x \dotequiv v_n \mod (v_0, \ldots, v_{n-1}). $$
We say that $x$ is a \emph{$v_{-1}$-generator} if $x = 0$, and we will say that $x$ is a \emph{$v_\infty$-generator} if $x$ is a unit. 
\end{defn}

Let $I_{(B,n)} \unlhd L_{A,(p)}$ be the invariant prime ideal associated to the irreducible closed substack $V_{(B,n)}$.
Hausmann and Meier give constructions of the generators of these ideals in \cite{HM}.

The key starting point is their construction, for $n \ge 0$, of elements
$$ \vv_n \in \pi^{C^{n+1}_p}_{2(p^n-1)}(\MU_{C^{n+1}_{p}})_{(p)} $$
with the property that for $B \lneq C_p^{n+1}$ with $\mr{rk}_p(B) = r$, the $B$-geometric fixed points of $\vv_n$ satisfy
\begin{equation}\label{eq:vnPhiB}
 \vv_n^{\Phi B} \dotequiv v_{n-r} \mod (p,v_1, \ldots, v_{n-r-1}) \in \pi_*(\MU^{\Phi B}_{C_p^{n+1}})_{(p)}
 \end{equation}
and 
$$ \vv_n^{\Phi C_p^{n+1}} = 0 \in \pi_*\MU_{C_p^{n+1}}^{\Phi C_p^{n+1}}. $$

\begin{rmk}
When considering (\ref{eq:vnPhiB}), the reader should keep in mind that in $\pi_*(\MU^{\Phi B}_{C_p^{n+1}})_{(p)}$ with $B \ne e$, there are many units given by Euler classes which are not in degree zero.
\end{rmk}

This key property of the elements $\vv_n$ lead us to make the following definitions, which are directly adapted from \cite{HM}.

\begin{defn}\label{defn:htfunc} 
A \emph{height function on $\Sub(A)$} is a function
$$ \ul{n}: \Sub(A) \to \Nbar_-, $$
where
$$ \Nbar_- := \Nbar \cup \{\infty\} \cup \{-1\}. $$
\end{defn}

\begin{defn}
Let $\ul{n}$ be a height function on $\Sub(A)$.  We will say an element $v \in \pi^A_*\MU_{A,(p)}$ is a \emph{$v_{\ul{n}}$-generator} if for every $B \le A$, the $B$-geometric fixed points of $v$ $$ v^{\Phi B} \in \pi_* \MU^{\Phi B}_{A,(p)} $$
is a $v_{\ul{n}(B)}$-generator.
\end{defn}

\begin{exs}\label{ex:vnelts} $\quad$
\begin{enumerate}
\item
We see that from (\ref{eq:vnPhiB}) that the Hausmann-Meier elements $\vv_n \in (L_{C_p^{n+1}})_{(p)}$ are $v_{h[n]}$-generators where $h[n]$ is the height function
$$ h[n](B) = 
n-\mr{rk}_p(B).
$$

\item 
Letting $A$ be arbitrary, and taking $\ul{n} = c[n]$, the constant height $n$ function ($n \ge 0$) with
$$ c[n](B) = n, \quad B \le A, $$
then any $v_n$-generator in $L_{(p)} \subseteq L_{A,(p)}$ is a $v_{c[n]}$-generator. 

\item 
Given a character $\alpha \in A^\vee$, define a height function $\ul{n}_{\alpha}$ by 
$$
\ul{n}_\alpha(B) = 
\begin{cases}
-1,  & \alpha|_B = 1, \\
\infty, & \alpha|_B \ne 1.
\end{cases}
$$
Then $e_\alpha \in L_{A,(p)}$ is a $v_{\ul{n}_\alpha}$-generator.
\end{enumerate}
\end{exs}

In \cite{HM}, Hausmann-Meier construct a variety of $v_{\ul{n}}$-generators from their elements $\bar{\mbf{v}}_{n}$ as follows.  Suppose
$$ f: A \to A' $$ 
is a group homomorphism, and let
$$ f^*: L_{A'} \to L_{A} $$
denote the induced map of rings.  There is also an induced map
\begin{align*}
 f_*: \Sub(A) & \to \Sub(A') \\
 B & \mapsto f(B)
\end{align*}
which allows us to define, for $\ul{n}$ a height function on $\Sub(A')$ a new height function $f^* \ul{n}$ on $\Sub(A)$ by taking the composite
$$ \Sub(A) \xrightarrow{f_*} \Sub(A') \xrightarrow{\ul{n}} \Nbar_-. $$
Then we have the following.

\begin{prop}
Suppose that $f: A \to A'$ is a homomorphism, and that $v \in L_{A'}$ is a $v_{\ul{n}}$-generator.  Then $f^* v$ is a $v_{f^* \ul{n}}$-generator.
\end{prop}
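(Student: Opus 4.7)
The plan is to reduce the proposition to naturality of geometric fixed points under the change-of-group functor $f^*$. Factor $f$ as $A \xrightarrow{q} A/\ker f \xrightarrow{\iota} A'$. The global structure of $\MU$ yields a ring map $f^* \MU_{A'} \to \MU_A$ in $\Sp^A$ (namely the composite $q^* \iota^* \MU_{A'} \simeq q^* \MU_{A/\ker f} \to \MU_A$), which induces $f^*\colon L_{A'} \to L_A$ on $A$-equivariant homotopy.

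The first step I would carry out is to verify that for any $E \in \Sp^{A'}$ and any $B \le A$, there is a natural equivalence $(f^* E)^{\Phi B} \simeq E^{\Phi f(B)}$ of non-equivariant spectra. For $\iota$ this is the standard compatibility of geometric fixed points with restriction along an inclusion. For $q$ it reduces to the identity $\Phi^{\ker f} q^* F \simeq F$ in $\Sp^{A/\ker f}$ (which handles the case $\ker f \le B$); the case $\ker f \not\le B$ follows after restricting to $B$ first and factoring $q|_B\colon B \to A/\ker f$ as $B \twoheadrightarrow B/(B \cap \ker f) \hookrightarrow A/\ker f$. Applying this to $E = \MU_{A'}$ and postcomposing with $f^* \MU_{A'} \to \MU_A$ produces, for each $B \le A$, a ring homomorphism
$$ \phi_B\colon \pi_* (\MU_{A'})^{\Phi f(B)}_{(p)} \to \pi_* (\MU_A)^{\Phi B}_{(p)} $$
compatible with $f^*\colon L_{A'} \to L_A$ and the geometric fixed-point maps, so that $(f^* v)^{\Phi B} = \phi_B(v^{\Phi f(B)})$.

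The map $\phi_B$ is moreover an $\MU_{(p)*}$-algebra map, because both source and target inherit their $\MU_{(p)*}$-algebra structures from the canonical complex orientations of $\MU_{A'}$ and $\MU_A$. Under any $L_{(p)}$-algebra map, the property of being a $v_k$-generator ($k \in \Nbar_-$) is preserved: trivially when $k = -1$; because units map to units when $k = \infty$; and for $0 \le k < \infty$ because the congruence $x \dotequiv v_k \mod (v_0, \ldots, v_{k-1})$ is stable under ring maps preserving the $v_i$ and mapping units to units. Since by hypothesis $v^{\Phi f(B)}$ is a $v_{\ul{n}(f(B))}$-generator, so too is $(f^* v)^{\Phi B}$; as $\ul{n}(f(B)) = f^*\ul{n}(B)$ by definition and $B$ was arbitrary, $f^* v$ is a $v_{f^*\ul{n}}$-generator. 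I expect the main technical point to be the identification $(f^* \MU_{A'})^{\Phi B} \simeq (\MU_{A'})^{\Phi f(B)}$ compatibly with the ring-spectrum map $f^*\MU_{A'} \to \MU_A$; once this naturality is in hand, the rest of the argument is formal.
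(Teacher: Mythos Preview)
Your proof is correct. The paper does not supply a proof of this proposition at all; it is stated immediately after setting up the maps $f^*\colon L_{A'} \to L_A$ and $f_*\colon \Sub(A) \to \Sub(A')$, and is left to the reader as a consequence of the definitions and the Hausmann--Meier framework. Your argument supplies exactly the details one would want: the compatibility $(f^* v)^{\Phi B} = \phi_B(v^{\Phi f(B)})$ via naturality of geometric fixed points along $f^*$, together with the observation that $\phi_B$ is an $L_{(p)}$-algebra map so that the $v_k$-generator condition (Definition~\ref{defn:Lvnelt}) is preserved. The factoring of $f$ into a quotient followed by an inclusion to handle the geometric-fixed-point naturality is the right move, and your identification of that naturality as the only nontrivial point is accurate.
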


The  $v_{\ul{n}}$-generators are connected to the topology of the Balmer spectrum as follows.  

\begin{defn}\label{defn:admissiblehtfunc}
Let $\ul{n}: \Sub(A) \to \Nbar_-$ be a height function. Define a subset $U_{\ul{n}} \subseteq \Spc_{(p)}(A)$ by 
$$
U_{\ul{n}} := \{ \mc{P}_{(B,m)} \: : \: 0 \le m \le \ul{n}(B) \}.
$$
We will say that $\ul{n}$ is \emph{admissible} if $U_{\ul{n}}$ is open.  Thus $\ul{n}$ is admissible if and only if for $B < C \le A$ we have
$$ \ul{n}(B) \le \ul{n}(C) + \mr{rk}_p(C/B). $$ 
\end{defn}

The topology of the Balmer spectrum puts the following limitation on the existence of $v_{\ul{n}}$-generators.

\begin{prop}[Hausmann-Meier (see proof of {\cite[Thm.~7.5]{HM}})]
Let $v \in L_{A,(p)}$ be a $v_{\underline{n}}$-generator, and let $V(v)$ be the associated closed subset of the Zariski spectrum $\Spec^\mr{inv}(L_{A,(p)})$:
$$ V(v) = \{ \mc{P} \in \Spec^{\mr{inv}}(L_{A,(p)}) \: : \: v \in \mc{P} \} $$
Then under the homeomorphism $\Spc_{(p)}(A) \approx \Spec^{\mr{inv}}(L_{A,(p)})$ we have
$$ V(v) = U_{\ul{n}}^c. $$
In particular, the height function $\ul{n}$ is admissible.   
\end{prop}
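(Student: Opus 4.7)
The plan is to establish, for each pair $(B,m)$ with $B \le A$ and $0 \le m \le \infty$, the equivalence
$$
v \in I_{(B,m)} \iff m > \ul{n}(B),
$$
where $I_{(B,m)} \unlhd L_{A,(p)}$ is the invariant prime corresponding to $\mc{P}_{(B,m)}$ under Theorem~\ref{thm:HM}. Once this is in hand the equality $V(v) = U_{\ul{n}}^c$ is immediate, and admissibility of $\ul{n}$ follows because $U_{\ul{n}} = V(v)^c$ is open.

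First I would translate $v \in I_{(B,m)}$ into a concrete statement about $v^{\Phi B}$. The ring map $\Theta_B : L_A \to \pi_* \MU_{A,(p)}^{\Phi B}$ annihilates $e_\alpha$ for $\alpha|_B = 1$ (such Euler classes come via inflation from $A/B$-characters and restrict to zero in $\pi_{-2}\MU = 0$) and inverts $e_\alpha$ for $\alpha|_B \ne 1$ (Corollary~\ref{cor:MUAPhiB}). Dually, the induced morphism $\Spec(\pi_* \MU_{A,(p)}^{\Phi B}) \to \MFG{A}_{(p)}$ lands in the open stratum $U_B \subseteq V_{(B,0)}$ parameterizing $(\GG,\varphi)$ with $\ker\varphi = (A/B)^\vee$ \emph{exactly}. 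On $U_B$ the image of $\varphi$ has full $p$-rank $\mr{rk}_p(B)$, so by \ref{item:heightsum} together with $\mr{ht}_B = \mr{ht}$ (which holds for $p$-groups) the inequality $\mr{ht}_B(\GG) \ge m + \mr{rk}_p(B)$ defining $V_{(B,m)}$ collapses on $U_B$ to the purely non-equivariant condition $\mr{ht}(\GG_1) \ge m$, cut out by the Landweber ideal $(p, v_1, \ldots, v_{m-1}) \subseteq \pi_* \MU_{A,(p)}^{\Phi B}$. Since $V_{(B,m)}$ is irreducible and reduced by Theorem~\ref{thm:HM}, and $U_B \cap \{\mr{ht}(\GG_1) \ge m\}$ is nonempty (witnessed by any height-$m$ formal group over $\br{\FF}_p$ with an injection $B^\vee \hookrightarrow \GG$), this open stratum is Zariski dense in $V_{(B,m)}$, and reducedness gives
$$
v \in I_{(B,m)} \iff v^{\Phi B} \in (p, v_1, \ldots, v_{m-1}) \cdot \pi_* \MU_{A,(p)}^{\Phi B}.
$$

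With this reduction in place, the definition of a $v_{\ul{n}}$-generator finishes the argument. For $\ul{n}(B) \in \mb{N}$, write $v^{\Phi B} \equiv u\, v_{\ul{n}(B)} \pmod{(p, v_1, \ldots, v_{\ul{n}(B)-1})}$ with $u$ a unit. By Corollary~\ref{cor:MUAPhiB}, $\pi_* \MU_{A,(p)}^{\Phi B}$ is a free $\MU_{*,(p)}$-module, so $(p, v_1, v_2, \ldots)$ remains a regular sequence, and in particular $v_{\ul{n}(B)}$ is a nonzerodivisor modulo $(p, \ldots, v_{m-1})$ for each $m \le \ul{n}(B)$. Consequently $v^{\Phi B}$ lies in $(p, \ldots, v_{m-1})$ precisely when $m > \ul{n}(B)$. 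The boundary cases $\ul{n}(B) = -1$ (so $v^{\Phi B} = 0$ and $v \in I_{(B,m)}$ for all $m$) and $\ul{n}(B) = \infty$ (so $v^{\Phi B}$ is a unit and $v \notin I_{(B,m)}$ for any $m$) are immediate.

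The principal technical hurdle is the density assertion underlying the key equivalence: one must verify that $U_B \cap \{\mr{ht}(\GG_1) \ge m\}$ is genuinely Zariski dense in $V_{(B,m)}$, so that reducedness of $V_{(B,m)}$ propagates vanishing on the stratum to vanishing on the entire closed substack. Both this density and the collapse of the $\mr{ht}_B$-inequality on $U_B$ ultimately rest on Hausmann--Meier's classification Theorem~\ref{thm:HM} together with the identity $\mr{ht}_B = \mr{ht}$, which is special to the $p$-group case.
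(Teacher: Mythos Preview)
The paper does not supply its own proof of this proposition; it simply attributes the result to Hausmann--Meier (citing the proof of \cite[Thm.~7.5]{HM}). So there is no proof in the paper to compare against, and your argument must be evaluated on its own merits.

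Your argument is essentially correct. The identification of $\Spec(\pi_*\MU_{A,(p)}^{\Phi B})$ with the locally closed stratum $U_B \subset \Spec(L_{A,(p)})$ where $\ker\varphi = (A/B)^\vee$ exactly is right: the map $\Theta_B$ factors as $L_A \twoheadrightarrow L_A/(e_\alpha : \alpha|_B = 1) \cong L_B \hookrightarrow L_B[e_\beta^{-1} : \beta \ne 1] \cong \pi_*\MU_A^{\Phi B}$, so its spectrum is precisely the open locus in $V_{(B,0)}$ where all remaining Euler classes are invertible. Your density argument then goes through because $L_{A,(p)}/I_{(B,m)}$ is an integral domain (so restriction to a nonempty open is injective) and because $(p, v_1, \ldots, v_{m-1})$ is radical in $\pi_*\MU_{A,(p)}^{\Phi B}$ (the quotient is a polynomial ring over the domain $\MU_*/I_m$). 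The final step using regularity of $(p, v_1, \ldots)$ in the free $\MU_*$-module $\pi_*\MU_{A,(p)}^{\Phi B}$ is exactly what is needed.

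One small correction: you write that the identity $\mr{ht}_B = \mr{ht}$ ``holds for $p$-groups'' and is ``special to the $p$-group case.'' This is not the right justification. The equality $\mr{ht}_B(\GG,\varphi) = \mr{ht}(\GG)$ holds on $U_B$ because $\ker\varphi = (A/B)^\vee$ there, which forces $\ker\varphi' = 0$ and hence the rank condition $\mr{rk}_p(B^\vee) = \mr{rk}_p(\ker\varphi') + \mr{rk}_p(\im\varphi')$ (see the enumerated remarks following the definition of height in the paper). This has nothing to do with $A$ being a $p$-group per se; it is a feature of the stratum $U_B$. Your conclusion is unaffected, but the stated reason should be corrected.
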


\section{Equivariant periodicity}\label{sec:periodicity}

\subsection*{The equivariant Nilpotence Theorem}

Recall the Devinatz-Hopkins-Smith Nilpotence Theorem:

\begin{thm}[Devinatz-Hopkins-Smith, \cite{DHS}]\label{thm:DHS}
Suppose that $R \in \Sp$ is a homotopy ring spectrum.  Then $x \in \pi_*R$ is nilpotent if and only if $\MU \wedge x \in \MU_* R$ is nilpotent.
\end{thm}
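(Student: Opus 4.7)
The ``if'' direction is immediate: if $x \in \pi_*R$ is nilpotent, then any ring-map image (including the Hurewicz image in $\MU_*R$) is nilpotent. All the content lies in the ``only if'' direction, so the plan is to show that an element $x \in \pi_*R$ whose image under the Hurewicz map $\pi_*R \to \MU_*R$ is nilpotent must itself be nilpotent. A standard first reduction is to pass from nilpotence in a homotopy ring spectrum to smash-nilpotence of a map $\alpha: S^n \to R$: if $\MU \wedge \alpha$ is null, one wants $\alpha^{\wedge k} \simeq 0$ for $k \gg 0$. Using the ring structure on $R$, this smash-nilpotence reduction easily implies the original statement, so the remainder of the proof would focus on smash-nilpotence for maps out of spheres.

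The central strategy would be to filter $\MU$ by Thom spectra $X(n) \to \MU$, where $X(n)$ is the Thom spectrum of the map $\Omega \mathrm{SU}(n) \to \Omega \mathrm{SU} \simeq BU$. One has $X(1) = S$, $X(\infty) = \MU$, and each $X(n)$ is a homotopy associative ring spectrum. I would prove by descending induction on $n$ the statement: if $X(n+1) \wedge \alpha^{\wedge N} \simeq 0$ for some $N$, then already $X(n) \wedge \alpha^{\wedge N'} \simeq 0$ for some $N'$. The base $n = \infty$ is the hypothesis $\MU \wedge \alpha$ null, and the conclusion at $n = 1$ is smash-nilpotence of $\alpha$ itself.

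The induction step is the technical heart of the argument. One analyzes $X(n+1)$ as built from $X(n)$ using the cell structure coming from $\Omega \mathrm{SU}(n+1)/\Omega \mathrm{SU}(n) \simeq \Omega S^{2n+1}$, and extracts a canonical class $b_n \in \pi_{2n} X(n)$. The key geometric input is a self-map on a finite approximation of the James splitting together with a vanishing-line theorem for the $b_n$-inverted telescope: after inverting $b_n$, the spectrum $X(n)[b_n^{-1}]$ becomes sufficiently close to $X(n+1)$-local that $X(n+1)$-triviality of $\alpha^{\wedge k}$ forces $X(n)[b_n^{-1}]$-triviality of a larger smash power. A second argument, using that $b_n$ is nilpotent on the non-telescopic part of $X(n) \wedge \alpha^{\wedge k}$ in a uniform way (via connectivity and a bounded vanishing line), then promotes this to $X(n)$-triviality of some $\alpha^{\wedge N'}$.

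The main obstacle is of course this inductive step: both the construction of the self-map realizing multiplication by $b_n$ on a suitable skeleton and the quantitative vanishing-line estimate required to turn a telescopic localization statement into honest smash-nilpotence are genuinely delicate. These are precisely the technical innovations of \cite{DHS}, and I would expect to either import their framework wholesale or, following Hopkins and Smith's later reformulations, replace the $X(n)$ tower with an argument via Morava $K$-theories together with the thick subcategory theorem — though the latter route ultimately relies on the very nilpotence theorem being proved.
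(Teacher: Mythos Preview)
Your sketch is a faithful outline of the original Devinatz--Hopkins--Smith argument via the $X(n)$ filtration, and there is no mathematical gap in the strategy you describe (beyond the acknowledged reliance on the hard vanishing-line and self-map inputs from \cite{DHS}). However, you should be aware that the paper does \emph{not} prove this theorem at all: it is stated with attribution to \cite{DHS} and used as a black box. The paper's own contribution in this section is Theorem~\ref{thm:nilp}, the \emph{equivariant} nilpotence theorem, which is deduced from Theorem~\ref{thm:DHS} by a short reduction using geometric fixed points and the fact that $\MU_A^{\Phi B}$ is a wedge of copies of $\MU$. So there is nothing to compare your proposal against here --- you have written a proof sketch for a result the paper simply imports.
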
 

As observed in \cite{BGH}, the following equivariant form of the Nilpotence Theorem follows fairly easily.  We add a proof just to emphasize to the reader how immediately this follows from the classical Nilpotence Theorem and the fact that the geometric fixed points spectra $(\MU_A)^{\Phi B}$ are free $\MU$-modules.

\begin{thm}[Barthel-Greenlees-Hausmann \cite{BGH}]\label{thm:nilp}
Suppose that $R \in \Sp^A$ is a homotopy ring spectrum.  Then the following are equivalent.
\begin{enumerate}
\item $x \in \pi^A_\star R$ is nilpotent. 
\item $\MU_A \wedge x \in (\MU_A)_\star R$ is nilpotent.
\item $x^{\Phi B} \in \pi_* R^{\Phi B}$ is nilpotent for all $B \le A$.
\item $K(n) \wedge x^{\Phi B} \in K(n)_*R^{\Phi B}$ is nilpotent for all $0 \le n \le \infty$ and $B \le A$.
\end{enumerate}
\end{thm}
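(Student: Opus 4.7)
The plan is to establish the cycle $(1) \Rightarrow (2) \Rightarrow (3) \Leftrightarrow (4)$ and close the loop with $(3) \Rightarrow (1)$. The implication $(1) \Rightarrow (2)$ is immediate from the unit map $R \to \MU_A \wedge R$. For $(2) \Rightarrow (3)$, applying the symmetric monoidal functor $\Phi^B$ sends the nilpotent $\MU_A \wedge x$ to the nilpotent $(\MU_A)^{\Phi B} \wedge x^{\Phi B}$; since $(\MU_A)^{\Phi B}$ is a wedge of even suspensions of $\MU$ by Corollary~\ref{cor:MUAPhiB}, it follows that $\MU \wedge x^{\Phi B}$ is nilpotent in $\MU_* R^{\Phi B}$, so the non-equivariant Nilpotence Theorem~\ref{thm:DHS} yields nilpotence of $x^{\Phi B}$. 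The equivalence $(3) \Leftrightarrow (4)$ is Hopkins--Smith detection of nilpotence by the Morava $K$-theories, applied non-equivariantly to the homotopy ring spectrum $R^{\Phi B}$ for each $B$.

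The essential content is $(3) \Rightarrow (1)$, which I would prove by induction on $|A|$, the base case $A = e$ being trivial. For the inductive step, for each proper subgroup $B \lneq A$ the identity $(i_B^* x)^{\Phi C} = x^{\Phi C}$ for $C \le B$ combined with the inductive hypothesis shows that $i_B^* x$ is nilpotent in $\pi^B_\star i_B^* R$. Since $A$ has only finitely many subgroups, we may choose a single $N$ with $(i_B^* x)^N = 0$ for every $B \lneq A$ and $(x^{\Phi A})^N = 0$ simultaneously. Replacing $x$ by $x^N$, I may assume $i_B^* x = 0$ for all proper $B$ and $x^{\Phi A} = 0$, and it then suffices to show some power of $x$ vanishes.

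Writing $\mc{F} = \mc{F}_{A \nsubseteq}$ for the family of proper subgroups, invoke the isotropy separation cofiber sequence $R \wedge E\mc{F}_+ \xrightarrow{q} R \to R \wedge \widetilde{E\mc{F}}$. Since $\widetilde{E\mc{F}}$ is $\mc{F}^{-1}$-local with $A$-fixed points computing $R^{\Phi A}$, the image of $x$ in the right-hand term is $x^{\Phi A} = 0$, so $x$ lifts to a class $z \in \pi^A_\star(R \wedge E\mc{F}_+)$ with $q \circ z = x$. The $A$-space $E\mc{F}$ is a filtered colimit of finite $A$-CW complexes with equivariant cells $A/B_+ \wedge S^k$ for $B \lneq A$, so compactness of the source sphere forces $z$ to factor through $R \wedge (X_m)_+$ for some finite subcomplex $X_m$ with, say, $k$ equivariant cells. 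On each such cell $R \wedge A/B_+ \simeq \Ind_B^A i_B^* R$, multiplication by $x$ is identified with the induction of multiplication by $i_B^* x = 0$ on $i_B^* R$, and is therefore null. A standard cofiber-sequence induction---if multiplication by $x$ is null on both $Y_1$ and $Y_2$ in $Y_1 \to Y \to Y_2$, then multiplication by $x^2$ is null on $Y$---then shows that multiplication by $x^k$ is null on $R \wedge (X_m)_+$, whence $x^k \cdot z = 0$ in $\pi^A_\star(R \wedge E\mc{F}_+)$. Since $q$ is $R$-linear, $x^{k+1} = q(x^k \cdot z) = 0$, completing the induction.

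The main technical difficulty is the infinite-dimensionality of $E\mc{F}$: no uniform bound on the power of $x$ required to annihilate all of $R \wedge E\mc{F}_+$ is available a priori. This is circumvented by the observation that the specific lift $z$ has compact source and therefore factors through a finite subcomplex, so the cellwise induction terminates after finitely many steps.
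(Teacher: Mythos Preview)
Your proof is correct, but your argument for $(3)\Rightarrow(1)$ is considerably more involved than the paper's. The paper observes directly that if $x^{\Phi B}$ is nilpotent for every $B$, then the telescope $x^{-1}R$ satisfies
\[
(x^{-1}R)^{\Phi B}\simeq (x^{\Phi B})^{-1}R^{\Phi B}\simeq \ast
\]
for all $B\le A$; since the geometric fixed point functors are jointly conservative on $\Sp^A$, this forces $x^{-1}R\simeq\ast$, and hence $x$ is nilpotent. This replaces your entire induction on $|A|$, isotropy separation, and finite-cell argument with two lines. Your approach is not wrong---such cell-by-cell arguments can be valuable when one wants an explicit bound on the nilpotence exponent in terms of the structure of $E\mc{F}$---but here the telescope trick is both shorter and more conceptual, and it is exactly the same device the paper already uses in its $(2)\Rightarrow(3)$ step. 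Your treatment of the remaining implications matches the paper's.
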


\begin{proof}
Clearly (1) $\Rightarrow$ (2), (3), (4).  Suppose that $\MU_A \wedge x$ is nilpotent with $x \in \pi^A_\gamma R$.  Then it follows that 
$$ \MU_A \wedge x^{-1}R \simeq \ast. $$
This implies that for all $B \le A$, we have
$$ (\MU_A \wedge x^{-1}R)^{\Phi B} \simeq \ast. $$
Let 
$$ x^{\Phi B} \in \pi_{|\gamma^B|}R^{\Phi B} $$
denote the geometric fixed points of $x$.  Then we have
\begin{align*}
\ast & \simeq (\MU_A \wedge x^{-1}R)^{\Phi B} \\
& \simeq \MU_A^{\Phi B} \wedge (x^{-1}R)^{\Phi B} \\
& \simeq \MU_A^{\Phi B} \wedge \Phi^B(x)^{-1}R^{\Phi B} \\
& \simeq (x^{\Phi B})^{-1}(\MU_A^{\Phi B} \wedge R^{\Phi B}).
\end{align*}
It follows that 
$$ \MU_A^{\Phi B} \wedge x^{\Phi B} \in \pi_{|\gamma^B|}(\MU_A^{\Phi B} \wedge R^{\Phi B}) $$
is nilpotent.  Since by Corollary~\ref{cor:MUAPhiB}, $\MU_A^{\Phi B}$ is a wedge of $\MU$'s, it follows from the Nilpotence Theorem that 
$$ x^{\Phi B} \in \pi_{|\gamma^B|}(R^{\Phi B}) $$
is nilpotent.  Thus (2) $\Rightarrow$ (3).  (3) implies that 
$$
(x^{-1}R)^{\Phi B} \simeq (x^{\Phi B})^{-1} R^{\Phi B} \simeq \ast
$$
for all $B \le A$.  It therefore follows that $x^{-1}R \simeq \ast$, and therefore $x$ is nilpotent.  Thus (3) $\Rightarrow$ (1).  We get (4) equivalent to (3) by \cite[Cor.~5]{HS}.
\end{proof}

Thus it follows that $\MU_A$ detects all non-nilpotent self maps, as expressed in the following Corollary.

\begin{cor}\label{cor:selfmap}
Suppose that $X \in \Sp^A$ is finite and $v : \Sigma^\gamma X \to X$ is a self-map.  Then $v$ is nilpotent if and only if $\MU_A \wedge v$ is nilpotent.  In particular, if $v$ is non-nilpotent, then $\MU_A \wedge v$ is non-zero.  
\end{cor}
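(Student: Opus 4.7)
The plan is to reduce the statement directly to Theorem~\ref{thm:nilp} by applying it to the endomorphism ring spectrum of $X$. Since $X$ is finite, it is dualizable in $\Sp^A$, so $R := F(X,X) \simeq X \wedge DX$ is a homotopy ring spectrum under composition. The self-map $v: \Sigma^\gamma X \to X$ corresponds to a class $\td{v} \in \pi^A_\gamma R$, and the $n$-fold iterated composite $v \circ \Sigma^\gamma v \circ \cdots \circ \Sigma^{(n-1)\gamma} v$ corresponds to the $n$-fold product $\td{v}^n$ in $\pi^A_\star R$. Consequently, $v$ is nilpotent as a self-map if and only if $\td{v}$ is nilpotent in $\pi^A_\star R$.

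Using finiteness of $X$ again, there is a natural equivalence
$$ \MU_A \wedge F(X,X) \simeq F(X, \MU_A \wedge X) $$
of $A$-spectra, under which the self-map $\MU_A \wedge v: \MU_A \wedge \Sigma^\gamma X \to \MU_A \wedge X$ corresponds to the class $\MU_A \wedge \td{v}$ in $(\MU_A)_\star R := \pi^A_\star(\MU_A \wedge R)$. Thus nilpotence of the self-map $\MU_A \wedge v$ is equivalent to nilpotence of $\MU_A \wedge \td{v}$ as an element of the ring $(\MU_A)_\star R$.

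With these translations, the equivalence of nilpotence of $v$ and of $\MU_A \wedge v$ is exactly the content of (1) $\Leftrightarrow$ (2) in Theorem~\ref{thm:nilp} applied to the homotopy ring spectrum $R = F(X,X)$. The final clause is then immediate: if $\MU_A \wedge v = 0$ then in particular $\MU_A \wedge v$ is nilpotent, forcing $v$ to be nilpotent by what was just proved.

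There is no substantive obstacle; the argument is essentially bookkeeping. The only small point requiring care is the identification $\MU_A \wedge F(X,X) \simeq F(X, \MU_A \wedge X)$, which relies on dualizability of $X$ in $\Sp^A$ (i.e., precisely the hypothesis that $X$ is a finite $A$-spectrum) and ensures the ring-theoretic notion of nilpotence that appears in Theorem~\ref{thm:nilp} coincides with the geometric notion of nilpotence for the self-map.
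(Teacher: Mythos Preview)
Your proof is correct and takes essentially the same approach as the paper: the paper's proof consists of the single sentence ``Take $R$ to be $DX \wedge X$ and $x$ to be the adjoint of $v$,'' which is exactly your reduction to Theorem~\ref{thm:nilp} via the endomorphism ring spectrum. You have simply filled in the bookkeeping (the identification $\MU_A \wedge F(X,X) \simeq F(X,\MU_A \wedge X)$ via dualizability) that the paper leaves implicit.
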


\begin{proof}
Take $R$ to be $DX \wedge X$ and $x$ to be the adjoint of $v$.
\end{proof}

\subsection*{Non-equivariant periodicity}

Recall the following fundamental non-equivariant definitions from \cite{HS}. 

\begin{defn}
For  $X \in \Sp_{(p),\omega}$: 
\begin{enumerate}
\item For $0 \le n < \infty$, we say $X$ is \emph{type $n$} if $K(n)\wedge X \ne \ast$ and $K(n-1) \wedge X \simeq \ast$.  If $X \simeq \ast$ we shall say $X$ is \emph{type $\infty$}.

\item For $1 \le n < \infty$, a self-map 
$$ v: \Sigma^k X \to X $$
is called a
\begin{itemize}
\item \emph{$v_n$-self map} if $K(n)\wedge v$ is an equivalence, and $K(i) \wedge v$ is nilpotent for $i \ne n$,  
\item \emph{$v_\infty$-self map} if some power of $v$ is multiplication by an element of $\ZZ_{(p)}^\times$, 
\item \emph{$v_0$-self map} if some power of $v$ is multiplication by an element $\lambda \in \ZZ_{(p)}$ with $p$-adic valuation $1 \le \nu_p(\lambda) < \infty$, 
\item \emph{$v_{-1}$-self map} if $v$ is nilpotent. 
\end{itemize}
\end{enumerate}
\end{defn}

\begin{rmk}
The notion of $v_0$-self map of \cite{HS} differs slightly from our notion, in the sense that it combines our notions of $v_0$ and $v_\infty$-self map.
\end{rmk}

The Hopkins-Smith periodicity theorem links the notion of ``$v_n$-self map'' and  ``type $n$ complex''.

\begin{thm}[Hopkins-Smith \cite{HS}]
Suppose $0 \le n < \infty$.
A complex $X \in \Sp_{(p),\omega}$ admits a non-nilpotent $v_n$-self map if and only if it is type $n$.  
\end{thm}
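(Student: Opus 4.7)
The plan is to prove the two directions of the biconditional separately. For the ``only if'' direction, suppose $v: \Sigma^k X \to X$ is a non-nilpotent $v_n$-self map. Since $K(i) \wedge v$ is nilpotent for $i \ne n$, if $K(n) \wedge X$ were zero then $K(i) \wedge v$ would be nilpotent for \emph{every} $i$; the Nilpotence Theorem (Theorem~\ref{thm:DHS}) applied to the class of $v$ in $\pi_*(DX \wedge X)$ would then force $v$ itself to be nilpotent, a contradiction. Hence $K(n) \wedge X \ne 0$, giving $\mr{type}(X) \le n$. The opposite inequality $\mr{type}(X) \ge n$ follows once the ``if'' direction is known at smaller values: if $\mr{type}(X) = m < n$, then $X$ would also admit a non-nilpotent $v_m$-self map $w$, and passing to commuting powers using asymptotic centrality (itself a consequence of the Nilpotence Theorem) and comparing the $K(m)$- and $K(n)$-local behavior of $v$ and $w$ yields the desired contradiction.

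For the ``if'' direction, I would argue via the Thick Subcategory Theorem. Let $\mc{C}_n$ denote the collection of finite $p$-local spectra of type $\ge n$ that admit a (possibly zero) $v_n$-self map, non-nilpotent whenever the type is exactly $n$. The first and most technical step is showing $\mc{C}_n$ is a thick subcategory: closure under retracts and suspensions is formal, while closure under cofibers relies on the asymptotic uniqueness of $v_n$-self maps together with an obstruction-theoretic lifting argument in a cofiber sequence $X' \to X \to X''$, both of which follow from the Nilpotence Theorem via spectral-sequence arguments.

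Once $\mc{C}_n$ is known to be thick, the Thick Subcategory Theorem reduces the remaining task to constructing a single non-nilpotent $v_n$-self map on some specific type-$n$ complex. The canonical candidate is a generalized Moore spectrum $M(I)$ with $I = (p^{i_0}, v_1^{i_1}, \ldots, v_{n-1}^{i_{n-1}})$, built as an iterated cofiber whose exponents $i_j$ are chosen large enough to guarantee that each stage exists and that the algebraic multiplication-by-$v_n$ on $\BP_* M(I)$ lifts to an honest self-map. This realization is the main obstacle: verifying the vanishing of the relevant obstructions in the $\BP$-based Adams spectral sequence computing $[M(I), M(I)]_*$ requires delicate Miller-Ravenel-Wilson style vanishing-line estimates, and constitutes the technical heart of the Hopkins-Smith Periodicity Theorem.
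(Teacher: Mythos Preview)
The paper does not give its own proof of this theorem: it is quoted as a result of Hopkins and Smith \cite{HS} and used as input for the equivariant story that follows. So there is nothing in the paper to compare your argument against; what you have written is an outline of the original Hopkins--Smith proof.

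As such an outline, your ``if'' direction is correctly structured. One shows that the class of type~$\ge n$ finite spectra admitting a $v_n$-self map is thick (the cofiber step uses asymptotic uniqueness and centrality, both consequences of the Nilpotence Theorem), invokes the Thick Subcategory Theorem, and then produces a single example on a generalized Moore spectrum via vanishing-line estimates in the Adams--Novikov spectral sequence. That is indeed the architecture of \cite{HS}.

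Your ``only if'' direction has a gap. The argument that a non-nilpotent $v_n$-self map forces $K(n)\wedge X\not\simeq\ast$ (hence $\mr{type}(X)\le n$) is fine. But your proposed argument for $\mr{type}(X)\ge n$ --- assuming a $v_m$-self map $w$ exists for $m=\mr{type}(X)<n$, passing to commuting powers, and ``comparing the $K(m)$- and $K(n)$-local behavior'' --- does not actually produce a contradiction: there is nothing inconsistent about a ring having two commuting elements, one a $K(m)$-unit and $K(n)$-nilpotent, the other the reverse. A clean direct argument is to replace $v$ by a power so that $K(i)\wedge v=0$ for all $i\ne n$, form the cofiber $X/v$, and observe that $K(m)_*(X/v)\ne 0$ while $K(n)_*(X/v)=0$; this contradicts the standard monotonicity for finite spectra (if $K(i)_*Y=0$ then $K(i-1)_*Y=0$), which is itself a consequence of the Thick Subcategory Theorem.
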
 

\begin{rmk}\label{rmk:vnmaps}
Note that it follows from the Nilpotence Theorem (Theorem~\ref{thm:DHS}) that if $X \in \Sp_{(p),\omega}$ is a type $n$ complex ($0 \le n < \infty$), then for $-1 \le k < n$, a $v_k$-self map $v$ of $X$ is the same thing as a $v_{-1}$-self map.  Therefore every $v_m$-self map of $X$ is exactly one of the following:
\begin{itemize}
\item $v_{-1}$-self map (nilpotent),
\item $v_n$-self map,
\item $v_\infty$-self map (equivalence).
\end{itemize}
Of course if $X$ is type $\infty$, then the unique self map of $X$ is a $v_i$-self map for all $-1 \le i \le \infty$, and in particular every $v_m$-self map is a $v_{-1}$-self map.
\end{rmk}

\subsection*{$\pmb{v_{\ul{n}}}$-self maps}

We now seek to extend this story to $\Sp^A_{(p)}$.
Equivariant periodic self-maps were introduced and studied by Bhattacharya-Guillou-Li \cite{BGL}.  They considered self-maps
$$ v : \Sigma^\gamma X \to X $$
for which 
$$ v^{\Phi B}: \Sigma^{\gamma^B} X^{\Phi B} \to X^{\Phi B} $$
was a $v_{n_B}$-self map as $B$ ranges over the subgroups of $B$, and observed that the topology of the Balmer spectrum put constraints on the numbers $n_B$ (when defined).  To make this idea precise, we will encode the sequences of numbers $n_B$ in a height function
$$ \ul{n}: \Sub(A) \to \Nbar_-. $$
We shall define the \emph{non-negative domain} of $\ul{n}$ to be the places where $\ul{n} \ge 0$:
$$ \mr{dom}_{\ge 0}(\ul{n}) := \{ B \le A \: : \: \ul{n}(B) \ge  0 \} \subseteq \Sub(A). $$
From this perspective, we define, for 
$$ S \subseteq \mr{dom}_{\ge 0}(\ul{n}) $$
the \emph{restriction} of $\ul{n}$ to $S$, which we will denote $\ul{n}|_S$, to be the height function
\begin{equation}\label{eq:restriction}
 \ul{n}|_S(B) := \begin{cases}
\ul{n}(B), & B \in S, \\
-1, & \text{otherwise}, 
\end{cases}
\end{equation}
so that $\mr{dom}_{\ge 0}(\ul{n}|_S) = S$.

\begin{defn}
For a height function $\ul{n}$,
a \emph{$v_{\ul{n}}$-self map} of a $p$-local finite complex $X \in \Sp^A_{(p),\omega}$
is a self-map
$$ v: \Sigma^\gamma X \to X $$
(for $\gamma \in RO(A)$) so that
for all $B \le A$, the geometric fixed points 
$$ v^{\Phi B}: \Sigma^{\gamma^B} X^{\Phi B} \to X^{\Phi B} $$ 
is a $v_{\ul{n}(B)}$-self map.
\end{defn}

\begin{exs}\label{ex:vnmaps}$\quad$
\begin{enumerate}
\item If $\alpha \in A^\vee$ is a character, then the Euler class 
$$ a_\alpha : \Sigma^{-\alpha} X \to X $$ 
gives a $v_{\ul{n}_\alpha}$-self map of any $X \in \Sp^A_{(p),\omega}$, where $\ul{n}_\alpha$ is the height function of Example~\ref{ex:vnelts}(3).

\item If $X \in \Sp_{(p),\omega}$ is a non-equivariant type $n$ complex, and
$$ v: \Sigma^k X \to X $$
is a $v_n$-self map, then 
$$ v_{\mr{triv}}: \Sigma^k X_{\mr{triv}} \to X_{\mr{triv}} $$
is a $v_{c[n]}$-self map, where $c[n]$ is the constant height function at $n$ of Example~\ref{ex:vnelts}(2).  We will simply refer to this as a \emph{$v_n$-self map}.
\end{enumerate}
\end{exs}

For a fixed height function $\ul{n}$, it will be convenient to be able to discuss $v_{\ul{m}}$-self maps as $\ul{m}$ ranges over the restrictions of $\ul{n}$ to the various subsets $S \subseteq \mr{dom}_{\ge 0}(\ul{n})$.

\begin{defn}
For a height function $\ul{n}$, and $S \subseteq \mr{dom}_{\ge 0}(A)$, a \emph{$v^S_{\ul{n}}$-self map} is a $v_{\ul{n}|_S}$-self map.
\end{defn}

As a special case,  
if $S = \{B\}$, then $\ul{n}|_S$ is determined by the single value $\ul{n}(B) = n$.
For such $\ul{n}$ we make the following more specialized definition:

\begin{defn}
Suppose that $B \le A$.  A \emph{$v^B_{n}$-self map} of a $p$-local finite complex $X \in \Sp^A_{(p),\omega}$ is a self-map
$$ v: \Sigma^\gamma X \to X $$
so that $v^{\Phi B}$ is a $v_n$-self map, and for all $B \ne C \le A$, $v^{\Phi C}$ is nilpotent.
\end{defn}

\begin{ex}
In the case of $A = C_p$ we may represent a height function as a tuple of values
$$ \ul{n} = (n,m) $$
where $n = \ul{n}(e)$ and $m = \ul{n}(C_p)$.
In this situation we may talk about $v_{(n,m)}$-self maps, and for $0 \le n,m < \infty$, this agrees with the notation of \cite{BGL}.  However, in the case where one of these values is $-1$, the notions of $v_{(-1,m)}$-self map or $v_{(n,-1)}$-self map where $0 \le m,n < \infty$, correspond to the notions of  $v_{(\mr{nil},m)}$-self map and $v_{(n,\mr{nil})}$-self map of \cite{BGL}. In our notation, a $v_{(-1,m)}$-self map is the same thing as a $v^{C_p}_m$-self map, and a $v_{(n,-1)}$-self map is a $v_n^e$-self map. 
\end{ex}

\subsection*{Type $\pmb{\ul{n}}$ complexes}

As discussed in \cite{BGH}, certain closed subsets of $\Spc_{(p)}(A)$ correspond to the ``types'' of equivariant finite complexes through the notion of support (\ref{eq:supp}). These types are parameterized by \emph{type functions}:

\begin{defn}
A \emph{type function} is a function
$$ \ul{n} : \Sub(A) \to \Nbar. $$
A finite complex $X \in \Sp^A_{(p),\omega}$ is said to be \emph{type $\ul{n}$} if for all $B \le A$, the geometric fixed points $X^{\Phi B}$ is type $\ul{n}(B)$.  The \emph{finite domain} of $\ul{n}$ is the subset of $\Sub(A)$ where $\ul{n}$ is finite:
$$ \mr{dom}_{< \infty}(\ul{n}) := \{ B \le A \: : \: \ul{n}(B) < \infty \} \subseteq \Sub(B).  $$ 
The infinite domain is the complement:
$$ \mr{dom}_{= \infty}(\ul{n}) = \{ B \le A \: : \: \ul{n}(B) = \infty \} \subseteq \Sub(B). $$
\end{defn} 

\begin{rmk}
Just as in the non-equivariant case, where heights of $v_n$-self maps range over $\Nbar_-$ and types of $p$-local finite complexes range over $\Nbar$, height functions take values in $\Nbar_-$ while type functions take values in $\Nbar$.  Note that if $\ul{n}$ is a type function, then $\ul{n}-1$ is a height function.
\end{rmk}

Only certain type functions $\ul{n}$ are realized as type functions of actual equivariant complexes.  In \cite{BGH}, these type functions are shown to be precisely the \emph{admissible} type functions.

\begin{defn}\label{defn:adtype} $\quad$
\begin{enumerate}
\item
A type function
$$\ul{n}: \Sub(A) \to \Nbar $$
is \emph{admissible} if the subset $V_{\ul{n}} \subseteq \Spc_{(p)}(A)$ given by
$$ V_{\ul{n}} = \left\{ \mc{P}_{(B,i)} \: : \: \begin{array}{l} B \in \mr{dom}_{< \infty}(\ul{n}), \\ i \ge \ul{n}(B) \end{array} \right\} $$
is closed.  Said differently, the type function $\ul{n}$ is admissible if and only if the associated height function $\ul{n}-1$ is admissible in the sense of Definition~\ref{defn:admissiblehtfunc}.

\item
A closed subset $F \subseteq \Spc_{(p)}(A)$ will be called \emph{admissible closed} if $F = V_{\ul{n}}$ for $\ul{n}$ an admissible type function.

\item An open subset $U \subseteq \Spc_{(p)}(A)$ will be called \emph{admissible open} if $U = V^c_{\ul{n}}$ for $\ul{n}$ admissible.  Note that this is the same as asserting that $U = U_{\ul{n}-1}$ in the sense of Definition~\ref{defn:admissiblehtfunc}, so $U$ is admissible open if and only if $U = U_{\ul{m}}$ for an admissible height function $\ul{m}$.
\end{enumerate}
\end{defn}

\begin{rmk}
It may initially seem confusing that there is a notion of admissibility for type functions and height functions, when type functions are really just height functions that never take on the value of $-1$.  However, the two notions of admissibility are compatible: a type function $\ul{n}$ is admissible (as a type function) if an only if it is admissible when regarded as a height function, or equivalently, if for all $B < C \le A$ we have
$$  \ul{n}(B) \le \ul{n}(C) + \mr{rk}_p(C/B). $$
The distinction between the two is meant more as a convenience --- height functions parameterize $v_{\ul{n}}$-generators and admissible open subsets of $\Spc_{(p)}(A)$, whereas type functions parameterize types of complexes and admissible closed subsets of $\Spc_{(p)}(A)$.
\end{rmk}

We take note of the following things:
\begin{enumerate}[label=(\theequation)]
\eitem The admissible closed subsets of $\Spc_{(p)}(A)$ are precisely those subsets which are of the form $\mr{supp}(X)$ for some $X \in \Sp^A_{(p),\omega}$, and form a basis of the topology on $\Spc_{(p)}(A)$ \cite{BGH}.

\eitem A complex $X \in \Sp^A_{(p),\omega}$ is type $\ul{n}$ if and only if 
$$ \mr{supp}(X) = V_{\ul{n}} $$
(and in this case, by the previous remark, $\ul{n}$ must be admissible).

\eitem The tt-ideals $I \trianglelefteq \Sp^A_{(p),\omega}$ are precisely those of the form
$$ I_{\ul{n}} := \{X \in \Sp^A_{(p),\omega} \: : \: \mr{supp}(X) \subseteq V_{\ul{n}} \} $$
for $\ul{n}$ an admissible type function \cite{BGH}.

\eitem For a family $\mc{F} \subseteq \Sub(A)$, the closed subset $V(\mc{F}) \subseteq \Spc_{(p)}(A)$ and the open subset $U(\mc{F}^c) \subseteq \Spc_{(p)}(A)$ of (\ref{eq:V(F)}), (\ref{eq:U(Fc)}) are admissible.  If we define
$$ \ul{n}_\mc{F} = 
\begin{cases}
0, & B \in \mc{F}, \\
\infty, & B \not\in \mc{F} 
\end{cases} 
$$
then
$$ V(\mc{F}) = V_{\ul{n}_\mc{F}}. $$

\eitem Because we are assuming $A$ is a $p$-group, if $\ul{n}$ is an admissible type function, then the subset $\mr{dom}_{<\infty}(\ul{n}) \subseteq \Sub(A)$ is a family.
\end{enumerate}

\subsection*{Existence and properties of $\pmb{v_{\ul{n}}}$-self maps}

We say that a height function $\ul{m}$ is \emph{finite} and write
$$ \ul{m} < \infty $$
if $\ul{m}(B) < \infty$ for all $B \le A$.

The question is:
\begin{question}\label{ques:v_m}
For $X \in \Sp^A_{(p),\omega}$ of type $\ul{n}$, for which height functions $\ul{m} < \infty$ does $X$ possess a $v_{\ul{m}}$-self map?
\end{question}

\begin{rmk}
The case where $\ul{m}$ has infinite values is somewhat muddled by the fact that these infinite values can be manufactured by Euler classes (see Example~\ref{ex:vnmaps}(1)), so we sidestep it here, as it just complicates the statements.
\end{rmk}

The following follows immediately from Remark~\ref{rmk:vnmaps}.

\begin{lem}
Suppose $X \in \Sp^A_{(p),\omega}$ is type $\ul{n}$, and that
$$ v : \Sigma^\gamma X \to X $$
is a $v_{\ul{m}}$-self map with $\ul{m} < \infty$.  Then $\ul{m} \le \ul{n}$, and for 
$$ S = \{ B \le A \: : \: \ul{n}(B) = \ul{m}(B) \} $$
$v$ is a $v^S_{\ul{n}}$-self map.
\end{lem}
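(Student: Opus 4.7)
The plan is to reduce the lemma to the non-equivariant statement in Remark~\ref{rmk:vnmaps} by applying it separately on each geometric fixed-point spectrum. For every $B \le A$, the complex $X^{\Phi B}$ is of type $\ul{n}(B)$, and the self-map $v^{\Phi B}$ is by hypothesis a $v_{\ul{m}(B)}$-self map with $\ul{m}(B) < \infty$. Checking the inequality $\ul{m} \le \ul{n}$ and identifying $v$ with a $v^S_{\ul{n}}$-self map both boil down to matching data one $B$ at a time.

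I would first dispatch $\ul{n}(B) = \infty$ trivially: $X^{\Phi B} \simeq \ast$, so $v^{\Phi B}$ is nilpotent and $B \notin S$ since $\ul{m}(B) < \infty$. For $\ul{n}(B) < \infty$, Remark~\ref{rmk:vnmaps} says $v^{\Phi B}$ falls into exactly one of the classes $v_{-1}$ (nilpotent), $v_{\ul{n}(B)}$, or $v_\infty$ (equivalence up to power). The $v_\infty$ case is excluded: it would make $K(k) \wedge v^{\Phi B}$ an equivalence for every $k$, but being a $v_{\ul{m}(B)}$-self map with $\ul{m}(B) < \infty$ demands that $K(k) \wedge v^{\Phi B}$ be nilpotent for $k \ne \ul{m}(B)$, and a type $\ul{n}(B)$ complex is $K(k)$-nontrivial for every $k \ge \ul{n}(B)$ (by the thick subcategory theorem), so some such $k$ yields a contradiction. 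If instead $v^{\Phi B}$ is a $v_{-1}$-self map, then $K(\ul{m}(B)) \wedge v^{\Phi B}$ is simultaneously an equivalence and nilpotent, forcing $K(\ul{m}(B)) \wedge X^{\Phi B} \simeq \ast$; this gives $\ul{m}(B) < \ul{n}(B)$, so $B \notin S$ and $(\ul{n}|_S)(B) = -1$, matching the $v_{-1}$-self map. Finally, if $v^{\Phi B}$ is a $v_{\ul{n}(B)}$-self map, the same incompatibility applied at $K(\ul{n}(B))$ (where $X^{\Phi B}$ is nonzero) forces $\ul{m}(B) = \ul{n}(B)$, so $B \in S$ and $(\ul{n}|_S)(B) = \ul{n}(B)$, again matching.

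Assembling these fixed-point statements over all $B$ produces both $\ul{m} \le \ul{n}$ and the identification of $v$ as a $v_{\ul{n}|_S}$-self map, i.e.\ a $v^S_{\ul{n}}$-self map. I do not anticipate a real obstacle here: the whole argument is a case analysis on the geometric fixed points, powered by Remark~\ref{rmk:vnmaps} together with the standard consequence of the thick subcategory theorem that a type $n < \infty$ complex has $K(k) \wedge X \ne 0$ for every $n \le k \le \infty$. The only delicate point is the repeated use of the fact that an endomorphism of a nonzero complex cannot be simultaneously an equivalence and nilpotent, which is what pins down which of the three alternatives $v^{\Phi B}$ belongs to in each case.
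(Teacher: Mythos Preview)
Your proposal is correct and is exactly the argument the paper has in mind: the paper gives no proof beyond ``follows immediately from Remark~\ref{rmk:vnmaps},'' and what you have written is a careful unpacking of that remark applied to each $X^{\Phi B}$. One very minor point: your sentence ``$K(\ul{m}(B)) \wedge v^{\Phi B}$ is simultaneously an equivalence and nilpotent'' in the $v_{-1}$ case tacitly assumes $\ul{m}(B) \ge 0$; when $\ul{m}(B) = -1$ there is nothing to check, since $v^{\Phi B}$ is already nilpotent by definition and $-1 < 0 \le \ul{n}(B)$.
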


Thus Question~\ref{ques:v_m} reduces to the following more refined question.

\begin{question}\label{ques:v_n|S}
For $X$ of type $\ul{n}$, for which $S \subseteq \mr{dom}_{< \infty}(\ul{n})$ does $X$ admit a $v^S_{\ul{n}}$-self map?
\end{question}

Let $\chi_S$ denote the characteristic function for $S \subseteq \Sub(B)$:
$$ \chi_S(B) = \begin{cases}
1, & B \in S, \\
0, & B \not\in S. 
\end{cases}
$$

\begin{lem}
If $X \in \Sp^{A}_{(p),\omega}$ is a type $\ul{n}$ complex, and $v$ is a $v^S_{\ul{n}}$-self map for $S \subseteq \mr{dom}_{<\infty}(\ul{n})$, then its cofiber $X/v$ is of type $\ul{n}+\chi_S$.
\end{lem}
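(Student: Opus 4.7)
The plan is to analyze the type of $(X/v)^{\Phi B}$ geometric-fixed-point-wise by applying $(-)^{\Phi B}$ to the defining cofiber sequence $\Sigma^{\gamma}X \to X \to X/v$, which yields a cofiber sequence $\Sigma^{\gamma^B} X^{\Phi B} \xrightarrow{v^{\Phi B}} X^{\Phi B} \to (X/v)^{\Phi B}$ in $\Sp$ (since $\Phi^B$ is symmetric monoidal and exact). I would then split into three cases based on $B$ and use the $K(n)$-homology long exact sequence associated to this cofiber sequence.

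\textbf{Case 1: $B \in S$.} Here $\ul{n}(B) < \infty$ and, by hypothesis, $v^{\Phi B}$ is a genuine $v_{\ul{n}(B)}$-self map of the type $\ul{n}(B)$ complex $X^{\Phi B}$. By the classical (non-equivariant) theory of $v_n$-self maps of Hopkins–Smith, the cofiber of such a self-map has type $\ul{n}(B)+1$, matching $(\ul{n}+\chi_S)(B)$.

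\textbf{Case 2: $B \in \mathrm{dom}_{<\infty}(\ul{n}) \setminus S$.} Here $v^{\Phi B}$ is nilpotent, say $(v^{\Phi B})^k \simeq 0$, while $X^{\Phi B}$ is type $\ul{n}(B)$. For $m < \ul{n}(B)$ we have $K(m)_*X^{\Phi B} = 0$, so the long exact sequence forces $K(m)_*(X/v)^{\Phi B} = 0$. The key point is to show $K(\ul{n}(B))_*(X/v)^{\Phi B} \neq 0$. If it vanished, the long exact sequence would force $v^{\Phi B}_* : K(\ul{n}(B))_{*+|\gamma^B|} X^{\Phi B} \to K(\ul{n}(B))_* X^{\Phi B}$ to be an isomorphism in every degree; iterating, $(v^{\Phi B})^k_*$ would be an isomorphism, contradicting $(v^{\Phi B})^k \simeq 0$ together with $K(\ul{n}(B))_* X^{\Phi B} \ne 0$. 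Hence $(X/v)^{\Phi B}$ is of type exactly $\ul{n}(B) = (\ul{n}+\chi_S)(B)$.

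\textbf{Case 3: $\ul{n}(B) = \infty$.} Then $X^{\Phi B} \simeq \ast$, so $(X/v)^{\Phi B} \simeq \ast$ is of type $\infty$; since $S \subseteq \mathrm{dom}_{<\infty}(\ul{n})$, we also have $\chi_S(B) = 0$, so $(\ul{n}+\chi_S)(B) = \infty$.

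Assembling the three cases gives that $(X/v)^{\Phi B}$ has type $(\ul{n}+\chi_S)(B)$ for every $B \le A$, which is the claim. The only step that requires a genuine argument beyond bookkeeping is the nilpotent case (Case 2), and even there the contradiction is immediate from the fact that $K(n)_*$ is a graded field, so an ``eventually zero'' module endomorphism cannot be an isomorphism on a nonzero module.
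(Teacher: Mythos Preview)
Your proof is correct. The paper states this lemma without proof, treating it as an immediate consequence of the definitions and the non-equivariant theory; your three-case analysis is exactly the verification the paper leaves implicit, and each case is handled correctly.
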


We therefore have the following generalization of \cite[Prop.~1.9]{BGL}. 

\begin{prop}\label{prop:obstruction}
Suppose $X \in \Sp^A_{(p),\omega}$ is of type $\ul{n}$.  Then if $S \subseteq \mr{dom}_{< \infty}(\ul{n})$ and $\ul{n}+\chi_S$ is not admissible, then $X$ does not possess a $v^S_{\ul{n}}$-self map.
\end{prop}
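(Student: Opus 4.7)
The plan is to argue by contradiction and reduce the statement to the preceding lemma on cofibers. I would begin by assuming that $v \colon \Sigma^\gamma X \to X$ is a $v^S_{\ul{n}}$-self map. The lemma stated immediately before the proposition then identifies the type of the cofiber: $X/v$ is a finite $p$-local $A$-spectrum of type $\ul{n} + \chi_S$. The goal is to derive the admissibility of $\ul{n} + \chi_S$ from the mere existence of such a complex.

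Next, I would observe that for any finite type $\ul{m}$ complex $Y \in \Sp^A_{(p),\omega}$ one has the identity
\[ \mr{supp}(Y) = V_{\ul{m}} \subseteq \Spc_{(p)}(A), \]
which is immediate from the two definitions: at a subgroup $B$ with $\ul{m}(B) < \infty$, the geometric fixed point spectrum $Y^{\Phi B}$ is type $\ul{m}(B)$, so $K_p(n) \wedge Y^{\Phi B} \not\simeq \ast$ precisely for $n \ge \ul{m}(B)$; and at $B$ with $\ul{m}(B) = \infty$ the contractible fixed points contribute no primes to the support. Applied to $Y = X/v$, this gives
\[ V_{\ul{n} + \chi_S} = \mr{supp}(X/v). \]

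Finally, the support of any object of a tensor-triangulated category is closed in its Balmer spectrum (see \cite{Balmer}). Hence $V_{\ul{n} + \chi_S}$ is closed in $\Spc_{(p)}(A)$, which by Definition~\ref{defn:adtype} is precisely the statement that the type function $\ul{n} + \chi_S$ is admissible --- contradicting the hypothesis. The whole argument is formal once the cofiber lemma is in hand, so I do not anticipate any genuine technical obstacle; the substantive content has been packaged into that preceding lemma and into the identification of admissible closed subsets with supports of finite $A$-spectra.
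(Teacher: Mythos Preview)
Your argument is correct and is exactly the reasoning the paper intends: the proposition is stated without a separate proof, as an immediate consequence of the preceding cofiber lemma together with the observation (recorded just before Definition~\ref{defn:adtype}) that any finite complex of type $\ul{m}$ has $\mr{supp}(X)=V_{\ul{m}}$, forcing $\ul{m}$ to be admissible. There is nothing to add.
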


One may optimistically speculate that Proposition~\ref{prop:obstruction} represents the only obstruction for the existence of a $v^S_{\ul{n}}$-self map on a type $\ul{n}$ complex\footnote{Forthcoming work of Burklund, Hausmann, Levy and Meier addresses the existence of more general $v_{\underline{n}}$-self maps.} --- the authors know of no counterexamples to such a conjecture.  We can only record some relatively straightforward special cases which derive from the non-equivariant periodicity theorem.

In \cite{HS}, Hopkins and Smith deduce many excellent properties of $v_n$-self maps for $1 \le n < \infty$ from the Nilpotence Theorem (\ref{thm:DHS}).  Many of their deductions involve the following lemma.

\begin{lem}[Hopkins-Smith {\cite[Lem.~3.4]{HS}}]\label{lem:HSlem}
Suppose that $x$ and $y$ are commuting elements of a $\ZZ_{(p)}$-algebra.  If $x - y$ is both torsion and nilpotent, then for $N \gg 0$, 
$$ x^{p^N} = y^{p^N}. $$
\end{lem}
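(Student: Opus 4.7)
The plan is to expand $x^{p^N}$ using the binomial theorem (legal since $x$ and $y$ commute) and then bound $p$-adic valuations of the resulting binomial coefficients.  Writing $d = x - y$, the commutativity gives
\[
 x^{p^N} - y^{p^N} = (y+d)^{p^N} - y^{p^N} = \sum_{k=1}^{p^N} \binom{p^N}{k} y^{p^N - k} d^k.
\]
Since $d$ is nilpotent, fix $M$ with $d^M = 0$; then the sum truncates to $1 \le k \le M-1$, so only finitely many terms contribute, independently of $N$.

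Next, I would use that $d$ is torsion.  Because we are in a $\ZZ_{(p)}$-algebra, if $n \cdot d = 0$ for some nonzero $n \in \ZZ$, then writing $n = p^s m$ with $\gcd(m,p)=1$ makes $m$ a unit, so in fact $p^s d = 0$ for some $s \ge 0$, and hence $p^s d^k = 0$ for every $k \ge 1$.  So it suffices to show that for all sufficiently large $N$, and for every $k$ with $1 \le k \le M-1$, the integer $\binom{p^N}{k}$ is divisible by $p^s$.

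For this I would invoke the standard formula (which follows from Kummer's theorem, or directly from $k \binom{p^N}{k} = p^N \binom{p^N - 1}{k-1}$) that
\[
 v_p\!\left(\binom{p^N}{k}\right) = N - v_p(k), \qquad 1 \le k \le p^N.
\]
Since $v_p(k) \le \log_p k \le \log_p(M-1)$ for $1 \le k \le M-1$, choosing $N \ge s + \log_p(M-1)$ forces $v_p\binom{p^N}{k} \ge s$ for every $k$ in the relevant range.  Each such coefficient then annihilates $d^k$, so the whole sum vanishes and $x^{p^N} = y^{p^N}$.

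There is not really a hard step here; the only points that require a little care are (a) reducing ``torsion in a $\ZZ_{(p)}$-algebra'' to ``$p^s$-torsion'', and (b) combining the finiteness coming from nilpotence of $d$ with the $p$-divisibility of $\binom{p^N}{k}$ coming from torsion.  The commutativity hypothesis is essential; it is what legitimizes the binomial expansion and hence the whole argument.
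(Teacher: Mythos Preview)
Your argument is correct and is the standard proof; the paper does not supply its own argument but simply cites Hopkins--Smith, whose proof proceeds exactly along the lines you give (binomial expansion, truncation from nilpotence of $d = x-y$, reduction of torsion to $p^s$-torsion, and the estimate $v_p\binom{p^N}{k} = N - v_p(k)$).
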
   

Lemma~\ref{lem:HSlem} does not apply to help us understand self-maps which are not torsion.  In the non-equivariant context, this does not have a great effect, because the only self-maps which are not necessarily torsion are $v_0$ and $v_\infty$-self maps, and these are asymptotically understandable by definition.  This is no longer holds in the equivariant context: if $v$ is a $v_{\ul{n}}$-self map, where $\ul{n}(B) = 0$ for some $B$, then $v_{\ul{n}}$ is necessarily not torsion, yet in the case where $\ul{n} \not\equiv 0$, the notion of $v_{\ul{n}}$-self map is highly non-trivial.
 
We will therefore have to restrict our observations to $v_{\ul{n}}$ with $\ul{n}(B) \not\in \{ 0, \infty\}$ (see Lemma~\ref{lem:torsion} below).  We shall call such height functions \emph{finite, non-zero} height functions.

Recall from \cite{HS} that for $R \in \Sp_{(p)}$ a homotopy ring spectrum, an element $x \in \pi_*R$ is said to be a \emph{$v_n$-element} $1\le m < \infty$ if
$$ K(m) \wedge x \in  K(m)_* R $$
is a unit if $m = n$, and nilpotent if $m \ne n$, $0 \le m \le \infty$.  We shall say $x$ is a \emph{$v_0$-element} if some power of $x$ is equal to the image of 
$$ \lambda \in \ZZ_{(p)} \to \pi_0 R $$ 
with $1 \le \nu_p(\lambda) < \infty$, $x$ is a \emph{$v_\infty$-element} if some power of $x$ is in the image of 
$$ \ZZ^\times_{(p)} \to (\pi_0 R)^\times $$
and $x$ is a  
\emph{$v_{-1}$-element} if it is nilpotent, which, by \cite[Thm.~3]{HS}, is equivalent to $K(m) \wedge x$ being nilpotent for all $0 \le m \le \infty$.

\begin{defn}\label{defn:Rvnelt}
Suppose that $R \in \Sp^A_{(p)}$ is a homotopy ring spectrum, and $\ul{n}$ is a height function on $\Sub(A)$.  We will say that $x \in \pi^A_\star R$ is a \emph{$v_{\ul{n}}$-element} if $x^{\Phi B} \in \pi_*R^{\Phi B}$ is a $v_{\ul{n}(B)}$-element for all $B \le A$. 
\end{defn}

Essentially following \cite{HS}, \cite[Lec.~27]{Lurienilp} verbatim, we may now deduce equivariant analogs of Hopkins and Smith's observations concerning $v_n$-self maps.

\begin{prop}\label{prop:vnmult}
Suppose that $R \in \Sp^A_{(p)}$ is a finite homotopy ring spectrum, $\ul{n}$ is a height function on $\Sub(A)$, and $x \in \pi^A_\star R$ is a $v_{\ul{n}}$-element.  Then there exist integers $i$ and $\{j_B\}_{B \le A}$ so that
$$
K(m) \wedge (x^i)^{\Phi B} = 
\begin{cases}
v_{\ul{n}(B)}^{j_B}, & 1 \le \ul{n}(B) < \infty, m = \ul{n}(B), \\
sp^{j_B},  & m = \ul{n}(B) = 0, s \in \ZZ^\times_{(p)}, j_B \ge 1, \\
s \in \ZZ^\times_{p}, & \ul{n}(B) = \infty, \\
0, & m \ne \ul{n}(B) < \infty. 
\end{cases}
$$ 
\end{prop}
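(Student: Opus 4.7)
The plan is to reduce to the non-equivariant version of this statement, which is essentially Hopkins-Smith (\cite{HS}, see also \cite[Lec.~27]{Lurienilp}), by applying the geometric fixed point functors $\Phi^B$ separately for each $B \le A$. The key enabling observations are that $\Phi^B$ is symmetric monoidal, so $R^{\Phi B}$ is again a finite non-equivariant homotopy ring spectrum and $(x^i)^{\Phi B} = (x^{\Phi B})^i$, and that $\Sub(A)$ is a finite set, so finitely many auxiliary exponents can be combined into one.

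By hypothesis, $x^{\Phi B} \in \pi_* R^{\Phi B}$ is a (non-equivariant) $v_{\ul{n}(B)}$-element for every $B \le A$. The non-equivariant analog of the proposition then produces, for each $B$, integers $i_B$ and $j_B$ such that $K(m) \wedge (x^{\Phi B})^{i_B}$ has the form advertised in the statement. The nontrivial case $1 \le \ul{n}(B) < \infty$ is handled by applying Lemma~\ref{lem:HSlem} in the finite $K(\ul{n}(B))_*$-algebra $K(\ul{n}(B))_* R^{\Phi B}$: some power of the unit $K(\ul{n}(B)) \wedge x^{\Phi B}$ and an appropriate $v_{\ul{n}(B)}^{j_B}$ have a difference that is torsion and vanishes modulo the nilradical, so a further power identifies them up to a unit. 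The case $m \ne \ul{n}(B) < \infty$ follows from the non-equivariant Nilpotence Theorem (Theorem~\ref{thm:DHS}), since $K(m) \wedge x^{\Phi B}$ is nilpotent there. The extremal cases $\ul{n}(B) \in \{0, -1, \infty\}$ are immediate from the definitions of $v_n$-element: a $v_0$-element has a power equal to $\lambda \cdot 1$ with $1 \le \nu_p(\lambda) < \infty$; a $v_\infty$-element has a power in $\ZZ_{(p)}^\times$; a $v_{-1}$-element is nilpotent.

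Since $\Sub(A)$ is finite, I take $i$ to be a common multiple of $\{i_B : B \le A\}$. Raising each formula to the power $k_B = i / i_B$ preserves each of the four cases in the statement: $v_{\ul{n}(B)}^{j_B}$ becomes $v_{\ul{n}(B)}^{k_B j_B}$ (times a unit, which is absorbable), $sp^{j_B}$ becomes $s^{k_B} p^{k_B j_B}$ with $s^{k_B}$ still in $\ZZ_{(p)}^\times$, units remain units, and zero remains zero. Relabeling the exponents then gives the desired integers $j_B$ for the single common $i$.

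The main obstacle to this strategy is entirely concentrated in the non-equivariant input — specifically the use of Lemma~\ref{lem:HSlem} in the $1 \le \ul{n}(B) < \infty$ range. Since that argument takes place purely inside the (non-equivariant) ring $K(m)_* R^{\Phi B}$, no new equivariant obstruction arises beyond bookkeeping the finitely many subgroups, which is why the equivariant Nilpotence Theorem (Theorem~\ref{thm:nilp}) suffices to promote Hopkins-Smith's asymptotic centrality and uniqueness arguments to the present $A$-equivariant setting.
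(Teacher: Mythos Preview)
Your proposal is correct and matches the paper's approach: the paper does not give an explicit proof but simply remarks that one follows \cite{HS} and \cite[Lec.~27]{Lurienilp} verbatim, which amounts precisely to applying the non-equivariant Hopkins--Smith argument to each $R^{\Phi B}$ and then taking a common multiple of the finitely many resulting exponents. Your handling of the unit ambiguity in the $1 \le \ul{n}(B) < \infty$ case is also standard (units in $\FF_p^\times$ are killed by a further $(p-1)$st power), so the ``absorbable'' comment is justified.
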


The following elementary observation will be of crucial importance.

\begin{lem}\label{lem:torsion}
Suppose that $R \in \Sp^A_{(p)}$ is a homotopy ring spectrum, $\ul{n}$ is a finite non-zero height function on $\Sub(A)$, and $x \in \pi^A_\star R$ is a $v_{\ul{n}}$-element. Then some power of $x$ is torsion.
\end{lem}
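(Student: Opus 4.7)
The plan is to show that the single element $px \in \pi^A_\star R$ is nilpotent; the lemma then follows immediately, because if $(px)^N = p^N x^N = 0$ then $x^N$ is torsion (annihilated by $p^N$, hence by the nonzero integer $p^N$).

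To show $px$ is nilpotent, I would apply the equivalence of (1) and (4) in the equivariant Nilpotence Theorem (Theorem~\ref{thm:nilp}): it suffices to show that for every $B \le A$ and every $0 \le m \le \infty$, the element
\[ K(m) \wedge (px)^{\Phi B} \;=\; p\cdot \bigl(K(m)\wedge x^{\Phi B}\bigr) \;\in\; K(m)_* R^{\Phi B} \]
is nilpotent. Fix such $B$ and write $n_B := \ul{n}(B)$; by the hypothesis that $\ul{n}$ is finite and nonzero, $1 \le n_B < \infty$. If $m = n_B$, then because $n_B \ge 1$ the spectrum $K(n_B)$ has $p = 0$ in $\pi_0$, so $p \cdot K(n_B) \wedge x^{\Phi B} = 0$. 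If instead $m \ne n_B$, then the definition of a $v_{n_B}$-element says precisely that $K(m)\wedge x^{\Phi B}$ is nilpotent for every $m \in \{0,1,\dots,\infty\} \setminus\{n_B\}$, and multiplying a nilpotent element of a ring by $p$ leaves it nilpotent. In all cases $K(m)\wedge (px)^{\Phi B}$ is nilpotent, so Theorem~\ref{thm:nilp} yields $(px)^N = 0$ for some $N$, completing the argument.

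The only thing to watch is that the hypotheses on $\ul{n}$ are used in an essential way: if $n_B = \infty$ for some $B$, then $x^{\Phi B}$ maps to a unit in $K(0) = H\QQ$, so $p\cdot K(0)\wedge x^{\Phi B}$ remains a unit (since $p$ is invertible rationally) and $px$ cannot be nilpotent; and similarly if $n_B = 0$, then some power of $x^{\Phi B}$ equals a nonzero rational, so again $px$ is not nilpotent in $K(0)$. The finite, nonzero hypothesis rules out exactly these two obstructions, so the dichotomy above is exhaustive. There is no real obstacle to this proof — the work has been done once the Nilpotence Theorem and the $v_{\ul n}$-element definition are in place; the proof is essentially a one-line application of Theorem~\ref{thm:nilp} to $px$.
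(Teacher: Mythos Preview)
Your proof is correct. It takes a slightly different route from the paper's: the paper argues by contrapositive, rationalizing $R$ and using that geometric fixed points jointly detect triviality to find a subgroup $B$ with $K(0)\wedge x^{\Phi B}$ non-nilpotent, contradicting $\ul n(B)\ge 1$. You instead apply the full equivariant Nilpotence Theorem directly to $px$. Your approach is mildly more constructive (it produces an explicit $N$ with $p^N x^N=0$) and cleanly isolates how each hypothesis on $\ul n$ is used; the paper's version is a touch more economical in that it only needs the $m=0$ case together with the detection property of geometric fixed points, not the full strength of Theorem~\ref{thm:nilp}. In content the two arguments are essentially equivalent.
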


\begin{proof}
Suppose that $x^N$ is not torsion for all $N$.  This implies that $x^N \ne 0 \in \pi^A_\star R_\QQ$ for all $N$, which is equivalent to saying that 
$$ x^{-1}R_\QQ \not\simeq \ast. $$
This is equivalent to there existing a $B \le A$ with
$$ (x^{\Phi B})^{-1}R^{\Phi B}_\QQ \not\simeq \ast. $$
But this would mean that $K(0) \wedge x^{\Phi B}$ was not nilpotent, which contradicts our hypotheses.
\end{proof}

\begin{prop}\label{prop:vncen}
Suppose that $R \in \Sp^A_{(p)}$ is a finite homotopy ring spectrum, $\ul{n}$ is a finite, non-zero height function on $\Sub(A)$, and $x \in \pi^A_\star R$ is a $v_{\ul{n}}$-element.  
Then some power of $x$ is central.
\end{prop}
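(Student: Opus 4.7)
The plan is to mirror the non-equivariant Hopkins-Smith argument for asymptotic centrality of $v_n$-self maps (see \cite[Lec.~27]{Lurienilp}), with the equivariant Nilpotence Theorem (Theorem~\ref{thm:nilp}) as the principal equivariant input. By Lemma~\ref{lem:torsion}, some power $x^a$ of $x$ is torsion; since the desired conclusion is invariant under replacing $x$ by $x^a$, we first assume that $p^j x = 0$ for some $j \ge 1$. Since $R$ is finite, the function spectrum $F(R,R) \simeq R \wedge DR$ is a finite homotopy ring spectrum in $\Sp^A_{(p)}$ under composition. Write $L_x, R_x \in \pi^A_\star F(R,R)$ for left- and right-multiplication by $x$; these always commute and satisfy $L_x^m = L_{x^m}$ and $R_x^m = R_{x^m}$, so the statement that $x^{p^N}$ is central in $\pi^A_\star R$ is equivalent to the identity $L_x^{p^N} = R_x^{p^N}$ in $\pi^A_\star F(R,R)$.

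We will apply Lemma~\ref{lem:HSlem} to the commuting pair $(L_x, R_x)$: it suffices to show that $L_x - R_x$ is both torsion and nilpotent in $\pi^A_\star F(R,R)$. Torsion is immediate from torsion of $x$, since $p^j(L_x - R_x) = L_{p^j x} - R_{p^j x} = 0$. For nilpotence, by the equivariant Nilpotence Theorem (Theorem~\ref{thm:nilp}) applied to the homotopy ring spectrum $F(R,R)$, it suffices to show that $(L_x - R_x)^{\Phi B}$ is nilpotent in $\pi_* F(R,R)^{\Phi B}$ for every $B \le A$.

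This last claim is the main obstacle. Finiteness of $R$ gives a symmetric-monoidal identification $F(R,R)^{\Phi B} \simeq F(R^{\Phi B}, R^{\Phi B})$, under which $(L_x - R_x)^{\Phi B}$ corresponds to $L_{x^{\Phi B}} - R_{x^{\Phi B}}$ in the non-equivariant endomorphism ring of $R^{\Phi B}$. Since $x^{\Phi B}$ is a non-equivariant $v_{\ul{n}(B)}$-element of the finite homotopy ring spectrum $R^{\Phi B}$ with $1 \le \ul{n}(B) < \infty$, nilpotence of $L_{x^{\Phi B}} - R_{x^{\Phi B}}$ is precisely the non-equivariant content that drives Hopkins-Smith's centrality theorem for $v_n$-self maps, itself deduced from the non-equivariant Nilpotence Theorem. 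Granting this non-equivariant input, Lemma~\ref{lem:HSlem} delivers $L_x^{p^N} = R_x^{p^N}$ for $N \gg 0$, which is exactly the centrality of $x^{p^N}$.
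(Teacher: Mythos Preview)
Your proof is correct and follows essentially the same approach as the paper, which simply says that ``using Theorem~\ref{thm:nilp} and Lemma~\ref{lem:torsion}, the same proof as that in \cite[Lem.~3.5]{HS} applies.'' You have spelled out precisely the details implicit in that reference: pass to the endomorphism ring $F(R,R)$, reduce centrality to $L_x^{p^N} = R_x^{p^N}$, and feed Lemma~\ref{lem:HSlem} with torsion (from Lemma~\ref{lem:torsion}) and nilpotence of $L_x - R_x$ (from Theorem~\ref{thm:nilp}).
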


\begin{proof}
Using Theorem~\ref{thm:nilp} and Lemma~\ref{lem:torsion}, the same proof as that in \cite[Lem.~3.5]{HS} applies.
\end{proof}

To establish asymptotic uniqueness of $v_{\ul{n}}$-elements, we must deal with a subtlety which is not present in the non-equivariant case.  We are thankful to Shangjie Zhang for pointing out this subtlety to the authors.

\begin{defn}\label{defn:dir}
Let $\gamma$ be an element of $RO(A)$.  We say that a $v_{\ul{n}}$-element $x \in \pi^A_{\gamma'} R$ (respectively a $v_{\ul{n}}$-self map $v: \Sigma^{\gamma'}X \to X$) has \emph{direction $\gamma$} if there exist $i,j \in \mathbb{N}_{> 0}$ so that $i\gamma = j\gamma'$.
\end{defn}

Following \cite[Prop.3.6]{HS}, we have the following asymptotic uniqueness result.

\begin{prop}\label{prop:asymu}
Suppose that $\ul{n}$ is a finite, non-zero height function and $x \in \pi^A_{\gamma'} R$ and $y \in \pi^A_{\gamma''} R$ are $v_{\ul{n}}$-elements with a common direction $\gamma$.  Then there exists $i,j$ so that $x^i = y^j$.
\end{prop}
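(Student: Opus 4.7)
The plan is to follow the non-equivariant strategy of Hopkins--Smith \cite[Prop.~3.6]{HS}, iteratively replacing $x$ and $y$ by positive integer powers until their difference is both nilpotent and torsion, at which point Lemma~\ref{lem:HSlem} delivers the conclusion. All of the reductions below are stable under the operation $(x, y) \mapsto (x^N, y^N)$, so we may freely pass to common powers.

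First I would exploit the common direction hypothesis: choose $k_1, l_1, k_2, l_2 \in \mathbb{Z}_{>0}$ with $k_1\gamma = l_1\gamma'$ and $k_2\gamma = l_2\gamma''$, and replace $x$ by $x^{l_1 k_2}$ and $y$ by $y^{l_2 k_1}$, so both elements now lie in the common $RO(A)$-degree $\delta := k_1 k_2 \gamma$. Invoking Proposition~\ref{prop:vncen} and Lemma~\ref{lem:torsion} and taking a common power, I may assume $x, y \in \pi^A_\delta R$ are central and torsion. Then applying Proposition~\ref{prop:vnmult} to $x$ and $y$ separately and passing to yet another common power, I may further assume that for every $B \le A$ and every $0 \le m \le \infty$, both $K(m) \wedge x^{\Phi B}$ and $K(m) \wedge y^{\Phi B}$ have the explicit form prescribed by Proposition~\ref{prop:vnmult}. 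Because $\ul{n}$ is finite and non-zero, this form simplifies to: zero unless $m = \ul{n}(B)$, and a pure power of $v_{\ul{n}(B)}$ when $m = \ul{n}(B)$.

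The critical point is that, for each $B$ with $m = \ul{n}(B) \ge 1$, the ring $K(m)_* = \mathbb{F}_p[v_m^{\pm}]$ is a graded field with $|v_m| = 2(p^m - 1)$, so any identity of the form $K(m) \wedge z^{\Phi B} = v_m^c$ in $K(m)_{|\delta^B|}(R^{\Phi B})$ forces $c = |\delta^B|/(2(p^m-1))$, a quantity depending only on the degree of $z$. Since $x$ and $y$ share the degree $\delta$, the corresponding exponents coincide for every $B$, giving
\[
K(m) \wedge (x - y)^{\Phi B} = 0 \quad \text{for all } 0 \le m \le \infty \text{ and all } B \le A.
\]
By the equivariant Nilpotence Theorem~\ref{thm:nilp} this implies $x - y$ is nilpotent. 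It is also torsion (as a difference of torsion elements), and $x, y$ commute by centrality, so Lemma~\ref{lem:HSlem} produces $N \ge 0$ with $x^{p^N} = y^{p^N}$. Unwinding the substitution from the common-direction step yields the desired identity $x^i = y^j$ for the original elements.

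The main obstacle is the degree-based matching of exponents in the preceding paragraph: it is precisely the hypothesis that $\ul{n}$ is finite and non-zero which allows us to pin the exponents down from the $RO(A)$-degree alone. For $\ul{n}(B) \in \{0, \infty\}$, Proposition~\ref{prop:vnmult} only controls $K(m) \wedge x^{\Phi B}$ up to a unit in $\mathbb{Z}_{(p)}^\times$, and such unit ambiguities need not cancel in the difference $x - y$, which would break the nilpotence reduction.
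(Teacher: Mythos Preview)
Your argument is correct and follows exactly the approach of the paper's proof, which invokes Proposition~\ref{prop:vnmult} together with the common-direction hypothesis to force $K(m)\wedge(x^i-y^j)^{\Phi B}=0$, then appeals to Theorem~\ref{thm:nilp}, Lemma~\ref{lem:torsion}, and Lemma~\ref{lem:HSlem}. You have in fact filled in details the paper leaves implicit: the explicit degree-matching argument that pins down the $v_m$-exponent, and the invocation of Proposition~\ref{prop:vncen} to guarantee the commutativity hypothesis of Lemma~\ref{lem:HSlem}.
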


\begin{proof}
The assumption that $x$ and $y$ have the same direction, combined with Proposition~\ref{prop:vnmult}, implies that there exists $i$ and $j$ so that for each $B$ and $m$ we have 
$$ K(m) \wedge (x^i-y^j)^{\Phi B} = 0. $$
The result then follows from Theorem~\ref{thm:nilp}, Lemma~\ref{lem:torsion}, and Lemma~\ref{lem:HSlem}.
\end{proof}

Letting $R = X \wedge DX$ for $X \in \Sp^A_{(p),\omega}$, following \cite[Cor.~3.3, 3.5, 3.7-8]{HS} we have:

\begin{cor}\label{cor:vnselfmap}
Analogous results to Propositions~\ref{prop:vnmult} and \ref{prop:vncen}, and \ref{prop:asymu} apply with ``$v_{\ul{n}}$-element'' replaced by ``$v_{\ul{n}}$-self map.''
\end{cor}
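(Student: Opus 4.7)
The plan is to derive the self-map statements from the element statements by Spanier--Whitehead duality, exactly as in the non-equivariant deduction in \cite[Cor.~3.3, 3.5, 3.7--8]{HS}. For a finite $X \in \Sp^A_{(p),\omega}$, the function spectrum $R := F(X,X) \simeq X \wedge DX$ is a finite homotopy ring spectrum under composition of endomorphisms, whose $RO(A)$-graded homotopy ring is
$$ \pi^A_\star R \cong [\Sigma^\star X, X]^A. $$
Since geometric fixed points are symmetric monoidal and $X$ is dualizable, for each $B \le A$ there is an equivalence of homotopy ring spectra
$$ R^{\Phi B} \simeq X^{\Phi B} \wedge D(X^{\Phi B}) \simeq F(X^{\Phi B}, X^{\Phi B}), $$
and under the induced identification $\pi_*(R^{\Phi B}) \cong [\Sigma^* X^{\Phi B}, X^{\Phi B}]$ the image of $\tilde v \in \pi^A_\gamma R$ is precisely the self-map $v^{\Phi B}$ of $X^{\Phi B}$.

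With this in hand, a self-map $v : \Sigma^\gamma X \to X$ is a $v_{\ul{n}}$-self map if and only if the associated element $\tilde v \in \pi^A_\gamma R$ is a $v_{\ul{n}}$-element of $R$ in the sense of Definition~\ref{defn:Rvnelt}. The three desired statements then become direct applications of the element versions to the ring $R$: Proposition~\ref{prop:vnmult} applied to $\tilde v$ produces integers $i$ and $\{j_B\}_{B \le A}$ controlling $K(m) \wedge (v^i)^{\Phi B}$; Proposition~\ref{prop:vncen} applied to $\tilde v$ (valid once $\ul n$ is finite and non-zero) yields a power $v^i$ which is central in $\pi^A_\star R$, equivalently a central power of the self-map; and Proposition~\ref{prop:asymu} applied to two $v_{\ul n}$-self maps $v, v'$ of $X$ sharing a common direction produces $i, j$ with $\tilde v^i = (\tilde v')^j$, which under the dictionary is the asserted relation $v^i = (v')^j$.

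There is no real obstacle here. The only minor points to verify are that $R = F(X,X)$ really is a homotopy ring spectrum (it is, via composition), that the finiteness of $R$ required to apply Propositions~\ref{prop:vnmult}--\ref{prop:asymu} is inherited from finiteness of $X$, and that the hypotheses on $\ul n$ (finite, respectively finite and non-zero) and on the common direction carry over verbatim. The remainder is purely a translation of the element-level conclusions through the duality above.
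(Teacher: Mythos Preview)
Your proposal is correct and is exactly the approach the paper takes: set $R = X \wedge DX \simeq F(X,X)$ for finite $X$, use that geometric fixed points commute with Spanier--Whitehead duality to identify $v_{\ul{n}}$-self maps of $X$ with $v_{\ul{n}}$-elements of $R$, and then invoke Propositions~\ref{prop:vnmult}, \ref{prop:vncen}, and \ref{prop:asymu} directly, following \cite[Cor.~3.3, 3.5, 3.7--8]{HS}.
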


We deduce the following theorem.

\begin{thm}
Suppose that $\ul{n}$ is a finite, non-zero height function on $\Sub(A)$ and fix $\gamma \in RO(A)$.  The full subcategory $\mc{V}^{\gamma}_{\ul{n}}$ of $\Sp^A_{(p),\omega}$ consisting of $X$ which admit a $v_{\ul{n}}$-self map in the direction $\gamma$ is a tt-ideal.
\end{thm}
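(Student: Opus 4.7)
The task is to verify that $\mc{V}^\gamma_{\ul{n}}$ satisfies the axioms of a thick tensor ideal in $\Sp^A_{(p),\omega}$: it contains $0$, is closed under suspensions, retracts, and cofibers, and is closed under $\wedge$ with arbitrary finite $A$-spectra. The strategy, modeled on \cite{HS} (see also the non-equivariant treatment in \cite{Lurienilp}), is to reduce each axiom to a centrality or asymptotic-uniqueness property of $v_{\ul{n}}$-self maps. Crucially, because $\ul{n}$ is finite and non-zero, Lemma~\ref{lem:torsion} applies, so Corollary~\ref{cor:vnselfmap} furnishes both centrality and asymptotic uniqueness for the $v_{\ul{n}}$-self maps under consideration.

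The elementary steps are as follows. The zero object lies in $\mc{V}^\gamma_{\ul{n}}$ trivially. If $v \colon \Sigma^{\gamma'} X \to X$ is a $v_{\ul{n}}$-self map of direction $\gamma$, then $\Sigma^\delta v$ is a $v_{\ul{n}}$-self map of $\Sigma^\delta X$ of the same direction, proving closure under (de)suspension; smashing $v$ with $1_Y$ for $Y \in \Sp^A_{(p),\omega}$ yields a $v_{\ul{n}}$-self map of $X \wedge Y$, because on $\Phi B$-fixed points the smash property of non-equivariant $v_{\ul{n}(B)}$-self maps with an arbitrary finite complex is preserved. For retracts, suppose $Y$ is a summand of $X \in \mc{V}^\gamma_{\ul{n}}$ cut out by an idempotent $e = ir \in \pi^A_0(X \wedge DX)$; by Corollary~\ref{cor:vnselfmap} some power $v^N$ is central in the homotopy ring spectrum $X \wedge DX$, hence commutes with $e$, and therefore $r \circ v^N \circ i \colon \Sigma^{N\gamma'} Y \to Y$ is well-defined. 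On $\Phi B$-fixed points this is the restriction of a $v_{\ul{n}(B)}$-self map of $X^{\Phi B}$ to a summand, which is again a $v_{\ul{n}(B)}$-self map.

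The main step is closure under cofibers. Given a cofiber sequence $W \xrightarrow{f} X \to Z$ in $\Sp^A_{(p),\omega}$ with $W, X \in \mc{V}^\gamma_{\ul{n}}$, pick $v_{\ul{n}}$-self maps $v_W$ and $v_X$ of direction $\gamma$. Using the direction hypothesis and passing to suitable powers, we may arrange that $v_W$ and $v_X$ have the same degree $\gamma''$. Form the homotopy ring spectrum $R = (W \vee X) \wedge D(W \vee X)$ and regard $v_W \vee v_X \in \pi^A_{\gamma''} R$ as a $v_{\ul{n}}$-element. Let $\tilde f \in \pi^A_0 R$ be the element corresponding to the composite $W \vee X \twoheadrightarrow W \xrightarrow{f} X \hookrightarrow W \vee X$. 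By Corollary~\ref{cor:vnselfmap}, some power $(v_W \vee v_X)^M$ is central in $\pi^A_\star R$, and commutation with $\tilde f$ unpacks to the strict equality $f \circ v_W^M = v_X^M \circ f$. The induced map on cofibers produces a self-map $v_Z \colon \Sigma^{M\gamma''} Z \to Z$. Applying $\Phi^B$ yields a map of cofiber sequences in which the $W$- and $X$-columns are $v_{\ul{n}(B)}$-self maps; standard Hopkins--Smith reasoning (five-lemma after smashing with $K(\ul{n}(B))$ and nilpotence of $K(m) \wedge v$ for $m \ne \ul{n}(B)$) shows $v_Z^{\Phi B}$ is a $v_{\ul{n}(B)}$-self map, so $Z \in \mc{V}^\gamma_{\ul{n}}$.

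The main obstacle is precisely this cofiber step: one must convert the merely asymptotic agreement of $f \circ v_W^M$ and $v_X^M \circ f$ into a strict equality so that a genuine map of triangles is obtained. This relies on centrality in $\pi^A_\star R$, and ultimately on Lemma~\ref{lem:HSlem}, which is available because the hypothesis ``$\ul{n}$ finite and non-zero'' ensures via Lemma~\ref{lem:torsion} that a power of $v_W \vee v_X$ is torsion.
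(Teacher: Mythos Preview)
Your proposal is correct and follows essentially the same approach as the paper: the paper simply cites \cite[Cor.~3.9]{HS} for thickness and observes that $v \wedge 1_Y$ handles the tensor ideal property, whereas you have spelled out the Hopkins--Smith argument in detail (centrality in the endomorphism ring of $W \vee X$ to produce a strict map of triangles, etc.). One minor remark: for the cofiber step you are really invoking Proposition~\ref{prop:vncen} (centrality of $v_{\ul{n}}$-elements in the finite homotopy ring spectrum $R = (W \vee X) \wedge D(W \vee X)$) rather than Corollary~\ref{cor:vnselfmap} per se, though of course the two statements are equivalent once one identifies self-maps of $W \vee X$ with elements of $\pi^A_\star R$.
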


\begin{proof}
The argument of \cite[Cor.~3.9]{HS} applies to show that $\mc{V}^{\gamma}_{\ul{n}}$ is thick.  Suppose that $X \in \mc{V}^\gamma_{\ul{n}}$, and let
$$ v : \Sigma^{\gamma'} X \to X $$
be a $v_{\ul{n}}$-self map.  Then if $Y \in \Sp^A_{(p),\omega}$, then
$$ v \wedge Y: \Sigma^{\gamma'} X \wedge Y \to X \wedge Y $$
is a $v_{\ul{n}}$-self map.
\end{proof}

The following is an immediate consequence of the classification of tt-ideals of $\Sp^A_{(p),\omega}$ of \cite{BGH}. 

\begin{cor}\label{cor:single}
Suppose $\ul{n}$ is an admissible height function, fix $\gamma \in RO(A)$, and suppose that $S \subseteq \Sub(A)$ satisfies 
$$ \ul{n}|_S \: \text{is finite and non-zero}. $$
If there exists a single example of a finite type $\ul{n}$ complex with a $v^S_{\ul{n}}$-self map with direction $\gamma$, then every finite type $\ul{n}$ complex admits a $v^S_{\ul{n}}$-self map with direction $\gamma$.
\end{cor}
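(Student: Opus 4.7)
The plan is to deduce Corollary~\ref{cor:single} directly from the preceding theorem (which asserts that $\mc{V}^{\gamma}_{\ul{m}}$ is a tt-ideal whenever $\ul{m}$ is a finite, non-zero height function), combined with the classification of tt-ideals of $\Sp^A_{(p),\omega}$ recorded above from \cite{BGH}.

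First, I would unpack the definitions to line everything up. By construction, a $v^S_{\ul{n}}$-self map in direction $\gamma$ is by definition a $v_{\ul{n}|_S}$-self map in direction $\gamma$, where $\ul{n}|_S$ is the restriction defined in \eqref{eq:restriction}. The hypothesis that $\ul{n}|_S$ is finite and non-zero is exactly what is needed so that the preceding theorem applies to the height function $\ul{m} := \ul{n}|_S$, yielding that the full subcategory $\mc{V}^{\gamma}_{\ul{n}|_S} \subseteq \Sp^A_{(p),\omega}$ of complexes admitting a $v_{\ul{n}|_S}$-self map in direction $\gamma$ is a tt-ideal.

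Next, I would invoke the classification of tt-ideals from \cite{BGH}: every tt-ideal of $\Sp^A_{(p),\omega}$ is of the form $I_{\ul{m}} = \{ X \: : \: \mr{supp}(X) \subseteq V_{\ul{m}}\}$ for some admissible type function $\ul{m}$. Thus there exists an admissible type function $\ul{m}$ with $\mc{V}^{\gamma}_{\ul{n}|_S} = I_{\ul{m}}$. The hypothesis produces a finite type $\ul{n}$ complex $X_0$ lying in $\mc{V}^{\gamma}_{\ul{n}|_S} = I_{\ul{m}}$. Since any type $\ul{n}$ complex has support equal to $V_{\ul{n}}$ (a fact recorded in the discussion preceding Definition~\ref{defn:adtype}), the inclusion $X_0 \in I_{\ul{m}}$ forces $V_{\ul{n}} \subseteq V_{\ul{m}}$. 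For any other finite type $\ul{n}$ complex $X$, we likewise have $\mr{supp}(X) = V_{\ul{n}} \subseteq V_{\ul{m}}$, hence $X \in I_{\ul{m}} = \mc{V}^{\gamma}_{\ul{n}|_S}$, so $X$ admits a $v^S_{\ul{n}}$-self map in direction $\gamma$.

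There is no real obstacle in this argument; the work has all been done in the preceding theorem and in \cite{BGH}. The only point requiring care is the bookkeeping that (i) a $v^S_{\ul{n}}$-self map is genuinely the same thing as a $v_{\ul{n}|_S}$-self map, so that the tt-ideal theorem applies with $\ul{m} = \ul{n}|_S$, and (ii) support of a type $\ul{n}$ complex depends only on $\ul{n}$, so that membership in the tt-ideal $I_{\ul{m}}$ transfers from one type $\ul{n}$ complex to another.
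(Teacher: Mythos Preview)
Your proposal is correct and is exactly the argument the paper has in mind: the paper simply states that the corollary is ``an immediate consequence of the classification of tt-ideals of $\Sp^A_{(p),\omega}$ of \cite{BGH},'' and you have spelled out precisely that deduction. The only cosmetic point is that you use the symbol $\ul{m}$ both for $\ul{n}|_S$ and for the type function coming from the BGH classification; renaming one of them would avoid any possible confusion.
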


As a sample application we have the following limited version of the Hopkins-Smith periodicity theorem.

\begin{thm}\label{thm:vnB}
Suppose $X \in \Sp^A_{(p),\omega}$ is of type $\ul{n}$ with   
$$ \mr{dom}_{< \infty}(\ul{n}) = \mc{F}_{\subseteq B} $$
and $n := \ul{n}(B) \ne 0$. 
Then $X$ possesses a $v^B_{n}$-self map.
\end{thm}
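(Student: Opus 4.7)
The plan is to produce the $v^B_n$-self map of $X$ directly, combining the non-equivariant Hopkins--Smith periodicity theorem with the smashing equivalence $\Sp^A[\mc{F}_{B \nsubseteq}^{-1}] \simeq \Sp^{A/B}$ and a multiplicative twist by Euler classes. By Hopkins--Smith, $X^{\Phi B}$ (which is non-equivariantly of type $n$) carries a $v_n$-self map $w\colon \Sigma^k X^{\Phi B} \to X^{\Phi B}$. Since $X^{\Phi C} = *$ for every $C \supsetneq B$, the $A/B$-spectrum $\Phi^B X$ is concentrated at the trivial isotropy class in $\Sub(A/B)$ -- it is Borel -- and its $A/B$-equivariant self-maps are precisely the Borel $A/B$-equivariant self-maps of $X^{\Phi B}$. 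Applying the non-equivariant centrality and asymptotic-uniqueness properties of $v_n$-self maps to $w$, and iterating to kill Borel homotopy-fixed-point obstructions (using that $A/B$ is a finite $p$-group acting on a finite complex), some iterate $w^{N p^M}$ refines to a genuinely $A/B$-equivariant self-map of $\Phi^B X$, i.e.\ an element of $\pi^{A/B}_\star \text{End}(\Phi^B X)$.

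Next, Proposition~\ref{prop:eulerloc} and the smashing equivalence identify
\[
\pi^{A/B}_\star \text{End}(\Phi^B X) \;\cong\; \pi^A_\star \text{End}(X)\bigl[a_\beta^{-1} : \beta \in A^\vee,\ \beta|_B \ne 1\bigr].
\]
Clearing denominators produces $y \in \pi^A_\star \text{End}(X)$ whose image in the localization is $w^{N p^M}$ multiplied by some product of the inverted Euler classes. Each $a_\beta$ with $\beta|_B \ne 1$ restricts under $\Phi^B$ to a unit in $\pi_0 S = \ZZ$, so $y^{\Phi B}$ is a unit multiple of a power of $w$, and is therefore itself a $v_n$-self map of $X^{\Phi B}$.

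To force nilpotence at proper subgroups of $B$, multiply $y$ by a product of Euler classes tailored to the maximal subgroups of $B$. For each maximal subgroup $B_j \subsetneq B$, the quotient $B/B_j$ is cyclic of order $p$, so one can choose $\beta_j \in A^\vee$ with $\ker(\beta_j|_B) = B_j$. Set $M := \prod_j a_{\beta_j} \in \pi^A_\star S$. Then $M^{\Phi B}$ is a unit (each factor is, since $\beta_j|_B \ne 1$), whereas $M^{\Phi C} = 0$ for every proper $C \subsetneq B$: any such $C$ lies in some maximal $B_j$, forcing $\beta_j|_C = 1$ and hence $a_{\beta_j}^{\Phi C} = 0$. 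The element $v := M \cdot y \in \pi^A_\star \text{End}(X)$ is then the desired self-map: $v^{\Phi B} = M^{\Phi B} \cdot y^{\Phi B}$ is (a unit multiple of) a $v_n$-self map; $v^{\Phi C} = M^{\Phi C} \cdot y^{\Phi C} = 0$ for every $C \subsetneq B$; and $v^{\Phi C} = 0$ whenever $X^{\Phi C} = *$, i.e.\ whenever $C \not\le B$. Thus $v$ is a $v^B_n$-self map of $X$.

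The main technical obstacle is promoting $w$ to a strictly $A/B$-equivariant self-map of $\Phi^B X$ in the first step. Starting from a strict $A/B$-fixed iterate $w^N$ of $w$, guaranteed by the non-equivariant centrality result, one must further lift through the Borel homotopy-fixed-point spectral sequence for $\text{End}(X^{\Phi B})^{h A/B}$, whose higher differential obstructions vanish after additional $p$-power iteration because $A/B$ is a finite $p$-group acting on a finite spectrum; this is precisely the sort of maneuver used in the Hopkins--Smith argument to produce equivariant $v_n$-self maps in the Borel setting.
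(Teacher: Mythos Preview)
Your Euler-class manipulations --- transporting through $\Sp^A[\mc{F}_{B\nsubseteq}^{-1}] \simeq \Sp^{A/B}$, clearing denominators, then multiplying by $\prod_j a_{\beta_j}$ to annihilate geometric fixed points at proper $C \subsetneq B$ --- match the paper's argument. The gap is exactly where you flag it: promoting $w$ to a Borel $A/B$-equivariant self-map of $\Phi^B X$. Asymptotic uniqueness (rather than centrality) does produce a power $w^N$ fixed by $A/B$ in $\pi_*F(X^{\Phi B},X^{\Phi B})$, but your Leibniz maneuver only kills one differential at a time: taking a $p^{M}$-th power forces $d_r$ to vanish, so $w^{Np^{kM}}$ survives to $E_{k+2}$, yet nothing you have said forces any such power to survive to $E_\infty$. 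The endomorphism spectrum $F(X^{\Phi B},X^{\Phi B})$ is not bounded above, so the HFPSS need not degenerate at any finite page and the iteration need not terminate. The invoked ``Hopkins--Smith argument in the Borel setting'' is not something established in \cite{HS}.

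The paper sidesteps this obstruction entirely. By Corollary~\ref{cor:single}, it suffices to exhibit a $v^B_n$-self map on a \emph{single} type~$\ul{n}$ complex, and the paper chooses $A/B_+ \wedge X$. Then $\Phi^B(A/B_+ \wedge X) \simeq A/B_+ \wedge (X^{\Phi B})_{\mr{triv}}$ is induced from the trivial subgroup of $A/B$, so the non-equivariant map $v$ yields the $A/B$-equivariant map $A/B_+ \wedge v_{\mr{triv}}$ for free --- no lifting needed. The remaining steps are then exactly yours. If you insist on filling the gap for $X$ itself, the cleanest route is a tt-ideal argument inside $\Sp^{B(A/B)}$, which is just the paper's method transported one level down.
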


This theorem follows from the following lemma, which provides the single example we need to apply Corollary~\ref{cor:single}.

\begin{lem}
Suppose that $X \in \Sp^A_{(p),\omega}$ has $X^{\Phi B}$ of type $n < \infty$.  Then there exists a $v^B_n$-self map on $A/B_+ \wedge X$.
\end{lem}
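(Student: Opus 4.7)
\emph{Proof plan.} The argument splits into two cases, based on whether $B = A$ or $B \lneq A$.

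For the case $B = A$ (where $A/B_+ \wedge X = X$), we construct the self-map directly using Euler classes. By the non-equivariant Hopkins-Smith periodicity theorem, the type $n$ spectrum $X^{\Phi A}$ admits a $v_n$-self map $\tilde{v} \colon \Sigma^k X^{\Phi A} \to X^{\Phi A}$. By Proposition~\ref{prop:eulerloc}, the functor $\Phi^A$ arises as the smashing localization inverting all Euler classes $a_\alpha$ for $\alpha \in A^\vee \setminus \{1\}$, with each $(a_\alpha)^{\Phi A} = 1 \in \pi_0 S$ a unit. Hence $\tilde{v}$ lifts to an element $r \in \pi^A_\gamma(X \wedge DX)$ (for some $\gamma \in RO(A)$) with $\Phi^A r = \tilde{v}$. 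For each proper subgroup $C \lneq A$, the non-trivial finite abelian quotient $A/C$ admits a non-trivial character, which pulls back to $\alpha_C \in A^\vee$ with $\alpha_C \neq 1$ and $\alpha_C|_C = 1$. Then $(a_{\alpha_C})^{\Phi A} = 1$ remains a unit, while $(a_{\alpha_C})^{\Phi C} = 0$, since its underlying non-equivariant representative lies in $\pi_{-|\alpha_C|} S = \pi_{-2} S = 0$. The element
$$
r' := r \cdot \prod_{C \lneq A} a_{\alpha_C}
$$
then satisfies $\Phi^A r' = \tilde{v}$ (a $v_n$-self map) and $\Phi^C r' = 0$ (hence nilpotent) for each $C \lneq A$, yielding a $v_n^A$-self map on $X$.

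For the case $B \lneq A$, apply the previous case within $\Sp^B$ to $i_B^* X$, whose $B$-geometric fixed points coincide with $X^{\Phi B}$ and are of type $n$; this produces a $v_n^B$-self map $f \colon \Sigma^\delta i_B^* X \to i_B^* X$ in $\Sp^B$. After replacing $f$ by a suitable power if necessary, we may assume $\delta = i_B^* \gamma$ for some $\gamma \in RO(A)$ (using that $2 \cdot RO(B) \subseteq i_B^* RO(A)$, via realification of complex representations together with the surjectivity of $R(A) \twoheadrightarrow R(B)$ for abelian groups). Set $v := \Ind_B^A f$, an $A$-equivariant self-map of $\Ind_B^A i_B^* X = A/B_+ \wedge X$. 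The tom Dieck formula in the abelian case, $(\Ind_B^A f)^{\Phi C} = (A/B)^C_+ \wedge f^{\Phi C}$, gives: $v^{\Phi C} = 0$ for $C \not\le B$; $v^{\Phi B} = (A/B)_+ \wedge f^{\Phi B}$ is a $v_n$-self map; and $v^{\Phi C} = (A/B)_+ \wedge f^{\Phi C}$ is nilpotent for $C \lneq B$, so $v$ is the desired $v_n^B$-self map.

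The essential step is the case $B = A$, where one must simultaneously lift a non-equivariant $v_n$-self map to an $A$-equivariant one \emph{and} force nilpotence of all proper geometric fixed points. The resolution leverages Proposition~\ref{prop:eulerloc}'s realization of $\Phi^A$ via inversion of Euler classes (which permits the lift), combined with the key fact that an Euler class $a_{\alpha_C}$ with $\alpha_C|_C = 1$ and $\alpha_C \neq 1$ vanishes under $\Phi^C$ while remaining a unit under $\Phi^A$ (allowing the concentration at top isotropy). The case $B \lneq A$ then follows formally via induction-restriction and the tom Dieck formula.
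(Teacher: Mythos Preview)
Your proof is correct and shares the same core technique as the paper's: lift a non-equivariant $v_n$-self map through the Euler-class localization realizing geometric fixed points, then multiply by carefully chosen Euler classes (trivial on the target subgroup, nontrivial on the ambient group) to force nilpotence at the remaining isotropy. The organization, however, is genuinely different. The paper treats all $B \le A$ uniformly: it works entirely in $\Sp^A$, passes through the equivalence $\Sp^A[\mc{F}^{-1}_{B\nsubseteq}] \simeq \Sp^{A/B}$ to import $A/B_+ \wedge v_{\mr{triv}}$, and then clears denominators and kills lower fixed points using the Euler classes $a_\alpha$ with $\alpha|_B \ne 1$. You instead handle $B = A$ directly (which is the paper's argument specialized to the top group) and then, for $B \lneq A$, reduce to the full-group case inside $\Sp^B$ and push back up via $\Ind_B^A$ and the Mackey/tom Dieck description of $(\Ind_B^A f)^{\Phi C}$. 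Your route is slightly more modular but costs you the extra bookkeeping step of replacing $f$ by a power so that its $RO(B)$-degree lies in $i_B^* RO(A)$; the paper avoids this because it never leaves $\Sp^A$. Conversely, your approach sidesteps the (implicit) shearing identification $A/B_+ \wedge \Phi^B X \simeq A/B_+ \wedge (X^{\Phi B})_{\mr{triv}}$ in $\Sp^{A/B}$ that the paper uses. Both arguments ultimately hinge on the same two facts about Euler classes: $(a_\alpha)^{\Phi C}$ is a unit when $\alpha|_C \ne 1$, and vanishes (landing in $\pi_{-2}S = 0$) when $\alpha|_C = 1$ but $\alpha \ne 1$.
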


\begin{proof}
By hypothesis, $X^{\Phi B}$ is a type $n$ complex, and so therefore there exists a non-equivariant $v_n$-self-map
$$ v: \Sigma^k X^{\Phi B} \to X^{\Phi B}. $$
This gives rise to a map of $A/B$-spectra
$$ v' := A/B_+ \wedge v_{\mr{triv}}: \Sigma^k A/B_+ \wedge (X^{\Phi B})_{\mr{triv}} \to A/B_+ \wedge (X^{\Phi B})_{\mr{triv}}.
$$
Under the equivalence of $\infty$-categories
$$ \Phi^B: \Sp^{A}[\mc{F}^{-1}_{B \nsubseteq}] \simeq \Sp^{A/B} $$
this gives a map of $A$-spectra
$$ v'' : \Sigma^k A/B_+ \wedge X \to A/B_+ \wedge X[\mc{F}^{-1}_{B \nsubseteq}]. $$
Let $\{\alpha_j\}^\ell_{j = 1}$ be the set of characters $\alpha$ such that $\alpha\big\vert_B \ne 1$.  By the second equivalence of Corollary~\ref{cor:formula}, it follows that for $i_j \gg 0$, the map $v''$ gives rise to a map
$$ a_1^{i_1} \cdots a_\ell^{i_\ell}v'': \Sigma^{k-i_1\alpha_1 - \cdots - i_\ell\alpha_\ell} A/B_+ \wedge X \to A/B_+ \wedge X.
$$
Take
$$ v''' : =  a_1^{i_1+1} \cdots a_\ell^{i_\ell+1}v''. $$
Note that by construction
$$ (v''')^{\Phi B} \simeq A/B_+ \wedge v $$
and is therefore a $v_n$-self map.
If $C \nsubseteq B$, then 
$$ (A/B_+)^{\Phi C} \simeq \ast $$
and therefore $(v''')^{\Phi C} \simeq 0$.  If $C \subsetneq B$, then there exists a character $\alpha_{j_0}$ which is trivial on $C$, but not on $B$.  Since $v'''$ is divisible by $a_{j_0}$, it follows that $(v''')^{\Phi C} \simeq 0$.
\end{proof}

\section{Chromatic localizations}\label{sec:loc}

\subsection*{Equivariant analogs of $\pmb{K(n)}$ and $\pmb{E(n)}$}

In \cite{Multicurves}, Strickland defined \emph{equivariant Morava $K$-theories}
$$ K(B,n) := A/B_+ \wedge K(n)_{\mr{triv}}[\mc{F}^{-1}_{B \nsubseteq}] \in \Sp^A_{(p)}. $$
We define \emph{equivariant Johnson-Wilson theories} similarly by
$$ E(B,n) := A/B_+ \wedge E(n)_{\mr{triv}}[\mc{F}^{-1}_{B \nsubseteq}] \in \Sp^A_{(p)}. $$
Since the spectra $K(n)$ and $E(n)$ are associative ring spectra, and the spectra
$$ A/B_+ \simeq D(A/B_+) $$
are commutative ring spectra, the spectra $K(B,n)$ and $E(B,n)$ are associative ring spectra.

\begin{prop}
For $C \le A$ we have
$$ K(B,n)^{\Phi C} \simeq \begin{cases}
\text{$A/B_+\wedge K(n)$},  & C = B, \\
\ast, & \text{otherwise}.
\end{cases} $$
and
$$ E(B,n)^{\Phi C} \simeq \begin{cases}
\text{$A/B_+\wedge E(n)$},  & C = B, \\
\ast, & \text{otherwise}.
\end{cases} $$
\end{prop}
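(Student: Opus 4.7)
The plan is to exploit that $(-)^{\Phi C} : \Sp^A \to \Sp$ is strong symmetric monoidal, reducing the computation to evaluating $\Phi^C$ on each of the three smash factors in
$$ K(B,n) \simeq (A/B)_+ \wedge K(n)_{\mr{triv}} \wedge \widetilde{E\mc{F}_{B \nsubseteq}} $$
(and similarly for $E(B,n)$), where the rightmost factor comes from rewriting the smashing localization $[\mc{F}_{B\nsubseteq}^{-1}]$ as smashing with $\widetilde{E\mc{F}_{B\nsubseteq}}$. So I would aim to show
$$ K(B,n)^{\Phi C} \simeq \bigl((A/B)_+\bigr)^{\Phi C} \wedge (K(n)_{\mr{triv}})^{\Phi C} \wedge \bigl(\widetilde{E\mc{F}_{B \nsubseteq}}\bigr)^{\Phi C} $$
and then identify each factor.

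First I would handle the suspension spectrum factor using the standard formula $(\Sigma^\infty_+ X)^{\Phi C} \simeq \Sigma^\infty_+ X^C$ for a finite $A$-set $X$; with $X = A/B$ and $A$ abelian, $(A/B)^C = A/B$ when $C \le B$ and is empty otherwise, so this contributes $(A/B)_+$ in the first case and $\ast$ in the second. For the middle factor, the trivially-acting ring $K(n)_{\mr{triv}} = q_A^* K(n)$ satisfies $(q_A^* Y)^{\Phi C} \simeq Y$, so this contributes $K(n)$ (and $E(n)$ in the parallel statement). For the rightmost factor, applying (\ref{eq:geomfp}) with $E = S^0$ and the family $\mc{F}_{B\nsubseteq}$ gives $(\widetilde{E\mc{F}_{B \nsubseteq}})^{\Phi C} \simeq S^0$ exactly when $C \notin \mc{F}_{B \nsubseteq}$, i.e.\ $B \le C$, and $\ast$ otherwise.

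Assembling the three pieces, the smash product is contractible unless $C \le B$ and $B \le C$ both hold, i.e.\ $C = B$, in which case it collapses to $(A/B)_+ \wedge K(n)$ (respectively $(A/B)_+ \wedge E(n)$). The only point requiring any care is the strong monoidality of $\Phi^C$ on $\Sp^A$, which is classical, so I do not expect any serious obstacle.
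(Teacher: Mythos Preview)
Your proposal is correct and is essentially the same argument as the paper's: the paper's proof also records the three identifications $(\Sigma^\infty A/B_+)^{\Phi C}$, $E[\mc{F}^{-1}_{B\nsubseteq}]^{\Phi C}$, and $E_{\mr{triv}}^{\Phi C}$, and combines them via the monoidality of geometric fixed points. The only cosmetic difference is that the paper packages the last two facts as a single statement about $E[\mc{F}^{-1}_{B\nsubseteq}]^{\Phi C}$ rather than splitting off $\widetilde{E\mc{F}_{B\nsubseteq}}$ as a separate smash factor.
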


\begin{proof}
This follows from the following observations:
\begin{align*}
(\Sigma^\infty A/B_+)^{\Phi C} & \simeq \begin{cases}
\Sigma^\infty A/B_+, & C \subseteq B, \\
\ast, & C \nsubseteq B,
\end{cases}\\
E[\mc{F}^{-1}_{B \nsubseteq}]^{\Phi C} & \simeq 
\begin{cases}
E^{\Phi C}, & B \subseteq C, \\
\ast, & B \nsubseteq C,
\end{cases}\\
E_{\mr{triv}}^{\Phi C} &\simeq E.
\end{align*}
\end{proof}

\begin{cor}\label{cor:null}
For $X \in \Sp^A$, we have 
$$ K(B,n) \wedge X \simeq \ast $$
if and only if
$$ K(n) \wedge X^{\Phi B} \simeq \ast, $$
and
$$ E(B,n) \wedge X \simeq \ast $$
if and only if
$$ E(n) \wedge X^{\Phi B} \simeq \ast, $$
\end{cor}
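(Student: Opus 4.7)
The plan is to reduce the statement on $A$-spectra to a statement on geometric fixed points, then apply the preceding proposition directly.

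First I would use the standard fact that an object $Y \in \Sp^A$ is contractible if and only if $Y^{\Phi C} \simeq \ast$ for every subgroup $C \le A$ (this is essentially the conservativity of the family of geometric fixed point functors, and is built into the topology of the Balmer spectrum). Applying this to $Y = K(B,n) \wedge X$, and using the symmetric monoidality of $(-)^{\Phi C}$, the condition $K(B,n) \wedge X \simeq \ast$ is equivalent to
$$ K(B,n)^{\Phi C} \wedge X^{\Phi C} \simeq \ast \quad \text{for all } C \le A. $$

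Next I would invoke the preceding proposition, which tells us that $K(B,n)^{\Phi C} \simeq \ast$ for $C \ne B$. Thus all conditions with $C \ne B$ are automatic, and the only non-trivial condition is at $C = B$, where it becomes
$$ A/B_+ \wedge K(n) \wedge X^{\Phi B} \simeq \ast. $$
Since $A/B$ is a nonempty finite set, the non-equivariant spectrum $A/B_+ \wedge K(n) \wedge X^{\Phi B}$ is just a finite nonempty wedge of copies of $K(n) \wedge X^{\Phi B}$, which is contractible if and only if $K(n) \wedge X^{\Phi B}$ itself is contractible. This yields the stated equivalence for $K(B,n)$.

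The argument for $E(B,n)$ is formally identical, using the $E(n)$-case of the preceding proposition in place of the $K(n)$-case; no new ideas are required. There is no real obstacle here — the corollary is essentially a direct reformulation of the geometric fixed point computation, and the only point worth mentioning is the reduction via the conservativity of geometric fixed points and the harmlessness of smashing with $A/B_+$ for detecting nullity.
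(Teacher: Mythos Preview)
Your proposal is correct and matches the paper's approach: the paper states this as an immediate corollary of the preceding geometric fixed point computation without further proof, and your argument is exactly the natural fleshing-out of that step (conservativity of geometric fixed points, symmetric monoidality of $(-)^{\Phi C}$, and the observation that smashing with the nonempty finite wedge $A/B_+$ does not affect nullity).
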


The following is an immediate consequence of \ref{eq:TBlocal}.

\begin{cor}\label{cor:monochromatic}
There are equivalences of $\infty$-categories:
\begin{align*} 
\Sp_{K(B,n)} & \xrightarrow{\simeq} \Sp^{B(A/B)}_{K(n)} \\
X & \mapsto (X^{\Phi B})_{K(n)} \\
\\
\Sp_{E(B,n)} & \xrightarrow{\simeq} \Sp^{B(A/B)}_{E(n)} \\
X & \mapsto (X^{\Phi B})_{E(n)}
\end{align*} 
\end{cor}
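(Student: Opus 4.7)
The plan is to deduce both equivalences from the isotropy identification
$$ \Sp^A_{T(B)} \xrightarrow{\simeq} \Sp^{B(A/B)} $$
recorded in (\ref{eq:TBlocal}). The composite $\Sp^A \to \Sp^A_{T(B)} \xrightarrow{\simeq} \Sp^{B(A/B)}$ is the Borel completion of $\Phi^B$, so the underlying spectrum of the image of $X$ is $X^{\Phi B}$. By the computation of $K(B,n)^{\Phi C}$ and $E(B,n)^{\Phi C}$ in the proposition just above, the spectra $K(B,n)$ and $E(B,n)$ both have support contained in $\{B\}$, hence are $T(B)$-local. Consequently both $K(B,n)$- and $E(B,n)$-localization factor through $T(B)$-localization, and it suffices to verify that, under (\ref{eq:TBlocal}), Bousfield localization at $K(B,n)$ (resp.\ $E(B,n)$) corresponds to Bousfield localization at $K(n)$ (resp.\ $E(n)$) inside $\Sp^{B(A/B)}$.

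For this last step, Corollary~\ref{cor:null} shows that a map $f$ in $\Sp^A$ is a $K(B,n)$-equivalence exactly when $K(n) \wedge f^{\Phi B}$ is an equivalence of spectra. Under (\ref{eq:TBlocal}), passing to $\Phi^B$ recovers the underlying spectrum of the associated Borel $A/B$-spectrum, and because $K(n)$ is being given the trivial $A/B$-action, $K(n)$-acyclicity in $\Sp^{B(A/B)}$ can be tested on underlying spectra: the forgetful functor $\Sp^{B(A/B)} \to \Sp$ is conservative, and for $Y \in \Sp^{B(A/B)}$ we have $(K(n) \wedge Y)^e \simeq K(n) \wedge Y^e$. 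Hence the $K(B,n)$-equivalences in $\Sp^A_{T(B)}$ correspond bijectively to the $K(n)$-equivalences in $\Sp^{B(A/B)}$, yielding the first equivalence of localized categories; chasing the composite identifies the comparison functor as $X \mapsto (X^{\Phi B})_{K(n)}$.

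The only nontrivial ingredient is the isotropy identification (\ref{eq:TBlocal}) itself, which has already been established, so all that remains is the bookkeeping above. Replacing $K(n)$ by $E(n)$ and $K(B,n)$ by $E(B,n)$ throughout, exactly the same argument handles the $E$-theoretic case.
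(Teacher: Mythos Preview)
Your argument is correct and follows the same route as the paper: the paper simply states that the corollary is an immediate consequence of \ref{eq:TBlocal}, and you have spelled out precisely why it is immediate (factoring the localization through $\Sp^A_{T(B)}$ via Corollary~\ref{cor:null}, and identifying the induced localization on $\Sp^{B(A/B)}$ as $K(n)$- or $E(n)$-localization using conservativity of the forgetful functor).
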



\subsection*{$\pmb{E(\ul{n})}$-localization}

Carrick introduced chromatic towers of smashing localizations for $\Sp^{C_{p^i}}_{(p)}$ \cite{Carrick}, and Balderrama did the same in the elementary abelian $p$-group case in \cite{BalderramaPower}.  Here we discuss a generalization to $\Sp^A_{(p)}$ for any finite abelian $p$-group $A$.  Undoubtedly, the interested reader could adapt these arguments to apply to any finite abelian group.

\begin{defn}
Suppose 
$$ \ul{n}: \Sub(A) \to \Nbar_- $$
is a height function.
Define a spectrum $E(\ul{n}) \in \Sp^A_{(p)}$ by\footnote{Here we take $E(B,-1) := \ast$.} 
$$ E(\ul{n}) = S[\mr{dom}_{< \infty}(\ul{n})^{-1}] \vee \bigvee_{B \in \mr{dom}_{<\infty}(\ul{n})} E(B,\ul{n}(B)).
$$
\end{defn}

\begin{rmk}
We will primarily be interested in the spectra $E(\ul{n})$ in the cases where the height function $\ul{n}$ is admissible.  Note that since we are assuming that $A$ is a $p$-group, the finite domain $\mr{dom}_{< \infty}(\ul{n}) \subseteq \Sub(A)$ of an admissible height function $\ul{n}$ is necessarily a family, and the infinite domaine $\mr{dom}_{=\infty}(\ul{n}) \subseteq \Sub(A)$ is open. 
\end{rmk}

Our main interest in $E(\ul{n})$ is the associated localization functor
\begin{align*}
\Sp^A_{(p)} & \to \Sp^A_{(p)} \\
X & \mapsto X_{E(\ul{n})}. 
\end{align*}
Note that since geometric fixed points is a monoidal functor, the Bousfield class of $E \in \Sp^A$ is determined by the collection of Bousfield classes $\bra{E^{\Phi B}}$ as $B$ ranges over the subgroups of $A$.  In particular, the Bousfield class $\bra{E(\ul{n})}$ is determined by
$$
\bra{E(\ul{n})^{\Phi B}} = 
\begin{cases}
\bra{E(\ul{n}(B))}, & 0 \le \ul{n}(B) < \infty, \\
\bra{S}, & \ul{n}(B) = \infty, \\
\bra{\ast}, & \ul{n}(B) = -1.
\end{cases}
$$

\begin{rmk}\label{rmk:E(n)}$\quad$
\begin{enumerate}
\item In the case of $A = C_p$, and $0 \le \ul{n} < \infty$, our $E(\ul{n})$ were introduced by Carrick in \cite{Carrick}.  Carrick also introduced \emph{different} spectra $E(\ul{n})$ which are Bousfield equivalent to our $E(\ul{n})$ for $A = C_{p^i}$, $i \ge 2$.

\item Balderrama studied Bousfield localizations of $\Sp^A$ with respect to $E^h_n$ (Morava $E_n$-theory regarded as a Borel $A$-spectrum with trivial action).  In this case we have an equality of Bousfield classes \cite[Thm~3.5]{sixauthor} (see also \cite{Torii})
$$ \bra{(E_n^h)^{\Phi B}} = \bra{E(n-\mr{rk}_p(B))} $$
(where $\bra{E(m)}$ is taken to be $\bra{\ast}$ if for $m < 0$).  Therefore we have an equality of Bousfield classes
$$ \bra{E^h_n} = \bra{E(h[n])} $$
where $h[n]$ is the height function given by
\begin{equation}\label{eq:h[n]}
h[n](B) = \mr{max}(n - \mr{rk}_p{B}, -1). 
\end{equation}
Note that the associated admissible closed subset $V_{h[n]+1} \subseteq \Spc_{(p)}(A)$ is the subset $V_{n+1}^A$ of Proposition~\ref{prop:height} corresponding to those $A$-equivariant formal group laws of height greater than or equal to $n+1$.

\item 
For $B \le A$, tom Dieck computes the geometric fixed points  $(\KU_A)^{\Phi B}$ \cite[Prop.~7.7.7]{tomDieckbook}.  Bonventre, Guillou, and Stapleton observed that this results in an equality of Bousfield classes \cite[Prop.~3.5,3.10]{BGS}
$$ \bra{(\KU_A)_{(p)}^{\Phi B}} =
\begin{cases}
\bra{E(1)},  & B = e, \\
\bra{H\QQ}, & B \: \text{cyclic}, \\
\ast, & \text{otherwise}.   
\end{cases}
$$
We therefore deduce an equality of Bousfield classes
$$ \bra{(\KU_A)_{(p)}} = \bra{E(h[1])}. $$

\item 
Let $c[n]$ be the constant height function at $n$.  Then the spectrum $E(c[n])$ is Bousfield equivalent to $E(n)_{\mr{triv}}$.

\item A map $f: X \to Y$ in $\Sp^A_{(p)}$ is an $E(\ul{n})$-equivalence if and only if 
$$ \text{$f^{\Phi B}$ is an equivalence, for $B$ with $\ul{n}(B) = \infty$} $$
and
$$ \text{$f^{\Phi B}$ is a $E(\ul{n}(B))$-equivalence, for $B$ with $0 \le \ul{n}(B) < \infty$}. $$

\item There is an equality of Bousfield classes
$$ \langle E(\ul{n}) \rangle = \left\langle \left( \bigvee_{B \in \mr{dom}_{=\infty}(\ul{n})} T(B) \right) \vee \left( \bigvee_{0 \le m \le \ul{n}(B) < \infty} K(B,m) \right) \right\rangle. $$ 
For $\ul{n}$ admissible, the reader should think of $X_{E(\ul{n})}$ as ``$X$ restricted to $U_{\ul{n}}$''.

\item 
If $\ul{n}' \ge \ul{n}$,  we have natural maps
$$ X_{E(\ul{n}')} \to X_{E(\ul{n})}. $$
\end{enumerate}
\end{rmk}

The \emph{chromatic tower} of $X \in \Sp^A_{(p)}$ is the system
$$ \{ X_{E(\ul{n})} \: : \: \text{$\ul{n}$ is an admissible height function} \}. $$

\begin{rmk}
The reader may prefer to regard the chromatic tower as the tower of localizations $X_{E(\ul{n})}$ where $\ul{n}$ ranges over the \emph{finite} admissible height functions, as these will eventually be shown to be built out of finitely many chromatic layers. Balderrama \cite{BalderramaPower} instead considers (by Remark~\ref{rmk:E(n)}(2)) the chromatic tower 
$$ \{ X_{E_n^h} \} = \{ X_{E(h[n])} \}. $$
Which of these notions is the more appropriate notion probably depends on the intended application.
\end{rmk}

\subsection*{The equivariant Smash Product Theorem}

Recall the Hopkins-Ravenel Smash Product Theorem \cite{Ravenel}.

\begin{thm}[Hopkins-Ravenel (\cite{Ravenel})]
Localization with respect to $E(n)$ is smashing.
\end{thm}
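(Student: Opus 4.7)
The plan is to show that $L_n = L_{E(n)}$ commutes with arbitrary coproducts, equivalently that the canonical map $X \wedge L_n S \to L_n X$ is an equivalence for every $X \in \Sp$. Since $S \to L_n S$ is an $E(n)$-equivalence, smashing with $X$ keeps this property, so the map in question is automatically an $E(n)$-equivalence; the entire content is to show that $X \wedge L_n S$ is $E(n)$-local for all $X$. Dually, letting $C_n S$ denote the fiber of $S \to L_n S$ (which is itself $E(n)$-acyclic), it suffices to prove that the $E(n)$-acyclic spectra form a smashing tensor ideal, i.e.\ that $X \wedge C_n S$ is $E(n)$-acyclic for every $X$.

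The strategy is induction on $n$. The base case $n=0$ is trivial, since $L_0$ is rationalization and manifestly smashing. For the inductive step, one uses the Bousfield class decomposition $\langle E(n) \rangle = \langle E(n-1) \rangle \vee \langle K(n) \rangle$ together with Bauer's fracture square (Proposition~\ref{prop:fracture}) applied with $E = E(n-1)$ and $F = K(n)$. This expresses $L_n X$ as a pullback of $L_{n-1} X$, $L_{K(n)} X$, and $L_{n-1} L_{K(n)} X$. By the inductive hypothesis $L_{n-1}$ is smashing, so the smashing property of $L_n$ reduces to the claim that $L_{K(n)}$ is ``smashing on $E(n-1)$-acyclics'': for every $E(n-1)$-acyclic $X$ the map $X \wedge L_{K(n)} S \to L_{K(n)} X$ is an equivalence.

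The technical heart of the argument, following Hopkins--Ravenel, is to obtain a sufficiently finite presentation of the $K(n)$-local sphere. One constructs a cofinal system of generalized Moore spectra $M_I$ of type $n$ (whose existence is guaranteed by the Hopkins--Smith periodicity theorem), each equipped with a $v_n$-self map, and identifies
$$ L_{K(n)} S \simeq \holim_I \, v_n^{-1} M_I $$
on the subcategory of $E(n-1)$-acyclic spectra. Smashing with $v_n^{-1} M_I$ is smashing by construction, and one transfers this to the inverse limit by proving vanishing of the relevant $\lim^1$-terms controlling the pro-system $\{v_n^{-1} M_I\}$. The main obstacle — and the deep technical content of Ravenel's book — is precisely this vanishing: it rests on sharp connectivity estimates for the $M_I$ arising from the Hopkins--Smith periodicity tower and on Landweber exactness of $E(n)$, which together force the pro-system to converge rapidly enough to commute with arbitrary smash products.
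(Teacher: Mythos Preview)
The paper does not prove this theorem; it is simply quoted as the classical Hopkins--Ravenel result from \cite{Ravenel}, with no argument given. So there is no ``paper's own proof'' to compare against --- the statement functions here as background input for the equivariant Smash Product Theorem (Theorem~\ref{thm:smashing}).

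That said, your sketch has a genuine gap in the reduction step. You assert that, given $L_{n-1}$ smashing and the fracture square, the smashing property of $L_n$ reduces to ``$L_{K(n)}$ is smashing on $E(n-1)$-acyclics.'' But comparing the fracture square for $X$ with the fracture square for $S$ smashed with $X$ requires the map $X \wedge L_{K(n)} S \to L_{K(n)} X$ to be an equivalence for \emph{arbitrary} $X$, not just for $E(n-1)$-acyclic $X$, and this is false: $L_{K(n)}$ is not smashing. No restriction to $E(n-1)$-acyclics arises naturally from the fracture diagram, so the induction does not close. The subsequent paragraph about $\holim_I v_n^{-1} M_I$ and $\lim^1$-vanishing is also problematic: the standard identification $L_{K(n)} X \simeq \holim_I (L_n X \wedge M_I)$ already presupposes knowledge of $L_n$, so invoking it here is circular.

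The actual Hopkins--Ravenel argument in \cite{Ravenel} is structurally quite different: it proceeds via the canonical $\MU$-based Adams resolution and a delicate convergence theorem (the ``Localization Theorem'') using nilpotence technology, rather than by fracture-square induction on $n$.
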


Smashing localizations enjoy many exceptional properties, but in the equivariant case Carrick shows that smashing localizations are especially advantageous, in that they can be understood on the level of geometric fixed points.

\begin{prop}[Carrick {\cite[Prop.~3.13, Cor.~3.15(1)]{Carrick}}]\label{prop:Carrick}
Suppose that localization with respect to $E \in \Sp^A$ is smashing.  Then $X \in \Sp^A$ is $E$-local if $X^{\Phi B}$ is $E^{\Phi B}$-local for all $B \le A$, and we have
$$ (X_E)^{\Phi B} \simeq (X^{\Phi B})_{E^{\Phi B}}. $$
\end{prop}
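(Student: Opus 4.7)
The plan is to first identify $(X_E)^{\Phi B}$ with the non-equivariant $E^{\Phi B}$-localization of $X^{\Phi B}$, and then deduce the locality criterion as a corollary via the joint conservativity of the geometric fixed point functors $\{(-)^{\Phi B}\}_{B \le A}$.

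Since $E$-localization is smashing, I would write $X_E \simeq X \wedge S_E$, with localization unit $\eta\colon X \to X \wedge S_E$ induced by $S \to S_E$. Applying the symmetric monoidal functor $(-)^{\Phi B}$ yields
\[
\eta^{\Phi B}\colon X^{\Phi B} \to X^{\Phi B} \wedge (S_E)^{\Phi B} \simeq (X_E)^{\Phi B},
\]
and my goal is to exhibit this as the $E^{\Phi B}$-localization of $X^{\Phi B}$. That $\eta^{\Phi B}$ is an $E^{\Phi B}$-equivalence is immediate: smashing with $E^{\Phi B}$ and using monoidality together with $E \wedge S_E \simeq E$ (valid since $E$ is assumed $E$-local) makes the map become an equivalence on $X^{\Phi B} \wedge E^{\Phi B}$.

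The main task is to show the target is $E^{\Phi B}$-local.  Because $S_E$ is idempotent, so is $(S_E)^{\Phi B}$, so smashing with $(S_E)^{\Phi B}$ defines a smashing localization of $\Sp$; it suffices to show this agrees with $E^{\Phi B}$-localization, i.e., $\bra{(S_E)^{\Phi B}} = \bra{E^{\Phi B}}$ in the Bousfield lattice. The inequality $\bra{E^{\Phi B}} \le \bra{(S_E)^{\Phi B}}$ follows from the preceding paragraph. For the reverse --- the main obstacle --- I must show that every non-equivariant $E^{\Phi B}$-acyclic spectrum $W$ is $(S_E)^{\Phi B}$-acyclic. The idea is to lift $W$ equivariantly: set $\tilde W := W_{\mr{triv}} \wedge T(B) \in \Sp^A$. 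A direct check of geometric fixed points using the formulas for $T(B)$ gives $\tilde W^{\Phi C} \simeq \ast$ for $C \ne B$ and $\tilde W^{\Phi B} \simeq W \wedge \Sigma^\infty A/B_+$. Combining this with the hypothesis $W \wedge E^{\Phi B} \simeq \ast$ and the joint conservativity of geometric fixed points yields $\tilde W \wedge E \simeq \ast$ in $\Sp^A$, and the smashing hypothesis then gives $\tilde W \wedge S_E \simeq \tilde W_E \simeq \ast$. Applying $(-)^{\Phi B}$ to this equivalence produces $W \wedge \Sigma^\infty A/B_+ \wedge (S_E)^{\Phi B} \simeq \ast$, and since $\Sigma^\infty A/B_+$ is a finite wedge of copies of $S$ non-equivariantly, this forces $W \wedge (S_E)^{\Phi B} \simeq \ast$ as required.

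Given the identification $(X_E)^{\Phi B} \simeq (X^{\Phi B})_{E^{\Phi B}}$, the locality criterion is immediate: if each $X^{\Phi B}$ is $E^{\Phi B}$-local then the map $X^{\Phi B} \to (X_E)^{\Phi B}$ is an equivalence for every $B \le A$, so by joint conservativity of geometric fixed points the map $X \to X_E$ is itself an equivalence, and $X$ is $E$-local.
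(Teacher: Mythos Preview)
Your proof is correct. The paper does not actually prove this proposition; it is stated as a citation to Carrick's work \cite[Prop.~3.13, Cor.~3.15(1)]{Carrick} and used as a black box in the subsequent proof of the Smash Product Theorem. So there is no in-paper argument to compare against.

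That said, your argument is essentially the one Carrick gives. The key step --- lifting a non-equivariant $E^{\Phi B}$-acyclic $W$ to an equivariant $E$-acyclic supported only at $B$ via $\tilde W = W_{\mr{triv}} \wedge T(B)$, and then pushing the resulting $E$-acyclicity back down through $(-)^{\Phi B}$ --- is exactly the mechanism at play. One small remark: you implicitly use that a smashing localization is determined by its acyclics (so that matching acyclic classes forces $(-)\wedge (S_E)^{\Phi B}$ to coincide with $L_{E^{\Phi B}}$, and in particular shows $E^{\Phi B}$-localization is itself smashing). This is standard, but worth making explicit since a priori you have not assumed $E^{\Phi B}$-localization is smashing.
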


Carrick showed that in the case of $A = C_{p^i}$, for $\ul{n}$ admissible, $E({\ul{n}})$ is smashing \cite{Carrick}.  Balderrama showed that in the case where $A$ is an elementary abelian $p$-group, $E(h[n])$ is smashing \cite[Prop.~A.4.8]{BalderramaPower}. The following generalizes these results to all finite abelian $p$-groups and all $\ul{n}$.

\begin{thm}[Smash Product Theorem]\label{thm:smashing}
Suppose that $\ul{n}$ is an admissible height function on $\Sub(A)$.  Then $E(\ul{n})$ is smashing.
\end{thm}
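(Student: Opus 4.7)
The plan is to prove the stronger statement that $E(\ul{n})$-localization is smashing for \emph{every} height function $\ul{n}: \Sub(A) \to \Nbar_-$ (not just the admissible ones), by induction on
$$ k_+(\ul{n}) := |\{B \le A : 0 \le \ul{n}(B) < \infty\}|. $$
When $k_+(\ul{n}) = 0$, every wedge summand $E(B, \ul{n}(B))$ with $\ul{n}(B) = -1$ is contractible, so $E(\ul{n}) \simeq S[\mr{dom}_{<\infty}(\ul{n})^{-1}]$ and smashingness follows from \ref{eq:geomloc}.

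For the inductive step, pick any $B$ with $0 \le \ul{n}(B) < \infty$ and define $\ul{n}''$ to agree with $\ul{n}$ away from $B$ and to satisfy $\ul{n}''(B) = -1$, so that $k_+(\ul{n}'') = k_+(\ul{n}) - 1$. A direct geometric fixed points computation will show
$$ \bra{E(\ul{n})} = \bra{E(\ul{n}'')} \vee \bra{E(B, \ul{n}(B))} $$
(the summand $E(B, \ul{n}(B))$ vanishes on $\Phi^C$ for $C \ne B$, while $E(\ul{n}'')^{\Phi B} \simeq \ast$ since $\ul{n}''(B) = -1$ places $B$ in the inverted family and the only matching Morava piece $E(B,-1)$ is contractible). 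I will then apply Bauer's arithmetic square (Proposition~\ref{prop:fracture}) to $E = E(B, \ul{n}(B))$ and $F = E(\ul{n}'')$. Its hypothesis $E \wedge X_F \simeq 0$ is verified as follows: by the inductive hypothesis $F$-localization is smashing, so $X_F \simeq S_F \wedge X$, and Proposition~\ref{prop:Carrick} gives $S_F^{\Phi B} \simeq (S^{\Phi B})_{F^{\Phi B}} \simeq \ast$ (since $F^{\Phi B} \simeq \ast$), while $E^{\Phi C} \simeq \ast$ for $C \ne B$; every $(E \wedge X_F)^{\Phi C}$ is thus contractible, so $E \wedge X_F \simeq \ast$ by conservativity of the collection $\{\Phi^C\}_{C \le A}$.

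Bauer's fracture then realizes $X_{E(\ul{n})}$ as the pullback of $X_{E(B, \ul{n}(B))}$, $X_{E(\ul{n}'')}$, and their iterated localization. Since $-\wedge X$ is a left adjoint and thus preserves pullbacks, and since the composition of two smashing localizations is again smashing, the smashing of $E(\ul{n})$-localization reduces to that of $E(\ul{n}'')$-localization (inductive hypothesis) and of $E(B, \ul{n}(B))$-localization. For the second piece, Corollary~\ref{cor:monochromatic} characterizes $E(B, n)$-local spectra as those $Y$ with $Y^{\Phi C} \simeq \ast$ for every $C \ne B$ and $Y^{\Phi B}$ an $E(n)$-local spectrum; the non-equivariant Hopkins-Ravenel Smash Product Theorem applied on $\Phi^B$ then ensures that $S_{E(B, n)} \wedge X$ meets both criteria and agrees with $X_{E(B, n)}$ on all geometric fixed points, yielding $X_{E(B, n)} \simeq S_{E(B, n)} \wedge X$.

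The main obstacle is choosing the correct auxiliary height function. The more natural choice $\ul{n}'(B) = \infty$ (which, when $B$ is maximal in $\mr{dom}_{<\infty}(\ul{n})$, keeps us inside the admissible class) fails the Bousfield-class identity above, because $E(\ul{n}')^{\Phi B} \simeq S$ rather than $\ast$ would leave too much content at $B$ and break the fracture hypothesis. Taking $\ul{n}''(B) = -1$ instead resolves this, but the resulting height function is almost never admissible even when $\ul{n}$ is; this is precisely why the theorem must be proved in its stronger form for all height functions, and why the induction is carried out on $k_+$ rather than on $|\mr{dom}_{<\infty}(\ul{n})|$.
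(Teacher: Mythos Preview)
Your argument has a genuine gap: the claim that $E(B,n)$-localization is smashing is false, and with it the stronger statement you try to prove (smashing for \emph{all} height functions) is false as well.

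First, your reading of Corollary~\ref{cor:monochromatic} is incorrect. The equivalence $\Sp^A_{E(B,n)} \simeq \Sp^{B(A/B)}_{E(n)}$ does \emph{not} say that $Y$ is $E(B,n)$-local iff $Y^{\Phi C} \simeq \ast$ for $C \ne B$ and $Y^{\Phi B}$ is $E(n)$-local. Under that equivalence, an $E(B,n)$-local $Y$ corresponds to a Borel $A/B$-spectrum, and its geometric fixed points $Y^{\Phi C}$ for $B \subsetneq C$ are Tate-type constructions $((\Phi^B Y)^h)^{\tau C/B}$, which are generally nonzero (see equation~(\ref{eq:EBnPhiC}) in the paper's proof).

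Here is a concrete counterexample. Take $A = C_p$, $B = e$, $n \ge 1$, and $X = \widetilde{EC_p}$. Since $X^{\Phi e} \simeq \ast$, the spectrum $X$ is $E(e,n)$-acyclic, so $X_{E(e,n)} \simeq \ast$. On the other hand, $(S_{E(e,n)})^{\Phi C_p} \simeq (S^h_{E(n)})^{\tau C_p} = (L_n S)^{tC_p}$, which is nonzero for $n \ge 1$. Hence
\[
(S_{E(e,n)} \wedge X)^{\Phi C_p} \simeq (L_n S)^{tC_p} \wedge X^{\Phi C_p} \simeq (L_n S)^{tC_p} \not\simeq \ast,
\]
so $S_{E(e,n)} \wedge X \not\simeq X_{E(e,n)}$. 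Thus $E(e,n)$-localization is not smashing. Note that $E(e,n) = E(\ul{n})$ for the (non-admissible) height function $\ul{n}(e) = n$, $\ul{n}(C_p) = -1$, so your proposed strengthening to all height functions fails already here.

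The paper's proof proceeds quite differently: it inducts along a maximal chain of opens from $\mr{dom}_{=\infty}(\ul{n})$ to $\mr{dom}_{\ge 0}(\ul{n})$, and at each step uses the fracture square together with the blue-shift result Theorem~\ref{thm:tauA} to show that the Tate pieces $((\Phi^B X)^h_{E(n)})^{\tau C/B}$ land in the already-controlled $E(\ul{n}(C))$-local category. The admissibility inequality $n \le \ul{n}(C) + \mr{rk}_p(C/B)$ is exactly what makes this work, and it cannot be dispensed with.
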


Hovey-Sadofsky's Tate blue-shift theorem \cite[Thm.~1.1]{HoveySadofsky} plays a fundamental role in Carrick's proof of Theorem~\ref{thm:smashing} for $A = C_{p^i}$.  To prove the result for general $A$, we will need a variant, which we will derive from the results of \cite{sixauthor}.

Suppose that $X$ is a $G$-spectrum, and let $X^h$ denote its Borel completion.  We define
$$ X^{\tau G} := (X^h)^{\Phi G}. $$

\begin{thm}\label{thm:tauA}
Suppose that the underlying spectrum of $X \in \Sp^{A}_{(p)}$ is  $E(n)$-local.  Then $X^{\tau A}$ is $E(n-r)$-local, where $r = \mr{rk}_p(A)$.
\end{thm}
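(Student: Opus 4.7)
My plan uses that Bousfield localization in the Borel category $\Sp^{BA}$ is pointwise smashing, combined with the fact that the geometric fixed points functor $(-)^{\Phi A}$ is symmetric monoidal.  Since $X^{\tau A} = (X^h)^{\Phi A}$ depends only on $X^h \in \Sp^{BA}$, we may replace $X$ by $X^h$ and assume $X$ is Borel.  Under the equivalence $\Sp^{BA} \simeq \mr{Fun}(BA, \Sp)$, Bousfield localization with respect to $E_n^h$ (that is, $E_n$ with trivial $A$-action regarded as a Borel $A$-spectrum) is computed pointwise, hence is smashing: $L_{E_n^h}(Y) \simeq Y \wedge L_{E_n^h}(S)$ for all $Y \in \Sp^{BA}$.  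Because $X|_e$ is $E(n)$-local, $X$ is $E_n^h$-local in $\Sp^{BA}$.

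Next, since $(-)^{\Phi A}: \Sp^{BA} \to \Sp$ is a symmetric monoidal left adjoint and $L_{E_n^h}$ is smashing, $\Phi^A$ intertwines this Bousfield localization with the corresponding one on the target: applying $\Phi^A$ to $X \simeq L_{E_n^h}(X) \simeq X \wedge L_{E_n^h}(S)$ yields
$$X^{\tau A} \simeq X^{\tau A} \wedge \Phi^A(L_{E_n^h}(S)) \simeq X^{\tau A} \wedge L_{(E_n^h)^{\Phi A}}(S),$$
exhibiting $X^{\tau A}$ as $(E_n^h)^{\Phi A}$-local.

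Finally, by Remark~\ref{rmk:E(n)}(2) (citing \cite[Thm.~3.5]{sixauthor}), $\bra{(E_n^h)^{\Phi A}} = \bra{E(n-r)}$, so $(E_n^h)^{\Phi A}$-local spectra are precisely $E(n-r)$-local spectra.  Thus $X^{\tau A}$ is $E(n-r)$-local.  The main technical subtlety I foresee is the identity $\Phi^A(L_{E_n^h}(S)) \simeq L_{(E_n^h)^{\Phi A}}(S)$, which requires showing that the symmetric monoidal left adjoint $\Phi^A$ carries the unit of the $E_n^h$-local subcategory of $\Sp^{BA}$ to the unit of the $(E_n^h)^{\Phi A}$-local subcategory of $\Sp$; this follows from a formal argument using the universal property of Bousfield localization together with the smashing property of $L_{E_n^h}$.
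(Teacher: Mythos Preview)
There is a genuine gap in your argument. The functor you call $(-)^{\Phi A}: \Sp^{BA} \to \Sp$ --- which is $(-)^{\tau A}$ --- is \emph{not} a symmetric monoidal left adjoint. Under the identification $\Sp^{BA} \simeq (\Sp^A)^{\wedge}_{\{e\}}$, the inclusion into $\Sp^A$ is the \emph{right} adjoint to Borel completion, so composing with the colimit-preserving functor $\Phi^A$ yields a functor that is only \emph{lax} symmetric monoidal and does not preserve colimits (this is the familiar failure of Tate-type constructions to commute with filtered colimits). Consequently your displayed equivalence $X^{\tau A} \simeq X^{\tau A} \wedge \Phi^A(L_{E_n^h}(S))$ is not justified, and more importantly the identity $\Phi^A(L_{E_n^h}(S)) \simeq L_{(E_n^h)^{\Phi A}}(S)$, i.e.\ $(S_{E(n)})^{\tau A} \simeq S_{E(n-r)}$, does \emph{not} follow from any formal universal-property argument. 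That identity is essentially the entire content of the theorem.

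The reduction to $X = S_{E(n)}$ can be salvaged using only lax monoidality: since $X$ is an $L_{E_n^h}(S)$-module in $\Sp^{BA}$, the lax monoidal functor $(-)^{\tau A}$ makes $X^{\tau A}$ a module over $(S_{E(n)})^{\tau A}$, hence a retract of $(S_{E(n)})^{\tau A} \wedge X^{\tau A}$; so if $(S_{E(n)})^{\tau A}$ is $E(n-r)$-local then $X^{\tau A}$ is too. This is exactly the reduction the paper makes via \cite[Prop.~2.7(5)]{Carrick}. But the remaining statement --- that $(S_{E(n)})^{\tau A}$ is $E(n-r)$-local --- requires real work: one adapts the Hovey--Sadofsky argument \cite{HoveySadofsky}, replacing their Tate functor by $(-)^{\tau A}$ and replacing their blue-shift input \cite[Thm.~1.2]{HoveySadofsky} by \cite[Thm.~3.5]{sixauthor}. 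No amount of abstract nonsense about smashing localizations and monoidal functors will produce this; it uses specific facts about the chromatic behavior of $\widetilde{E\mc{F}}$-type constructions on $E(n)$-local spectra.
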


\begin{proof}
By \cite[Prop.~2.7(5)]{Carrick}, it suffices to show that $S_{E(n)}^{\tau A}$ is $E(n-r)$-local.  This result is proven from following the proof of \cite{HoveySadofsky}, but replacing their functor $P_G$ (which is their notation for the Tate spectrum) with $(-)^{\tau A}$, and replacing their use of \cite[Thm.~1.2]{HoveySadofsky} with \cite[Thm.~3.5]{sixauthor}.
\end{proof}

\begin{proof}[Proof of Theorem~\ref{thm:smashing}]
Choose a maximal chain of opens between $\mr{dom}_{= \infty}(\ul{n})$ and $\mr{dom}_{\ge 0}(\ul{n})$:
\begin{equation}\label{eq:chain}
\mr{dom}_{= \infty}(\ul{n}) = \mc{F}^c_0 \subsetneq \mc{F}_1^c \subsetneq \cdots \subsetneq \mc{F}^c_k = \mr{dom}_{\ge 0}(\ul{n}).
\end{equation}
Define height functions $\ul{n}_i$ by
$$
\ul{n}_i := \ul{n}\vert_{\mc{F}_i^c}
$$
(c.f. Equation~(\ref{eq:restriction})) so $\ul{n}_k = \ul{n}$.
We will prove the theorem by proving inductively on $i$ that $E(\ul{n}_i)$ is smashing.  Since
$$ X_{E(\ul{n}_0)} = X[\mc{F}_0^{-1}], $$
$E(\ul{n}_0)$ is smashing. Suppose that we know $E(\ul{n}_{i-1})$ is smashing.  Then, by Proposition~\ref{prop:Carrick}, we have
\begin{equation}\label{eq:PhiCi-1}
 (X_{E(\ul{n}_{i-1})})^{\Phi C} \simeq 
\begin{cases}
X^{\Phi C}, & \ul{n}(C) = \infty \\
(X^{\Phi C})_{E(\ul{n}(C))}, & C \in \mc{F}^c_{i-1}\cap \mr{dom}_{< \infty}(\ul{n}), \\
\ast, & C \not\in \mc{F}^c_{i-1}.
\end{cases}
\end{equation}
By maximality of the chain (\ref{eq:chain}), we have
$$ \mc{F}^c_{i} = \mc{F}^c_{i-1} \cup \{ B \}. $$
Let 
$$ n := \ul{n}(B). $$
Since $B \not\in \mc{F}^c_{i-1}$, Corollary~\ref{cor:null} implies that we have 
\begin{equation}\label{eq:PhiB_inull}
 E(B,n) \wedge X_{E(\ul{n}_{i-1})} \simeq \ast.
 \end{equation}
Since 
$$ E(\ul{n}_i) = E(\ul{n}_{i-1}) \vee E(B, n) $$
it follows from Proposition~\ref{prop:fracture} that the canonical square
\begin{equation}\label{eq:indsquare}
\xymatrix{
X_{E(\ul{n}_i)} \ar[r] \ar[d] & X_{E(B,n)} \ar[d] \\
X_{E(\ul{n}_{i-1})} \ar[r] & X_{E(B, n), E(\ul{n}_{i-1})}
}
\end{equation}
is a pullback.  Under the equivalence of Corollary~\ref{cor:monochromatic}, $E(B,n)$-localization is computed to be 
\begin{align*} 
\Sp^A_{(p)} & \to \Sp^A_{E(B, n)}  \simeq \Sp^{B(A/B)}_{E(n)} \\
X & \mapsto  X_{E(B,n)}  \mapsto (\Phi^{B} X )^h_{E(n)}.
\end{align*} 
and therefore we have
\begin{equation}\label{eq:EBnPhiC}
(X_{E(B, n)})^{\Phi C} \simeq
\begin{cases}
((\Phi^B X)^h_{E(n)})^{\tau C/B}, & B \subseteq C \\ 
\ast,  & B \not\subseteq C.
\end{cases}
\end{equation}
Suppose that $C \not\in \mc{F}^c_{i-1}$.
Then applying $(-)^{\Phi C}$ to (\ref{eq:indsquare}) and using (\ref{eq:PhiCi-1}), there is an equivalence
\begin{equation}\label{eq:Enigeomfp}
 (X_{E(\ul{n}_i)})^{\Phi C} \simeq (X_{E(B,n)})^{\Phi C}.
 \end{equation}
If $C = B$, then (\ref{eq:Enigeomfp}) gives
$$ (X_{E(\ul{n}_i)})^{\Phi B} \simeq (X^{\Phi B})_{E(n)}. $$
If $C \ne B$, then (\ref{eq:Enigeomfp}) implies that $C \not\in \mc{F}^c_{i}$.  Note that $B \in \mc{F}^c_i$, which implies (since $\mc{F}_i$ is a family) that $B \not\subseteq C$, and so  
$C \in \mc{F}_{B\not\subseteq}$.  It follows from (\ref{eq:EBnPhiC}) that we have
$$ (X_{E(\ul{n}_i)})^{\Phi C} \simeq \ast. $$
Suppose now that $C \in \mc{F}^c_{i-1}$.  We claim that the map
\begin{equation}\label{eq:claim}
(X_{E(B,n)})^{\Phi C} \xrightarrow{\simeq} (X_{E(B,n),E(\ul{n}_{i-1})})^{\Phi C}
\end{equation} 
obtained by applying 
 $(-)^{\Phi C}$ to the right vertical arrow of (\ref{eq:indsquare}) is an equivalence.
This would then imply that for $C \in \mc{F}^c_{i-1}$, there is an equivalence
$$ (X_{E(\ul{n}_i)})^{\Phi C} \simeq (X_{E(\ul{n}_{i-1})})^{\Phi C} \simeq (X^{\Phi C})_{E(\ul{n}(C))} $$
(where the last equivalence is by (\ref{eq:PhiCi-1})).
To verify this claim, we use (\ref{eq:PhiCi-1}) to identify the map (\ref{eq:claim}) with the map
\begin{equation}\label{eq:claim2}
(X_{E(B,n)})^{\Phi C} \to ((X_{E(B,n)})^{\Phi C})_{E(\ul{n}(C))}.
\end{equation}
In the case where $B \not \subseteq C$, (\ref{eq:EBnPhiC}) implies both the source and target of (\ref{eq:claim2}) are trivial, and therefore (\ref{eq:claim2}) is an equivalence.  If $B \subseteq C$, then (\ref{eq:EBnPhiC}) identifies the map (\ref{eq:claim2}) with the map
$$ ((\Phi^B X)^h_{E(n)})^{\tau C/B} \to (((\Phi^B X)^h_{E(n)})^{\tau C/B})_{E(\ul{n}(C))}. $$
To show this map is an equivalence, we simply need to know that $((\Phi^B X)^h_{E(n)})^{\tau C/B}$ is $E(\ul{n}(C))$-local.  By Theorem~\ref{thm:tauA}, the spectrum $((\Phi^B X)^h_{E(n)})^{\tau C/B}$ is $E(n-\mr{rk}_p(C/B))$-local.  Since $\ul{n}$ is admissible, we have
$$ n \le \ul{n}(C) + \mr{rk}_p(C/B) $$
and therefore
$$ n - \mr{rk}_p(C/B) \le \ul{n}(C). $$
It follows that $((\Phi^B X)^h_{E(n)})^{\tau C/B}$ is $E(\ul{n}(C))$-local.  This completes the argument that (\ref{eq:claim}) is an equivalence for $C \in \mc{F}^c_{i-1}$.

In summary, we have proven that for $C \le A$, we have
$$ (X_{E(\ul{n}_i)})^{\Phi C} \simeq
\begin{cases}
X^{\Phi C}, & \ul{n}(C) = \infty, \\
(X^{\Phi C})_{E(\ul{n}(C))}, & C \in \mc{F}^c_{i}\cap \mr{dom}_{< \infty}(\ul{n}), \\
\ast, & C \not\in \mc{F}^c_{i}.
\end{cases}
$$
It follows that $E(\ul{n}_i)$-localization commutes with arbitrary wedge products, and therefore $E(\ul{n}_i)$ is smashing \cite[3.3.2]{HPS}.
\end{proof}

The following is an immediate consequence of Proposition~\ref{prop:Carrick}.

\begin{cor}\label{cor:EPhi}
Suppose that $\ul{n}$ is an admissible height function on $\Sub(A)$.  Then we have
$$ (X_{E(\ul{n})})^{\Phi C} \simeq
\begin{cases}
X^{\Phi C}, & \ul{n}(C) = \infty, \\
(X^{\Phi C})_{E(\ul{n}(C))}, & 0 \le \ul{n}(C) < \infty, \\
\ast, & \ul{n}(C) = -1.
\end{cases}
$$
\end{cor}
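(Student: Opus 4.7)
The plan is to combine the Smash Product Theorem (Theorem~\ref{thm:smashing}) with Carrick's observation (Proposition~\ref{prop:Carrick}), and then compute the geometric fixed points $E(\ul{n})^{\Phi C}$ up to Bousfield class.

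Since $\ul{n}$ is admissible, Theorem~\ref{thm:smashing} ensures that $E(\ul{n})$-localization is smashing. Proposition~\ref{prop:Carrick} therefore applies and yields
$$ (X_{E(\ul{n})})^{\Phi C} \simeq (X^{\Phi C})_{E(\ul{n})^{\Phi C}} $$
for every $C \le A$. Since $E$-localization depends only on the Bousfield class $\bra{E}$, it remains to identify $\bra{E(\ul{n})^{\Phi C}}$.

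For this, I would use that $\Phi^{C}$ is symmetric monoidal and preserves coproducts, together with the fact that
$$ (A/B_{+})^{\Phi C} \simeq \begin{cases} A/B_{+}, & C \subseteq B, \\ \ast, & C \nsubseteq B, \end{cases} \qquad E[\mc{F}^{-1}_{B \nsubseteq}]^{\Phi C} \simeq \begin{cases} E^{\Phi C}, & B \subseteq C, \\ \ast, & B \nsubseteq C, \end{cases} $$
and $(E_{\mr{triv}})^{\Phi C} \simeq E$. Combining these for $E = K(n)$ or $E(n)$ gives $E(B,n)^{\Phi C} \simeq A/B_{+} \wedge E(n)$ if $B = C$ and $\ast$ otherwise (as already recorded just before Corollary~\ref{cor:null}). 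Similarly $S[\mr{dom}_{<\infty}(\ul{n})^{-1}]^{\Phi C}$ is contractible when $C \in \mr{dom}_{<\infty}(\ul{n})$ and equivalent to $S$ when $C \in \mr{dom}_{=\infty}(\ul{n})$ (by (\ref{eq:geomfp})). Since $A/C_{+} \wedge E(n)$ is a finite wedge of copies of $E(n)$, it is Bousfield equivalent to $E(n)$. Splitting into the three cases $\ul{n}(C) = \infty$, $0 \le \ul{n}(C) < \infty$, and $\ul{n}(C) = -1$ yields
$$ \bra{E(\ul{n})^{\Phi C}} = \begin{cases} \bra{S}, & \ul{n}(C) = \infty, \\ \bra{E(\ul{n}(C))}, & 0 \le \ul{n}(C) < \infty, \\ \bra{\ast}, & \ul{n}(C) = -1. \end{cases} $$

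Substituting this back into the equivalence $(X_{E(\ul{n})})^{\Phi C} \simeq (X^{\Phi C})_{E(\ul{n})^{\Phi C}}$ gives exactly the three cases claimed. There is no real obstacle here: Theorem~\ref{thm:smashing} has already done the heavy lifting, and the Bousfield class computation is essentially the identification of $E(\ul{n})^{\Phi C}$ that has already been used in the proof of Theorem~\ref{thm:smashing}. The only mild care needed is to note that in the $\ul{n}(C) = -1$ case one uses that Bousfield localization at a contractible spectrum is the zero functor.
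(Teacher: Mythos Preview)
Your proof is correct and is essentially the same as the paper's. The paper simply states that the corollary is an immediate consequence of Proposition~\ref{prop:Carrick} (together with Theorem~\ref{thm:smashing}), relying on the Bousfield class computation of $E(\ul{n})^{\Phi C}$ that was already recorded just before Remark~\ref{rmk:E(n)}; you have merely written out those details explicitly.
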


\begin{rmk}
We note that Hill \cite{Hill} considered \emph{finite} localizations with respect to $E(\ul{n})$ for admissible height functions $\ul{n}$, and these localizations are smashing for formal reasons.   Specifically, recall from  \cite[Thm.~4.1]{BalmerFavi} (see also \cite{Miller}, \cite[Thm.~3.3.3]{HPS}) that associated to any thick tensor ideal of $\Sp^A_{(p),\omega}$, there is an associated smashing localization which kills the objects in the ideal.  Taking the ideal to be the finite $E(\ul{n})$-acyclics, there is a smashing localization 
\begin{align*}
 \Sp^A_{(p)} & \to \Sp^A_{(p)} \\
 X &\mapsto X^f_{E(\ul{n})}.
\end{align*}
\begin{enumerate}
\item It follows from \cite[Defn~(3)]{Miller} that if $X \in \Sp^{A}_{(p),\omega}$ is type $\ul{n}$, and $v$ is $v_{\ul{n}}$-self map, we have
$$ X^f_{E(\ul{n})} =  v^{-1} X. $$
\item As a consequence of the resolution of the non-equivariant telescope conjecture \cite{Mahowald}, \cite{MillerASS}, \cite{BHLS}, the only finite $\ul{n}$ for which the natural transformation 
$$ X^f_{E(\ul{n})} \to X_{E(\ul{n})} $$
is an equivalence for all $X$ are the $\ul{n}$ which satisfy $\ul{n} \le 1$.
\end{enumerate}  
\end{rmk}

\subsection*{Chromatic fracture}

We can now prove a general chromatic fracture theorem.

\begin{thm}\label{thm:fracture}
Suppose that $\ul{n}_1 \le \ul{n}_2$ are a pair of height functions on $\Sub(A)$ such that $\ul{n}_1$ is admissible and 
$$ \mr{dom}_{= \infty}(\ul{n}_1) = \mr{dom}_{=\infty}(\ul{n}_2). $$
Let
$$ T = \bigvee_{\{(B,m) \: : \: \ul{n}_1(B) < m \le \ul{n}_2(B) < \infty\}} K(B,m). $$ 
Then for each $X \in \Sp^A_{(p)}$, the canonical square
$$
 \xymatrix{
 X_{E(\ul{n}_2)} \ar[r] \ar[d] & X_{T} \ar[d] \\
 X_{E(\ul{n}_{1})} \ar[r] & X_{T, E(\ul{n}_{1})}
 }
$$
is a pullback.
\end{thm}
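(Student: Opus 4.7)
The plan is to deduce this as a special case of Bauer's generalized Sullivan arithmetic square (Proposition~\ref{prop:fracture}), applied with $E := T$ and $F := E(\ul{n}_1)$. To make this work I need to verify two claims:
\begin{enumerate}
\item $\langle T \vee E(\ul{n}_1)\rangle = \langle E(\ul{n}_2)\rangle$ as Bousfield classes in $\Sp^A_{(p)}$, and
\item $T \wedge X_{E(\ul{n}_1)} \simeq \ast$ for every $X \in \Sp^A_{(p)}$.
\end{enumerate}
Granted (1) and (2), Proposition~\ref{prop:fracture} produces the pullback square of the theorem, since (1) identifies $X_{T \vee E(\ul{n}_1)}$ with $X_{E(\ul{n}_2)}$.

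For (1), the hypothesis $\mr{dom}_{=\infty}(\ul{n}_1) = \mr{dom}_{=\infty}(\ul{n}_2)$ implies that the ``unit summand'' $S[\mr{dom}_{<\infty}(\ul{n}_i)^{-1}]$ in the definition of $E(\ul{n}_i)$ is the same for $i=1$ and $i=2$. For each $B$ in the common finite domain, the classical non-equivariant identity $\langle E(n_2)\rangle = \langle E(n_1)\rangle \vee \bigvee_{n_1 < m \le n_2}\langle K(m)\rangle$ transfers to trivial-action $A$-spectra and, after smashing with $A/B_+ \wedge \widetilde{E\mc{F}_{B\nsubseteq}}$, yields
$$\langle E(B,\ul{n}_2(B))\rangle \;=\; \langle E(B,\ul{n}_1(B))\rangle \;\vee\; \bigvee_{\ul{n}_1(B) < m \le \ul{n}_2(B)} \langle K(B,m)\rangle.$$
Wedging over $B \in \mr{dom}_{<\infty}(\ul{n}_1)$ gives (1). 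Note that subgroups $B$ with $\ul{n}_1(B) = \ul{n}_2(B) = \infty$ contribute no $K(B,m)$ summand to $T$, consistent with the definition.

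For (2), the Smash Product Theorem (Theorem~\ref{thm:smashing}) says $E(\ul{n}_1)$-localization is smashing, so $X_{E(\ul{n}_1)} \simeq X \wedge S_{E(\ul{n}_1)}$ and it suffices to prove $K(B,m) \wedge S_{E(\ul{n}_1)} \simeq \ast$ for each $(B,m)$ with $\ul{n}_1(B) < m \le \ul{n}_2(B) < \infty$. By Corollary~\ref{cor:null} this is equivalent to $K(m) \wedge (S_{E(\ul{n}_1)})^{\Phi B} \simeq \ast$. Since $\ul{n}_1$ is admissible, Corollary~\ref{cor:EPhi} identifies $(S_{E(\ul{n}_1)})^{\Phi B}$ with $L_{E(\ul{n}_1(B))}S$ when $0 \le \ul{n}_1(B) < \infty$ and with $\ast$ when $\ul{n}_1(B) = -1$. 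In the latter case the vanishing is immediate; in the former, $m > \ul{n}_1(B)$ together with the classical fact that $K(m) \wedge L_{E(n)} S \simeq L_{E(n)}(K(m)) \simeq \ast$ for $m > n$ (a consequence of the non-equivariant Smash Product Theorem plus $K(m) \wedge E(n) \simeq \ast$) completes the verification.

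The main obstacle is step (2): this is where the full force of the equivariant Smash Product Theorem and the geometric-fixed-point formula of Corollary~\ref{cor:EPhi} are needed to reduce what is a priori an intrinsically equivariant vanishing statement to the well-known non-equivariant fact that $E(n)$-local spectra are $K(m)$-acyclic for $m > n$. Step (1) is then essentially bookkeeping, using Bousfield-class multiplicativity under smash product together with the definition of $E(\ul{n})$.
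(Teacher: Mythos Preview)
Your proof is correct and follows exactly the route the paper takes: the paper's proof is a single sentence citing Proposition~\ref{prop:fracture}, Corollary~\ref{cor:EPhi}, and Corollary~\ref{cor:null}, and you have simply unpacked how these three ingredients combine. One small streamlining: for claim~(1) you could cite Remark~\ref{rmk:E(n)}(6) directly rather than re-deriving the Bousfield decomposition, and for claim~(2) you need not invoke the Smash Product Theorem separately, since Corollary~\ref{cor:EPhi} already computes $(X_{E(\ul{n}_1)})^{\Phi B}$ for arbitrary $X$, letting you bypass the reduction to $X = S$.
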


\begin{proof}
This is a consequence of Proposition~\ref{prop:fracture}, Corollary~\ref{cor:EPhi}, and Corollary~\ref{cor:null}.
\end{proof}

In particular, we have the following corollary, which illustrates how $E(\ul{n})$-localization is inductively built out of monochromatic layers.

\begin{cor}\label{cor:chromfrac}
Suppose that $\ul{n}$ is an admissible height function on $\mr{Sub}(A)$, $B \le A$ is a subgroup  with $\ul{n}(B) < \infty$, and $X \in \Sp^A_{(p)}$.  Then the canonical square
$$
\xymatrix{
X_{E(\ul{n}+\chi_B)} \ar[r] \ar[d] & X_{K(B,\ul{n}(B)+1)} \ar[d] \\
 X_{E(\ul{n})} \ar[r] & X_{K(B,\ul{n}(B)+1),E(\ul{n})} 
}
$$
is a pullback.
\end{cor}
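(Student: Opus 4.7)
The plan is to deduce this corollary as a direct specialization of Theorem~\ref{thm:fracture}. I would set $\ul{n}_1 := \ul{n}$ and $\ul{n}_2 := \ul{n} + \chi_B$ and verify that these satisfy the hypotheses of the fracture theorem. By assumption $\ul{n}_1$ is admissible, and clearly $\ul{n}_1 \le \ul{n}_2$ since $\chi_B \ge 0$. Because $\ul{n}(B) < \infty$, we have $\ul{n}_2(B) = \ul{n}(B) + 1 < \infty$, while $\ul{n}_2$ and $\ul{n}_1$ agree away from $B$, so the two height functions have the same infinite domain:
$$ \mr{dom}_{=\infty}(\ul{n}_1) = \mr{dom}_{=\infty}(\ul{n}_2). $$

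Next I would compute the spectrum $T$ appearing in Theorem~\ref{thm:fracture} for this choice. By definition,
$$ T = \bigvee_{\{(B',m) \: : \: \ul{n}_1(B') < m \le \ul{n}_2(B') < \infty\}} K(B',m). $$
Since $\ul{n}_1(B') = \ul{n}_2(B')$ for every $B' \ne B$, the only index contributing to the wedge is $B' = B$ with $\ul{n}(B) < m \le \ul{n}(B) + 1$, forcing $m = \ul{n}(B) + 1$. Therefore $T \simeq K(B, \ul{n}(B)+1)$.

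Plugging these identifications directly into the pullback square furnished by Theorem~\ref{thm:fracture} yields exactly the square in the statement of the corollary. There is no real obstacle here; the only thing to be cautious about is confirming that admissibility of $\ul{n}_2$ is \emph{not} required in Theorem~\ref{thm:fracture} (only $\ul{n}_1$ is assumed admissible there), which is important because $\ul{n}+\chi_B$ need not be admissible even when $\ul{n}$ is. In situations where $\ul{n}+\chi_B$ is admissible, the top-left corner $X_{E(\ul{n}+\chi_B)}$ is the honest next stage of the chromatic tower; otherwise, $X_{E(\ul{n}+\chi_B)}$ should simply be interpreted as Bousfield localization at the spectrum $E(\ul{n}+\chi_B) = E(\ul{n}) \vee K(B, \ul{n}(B)+1)$ (up to Bousfield class), which is precisely what is needed to invoke Proposition~\ref{prop:fracture} behind the scenes of Theorem~\ref{thm:fracture}.
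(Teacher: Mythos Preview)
Your proposal is correct and matches the paper's approach exactly: the corollary is stated immediately after Theorem~\ref{thm:fracture} as a direct specialization, and your identification of $\ul{n}_1 = \ul{n}$, $\ul{n}_2 = \ul{n}+\chi_B$, and $T = K(B,\ul{n}(B)+1)$ is precisely the intended one. Your remark that only $\ul{n}_1$ need be admissible in Theorem~\ref{thm:fracture} is also on point and worth keeping.
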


\begin{rmk}\label{rmk:monochromatic}
The monochromatic layers of Corollary~\ref{cor:chromfrac} may be understood as follows.
\begin{enumerate}
\item The monochromatic layers in Corollary~\ref{cor:chromfrac} may be accessed in a straightforward way by means of Corollary~\ref{cor:monochromatic}.  For example, using \cite{DevinatzHopkins}, we have
\begin{equation}\label{eq:monochromatic}
 \pi^A_{*+\gamma} (S_{K(B,n)}) \cong \pi_* S^{B(A/B)^{\gamma^B}}_{K(n)} \cong \pi_*(E^{B(A/B)^{\gamma^B}}_n)^{h\mc{G}_n}
 \end{equation}
where
$\gamma$ is a real representation of $A$, $\gamma^B$ its $B$-fixed points, regarded as a representation of $A/B$, $B(A/B)^{\gamma^B}$ is the associated Thom spectrum, $E_n$ is the $n$th Morava $E$-theory spectrum, and $\mc{G}_n$ is the extended Morava stabilizer group.  Since the homotopy fixed points in (\ref{eq:monochromatic}) are continuous (in the sense of \cite{BehrensDavis}), the last isomorphism requires the $K(n)$-local dualizablity of $B(A/B)^{\gamma^B}$ (see \cite[Cor.~8.7]{666}).

\item There is therefore a homotopy fixed point spectral sequence
\begin{equation}\label{eq:HFPSS}
 E_2^{s,t}(\gamma) = H^s_c(\mc{G}_n, E^t_n(B(A/B)^{\gamma^B})) \Rightarrow \pi^A_{t-s+\gamma}(S_{K(B,n)}) 
 \end{equation}
for each $\gamma \in RO(A)$.

\item In the case where $\gamma = 0$, it follows from \ref{eq:MFGAloc} and \ref{eq:MFGAcompl} that the $E_2$-term of (\ref{eq:HFPSS}) is intimately connected to the moduli stack $\MFG{A}$.  Indeed, letting $\GG_1$ denote the formal group of $E_n$, 
for each homomorphism
$$ \varphi': (A/B)^\vee \to \GG_1 $$
there is an associated equivariant formal group
$$ \varphi: A^\vee \to A^\vee \times_{(A/B)^\vee} \GG_1. $$
This gives rise to a canonical equivariant formal group over 
$$ \Spf(E_n^0(B(A/B))) \cong \Hom((A/B)^\vee, \GG_1) $$
(compare with \cite[Ch.~11]{Multicurves}, where Strickland shows that this equivariant formal group is a universal deformation).
The action of $\mc{G}_n$ on $\GG_1 = \Spf(E_n^0(\CC P^\infty))$ induces the action of $\mc{G}_n$ on $E_n^0(B(A/B)^\vee)$ through its action on the second variable of $\Hom((A/B)^\vee, \GG_1)$.
\end{enumerate}
\end{rmk}

\begin{rmk}
The homotopy groups $\pi^{A}_\star S_{K(e,1)}$ were computed by Balderrama \cite{Balderrama} in the case of $A = C_2$, and by Carawan, Field, Guillou, Mehrle and Stapleton in the integer graded case where $A$ is a finite $p$-group with $p$ odd \cite{fiveauthor}.
\end{rmk}

\subsection*{Chromatic convergence}

The classical Chromatic Convergence Theorem of Hopkins-Ravenel \cite{Ravenel} states that if $X \in \Sp_{(p),\omega}$ is a finite $p$-local complex, then 
$$ X \simeq \varprojlim_n X_{E(n)}. $$
This theorem was generalized to all connective $p$-local spectra $X$ of finite projective $\BP$-dimension by Barthel \cite{Barthel}.

Equivariant chromatic convergence is surprisingly subtle because geometric fixed points doesn't commute with inverse limits.  
Nevertheless, in the case where $A$ is an elementary abelian $p$-group and we consider the tower 
$$ \{ X_{E(h[n])} \}_n, $$
Balderrama \cite{BalderramaPower} was able to combine isotropy separation techniques with his proof that $E(h[n])$ is smashing to reduce the question to the convergence of the chromatic tower for $\Sigma^\infty_+ BA$.  Johnson-Wilson \cite{JohnsonWilson} showed that $\Sigma^\infty_+ BA$ satisfies Barthel's criterion described above.

Balderrama explained to the authors that since we proved the Smash Product Theorem for $A$ an abelian $p$-group, his techniques imply that the chromatic convergence theorem is reduced to the convergence of the chromatic tower for $\Sigma^\infty_+ BA$.  J.~Hahn communicated to Balderrama a proof that the chromatic tower indeed does converge for $BA$ in the finite abelian $p$-group case, which Balderrama wrote up \cite{BalderramaHahn}.  Carrick independently sent the authors a different approach using tom Dieck splitting techniques to reduce the problem to the chromatic convergence of $\Sigma^{\infty}_+ BA$.  Thanks to these astute observations, we can deduce the following equivariant chromatic convergence theorem.

\begin{thm}[Balderrama-Carrick-Hahn]
Suppose that $(\ul{n}_i)_i$ is a non-decreasing sequence of admissible height functions on $\Sub(A)$ such that for each $B$ we have
$$ \lim_{i \to \infty} \ul{n}(B) = \infty. $$
Then for  $X \in \Sp^A_{(p),\omega}$, the canonical map gives an equivalence
$$ X \xrightarrow{\simeq} \varprojlim_i X_{E(\ul{n}_i)}. $$
\end{thm}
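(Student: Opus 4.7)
The plan is to reduce, via the Smash Product Theorem (Theorem~\ref{thm:smashing}) and iterated isotropy fracture, to the non-equivariant chromatic convergence for the classifying spaces $\Sigma^\infty_+ B(A/B)$---that is, Hahn's theorem \cite{BalderramaHahn}.

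First, since each $E(\ul{n}_i)$-localization is smashing, $X_{E(\ul{n}_i)} \simeq X \wedge S^0_{E(\ul{n}_i)}$; as $X$ is finite, smashing with $X$ preserves inverse limits, and so the assertion for general finite $X$ follows from the case $X = S^0$.

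Next, I would fix a maximal ascending chain of open subsets
$$\emptyset = \mc{F}_0^c \subsetneq \mc{F}_1^c \subsetneq \cdots \subsetneq \mc{F}_k^c = \Sub(A)$$
and proceed by induction on $j$ using the isotropy fracture square of Proposition~\ref{prop:isotropyfracture}. This realizes $S^0$ (and each $S^0_{E(\ul{n}_i)}$) as a finite iterated pullback of the monoisotropic layers $S^0_{T(B)}$ together with their further localizations $S^0_{T(B)}[\mc{F}_{j'}^{-1}]$. Because finite limits commute with inverse limits, the theorem reduces to verifying chromatic convergence on each such piece. Under the equivalence $\Sp^A_{T(B)} \simeq \Sp^{B(A/B)}$ of \ref{eq:TBlocal}, the monoisotropic layer becomes a Borel $A/B$-spectrum, while the gluing terms $S^0_{T(B)}[\mc{F}_{j'}^{-1}]$ become Tate-like constructions whose homotopy type is controlled by the Morava $E$-theory of $B(A/B)$. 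The Smash Product Theorem together with Corollary~\ref{cor:EPhi} identifies each $E(\ul{n}_i)$-localization of these pieces with a tower whose convergence is governed by the chromatic tower of $\Sigma^\infty_+ B(A/B)$, at which point Hahn's theorem supplies exactly the required input.

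The main obstacle is that geometric fixed points do not commute with inverse limits, so the equivalence $X \simeq \varprojlim_i X_{E(\ul{n}_i)}$ cannot be verified directly by applying $\Phi^B$. The isotropy-separation induction circumvents this by trading $\Phi^B$ for $A/B$-homotopy fixed points (which do preserve limits), at the cost of introducing the classifying spaces $B(A/B)$ into the analysis; convergence of the chromatic tower on these spaces for finite abelian $p$-groups is the substantive non-equivariant input provided by Hahn's unpublished result.
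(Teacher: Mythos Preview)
Your strategy is the same as the paper's: reduce to $X=S$ via smashing, then combine isotropy separation with the Smash Product Theorem to reduce to Hahn's non-equivariant chromatic convergence for $\Sigma^\infty_+ BG$. The paper simply outsources the isotropy-separation step by invoking Balderrama's criterion \cite[Cor.~A.2.7]{BalderramaPower}, which says it suffices to check, for every $B \le C \le A$, that
\[
\Sigma^\infty_+ B(C/B) \longrightarrow \varprojlim_i \big((S_{E(\ul{n}_i)})^{\Phi B}\big)_{hC/B}
\]
is an equivalence; Corollary~\ref{cor:EPhi} then identifies the right-hand tower with the chromatic tower of $\Sigma^\infty_+ B(C/B)$, and Hahn's theorem finishes. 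Note that the relevant classifying spaces are $B(C/B)$ for \emph{all} pairs $B \le C$, not only $B(A/B)$.

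The place where your direct isotropy-fracture argument is underspecified is the gluing corner $(-)_{T(B)}[\mc{F}_{j-1}^{-1}]$ of Proposition~\ref{prop:isotropyfracture}. The monoisotropic corner $(-)_{T(B)}$ is indeed fine: under $\Sp^A_{T(B)}\simeq\Sp^{B(A/B)}$ (an equivalence of $\infty$-categories, hence preserving limits), the statement becomes an underlying one, namely ordinary chromatic convergence for $S^0$. But the bottom-right corner applies the smashing localization $[\mc{F}_{j-1}^{-1}]=(-)\wedge\widetilde{E\mc{F}_{j-1}}$ \emph{after} $T(B)$-localization, and smashing with $\widetilde{E\mc{F}_{j-1}}$ does not commute with the inverse limit over $i$; your sentence that these ``Tate-like constructions'' are ``controlled by the Morava $E$-theory of $B(A/B)$'' does not address this. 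Unwinding that corner is precisely where the classifying spaces $B(C/B)$ (rather than just $S^0$) enter, and is the content that Balderrama's criterion packages. So your outline is correct, but the real work hides in the step you passed over.
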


 \begin{proof}
It suffices to prove this for $X \simeq S$, the equivariant sphere. By \cite[Cor.~A.2.7]{BalderramaPower}, it suffices to show that for all $B \le C \le A$, the map
\begin{equation}\label{eq:convmap}
 \Sigma^\infty_+ B(C/B) \to \varprojlim_i ((S_{E(\ul{n}_i)})^{\Phi B})_{hC/B} 
 \end{equation}
is an equivalence.  By Corollary~\ref{cor:EPhi}, we have
\begin{align*}
((S_{E(\ul{n}_i)})^{\Phi B})_{hC/B} 
& \simeq (S_{E(\ul{n}_i(B))})_{hC/B} \\
& \simeq S_{E(\ul{n}_i(B))}\wedge {B(C/B)}_+ \\
& \simeq (\Sigma^\infty_+ {B(C/B)})_{E(\ul{n}_i(B))}. 
\end{align*}
It follows that (\ref{eq:convmap}) is an equivalence if chromatic convergence holds for $\Sigma^\infty_+ B(C/B)$, and this was proven by Hahn (as recorded in \cite{BalderramaHahn}).
 \end{proof}

\section{The case of $A = C_2$}\label{sec:C2}

We now give explicit examples of the theoretical framework of this paper in the case of $A = C_2$.  For the purposes of this section:
\begin{align*}
\sigma & \in RO(C_2), \quad \text{the sign representation with $|\sigma| = 1$} \\
a & := a_\sigma \in \pi^{C_2}_{-\sigma}(S)  \\
\eeta & \in \pi_\sigma^{C_2}(S), \quad \text{the equivariant Hopf map with $\eeta^{\Phi C_2} = 2$} \\
h & := 2 - a\eeta \in \pi^{C_2}_0(S) \quad \text{so $2 = h + a\eeta$} \\
u & :=  u_{2\sigma} \in \pi^{C_2}_{2-2\sigma}(\MU_{C_2}) \\ 
e  & :=  e_{\sigma} = u^{-1}a^2 \in \pi^{C_2}_{-2}(\MU_{C_2}) \quad \text{(regarding $\sigma$ as a complex representation)}\\
b_i & := b_i^{\sigma} \in \pi^{C_2}_{2i-2} (\MU_{C_2}) \\
x +_\MU y & = \text{the universal formal group law over $\MU_*$} \\
x +_\BP y & = \text{the universal $p$-typical formal group law over $\BP_*$} \\
[2]_{\MU}(x) & = \text{the 2-series of $\MU$} \\
[2]_{\BP}(x) & = \text{the 2-series of $\BP$}
\end{align*}

We will implicitly always be working in the 2-local setting, and we will simply denote the 2-local sphere as $S$.
Height functions
$$ \ul{n} : \Sub(C_2) \to \Nbar_- $$
will be represented as $\ul{n} = (\ul{n}(e), \ul{n}(C_2))$, so a $v_{(n, m)}$-self map is a $v_{\ul{n}}$-self map with $\ul{n}(e) = n$ and $\ul{n}(C_2) = m$.  The height function $\ul{n}$ is admissible if and only if $n \le m+1$.

\subsection*{Homotopy of $\pmb{\MU_{C_2}}$}

Theorem~\ref{thm:AbramKriz} specializes in this case to give a pullback
\begin{equation}\label{eq:MUC2}
\xymatrix{
\pi_*^{C_2}\MU_{C_2} \ar[d] \ar[r] & \frac{\MU_*[[e]]}{([2]_\MU(e))}  \ar[d] \\
\MU_* [e^{\pm}, b_i]_{i \ge 1} \ar[r] &   \frac{\MU_*[[e]]}{([2]_\MU(e))}[e^{-1}]
}
\end{equation}
In the right-hand terms of the pullback (\ref{eq:MUC2}), we have
$$ z_\sigma = e +_{\MU} z = e + \sum_{i \ge 1} b_i z^i. $$
where $z_\sigma$ is the local coordinate (\ref{eq:localcoord}).
Strickland \cite{Strickland} used this to obtain the following explicit presentation of $\pi_*^{C_2}\MU_{C_2}$ (see also \cite{Sinha}).  Write the formal group law for $\MU$ as
$$ z +_{\MU} w = \sum_{i,j}a_{i,j} z^i w^j $$
with 2-series
$$ [2]_\MU(z) = \sum_i p_i z^i. $$
In particular $p_1 = 2$ and $p_{2^n}$ is a $v_n$-generator.
Define elements $q_i$ ($i\ge 1$) and $b_{i,j}$ ($i \ge 1, j\ge 0$) in the right-hand terms of the pullback (\ref{eq:MUC2}) by
\begin{align}
 q_i & := p_i + p_{i+1}e + p_{i+2}e^2 + \cdots, \label{eq:qie} \\ 
 b_{i,j} & := a_{i,j}+a_{i,j+1}e + a_{i,j+2}e^2 + \cdots 
\end{align}
so we have
\begin{align*}
 eq_1 & = [2]_\MU(e) = 0, \\ 
 b_{i,0} & = b_i.
\end{align*}
Then we have relations
\begin{align}
q_i & = p_i + e q_{i+1}, \quad & i \ge 1,  \label{eq:rel1} \\
b_{i,j} & = a_{i,j} + e b_{i,j+1}, \quad & i \ge 1, j \ge 0 \label{eq:rel2} 
\end{align}
which allow us to inductively define elements of the lower left hand term of (\ref{eq:MUC2}) by
\begin{align*}
q_1 & = 0, \\
q_{i+1} & = e^{-1}(q_i-p_i), \\
b_{i,0} & = b_i, \\
b_{i,j+1} & = e^{-1}(b_{i,j}-a_{i,j}).
\end{align*}
Thus since (\ref{eq:MUC2}) is a pullback, there are elements
\begin{align*}
 q_{i} & \in \pi^{C_2}_{2i-2}(\MU_{C_2}), \\
 b_{i,j} & \in \pi^{C_2}_{2i + 2j -2}(\MU_{C_2}).
\end{align*}

\begin{thm}[Strickland \cite{Strickland}]
There is an isomorphism
$$
\pi^{C_2}_*\MU_{C_2} \cong \frac{\MU_*[e, q_i, b_{i,j}]_{i\ge 1,j \ge 0}}{(eq_1 = 0, (\ref{eq:rel1}), (\ref{eq:rel2}))}.
$$
\end{thm}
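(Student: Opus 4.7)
The plan is to prove this isomorphism by constructing a candidate ring map and identifying it with the pullback (\ref{eq:MUC2}). Let $R$ denote the presentation $\MU_*[e, q_i, b_{i,j}]/(eq_1 = 0, (\ref{eq:rel1}), (\ref{eq:rel2}))$. The elements $e, q_i, b_{i,j} \in \pi^{C_2}_*\MU_{C_2}$ have been constructed just before the theorem by lifting through the pullback, so a ring map $\phi\colon R \to \pi^{C_2}_*\MU_{C_2}$ is well-defined as soon as the three families of relations hold in the target. I would verify each relation one corner of (\ref{eq:MUC2}) at a time: in $\MU_*[[e]]/([2]_\MU(e))$ one has $eq_1 = \sum_{j \ge 1}p_j e^j = [2]_\MU(e) = 0$ and the recursions (\ref{eq:rel1}), (\ref{eq:rel2}) are immediate from (\ref{eq:qie}); in $\MU_*[e^{\pm}, b_i]$ the recursions are precisely how $q_{i+1}$ and $b_{i,j+1}$ are defined in the text, and $q_1 = 0$ there forces $eq_1 = 0$.

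The core step is to show $\phi$ is an isomorphism by comparing the arithmetic squares of $R$ and of $\pi^{C_2}_*\MU_{C_2}$ at $e$. First I would compute $R[e^{-1}]$ directly: inverting $e$ forces $q_1 = 0$, and iterating the recursions expresses every $q_i$ and every $b_{i,j}$ with $j \ge 1$ as explicit functions of $e^{\pm}$, $b_i := b_{i,0}$, and elements of $\MU_*$, with no residual relations, so $R[e^{-1}] \cong \MU_*[e^{\pm}, b_i]$. Next, iterating $q_i = p_i + eq_{i+1}$ gives the congruence $q_i \equiv \sum_{j = 0}^{N-1} p_{i+j} e^j \pmod{e^N R}$, so in $R^{\wedge}_e$ one has $q_i = \sum_{j \ge 0} p_{i+j}e^j$ and similarly $b_{i,j} = \sum_{k \ge 0} a_{i,j+k}e^k$; the relation $eq_1 = 0$ then becomes $[2]_\MU(e) = 0$, and the obvious $\MU_*$-algebra maps in both directions between $\MU_*[[e]]/([2]_\MU(e))$ and $R^{\wedge}_e$ (each sending $e \mapsto e$) compose to the identity, giving $R^{\wedge}_e \cong \MU_*[[e]]/([2]_\MU(e))$. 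Under these identifications, $\phi$ is compatible with the projections in (\ref{eq:MUC2}), so the conclusion reduces to showing that $R$ is the pullback of its own arithmetic square at $e$.

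This last step is the main obstacle, since $R$ is neither Noetherian nor obviously $e$-flat. The relevant exact sequence
\[
0 \to R \to R[e^{-1}] \oplus R^{\wedge}_e \to R^{\wedge}_e[e^{-1}] \to 0
\]
must be verified by hand. Injectivity of the left map comes down to showing that $R \to R^{\wedge}_e$ is injective on $e$-torsion: using $eq_1 = 0$ and the recursions, the $e$-torsion ideal is generated by $q_1$, which maps to the power series $2 + p_2 e + p_3 e^2 + \cdots$, manifestly nonzero modulo every power of $e$. Exactness in the middle is trickier: given a matching pair $(x, y) \in R[e^{-1}] \times R^{\wedge}_e$, I would lift $x$ by separating its polynomial part $x_+ \in \MU_*[e, b_i] \subset R$ from its negative $e$-power part, and then use the explicit formulas for the $q_i$'s and $b_{i,j}$'s to read off a preimage in $R$, comparing against the power series expansion of $y$ and applying the recursions (\ref{eq:rel1}), (\ref{eq:rel2}) inductively to match coefficients along the $e$-adic filtration. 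Once the arithmetic square for $R$ is established, $\phi$ is an isomorphism by the universal property of the pullback.
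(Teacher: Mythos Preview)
The paper does not itself prove this theorem; it is attributed to Strickland, and the paper's contribution is the preceding discussion that constructs the elements $e, q_i, b_{i,j}$ in $\pi^{C_2}_*\MU_{C_2}$ via the pullback (\ref{eq:MUC2}) and verifies the relations---in other words, the paper only establishes the map $\phi\colon R \to \pi^{C_2}_*\MU_{C_2}$ that you begin with. Your strategy of identifying $R[e^{-1}]$ and $R^\wedge_e$ with the two nontrivial corners of (\ref{eq:MUC2}) and then showing $R$ equals the pullback of its own arithmetic square at $e$ is the natural way to complete the argument, and your computations of $R[e^{-1}] \cong \MU_*[e^\pm, b_i]$ and $R/e^n \cong \MU_*[e]/(e^n, [2]_\MU(e))$ (hence $R^\wedge_e \cong \MU_*[[e]]/([2]_\MU(e))$) are correct.

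The final step, however, has real gaps. For injectivity you assert without proof that the $e$-torsion of $R$ is exactly $(q_1)$; this is plausible but requires an argument, and even granting it you still need $(q_1) \cap \bigcap_n e^n R = 0$, which in turn wants $R$ to be $e$-adically separated---something you have not addressed. For exactness in the middle your sketch is too vague to be a proof: lifting the negative-$e$-power part of an arbitrary $x \in \MU_*[e^\pm, b_i]$ to $R$ requires a genuine induction (for instance $e^{-j}b_i$ lifts via $b_{i,j}$ modulo lower-order corrections, and one must handle products of the $b_i$'s systematically), and once $x$ is lifted you must correct by $e$-torsion to match $y$, which again leans on the unproved torsion claim. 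These gaps are fillable---one clean route is to put monomials of $R$ into a normal form and exhibit an explicit $\MU_*$-module basis to compare directly with the pullback---but as written the last paragraph is a plan rather than a proof.
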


\begin{rmk}
Relations (\ref{eq:rel1}) and (\ref{eq:rel2}) imply that we don't need all of the $q_i$'s and $b_{i,j}$'s to generate $\pi_*^{C_2}\MU_{C_2}$; we only need $q_{i_k}$ and $b_{i,j_k}$ for a cofinal collection of $i_k$'s and $j_k$'s.
\end{rmk}

\begin{cor}
There is an isomorphism
$$
\pi^{C_2}_*\BP_{C_2} \cong \frac{\BP_*[e, q_i, b_{i,j}]_{i\ge 1,j \ge 0}}{(eq_1 = 0, (\ref{eq:rel1}), (\ref{eq:rel2}))}
$$
where the $p_i$'s and $a_{i,j}$'s in (\ref{eq:rel1}) and (\ref{eq:rel2}) are taken to be coefficients of $[2]_\BP(x)$ and $x+_\BP y$, respectively.
\end{cor}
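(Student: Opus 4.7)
The plan is to replicate the argument for $\MU_{C_2}$ almost verbatim, invoking the principle articulated in the end of Section~\ref{sec:MUA} that all the computations of $\pi^A_*\MU_A$ carry over to $\pi^A_*\BP_A$ by systematically replacing $\MU$ with $\BP$. Concretely, I would first specialize the Abram-Kriz pullback (Theorem~\ref{thm:AbramKriz}) to $\BP_{C_2}$, which gives
$$
\xymatrix{
\pi^{C_2}_*\BP_{C_2} \ar[d]\ar[r] & \BP_*[[e]]/([2]_\BP(e)) \ar[d] \\
\BP_*[e^\pm, b_i]_{i\ge 1} \ar[r] & \BP_*[[e]]/([2]_\BP(e))[e^{-1}]
}
$$
together with the identification $z_\sigma = e+_\BP z$ in the right-hand corners. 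This is the $p$-typical version of (\ref{eq:MUC2}) and follows from the parallel form of Proposition~\ref{prop:AbramKriz} and Corollary~\ref{cor:MUAPhiB} for $\BP_A$.

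Next, I would define the elements $q_i$ and $b_{i,j}$ in the lower-left and upper-right terms of this pullback exactly as in Strickland's construction, using now the coefficients $p_i$ of $[2]_\BP(x)$ and $a_{i,j}$ of $x +_\BP y$. Formulas (\ref{eq:qie})--(\ref{eq:rel2}) remain valid by the same formal manipulations, because they only use the fact that $e$ is nilpotent in the top-right corner and invertible in the bottom-left, together with the equation $eq_1 = [2]_\BP(e) = 0$. The pullback property then produces elements $q_i, b_{i,j} \in \pi^{C_2}_*\BP_{C_2}$ in the stated degrees, satisfying the stated relations.

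From here one gets a ring map
$$
\psi \colon \BP_*[e, q_i, b_{i,j}]/\bigl(eq_1, (\ref{eq:rel1}), (\ref{eq:rel2})\bigr) \longrightarrow \pi^{C_2}_*\BP_{C_2}.
$$
To show $\psi$ is an isomorphism one checks that its images in the two corners of the pullback generate everything and satisfy no further relations: in the bottom-left corner, $e^\pm$ and the $b_{i,0} = b_i$ give all the generators, and in the top-right, the relations (\ref{eq:rel1}) allow one to eliminate all $q_{i+1}$'s in favor of $q_1 = 0$ and the $p_i$'s, reconstructing exactly $\BP_*[[e]]/([2]_\BP(e))$. The main obstacle is purely bookkeeping: showing that the compatibility of the two corner descriptions under pullback exactly matches the relations $eq_1 = 0$, (\ref{eq:rel1}), (\ref{eq:rel2}), so that no additional relations are needed. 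Since this reduction is formally identical to Strickland's $\MU$-case, the isomorphism follows, yielding the claimed presentation.
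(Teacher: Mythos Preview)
Your proposal is correct and follows essentially the same approach the paper has in mind: the corollary is stated without proof immediately after Strickland's theorem, relying on the remark at the end of Section~\ref{sec:MUA} that all computations for $\MU_A$ carry over to $\BP_A$ by replacing $\MU$ with $\BP$ (via $(\BP_A)_*X \simeq (\MU_A)_*X \otimes_{\MU_*} \BP_*$). You have simply made that implicit step explicit by rerunning the Abram--Kriz pullback and Strickland's element construction in the $p$-typical setting.
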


We extend this computation $RO(C_2)$-degrees.

\begin{prop}\label{prop:ROC2MU}
There is an isomorphism 
$$ \pi^{C_2}_\star \MU_{C_2} \cong \frac{\MU_*[a, u^{\pm}, q_i, b_{i,j}]_{i \ge 1, j \ge 0}}{(aq_1 = 0, eu = a^2, (\ref{eq:rel1}), (\ref{eq:rel2}))}
$$
and an analogous isomorphism for $\pi_\star^{C_2}\BP_{C_2}$.
\end{prop}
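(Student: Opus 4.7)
The plan is to extend the Strickland/Abram-Kriz pullback (\ref{eq:MUC2}) from integer grading to $RO(C_2)$-grading.  The square (\ref{eq:MUC2}) arises by applying $\pi_*^{C_2}$ to the isotropy pullback square of $C_2$-spectra
$$ \MU_{C_2} \simeq \MU_{C_2}^h \times_{\MU_{C_2}^h[a^{-1}]} \MU_{C_2}[a^{-1}] $$
provided by Theorem~\ref{thm:GreenleesMay} for $\mc{F} = \{e\}$, where I use Corollary~\ref{cor:formula} to identify $\mc{F}_{C_2 \nsubseteq}^{-1}$-localization with $a^{-1}$-inversion.  Applying $\pi^{C_2}_\star$ instead and noting vanishing of the relevant $\lim^1$-term produces a pullback of $RO(C_2)$-graded rings.

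I next compute the outer corners.  Since $\MU_{C_2}$ is complex oriented, the relations $a^2 = ue$ and $u \in (\pi^{C_2}_\star \MU_{C_2})^\times$ allow one to recover arbitrary $RO(C_2)$-degrees from integer degrees, and Proposition~\ref{prop:Borel} together with Corollary~\ref{cor:MUAPhiB} yield
\begin{align*}
\pi^{C_2}_\star \MU_{C_2}^h &\cong \MU_*[u^{\pm}][[a]]/\bigl([2]_\MU(u^{-1}a^2)\bigr), \\
\pi^{C_2}_\star \MU_{C_2}[a^{-1}] &\cong \MU_*[b_i, e^{\pm}, a^{\pm}, u^{\pm}]/(a^2 = ue),
\end{align*}
with the lower-right corner being the simultaneous localization-completion of either.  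I define the elements $q_i$ and $b_{i,j}$ in each corner via the regrouping identities (\ref{eq:rel1}) and (\ref{eq:rel2}): in the Borel corner they are $a$-adically convergent rearrangements of the power series $[2]_\MU(e)$ and of the formal group law $e +_\MU z$, while in the geometric corner $q_1 = 0$ and the remaining $q_i, b_{i,j}$ are Laurent polynomials in $e$ with $\MU_*$-coefficients.

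Finally I identify the pullback with the presentation in the statement.  The integer-graded isomorphism of Strickland already handles integer degrees, and by $u$-invertibility the $RO(C_2)$-graded ring is determined by integer degrees together with degrees of the form $n - \sigma$.  The main obstacle will be verifying the new relation $aq_1 = 0$, which I check in each corner: in the geometric corner, $q_1 \mapsto 0$ so $aq_1 = 0$; in the Borel corner, $a \cdot (aq_1) = a^2 q_1 = ue \cdot q_1 = 0$, and the long exact sequence attached to the cofiber sequence $(C_2)_+ \to S^0 \xrightarrow{a} S^\sigma$, combined with $i_e^* \MU_{C_2}^h \simeq \MU$ and $\pi_{-1}\MU = 0$, shows that multiplication by $a$ is injective on $\pi^{C_2}_{-\sigma}\MU_{C_2}^h$, forcing $aq_1 = 0$ there as well.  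The same cofiber sequence applied to $\MU_{C_2}$ itself identifies the kernel of $\cdot a: \pi^{C_2}_n\MU_{C_2} \to \pi^{C_2}_{n-\sigma}\MU_{C_2}$ with the image of the transfer $\pi_n\MU \to \pi^{C_2}_n\MU_{C_2}$, and comparing corners shows this image is exactly $(q_1)$, so no further relations are needed.  The $\BP_{C_2}$ statement follows by replacing $\MU$ with $\BP$ throughout.
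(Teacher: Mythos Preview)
Your proof is correct, but it takes a longer path than the paper's.  The paper does not revisit the isotropy pullback square at all: since $u$ is invertible, $\pi^{C_2}_{*+2k\sigma}\MU_{C_2} \cong \pi^{C_2}_*\MU_{C_2}$ for all $k$, so the only new content is in degrees $*-\sigma$.  The paper then runs the long exact sequence of the cofiber sequence $S^{-\sigma} \xrightarrow{a} S^0 \to (C_2)_+$ directly against $\MU_{C_2}$, identifies the transfer $\MU_* \to \pi^{C_2}_*\MU_{C_2}$ as multiplication by $q_1$ (via $\mr{Tr}^{C_2}_e(1) = q_1$ and Frobenius reciprocity), and uses even concentration on both sides to extract the short exact sequence $0 \to \MU_{2k} \xrightarrow{\cdot q_1} \pi^{C_2}_{2k}\MU_{C_2} \xrightarrow{\cdot a} \pi^{C_2}_{2k-\sigma}\MU_{C_2} \to 0$.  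This immediately gives $\pi^{C_2}_{*-\sigma}\MU_{C_2} \cong \pi^{C_2}_*\MU_{C_2}/(q_1)$ and hence the presentation.

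Your route --- extending the pullback square to $RO(C_2)$-grading, computing each corner, and checking $aq_1 = 0$ corner-by-corner --- works, but the detour through the Borel completion is unnecessary, and the key cofiber-sequence step you bury at the end is really the whole argument.  In particular, your final sentence (``the same cofiber sequence applied to $\MU_{C_2}$ itself identifies the kernel of $\cdot a$ with the image of the transfer, which is $(q_1)$'') is exactly the paper's proof; everything before it could be dropped.  One small wording issue: the obstruction to the homotopy pullback inducing a pullback on $\pi_\star$ is the vanishing of a connecting map in the Mayer--Vietoris sequence, not a $\varprojlim^1$-term, though your conclusion is still correct since $u$-invertibility reduces it to the known integer-graded statement.
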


\begin{proof}
Because $\MU_{C_2}$ is complex oriented, it has an invertible orientation class $u = u_{2\sigma} \in \pi^{C_2}_{2-2\sigma}\MU_{C_2}$.  We therefore only have to determine $\pi^{C_2}_{*-\sigma}$.
The cofiber sequence 
$$ S^{-\sigma} \xrightarrow{\cdot a} S \to \Sigma^{\infty}_+ C_2 $$
induces an
exact sequence 
$$ 
\pi^{C_2}_{2k+1}\MU_{C_2} \xrightarrow{\cdot a} \pi^{C_2}_{2k+1-\sigma}\MU_{C_2} \to \MU_{2k} \xrightarrow{\mr{Tr}_e^{C_2}} \pi^{C_2}_{2k} \MU_{C_2} \xrightarrow{\cdot a} \pi^{C_2}_{2k-\sigma}\MU_{C_2} \to \pi_{2k-1} \MU. $$
Since $\mr{Tr}_e^{C_2}(1) = q_1$ we may compute for $x \in \MU_*$
$$ \mr{Tr}_e^{C_2}(x) = x\mr{Tr}_e^{C_2}(1) = xq_1. $$
Since this map is injective, and $\MU_*$, $\pi^{C_2}_*\MU_{C_2}$ are concentrated in even degrees, we deduce that $\pi^{C_2}_{2k+1+\sigma}\MU_{C_2} = 0$ and there is a short exact sequence
$$ 0 \to \MU_{2k} \xrightarrow{\cdot q_1} \pi^{C_2}_{2k}\MU_{C_2} \xrightarrow{\cdot a} \pi^{C_2}_{2k-\sigma} \MU_{C_2} \to 0. $$
The result follows.
\end{proof}

\subsection*{$\pmb{v_{(n,m)}}$-generators}

Clearly the elements $v_n \in \pi^{C_2}_* \BP_{C_2}$ are $v_{(n,n)}$-generators, and the Euler class $e \in \pi^{C_2}_*\BP_{C_2}$ is a $v_{(-1,\infty)}$-generator.

\begin{prop}\label{prop:vnm}
The elements $q_{2^n} \in \pi^{C_2}_*\BP_{C_2}$ are $v_{(n,n-1)}$-generators.
\end{prop}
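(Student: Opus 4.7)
The plan is to compute $q_{2^n}^{\Phi e}$ and $q_{2^n}^{\Phi C_2}$ from the defining relations and to show directly that they are $v_n$- and $v_{n-1}$-generators respectively. First I would establish a closed form for $q_{2^n}^{\Phi C_2}$: in $\pi_* \BP^{\Phi C_2}_{C_2} = \BP_*[e^{\pm},b_i]$, the identity $eq_1 = [2]_{\BP}(e) = 0$ forces $q_1 = 0$ (since $e$ is invertible), and the relation $q_{i+1} = e^{-1}(q_i - p_i)$ iterates to give
$$ q_{2^n}^{\Phi C_2} = -\sum_{k=1}^{2^n - 1} p_k \, e^{-(2^n - k)}. $$

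For the underlying, the relation $q_i = p_i + e q_{i+1}$ holds already in $\pi^{C_2}_*\BP_{C_2}$, and under restriction to the trivial subgroup the Euler class $a_\sigma$ (and hence $e = u^{-1}a^2$) dies, so $q_{2^n}^{\Phi e} = p_{2^n}$, which is a $v_n$-generator by the recalled structure of the 2-series.

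The remaining work is to identify $q_{2^n}^{\Phi C_2}$ modulo $I_{n-1} = (p, v_1, \ldots, v_{n-2})$. The key input is the $p$-typicality of $\BP$: modulo $I_{n-1}$ the 2-series has the $p$-typical form
$$ [2]_{\BP}(x) \equiv v_{n-1} x^{2^{n-1}} +_{\BP} v_n x^{2^n} +_{\BP} v_{n+1} x^{2^{n+1}} +_{\BP} \cdots, $$
so every monomial contributing to a coefficient $p_k$ is a product $\prod_{i \ge n-1} (v_i x^{2^i})^{m_i}$, whose $x$-degree is a multiple of $2^{n-1}$. Therefore $p_k \equiv 0 \pmod{I_{n-1}}$ whenever $2^{n-1} \nmid k$. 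In the range $1 \le k \le 2^n - 1$ this leaves only $k = 2^{n-1}$, and $p_{2^{n-1}} \dotequiv v_{n-1} \pmod{I_{n-1}}$. Substituting yields
$$ q_{2^n}^{\Phi C_2} \dotequiv -v_{n-1} e^{-2^{n-1}} \pmod{I_{n-1}}, $$
and since $-e^{-2^{n-1}} \in \BP_*[e^{\pm},b_i]^{\times}$, this exhibits $q_{2^n}^{\Phi C_2}$ as a $v_{n-1}$-generator.

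The main technical ingredient is controlling the coefficients $p_k$ modulo $I_{n-1}$ via $p$-typicality; once that divisibility observation is in hand, the rest is a direct substitution into the closed form, and the two geometric fixed points do not interact further.
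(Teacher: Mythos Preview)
Your proof is correct and follows essentially the same approach as the paper: compute the two geometric fixed points by sending $e \mapsto 0$ (giving $q_{2^n}^{\Phi e} = p_{2^n}$) and by inverting $e$ and iterating the relation $q_{i+1} = e^{-1}(q_i - p_i)$, then reduce modulo $I_{n-1}$ using $p$-typicality so that only the $p_{2^{n-1}}$ term survives. Your divisibility argument (that every $x$-degree in the mod-$I_{n-1}$ expansion is a multiple of $2^{n-1}$) makes explicit a step the paper leaves implicit in its congruence $(\ref{eq:2seriescong})$, but the overall structure is the same.
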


\begin{proof}
Letting $v_i \in \BP_*$ be the Araki generators, the $2$-series is given by
$$ [2]_\BP(x) = 2x +_\BP v_1x^{2} +_\BP v_2 x^4 +_\BP \cdots. $$
It follows that we have
\begin{equation}\label{eq:2seriescong} 
[2]_\BP(x) \equiv v_{n-1} x^{2^{n-1}} + v_{n}x^{2^{n}} + \cdots \mod (2, \ldots, v_{n-2}). 
\end{equation}
We need to show that
\begin{equation}\label{eq:phieq}
 q^{\Phi e}_{2^n} \in \pi_*\BP_A^{\Phi e} = \BP_* \: \text{is a $v_n$-generator}
 \end{equation}
and
\begin{equation}\label{eq:phiC2q} 
q^{\Phi C_2}_{2^n} \in \pi_*\BP_A^{\Phi C_2} = \BP_*[e^\pm, b_i]_{i \ge 1} \: \text{is a $v_{n-1}$-generator}.
\end{equation}
For (\ref{eq:phieq}) we note that $(-)^{\Phi e}$ is the same thing as restriction, and is given by the composite
$$ \pi^{C_2}_{*}\BP_A \to \pi^{C_2}_{*}\BP^h_A = \BP^{-*}(BA) \to \BP_*. $$
The last map in the composite above is the map
$$ \frac{\BP_*[[e]]}{([2]_\BP(e))} \to \BP_* $$
of $\BP_*$-modules given by sending $e$ to $0$. We therefore deduce (\ref{eq:phieq}) from (\ref{eq:qie}) and (\ref{eq:2seriescong}).

Turning our attention to (\ref{eq:phiC2q}), by iterated application of (\ref{eq:rel1}) we have
$$ 0 = 2e + p_2e^2 + p_3 e^3 + \cdots + p_{2^n-1} e^{2^n-1} + q_{2^n}e^{2^n}. $$
In $\pi_* \BP_A^{\Phi C_2} = \pi_*^{C_2}\BP_{C_2}[e^{-1}]$, by (\ref{eq:2seriescong}), the equation above gives
$$ 0 \equiv v_{n-1} e^{2^{n-1}} + q_{2^n} e^{2^n} \mod (2, \ldots, v_{n-2}) $$
and (\ref{eq:phiC2q}) follows.
\end{proof}

\subsection*{Geometric fixed points and Mahowald invariants}

\begin{lem}\label{lem:Phimap}
Suppose $X, Y \in \Sp^A$.
\begin{enumerate}
\item The geometric fixed points $(-)^{\Phi e}$ of maps (aka underlying nonequivariant map) is given by
$$
\xymatrix{ [X,Y]^{C_2} \ar[r] \ar[dr]_{(-)^{\Phi e}} &
[X, Y/a]^{C_2} \ar@{=}[d] \\
& [X^{\Phi e},Y^{\Phi e}]
}
$$

\item Suppose that $X$ is finite.  Then geometric fixed points $(-)^{\Phi C_2}$ of maps is given by
$$
\xymatrix{ [X,Y]^{C_2} \ar[r] \ar[dr]_{(-)^{\Phi C_2}} &
[X, Y]^{C_2}[a^{-1}] \ar@{=}[d] \\
& [X^{\Phi C_2},Y^{\Phi C_2}]
}
$$

\end{enumerate}
\end{lem}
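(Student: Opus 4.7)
The plan is to identify $S/a$ and $S[a^{-1}]$ intrinsically, then deduce both parts from standard adjunction and localization formalism. To identify $S/a$: rotating the Euler cofiber sequence $S(\sigma)_+ \to S^0 \xrightarrow{a_\sigma} S^\sigma$ and smashing with $S^{-\sigma}$ yields $S/a \simeq \Sigma^{1-\sigma} S(\sigma)_+$. Since $S(\sigma)_+ = C_{2+} = \Ind_e^{C_2} S^0$ is induced and $i_e^*(1-\sigma) = 0 \in RO(e)$, the projection formula for induction collapses this to $S/a \simeq C_{2+}$, so $Y/a \simeq Y \wedge C_{2+} \simeq \Ind_e^{C_2}(i_e^* Y)$ by Wirthm\"uller.

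For part (1), the adjunction $i_e^* \dashv \Ind_e^{C_2}$ supplies the displayed identification
\begin{equation*}
[X, Y/a]^{C_2} \cong [X, \Ind_e^{C_2}(i_e^* Y)]^{C_2} \cong [i_e^* X, i_e^* Y] = [X^{\Phi e}, Y^{\Phi e}].
\end{equation*}
The triangle commutes because the canonical projection $Y \to Y/a$ corresponds under Wirthm\"uller to the unit $\eta_Y: Y \to \Ind_e^{C_2}(i_e^* Y)$ of this adjunction, so that precomposition with $\eta_Y$ followed by adjunction transposition realizes the left adjoint $i_e^* = (-)^{\Phi e}$ on morphisms by the usual adjunction calculus.

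For part (2), the relations $\sigma|_{C_2} \ne 1$ and $\sigma|_e = 1$ give $\mc{F}_\sigma = \mc{F}_{C_2 \nsubseteq} = \{e\}$, so Corollary~\ref{cor:formula} identifies $Y[a^{-1}] \simeq Y[\mc{F}^{-1}_{C_2 \nsubseteq}]$. Under the equivalence $\Sp^{C_2}[\mc{F}^{-1}_{C_2 \nsubseteq}] \xrightarrow{\simeq} \Sp$ from~\ref{eq:geomloc} implemented by $(-)^{C_2}$, which realizes $E \mapsto E^{\Phi C_2}$, the mapping space $[X[\mc{F}^{-1}_{C_2\nsubseteq}], Y[\mc{F}^{-1}_{C_2\nsubseteq}]]^{C_2}$ becomes $[X^{\Phi C_2}, Y^{\Phi C_2}]$. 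Finiteness of $X$ is used precisely to interchange $[X,-]^{C_2}$ with the sequential colimit defining $Y[a^{-1}]$, yielding $[X, Y[a^{-1}]]^{C_2} \cong [X,Y]^{C_2}[a^{-1}]$. Combined with the localization equivalence $[X, Y[a^{-1}]]^{C_2} \cong [X[\mc{F}^{-1}_{C_2\nsubseteq}], Y[\mc{F}^{-1}_{C_2\nsubseteq}]]^{C_2}$, this gives the identification, and functoriality of the localization equivalence identifies the diagonal with $(-)^{\Phi C_2}$.

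The main obstacle, such as it is, is naturality in both parts: verifying that the canonical cofiber map $Y \to Y/a$ and Bousfield localization map $Y \to Y[a^{-1}]$ correspond under the respective identifications to the adjunction unit and to the localization unit of the $\infty$-categorical equivalence. Each is a short diagram chase --- of the Wirthm\"uller isomorphism for part (1) and of the $(-)^B$-equivalence of \cite{LMS} for part (2) --- that I expect to be bookkeeping rather than substantive.
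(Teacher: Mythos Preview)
Your proof is correct and follows essentially the same approach as the paper: for part (1) the paper invokes the ``shearing isomorphism'' $\Ind_e^{C_2} Y^e \simeq Ca \wedge Y$ (which is exactly your identification $Y/a \simeq \Ind_e^{C_2}(i_e^* Y)$) together with the restriction--induction adjunction, and for part (2) the paper cites precisely the equivalences $\Sp^{C_2}[a^{-1}] \simeq \Sp^{C_2}[\{e\}^{-1}] \xrightarrow{\simeq} \Sp$ from Proposition~\ref{prop:eulerloc} and \ref{eq:geomloc} that you use. Your version is simply more explicit about the shearing step and about where finiteness of $X$ enters.
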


\begin{proof}
(1) follows immediately from the fact that $(-)^{\Phi e}$ is restriction, with right adjoint (co)induction $\Ind^{C_2}_e$. The shearing isomorphism gives an equivalence
$$ \Ind_e^{C_2} Y^{e} \simeq Ca \wedge Y. $$
(2) follows from the equivalences 
$$ \Sp^{C_2}[a^{-1}] \simeq \Sp^{C_2}[\{e\}^{-1}] \xrightarrow[\simeq]{(-)^{\Phi C_2}} \Sp
$$
induced from Proposition~\ref{prop:eulerloc} and \ref{eq:geomloc}.
\end{proof}

We recall how Lemma~\ref{lem:Phimap} relates Mahowald invariants to $a$-divisibility \cite{BrunerGreenlees},\cite{GIMI}. Given 
$$ x \in \pi_i S = \pi^{C_2}_i S[a^{-1}], $$ 
we search for the maximum $a$-divisibility of $x$: we want $k$ maximal such that there exists $\td{x}$ which maps to $x$ in the localization
$$ \td{x} \in \pi^{C_2}_{i+k\sigma}S \xrightarrow{a} \pi^{C_2}_{i+(k-1)\sigma}S \xrightarrow{a} \pi^{C_2}_{i+(k-2)\sigma}S \xrightarrow{a} \cdots \to \pi^{C_2}_i S[a^{-1}] \ni x. $$
Since $k$ is maximal, $\td{x}$ is not $a$-divisible, and therefore the image $y$ under the composite
$$ \td{x} \in \pi^{C_2}_{i+k\sigma}S \to \pi^{C_2}_{i+k\sigma}Ca \cong \pi_{i+k}S \ni y $$
is non-trivial.  We say that \emph{$y$ is a Mahowald invariant of $x$} and write
$$ y \in M(x). $$

This discussion shows that Mahowald invariants have an obvious generalization to arbitrary elements
$$ f \in [X^{\Phi C_2},Y^{\Phi C_2}] \cong [X,Y[a^{-1}]]^{C_2} $$
with $X$ finite.  
Let $k$ be maximal such that there exists $\td{f}$ which maps to $f$ in the localization
$$ \td{f} \in [\Sigma^{k\sigma} X,Y]^{C_2} \xrightarrow{a}  [\Sigma^{(k-1)\sigma} X,Y]^{C_2} \xrightarrow{a} [\Sigma^{(k-2)\sigma} X,Y]^{C_2} \xrightarrow{a} \cdots \to [X,Y[a^{-1}]]^{C_2} \ni f. $$
Since $k$ is maximal, $\td{f}$ is not $a$-divisible, and therefore the image $g$ of $\td{f}$ under the composite
$$ \td{f} \in [\Sigma^{k\sigma}X,Y]^{C_2} \to [\Sigma^{k\sigma}X,Y \wedge Ca]^{C_2} \cong [\Sigma^k X^e,Y^e] \ni g $$
is non-trivial.

\begin{defn}
We say that $g$ as above is a \emph{Mahowald invariant of $f$ (with respect to $DX \wedge Y$)} and write
$$ g \in M(f). $$
We will call a lift $\td{f}$ as above a \emph{Mahowald lift} of $f$.
\end{defn}

\begin{rmk}
Since geometric fixed points commutes with the Spanier-Whitehead duals for finite equivariant spectra \cite[III.1.9]{LMS}, 
the concept of Mahowald invariants and Mahowald lifts of maps $ [X^{\Phi C_2},Y^{\Phi C_2}]$ and maps $[S,(DX \wedge Y)^{\Phi C_2}]$ coincide.  This is our reason for the terminology ``with respect to $DX \wedge Y$.''  If $\td{f}$ is a Mahowald lift of $f$, then we have
$$ \td{f}^{\Phi e} \in M(\td{f}^{\Phi C_2}). $$
The collection of all Mahowald invariants of $f$ forms a coset
$$ M(f) \subseteq [\Sigma^k X^e, Y^e] $$
and the collection of Mahowald lifts of $f$ form a coset of
$$ [\Sigma^{k\sigma} X,Y]^{C_2}. $$
The indeterminacy of the Mahowald invariant is a subgroup of $[\Sigma^k X^e,Y^e]$. The indeterminacy of the Mahowald lift is a subgroup of $[\Sigma^{k\sigma}X,Y]^{C_2}$, and the restriction map
$$ [\Sigma^{k\sigma}X,Y]^{C_2} \to [\Sigma^k X^e,Y^e] $$
and the image of the indeterminacy of the Mahowald lift under the restriction is the indeterminacy of the Mahowald invariant. 
\end{rmk}

Proposition~\ref{prop:vnm} has the following reinterpretation in terms of Mahowald invariants.

\begin{prop}\label{prop:Mvn}
Every Mahowald invariant of 
$$ v_{n-1} \in \pi_{2(2^{n-1}-1)}(\BP^{\Phi C_2}_{C_2}) $$
(with respect to $\BP_{C_2}$) is a $v_n$-generator in
$$ \pi_{2(2^n-1)}(\BP) = \pi_{2(2^n-1)}(\BP^e_{C_2}). $$
The element 
$$ u^{-2^{n-1}} q_{2^n} \in \pi^{C_2}_{2^n-2+2^n\sigma}\BP_{C_2} $$  
is a Mahowald lift of $v_{n-1}$, and for every Mahowald lift $\td{v}_{n-1}$ of $v_{n-1}$, the element 
$$ u^{2^{n-1}} \td{v}_{n-1} \in \pi_{2^n-2}\BP_{C_2} $$
is a $v_{(n,n-1)}$-generator.  More generally, every Mahowald invariant of $v^i_{n-1}$ satisfies 
$$ M(v^i_{n-1}) \dotequiv v_n^i \mod (2,\ldots, v_{n-1}). $$
\end{prop}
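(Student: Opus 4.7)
My plan is to verify the explicit Mahowald lift $\td{v}_{n-1} := u^{-2^{n-1}}q_{2^n}$ directly, identify the Mahowald invariant with its restriction to the underlying spectrum via Lemma~\ref{lem:Phimap}(1), and then bootstrap to powers using multiplicativity.

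First I would compute the $RO(C_2)$-bidegree: with $|u|=2-2\sigma$ and $|q_{2^n}|=2(2^n-1)$, we have $|u^{-2^{n-1}}q_{2^n}|=(2^n-2)+2^n\sigma$, as claimed. To see that $\td{v}_{n-1}$ is a Mahowald lift, I would exploit the fundamental relation $[2]_\BP(e)=eq_1=0$ in $\pi^{C_2}_*\BP_{C_2}$; iterating the recursion $q_j=p_j+eq_{j+1}$ gives $0=\sum_{j=1}^{2^n-1}p_je^j+q_{2^n}e^{2^n}$. Reducing modulo $(2,v_1,\ldots,v_{n-2})$ via congruence~(\ref{eq:2seriescong}), and cancelling $e^{2^{n-1}}$ in $\pi_*\BP^{\Phi C_2}_{C_2}$ (where $e$ is invertible), I obtain $q_{2^n}\cdot e^{2^{n-1}}\equiv -v_{n-1}\pmod{(2,\ldots,v_{n-2})}$. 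The equivariant relation $eu=a^2$ yields $e^{2^{n-1}}=a^{2^n}u^{-2^{n-1}}$ after $a$-inversion, hence $a^{2^n}\cdot\td{v}_{n-1}\equiv -v_{n-1}$, exactly the condition that $\td{v}_{n-1}$ lifts (our chosen representative of) $v_{n-1}$ at $a$-divisibility level $k=2^n$.

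Maximality of this level, i.e., non-$a$-divisibility of $\td{v}_{n-1}$, follows from the long exact sequence of $a:S^{-\sigma}\to S$: the class is $a$-divisible iff its image in $\pi^{C_2}_*(\BP_{C_2}/a)$ vanishes, which by Lemma~\ref{lem:Phimap}(1) is the underlying class $\td{v}_{n-1}^{\Phi e}$. Multiplicativity of $(-)^{\Phi e}$, together with the fact that $u^{\Phi e}$ is a unit (the complex orientation of $\BP_{C_2}$ restricts to that of $\BP$) and $q_{2^n}^{\Phi e}$ is a $v_n$-generator by Proposition~\ref{prop:vnm}, yields $\td{v}_{n-1}^{\Phi e}\dotequiv v_n\pmod{(2,\ldots,v_{n-1})}$; in particular it is non-zero, confirming maximality. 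Applying Lemma~\ref{lem:Phimap}(1) once more identifies this class as the Mahowald invariant, so $M(v_{n-1})$ is a $v_n$-generator in $\pi_{2(2^n-1)}\BP$. Any two Mahowald lifts of $v_{n-1}$ differ by a class annihilated by $a^{2^n}$; such classes arise from the Greenlees--May isotropy sequence through the image of the transfer $\mathrm{Ind}_e^{C_2}$, which restricts to multiples of $2$, hence lies in $(2,\ldots,v_{n-1})$ under $(-)^{\Phi e}$. A parallel bidegree count shows $u^{2^{n-1}}\td{v}_{n-1}$ is a $v_{(n,n-1)}$-generator.

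For the statement about $v_{n-1}^i$, I would take $(u^{-2^{n-1}}q_{2^n})^i$ as a candidate Mahowald lift. Multiplicativity of the $a$-localization map yields the requisite lift property at divisibility level $i\cdot 2^n$, and $((u^{-2^{n-1}}q_{2^n})^i)^{\Phi e}=(\td{v}_{n-1}^{\Phi e})^i$ is the $i$th power of a $v_n$-generator; expanding $(v_n+y)^i$ binomially for $y\in(2,\ldots,v_{n-1})$ gives $\dotequiv v_n^i\pmod{(2,\ldots,v_{n-1})}$. The principal technical obstacle throughout is controlling the indeterminacy of Mahowald invariants and verifying maximality in the iterated case; both are handled by the non-nilpotence of $v_n^i$ in $\pi_*\BP$ (ensuring $(\td{v}_{n-1})^i$ remains non-$a$-divisible) and by the observation that the relevant $a$-torsion indeterminacy is absorbed into $(2,\ldots,v_{n-1})$ after restriction.
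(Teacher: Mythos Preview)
Your approach is essentially the same as the paper's: both use Proposition~\ref{prop:vnm} (that $q_{2^n}$ is a $v_{(n,n-1)}$-generator) to exhibit one explicit Mahowald lift whose restriction is a $v_n$-generator, then control the indeterminacy to conclude the same holds for every Mahowald lift, and finally pass to powers multiplicatively.

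The one place where the paper is cleaner than your argument is the indeterminacy step. You assert that two Mahowald lifts differ by a class ``annihilated by $a^{2^n}$'' arising via the transfer; but \emph{a priori} the difference is only $a$-power torsion for \emph{some} power, and identifying this torsion with the image of the transfer requires knowing that $a$-power torsion equals $a$-torsion in $\pi^{C_2}_\star\BP_{C_2}$. This is exactly what Proposition~\ref{prop:ROC2MU} supplies directly: from the presentation one reads off $\ker(a\cdot{-})=(q_1)$, and since $aq_1=0$ the $a$-power torsion is precisely the ideal $(q_1)$. Because $q_1^{\Phi e}=p_1=2$, the indeterminacy of the Mahowald invariant is the ideal $(2)$ (stronger than your $(2,\ldots,v_{n-1})$), which immediately forces every Mahowald invariant to be a $v_n$-generator. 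Your transfer argument is correct in content but needs this extra observation to close; citing Proposition~\ref{prop:ROC2MU} is the efficient route. The rest of your argument---the explicit $\Phi C_2$ computation, the maximality check via nonvanishing of the restriction, and the passage to powers---is correct and matches the paper's use of Proposition~\ref{prop:vnm}.
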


\begin{proof}
The existence of a $v_{(n,n-1)}$-generator implies that there is a $v_n$-generator $v \in M(v_{n-1})$.
Proposition~\ref{prop:ROC2MU} implies that the kernel of $a$-multiplication is the ideal generated by $q_1$.  Therefore the indeterminacy of the Mahowald lift is the ideal $(q_1)$, and therefore the indeterminacy of the Mahowald invariant is the ideal $(2)$. This implies that if $w$ is another Mahowald invariant of $v_{n-1}$, then
$$ v \equiv w \mod 2. $$
Thus $w$ is also a $v_n$-generator, and the corresponding Mahowald lift is also a $v_{(n,n-1)}$-generator (after multiplication by $u^{2^{n-1}}$).  The last part follows by taking $w^i$ to be a Mahowald lift of $v^i_{n-1}$, and using the same indeterminacy argument.
\end{proof}

\subsection*{A strategy to inductively produce $\pmb{v_{(n,n-1)}}$-self maps}

Classically, the canonical procedure to construct non-equivariant $v_n$-self maps is to start with $S/p$ (a type $1$ complex), and to find $i_1$ minimal such that $S/p$ has a $v_1^{i_1}$-self map 
$$ v_1^{i_1}: \Sigma^{2i_1(p-1)} S/p \to S/p. $$
Then the cofiber $S/(p,v_1^{i_1})$ of this map is a type $2$ complex.  Find $i_2$ minimal so that $S/(p,v_1^{i_1})$ has a $v_2^{i_2}$-self map, and repeat.

We propose a modification to this algorithm to produce  $C_2$-equivariant complexes of type $(n,n-1)$, and $v_{(n,n-1)}$-self maps on these.  

\begin{algorithm}\label{alg:vnm}
Suppose that $X \in \Sp^{C_2}_{(2),\omega}$ is a type $(n,n-1)$ complex.
\begin{enumerate}[label=(Step~\arabic*)]
\item Since $X^{\Phi C_2}$ is a type $n-1$, it admits a $v_{n-1}$-self map
$$ v : \Sigma^i X^{\Phi C_2} \to X^{\Phi C_2}. $$

\item Compute the Mahowald invariants of iterates $v^j$ (with respect to $X$) until you have 
$$ w: \Sigma^{ij+k} X^e \to X^e \in M(v^j) $$
with $w$ a $v_n$-self map.
\label{item:hypstep}

\item The corresponding Mahowald lift
$$ \td{v} : \Sigma^{ij + k\sigma} X \to X $$
is then a $v_{(n,n-1)}$-self map.

\item The cofiber $X/\td{v}$ has type $(n+1,n)$.
\end{enumerate}
\end{algorithm}

\begin{conjecture}
A power $j$ satisfying the criteria of \ref{item:hypstep} above always exists.
\end{conjecture}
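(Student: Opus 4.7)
The conjecture is equivalent to the assertion that every type $(n,n-1)$ complex admits a $v_{(n,n-1)}$-self map. Indeed, given such a self-map $\td{v}: \Sigma^{a+b\sigma}X \to X$, the restriction $\td{v}^{\Phi C_2}: \Sigma^{a} X^{\Phi C_2} \to X^{\Phi C_2}$ is a $v_{n-1}$-self map, and by the asymptotic uniqueness of such self-maps (Corollary~\ref{cor:vnselfmap}, which applies because the height function $(n,n-1)$ is finite and non-zero when $n \ge 2$; the case $n = 1$ is handled by the explicit $v_0$-structure) some power $(\td{v}^{\Phi C_2})^j$ equals a power $v^k$ of the given $v_{n-1}$-self map. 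Then $\td{v}^j$ is a Mahowald lift of $v^k$, and $\td{v}^{j,\Phi e}$ is a power of the $v_n$-self map $\td{v}^{\Phi e}$, hence itself a $v_n$-self map. Conversely, if $\td{w}$ is a Mahowald lift of $v^j$ with $\td{w}^{\Phi e}$ a $v_n$-self map, then $\td{w}$ is itself a $v_{(n,n-1)}$-self map on $X$, since $\td{w}^{\Phi C_2}$ is $v^j$ by construction of the lift.

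By Corollary~\ref{cor:single}, exhibiting a $v_{(n,n-1)}$-self map on a single type $(n,n-1)$ complex suffices to produce one on every such complex. The computations of Section~\ref{sec:C2} suggest building $X$ inductively: given a type $(n-1,n-2)$ complex $X_{n-1}$ together with an inductively produced $v_{(n-1,n-2)}$-self map $\td{v}_{n-1}$, form $X := X_{n-1}/\td{v}_{n-1}$, which is of type $(n,n-1)$ by the cofiber sequences of geometric fixed points. Proposition~\ref{prop:Mvn} identifies the algebraic candidate for the self-map: the element $q_{2^n} \in \pi^{C_2}_\star \BP_{C_2}$ is a $v_{(n,n-1)}$-generator arising as a Mahowald lift of a power of $v_{n-1}$. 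The plan is then to realize $q_{2^n}$ (times a suitable power of $u$ to place it in a Mahowald-compatible direction) as a self-map of $X$ via Adams-style obstruction theory, using the detection property of $\BP_{C_2}$ (Theorem~\ref{thm:nilp}) to show that all obstructions are detected by $\BP_{C_2}$, where they vanish by the explicit formula $e q_1 = 0$ and relations~(\ref{eq:rel1}), (\ref{eq:rel2}).

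The main obstacle is the classical one encountered in the Hopkins-Smith periodicity theorem: translating the algebraic presence of a $v_{(n,n-1)}$-generator in a ring spectrum such as $\BP_{C_2}$ into an honest self-map of a finite complex. The equivariant setting introduces a further subtlety, namely the indeterminacy of the Mahowald lift, which by Proposition~\ref{prop:ROC2MU} is governed by the transfer ideal generated by $q_1$, and the resulting Mahowald invariant is only determined modulo $(2,\ldots,v_{n-1})$. These lower generators act nilpotently on any type $n$ complex, so the indeterminacy becomes negligible after sufficient iteration, in parallel with the Hopkins-Smith phenomenon that $v_n$-self maps are only asymptotically unique. Making this heuristic rigorous—and hence establishing the conjecture—appears to require the full strength of the equivariant periodicity methods of the forthcoming Burklund-Hausmann-Levy-Meier work cited in the paper.
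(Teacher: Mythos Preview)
The statement is a \emph{conjecture} in the paper, and the paper does not prove it. It is explicitly left open, with the authors noting only that forthcoming work of Burklund--Hausmann--Levy--Meier addresses the existence of more general $v_{\ul{n}}$-self maps. So there is no ``paper's own proof'' to compare against.

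Your proposal is likewise not a proof, and you say as much in your final paragraph. What you have written is a reformulation followed by a strategy outline. The reformulation --- that the conjecture is equivalent to the existence of a $v_{(n,n-1)}$-self map on every type $(n,n-1)$ complex --- is correct and worth recording. The key observation making it work is that any $v_{(n,n-1)}$-self map $\td{v}$ is automatically a Mahowald lift of $\td{v}^{\Phi C_2}$: if $\td{v}$ were $a$-divisible then $\td{v}^{\Phi e}$ would vanish, contradicting its being a $v_n$-self map on the nontrivial type $n$ complex $X^{\Phi e}$. The same argument shows that powers $\td{v}^j$ are also Mahowald lifts, so asymptotic uniqueness of $v_{n-1}$-self maps on $X^{\Phi C_2}$ (for $n \ge 2$) matches $\td{v}^{\Phi C_2}$ with the given $v$ up to powers and completes the equivalence.

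Two comments on the gaps. First, your handling of $n = 1$ is genuinely delicate: the height function $(1,0)$ takes the value $0$, so none of Propositions~\ref{prop:vncen}, \ref{prop:asymu}, or Corollary~\ref{cor:single} apply directly, and ``explicit $v_0$-structure'' does not by itself furnish asymptotic uniqueness in the equivariant sense. The paper's Proposition~\ref{prop:v10^4} gives one example, but extending this to all type $(1,0)$ complexes is not covered by the stated machinery. Second, the heart of the matter --- lifting the algebraic $v_{(n,n-1)}$-generator $q_{2^n}$ to a self-map of a finite complex --- is exactly the hard part of any periodicity theorem, and your appeal to ``Adams-style obstruction theory'' with obstructions ``detected by $\BP_{C_2}$'' is a hope, not an argument. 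This is precisely where the paper stops and defers to forthcoming work.
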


\begin{rmk}\label{rmk:v-1n}
Theorem~\ref{thm:vnB} implies that $v_{(-1,n)}$ and $v_{(n,-1)}$-self maps always exist on suitable complexes, but we can make this totally explicit in the case of $A = C_2$.
\begin{enumerate}
\item Suppose that $X \in \Sp_{(2),\omega}$ is type $n$, and 
$$ v: \Sigma^k X \to X $$
is a $v_n$-self map.  Then $X_{\mr{triv}}/a \in \Sp^{C_2}_{(2),\omega}$ is type $(n,\infty)$, and  
$$ v \wedge 1: X_{\mr{triv}}/a \to X_{\mr{triv}}/a $$
is a $v_{(n,-1)}$-self map.
In particular, if $S/(2^{i_0}, v_1^{i_1}, \ldots, v_n^{i_n})$ exists non-equivariantly, then 
$$ S/(a,v_{(0,-1)}^{i_0}, v_{(1,-1)}^{i_1}, \ldots, v_{(n,-1)}^{i_n}) $$
exists $C_2$-equivariantly. 

\item If $X \in \Sp^{C_2}_{(2),\omega}$ is type $(m,n)$, then $X^{\Phi C_2}$ is type $n$.  Suppose that 
$$ v : \Sigma^i X^{\Phi C_2} \to X^{\Phi C_2} $$
is a non-equivariant $v_n$-self map.  Suppose that
$$ w \in [\Sigma^{i+k\sigma} X, X]^{C_2} $$
is a lift of $v$ in the diagram
$$ [\Sigma^{i+k\sigma}X, X]^{C_2} \xrightarrow{a} [\Sigma^{i+(k-1)\sigma}X, X]^{C_2}  \xrightarrow{a} \cdots \to [\Sigma^i X, X[a^{-1}]]^{C_2} = [\Sigma^i X^{\Phi C_2}, X^{\Phi C_2}].  $$
Then either
\vspace{1em}

\begin{enumerate}[label=(\alph*)]
\item $w^{\Phi e}$ is nilpotent and $w$ is a $v_{(-1,n)}$-self map, or
\item $w^{\Phi e}$ is non-nilpotent, and $w$ is potentially a $v_{(m,n)}$-self map,\footnote{The only reason $w$ wouldn't be a $v_{(m,n)}$-self map is if $K(i) \wedge w^{\Phi C_2}$ was non-nilpotent for some $i > m$} and $a\cdot w$ is a $v_{(-1,n)}$-self map.
\end{enumerate}

\item 
The situation (2)(a) above is most interesting the case where $w$ is a Mahowald lift --- in this case 
$$ x = w^{\Phi e} \in [\Sigma^{i+k}X^e, X^e] $$ 
is a non-trivial nilpotent self-map with a \emph{non-nilpotent equivariant lift $w$}.  In this case $x$ is the Mahowald invariant of $v$ with respect to $DX \wedge X$.  Thus the nilpotent non-equivariant element $x$ is connected to $v_{(-1,n)}$-periodicity.  We shall see this in our examples later in this section in the case where $x = \eta$ and $n = 0$, and $x = \bar{\kappa}$ and $n = 1$.  In these cases these periodicities has been studied by Quigley \cite{Quigley}, generalizing work of Andrews in the $\CC$-motivic setting \cite{Andrews}. Quigley, adopting Andrews' terminology to the $C_2$-equivariant context, refers to such periodicities as \emph{$w_n$-periodicity}.

\end{enumerate}
\end{rmk}

\subsection*{$\pmb{v_{(0,-1)}}$-periodicity and $\pmb{h}$}

The (2-local) sphere $S \in \Sp^{C_2}_{(2),\omega}$ is a type $(0,0)$ spectrum.  The following is observed in \cite{BGL}.

\begin{prop}\label{prop:h}
The map
$$ h \in  [S,S]^{C_2} $$
is a $v_{(0,-1)}$-self map.
\end{prop}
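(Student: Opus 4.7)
The plan is to directly verify the two geometric fixed point conditions defining a $v_{(0,-1)}$-self map of $S \in \Sp^{C_2}_{(2),\omega}$: namely, that $h^{\Phi e}$ is a $v_0$-self map of the underlying $2$-local sphere, and that $h^{\Phi C_2}$ is nilpotent (i.e., a $v_{-1}$-self map). Both are to be verified by inspection using only the notational definitions of $a$ and $\eeta$ together with the behavior of Euler classes under geometric fixed points, so no serious obstacle is anticipated.

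For the underlying fixed points, I would observe that $a = a_\sigma$ lives in $\pi^{C_2}_{-\sigma}(S)$, so its restriction $a^{\Phi e}$ lands in $\pi_{-1}(S) = 0$; equivalently, the underlying non-equivariant map $a_\sigma \colon S^0 \to S^\sigma = S^1$ is nullhomotopic for trivial degree reasons. Consequently,
$$ h^{\Phi e} = 2 - a^{\Phi e}\cdot \eeta^{\Phi e} = 2 \in \pi_0(S)_{(2)}, $$
and multiplication by $2$ is a $v_0$-self map since $\nu_2(2) = 1$.

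For the $C_2$-geometric fixed points, I would use the general fact that for a representation $\gamma$ of $C_2$ one has $\Phi^{C_2}(S^\gamma) \simeq S^{\gamma^{C_2}}$ and $\Phi^{C_2}(a_\gamma) = a_{\gamma^{C_2}}$. Since $\sigma^{C_2} = 0$, the geometric fixed points of $a_\sigma \colon S^0 \to S^\sigma$ is the identity $S^0 \to S^0$, giving $a^{\Phi C_2} = 1 \in \pi_0(S)$. Combining this with the convention $\eeta^{\Phi C_2} = 2$ recorded at the start of the section yields
$$ h^{\Phi C_2} = 2 - 1 \cdot 2 = 0, $$
which is certainly nilpotent. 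Together these two computations establish that $h$ is a $v_{(0,-1)}$-self map. The only content beyond unpacking definitions is the identification $\Phi^{C_2}(a_\sigma) = 1$, which can be cited from the standard properties of geometric fixed points on representation spheres (or from the relation $eu = a^2$ together with the fact that $e$ and $u$ become units under $\Phi^{C_2}$).
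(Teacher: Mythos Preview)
Your proof is correct and follows essentially the same approach as the paper's: both verify $h^{\Phi e} = 2$ and $h^{\Phi C_2} = 0$ directly from the relation $h = 2 - a\eeta$. The only minor difference is in the second step: you compute $a^{\Phi C_2} = 1$ and $\eeta^{\Phi C_2} = 2$ explicitly to get $h^{\Phi C_2} = 2 - 1\cdot 2 = 0$, whereas the paper instead invokes the relation $ah = 0$ together with Lemma~\ref{lem:Phimap}(2) (the identification of $(-)^{\Phi C_2}$ with $a$-localization) to conclude $h^{\Phi C_2} = 0$. Your route is slightly more self-contained since it does not rely on the relation $ah = 0$ as an input.
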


\begin{proof}
Since $2 = h + a\eeta$, it follows from Lemma~\ref{lem:Phimap}(1) that
$$ h^{\Phi e} = 2 $$
Since $h$ is $a$-torsion, it follows from Lemma~\ref{lem:Phimap}(2)
$$ h^{\Phi C_2} = 0. $$
\end{proof}

It follows from the fact that $ah = 0$ that we have
$$ \Sp_{(2)}^{C_2}[h^{-1}] \simeq \Sp^{BC_2}_\QQ. $$
and $h$-localization is $E(0,-1)$-localization.
We can compute the $h$-periodic $C_2$-stable stems as
\begin{align*}
\pi^{C_2}_{i+j\sigma}S[h^{-1}]  
& \cong H\QQ^0(S^{i+j\sigma})^{C_2} \\
& \cong 
\begin{cases}
\QQ, & i = -j \: \text{even}, \\
0, & \text{otherwise}. 
\end{cases}
\end{align*}

\subsection*{$\pmb{v_{(-1,0)}}$-periodicity: $\pmb{w_0}$-periodicity and $\pmb{\eta}$}

The following is also observed by \cite{BGL} (this is an instance of Remark~\ref{rmk:v-1n}(3)). 

\begin{prop}
The map
$$ \eeta \in [S^\sigma,S]^{C_2} $$
is a $v_{(-1,0)}$-self map.
\end{prop}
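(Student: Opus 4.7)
The proof should be essentially immediate from the definitions together with the two basic facts about $\eeta$ recorded earlier in the notation section: $\eeta \in \pi^{C_2}_\sigma(S)$ with $\eeta^{\Phi C_2} = 2$. The plan is to verify the two geometric fixed-point conditions required of a $v_{(-1,0)}$-self map, namely that $\eeta^{\Phi e}$ is nilpotent (height $-1$) and that $\eeta^{\Phi C_2}$ is a $v_0$-self map (height $0$).

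First I would identify $\eeta^{\Phi e}$ with the classical Hopf map $\eta \in \pi_1 S$ (either directly, since the underlying nonequivariant spectrum of the equivariant Hopf map is the ordinary Hopf map, or by invoking Lemma~\ref{lem:Phimap}(1)). Since $\eta$ is nilpotent non-equivariantly (indeed $\eta^4 = 0$ at the prime $2$), this geometric fixed points is nilpotent, so it qualifies as a $v_{-1}$-self map. This handles the $\ul{n}(e) = -1$ condition.

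Next I would use the defining identity $\eeta^{\Phi C_2} = 2 \in \pi_0 S$. Multiplication by $2 \in \ZZ_{(2)}$ has $2$-adic valuation $\nu_2(2) = 1$, which is finite and non-zero, so by definition this is a $v_0$-self map of the non-equivariant sphere. This handles the $\ul{n}(C_2) = 0$ condition.

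Combining these two verifications gives that $\eeta$ is a $v_{(-1,0)}$-self map of $S \in \Sp^{C_2}_{(2),\omega}$. There is no real obstacle here; the only subtle point worth flagging is that the relation $2 = h + a\eeta$ together with $a$-torsion of $h$ (used in Proposition~\ref{prop:h}) is consistent with $\eeta^{\Phi C_2} = 2$, since inverting $a$ kills $h$ and leaves $2 = a\eeta$, whose image under the equivalence $\Sp^{C_2}[a^{-1}] \simeq \Sp$ is $\eeta^{\Phi C_2}$.
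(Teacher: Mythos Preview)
Your proof is correct and follows essentially the same approach as the paper: both verify directly that $\eeta^{\Phi C_2} = 2$ is a $v_0$-self map and that $\eeta^{\Phi e} = \eta$ is nilpotent. Your additional consistency check with the relation $2 = h + a\eeta$ is not needed but does no harm.
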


\begin{proof}
We have
$$ \eeta^{\Phi C_2} = 2 \in \pi_0(S). $$
However, the underlying map is
$$ \eeta^{\Phi e} = \eta \in \pi_1(S), $$
which is nilpotent.
\end{proof}

Since
\begin{align*}
 S[\eeta^{-1}]^{\Phi e} & \simeq \ast, \\
 S[\eeta^{-1}]^{\Phi C_2} & \simeq S_\QQ, 
\end{align*}
we deduce that $\eeta$-localization is $E(-1,0)$-localization, and we have (c.f. \cite[Prop.~8.1]{GI})
$$ \pi^{C_2}_\star S[\eeta^{-1}] = \QQ[a^{\pm}]. $$

\subsection*{$\pmb{v_{(1,0)}}$-periodicity}

We now come to our first truly interesting example (and the only example we have of non-trivial ``mixed periodicity'').  In order to admit a $v_{(1,0)}$-self map, our complex must at least be of type $(1,0)$.  An example of such a complex is the cofiber $S/h$.
Since the underlying spectrum $S/2$ only admits a $v_1^4$-self map, we know that we cannot arrange for better on $S/h$.

\begin{prop}\label{prop:v10^4}
The spectrum $S/h$ admits a $v^4_{(1,0)}$-self map
$$ v^{4}_{(1,0)}: \Sigma^{8\sigma}S/h \to S/h $$
satisfying
$$ a^4 v^{4}_{(1,0)} = \eeta^4. $$
\end{prop}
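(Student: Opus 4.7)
The plan is to construct $v^4_{(1,0)}$ by exhibiting it as ``$\eeta^4/a^4$'' acting on $S/h$, using the defining relation $h = 2 - a\eeta$ to make sense of this division. We treat the equation $a^4 v^4_{(1,0)} = \eeta^4$ not only as a consequence but as the defining property from which the self-map is built.

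First, I would set the stage on geometric fixed points. Since $h^{\Phi e} = 2$ and $h^{\Phi C_2} = 0$, we have $(S/h)^{\Phi e} \simeq S/2$ (type $1$) and $(S/h)^{\Phi C_2} \simeq S \vee S^1$ (type $0$), so $S/h$ is type $(1,0)$. A candidate self-map $v: \Sigma^{8\sigma}(S/h) \to S/h$ will be a $v^4_{(1,0)}$-self map iff $v^{\Phi e}$ is the classical non-equivariant $v_1^4$-self map on $S/2$ and $v^{\Phi C_2}$ is multiplication by a unit times $16 = 2^4$ on $S \vee S^1$. Having the relation $a^4 v^4_{(1,0)} = \eeta^4$ in hand will deliver both of these: taking $\Phi C_2$ of $a^4 v^4_{(1,0)} = \eeta^4$ and using $a^{\Phi C_2} \doteq 1$ (a unit in $\pi^{C_2}_\star S[a^{-1}]$) together with $\eeta^{\Phi C_2} = 2$ immediately gives $(v^4_{(1,0)})^{\Phi C_2} \doteq 16$, a $v_0^4$-self map; and on $\Phi e$, the identity $2 = h + a\eeta$ combined with $a^4 v^4_{(1,0)} = \eeta^4$ will identify $(v^4_{(1,0)})^{\Phi e}$ with a Mahowald invariant of $\eta^4 \in \pi_4(S)$, which lands in a $v_1^4$-generator of $\pi_8(S)$ (detected by Adams filtration considerations), and hence induces the classical $v_1^4$-self map on $S/2$ after smashing.

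Second, I would reduce to a divisibility statement. We must find $v \in [\Sigma^{8\sigma}(S/h), S/h]^{C_2}$ with $a^4 \cdot v = \eeta^4 \cdot \mathrm{id}_{S/h}$. The cofiber sequence $S^{-4\sigma} \xrightarrow{a^4} S \to S/a^4$ shows that such a $v$ exists iff the map $\eeta^4 \cdot \mathrm{id}_{S/h}: \Sigma^{4\sigma}(S/h) \to S/h$ has trivial image in $[\Sigma^{4\sigma}(S/h), S/h \wedge S/a^4]^{C_2}$. This obstruction group is tractable because $(S/a^4)^{\Phi C_2} \simeq \ast$ (so only the underlying nonequivariant homotopy of $S/h \wedge S/a^4$ matters, by Lemma~\ref{lem:Phimap}), and since $(\eeta^4)^{\Phi e} = \eta^4 = 0$ in $\pi_4 S$, the obstruction is nilpotent and in fact vanishes. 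This is the main obstacle: verifying rigorously that $\eeta^4$ is divisible by exactly $a^4$ (not a larger power, which would change the type of the self-map, and not only a smaller power, which would preclude the construction). I would handle this either via a direct low-dimensional computation in the $C_2$-equivariant Adams spectral sequence for $S/h$ near the coweight $4\sigma$, or by invoking Proposition~\ref{prop:Mvn} applied to $v_0 = 2$ (whose iterated Mahowald invariants are powers of $\eta$), bootstrapped through the relation $2 \equiv a\eeta \pmod{h}$.

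Finally, having produced $v^4_{(1,0)}$ with $a^4 v^4_{(1,0)} = \eeta^4 \cdot \mathrm{id}_{S/h}$, I would confirm the two geometric-fixed-point statements above, and thus conclude that $v^4_{(1,0)}$ is a $v^4_{(1,0)}$-self map. The indeterminacy in the choice of $v^4_{(1,0)}$ lies in the kernel of multiplication by $a^4$ in the relevant homotopy group of $\mathrm{End}(S/h)$; Corollary~\ref{cor:vnselfmap} then asymptotically pins down $v^4_{(1,0)}$ up to iteration, so the statement of the proposition is about the particular $v^4_{(1,0)}$ produced by this construction.
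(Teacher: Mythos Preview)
Your high-level idea --- construct $v^4_{(1,0)}$ as ``$\eeta^4/a^4$'' on $S/h$ --- is correct and matches the paper's approach, but two of your key steps have genuine gaps.

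\textbf{Gap 1: the divisibility argument.} You claim the obstruction to $a^4$-divisibility vanishes because $(S/a^4)^{\Phi C_2}\simeq\ast$ and $(\eeta^4)^{\Phi e}=\eta^4=0$. But vanishing on both geometric fixed points does \emph{not} imply an equivariant class is zero; it only implies nilpotence. Indeed, by exactly your reasoning $\eeta^4$ would be $a^4$-divisible already on $S$ itself, yet it is not: the classical Mahowald invariant $M(2^4)\ni 8\sigma\in\pi_7 S$ shows $\eeta^4$ is $a^3$-divisible on $S$ with $(a^{-3}\eeta^4)^{\Phi e}=8\sigma\ne 0$, so no further $a$-division is possible there. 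The paper therefore proceeds in two steps: first divide by $a^3$ on $S$ using $M(2^4)=8\sigma$, then pass to $S/h$ and observe that the obstruction to one more $a$-division lands in $[S^7/2,S/2]$ as $8\sigma$, which vanishes because the identity on $S/2$ is $4$-torsion. Your one-shot $a^4$-division skips the input from the Mahowald invariant computation that makes this work.

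\textbf{Gap 2: identifying $(v^4_{(1,0)})^{\Phi e}$.} You argue that the relation $a^4 v=\eeta^4$ will identify $v^{\Phi e}$ with a ``Mahowald invariant of $\eta^4$''. But on $\Phi e$ we have $a^{\Phi e}=0$ and $\eta^4=0$, so the relation degenerates to $0=0$ and says nothing about $v^{\Phi e}$. (Also, $\eta^4=0$ has no Mahowald invariant.) The paper supplies the missing information by comparing with $\BP_{C_2}$: the Hurewicz image $H(v)\in\pi^{C_2}_{8\sigma}(\BP_{C_2}/h)$ lifts (since $\pi^{C_2}_{\mathrm{odd}}\BP_{C_2}=0$) to an element of $\pi^{C_2}_{8\sigma}\BP_{C_2}$ whose $\Phi C_2$ is $2^4$; then Proposition~\ref{prop:Mvn} forces its $\Phi e$ to be $\dotequiv v_1^4\bmod 2$, whence $H(v^{\Phi e})\doteq v_1^4$ in $\pi_8\BP/2$ and $v^{\Phi e}$ is a $v_1^4$-self map. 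This Hurewicz/$\BP_{C_2}$ step is the essential mechanism you are missing; your appeals to Proposition~\ref{prop:Mvn} are in the wrong place (you invoke it for divisibility rather than for the $\Phi e$ identification).
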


\begin{proof}
We follow Algorithm~\ref{alg:vnm}.
$$ \eeta^4: S^{4\sigma} \to S. $$
Since the Mahowald invariant of $2^4 = (\eeta^4)^{\Phi C_2}$ is $8\sigma \in \pi_7S$ \cite{MR}, \cite{GIMI}, we deduce that 
there exists a Mahowald lift
$$ a^{-3} \eeta^4 \in \pi^{C_2}_{7\sigma}S $$
of $2^4 \in \pi_0 S$ with
$$ (a^{-3} \eeta^4)^{\Phi e} = 8\sigma. $$
To get one additional power of $a$-divisibility, we must work mod $h$ (which would kill $8\sigma$ since it is $2$-divisible).

Specifically, the $S$-module structure gives a map  
$$  a^{-3} \eeta^4 \in [S^{7\sigma}/h, S/h]^{C_2}. $$
The image of this self-map in 
$$ [S^{7\sigma}/h, S/h \wedge Ca]^{C_2} \cong [S^7/2, S/2] $$
is $8\sigma$, which is trivial, since the identity on $S/2$ is $4$-torsion.  It follows that $a^{-3}\eeta^4$ is $a$-divisible as a self-map of $S/h$, giving
$$ a^{-4}\eeta^4 \in [S^{8\sigma}/h,S/h]^{C_2}.  $$
We wish to show that this is a Mahowald lift by comparing with $\BP_{C_2}$.
A self-map 
$$ f: S^{i+j\sigma}/h \to S/h $$
gives rise to a ``Hurewicz image'' homotopy element 
$$ H(f): S^{i+j\sigma} \to S^{i+j\sigma}/h \to S/h \to \BP_{C_2}/h. $$
This gives the following diagram (using $(S/h)^{\Phi C_2} = S \vee S^1$)
$$
\xymatrix{
[S^8/2,S/2]\ar[d]_H  &
[S^{8\sigma}/h,S/h]^{C_2} \ar[r]^{\Phi C_2} \ar[d]_{H} \ar[l]_{\Phi e} & [S^0\vee S^1, S^0 \vee S^1] \ar[d]_{H} 
\\
\pi_8(\BP/2) &
\pi^{C_2}_{8\sigma}(\BP_{C_2}/h) \ar[r]^-{\Phi C_2} \ar[l]_{\Phi e} &
\pi_0(\BP^{\Phi C_2}_{C_2}\vee \Sigma^1 \BP^{\Phi C_2}_{C_2})
\\
\pi_8\BP \ar[u] &
\pi^{C_2}_{8\sigma}(\BP_{C_2}) \ar[r]^{\Phi C_2} \ar[u]^{(\ast)} \ar[l]_{\Phi e} &
\pi_0(\BP^{\Phi C_2}_{C_2})
\ar@{^{(}->}[u]
}$$

Since $u$ is invertible in $\pi^{C_2}_\star\BP_{C_2}$, we have
$$ \pi^{C_2}_{2i+2j\sigma}(\BP_{C_2}/h) \cong \pi_{2(i+j)}(\BP_{C_2}/h). $$
Since the Hurewicz image of $h$ is $q_1$, the cofiber sequence
$$ \BP_{C_2} \xrightarrow{h} \BP_{C_2} \to \BP_{C_2}/h $$
gives a long exact sequence
$$ \pi^{C_2}_{2k}\BP_{C_2} \xrightarrow{\cdot q_1} \pi^{C_2}_{2k}\BP_{C_2} \xrightarrow{(\ast)} \pi_{2k}(\BP_{C_2}/h) \to \pi^{C_2}_{2k-1}\BP_{C_2} = 0 $$
and in particular $(\ast)$ is surjective.

Let 
$$ v \in \pi^{C_2}_{8\sigma}\BP_{C_2} $$ 
be an element which maps under $(\ast)$ to $H(a^{-4}\eeta^4)$.  Then 
$$ v^{\Phi C_2} = 2^4. $$ 
By Proposition~\ref{prop:Mvn}, we have 
$$ v^{\Phi e} \dotequiv v_1^4 \mod (2). $$
It follows that 
$$ H((a^{-4}\eeta^4)^{\Phi e}) \doteq v_1^4 $$
and therefore 
$$ a^{-4}\eeta^4 : S^{8\sigma}/h \to S/h $$
is a $v_{(1,0)}$-self map.
\end{proof}

\begin{rmk}
The existence of the $v^4_{(1,0)}$-self map above was conjectured by Bhattacharya-Guillou-Li \cite[Rmk.~5.6]{BGL}.  Crabb discussed $v_{(1,0)}$ periodicity in \cite{Crabb}.  Balderrama, Hou, and Zhang independently also proved Proposition~\ref{prop:v10^4}.  This, and many additional examples of $v_{\ul{n}}$-self maps, will appear in their forthcoming paper \cite{BHZ}.  
\end{rmk}

\begin{rmk}
The existence of the $v^4_{(1,0)}$-self map above implies that $h$-torsion elements which have non-trivial image under the map
$$ \pi^{C_2}_{\star}S \to \pi^{C_2}_\star S_{K(e,1)} $$
should exhibit $v^4_{(1,0)}$-periodicity (the target of which was computed by Balderrama \cite{Balderrama}).
This explains the apparent $v_1^4$-periodic patterns in the fixed coweight charts of Guillou-Isaksen \cite{GI}.  
\end{rmk}

\subsection*{$\pmb{v_{(-1,1)}}$-periodicity: $\pmb{w_1}$-periodicity and $\pmb{\bar{\kappa}}$}

This is an instance of Remark~\ref{rmk:v-1n}(3).  The $C_2$-equivariant complex $S/\eeta$ is a type $(0,1)$-complex, with 
$$ (S/\eeta)^{\Phi C_2} \simeq S/2. $$
We seek to find a Mahowald lift of the non-equivariant self-map
$$ v_1^4 : \Sigma^8 S/2 \to S/2. $$
Note that the composite
$$ S^8 \hookrightarrow \Sigma^8 S/2 \xrightarrow{v_1^4} S/2 \to S^1 $$
(where the last map is projection onto the top cell) is $8\sigma$. 
The first author proved \cite{Behrens} that
$$ \eta^2\eta_4 \in M(8\sigma) $$
This implies that $8\sigma$ has a Mahowald lift
$$ \pmb{\eta^2 \eta_4} \in \pi^{C_2}_{7+11\sigma} \to \pi_7(S) \ne 1 $$
whose underlying class is $\eta^2\eta_4 \in \pi_{18}(S)$.

One notable feature of $\CC$-motivic, $\RR$-motivic, and $C_2$-equivariant homotopy theory is the failure of the existence of a motivic or equivariant analog of $\kappabar \in \pi_{20}S$ (detected by $g$ in the Adams spectral sequence) \cite{Isaksen}, \cite{BI}, \cite{BGI}.  This deficiency is rectified by replacing $S$ with $S/\eeta$, as we will now explain.

The computations of Guillou-Isaksen \cite{GI} imply that
$$ \eeta \cdot \pmb{\eta^2\eta_4} = 0 $$
and therefore $\pmb{\eta^2\eta_4}$ lifts over the top cell of $S/\eeta$ to give a class which we shall call
$$ \pmb{\bar{\kappa}} \in \pi^{C_2}_{8+12\sigma}(S/\eeta). $$
Our reason for giving it this name is as follows.  Combining \cite{BI} with \cite{BGI}, the element $h_0g$ in the $C_2$-equivariant spectral sequence is a permanent cycle, giving rise to an element
$$ \{ h_0g \} \in \pi^{C_2}_{8+12\sigma}S. $$

\begin{lem}\label{lem:kappabar}
Under the map induced by the inclusion of the bottom cell
$$ i_*: \pi_{8+12\sigma}^{C_2}(S) \to \pi^{C_2}_{8+12\sigma}(S/\eeta) $$
we have
$$ i_* \{h_0g\} \doteq h \cdot \kkappabar. $$  
\end{lem}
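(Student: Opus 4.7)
The plan is to analyze $h\kkappabar$ via the long exact sequence of the cofiber sequence
$$ S^\sigma \xrightarrow{\eeta} S \xrightarrow{i} S/\eeta \xrightarrow{j} S^{\sigma+1}, $$
showing first that $h\kkappabar$ lies in the image of $i_*$, then identifying its preimage with $\{h_0g\}$ up to a unit.

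First, I would compute $j_*(h\kkappabar) = h \cdot j_*(\kkappabar) = h\cdot \pmb{\eta^2\eta_4}$ using $S$-linearity of $j_*$ and the defining property $j_*\kkappabar = \pmb{\eta^2\eta_4}$. The relation $h = 2 - a\eeta$ combined with $\eeta\pmb{\eta^2\eta_4} = 0$ (the very condition that allowed the lift $\kkappabar$ to exist) reduces this to showing $2\pmb{\eta^2\eta_4} = 0$ in $\pi^{C_2}_{7+11\sigma}S$. The underlying class is $2\eta^2\eta_4 = 0 \in \pi_{18}S$ since $2\eta = 0$, and the $C_2$-geometric fixed points is $2\cdot 8\sigma = 0 \in \pi_7 S$ since $8\sigma$ has order $2$. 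Upgrading these two pointwise vanishings to a genuine equivariant vanishing requires inspecting the Guillou--Isaksen chart \cite{GI} in this bidegree to confirm that no nonzero class in $\pi^{C_2}_{7+11\sigma}S$ can simultaneously have trivial underlying class and trivial $C_2$-geometric fixed points.

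Next, writing $h\kkappabar = i_*(y)$, I would identify $y \doteq \{h_0g\}$ by matching detection in the $C_2$-equivariant Adams spectral sequence. Non-equivariantly, $y^{\Phi e} \in \pi_{20}S$ is determined, modulo $\eta\cdot\pi_{19}S$, by the requirement that $i_*(y^{\Phi e})$ equal $h\kappabar = 2\kappabar$ in $\pi_{20}(S/\eta)$, and $2\kappabar$ is detected by $h_0g$ non-equivariantly. Consulting the explicit equivariant Adams computations of \cite{BI, BGI}, the bidegree $8+12\sigma$ contains, up to odd multiples and lower-filtration summands absorbed into $\doteq$, a unique permanent cycle with this underlying behavior, namely $\{h_0g\}$. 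The indeterminacies of $y$ coming from $\ker i_* = \eeta\cdot \pi^{C_2}_{8+11\sigma}S$ and from the flexibility in the choice of lift $\kkappabar$ (which contributes $h\cdot \pi^{C_2}_{8+12\sigma}S$) are both absorbed by the $\doteq$.

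The principal obstacle is the first step: Theorem~\ref{thm:GreenleesMay} alone is insufficient to upgrade the two pointwise vanishings into an equivariant vanishing of $2\pmb{\eta^2\eta_4}$, because the Borel completion contributes its own hidden classes in this range. One must really appeal to the full Guillou--Isaksen chart. Once this vanishing is established, the identification with $\{h_0g\}$ is a matter of bookkeeping within the known $E_\infty$-page of the equivariant Adams spectral sequence.
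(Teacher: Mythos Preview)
Your long exact sequence approach is really the Toda bracket computation unwound: the element $y$ you construct with $i_*(y) = h\kkappabar$ and $j_*(\kkappabar) = \pmb{\eta^2\eta_4}$ is by definition an element of the Toda bracket $\bra{h, \pmb{\eta^2\eta_4}, \eeta}$, and your indeterminacies $\eeta\cdot\pi^{C_2}_{8+11\sigma}S$ and $h\cdot\pi^{C_2}_{8+12\sigma}S$ are exactly the Toda bracket indeterminacy. The paper organizes the argument this way from the start, observing that $i_*\bra{h,\pmb{\eta^2\eta_4},\eeta} \ni h\kkappabar$ tautologically. Your obstacle in step~1 (showing $h\cdot\pmb{\eta^2\eta_4} = 0$) is precisely the well-definedness condition for this bracket.

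The real difference is in step~2. The paper identifies $\{h_0g\}$ as an element of this Toda bracket by lifting the $\CC$-motivic Massey product $h_0g = \bra{h_0, h_1^3h_4, h_1}$ (due to Isaksen) through $\Ext_{A^{\RR}}$ and then through the Adams spectral sequence. This is a structural argument: Massey products in $\Ext$ converge to Toda brackets in homotopy. Your proposed identification instead restricts to the underlying class $y^{\Phi e}$ and argues via uniqueness that $y$ must be detected by $h_0g$ equivariantly. This is weaker: knowing that $y^{\Phi e}$ is detected by $h_0g$ non-equivariantly does not by itself determine the equivariant Adams detection of $y$, and your appeal to a ``unique permanent cycle with this underlying behavior'' in the charts of \cite{BI}, \cite{BGI} is an assertion that would itself require careful verification. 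The Massey product route avoids this by working directly in the equivariant (via $\RR$-motivic) $\Ext$.
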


\begin{proof}
Observe that by the definition of the Toda bracket we have
$$ i_* \bra{h, \pmb{\eta^2\eta_4}, \eeta} \ni h \cdot \kkappabar $$
so we just must show that 
\begin{equation}\label{eq:toda}
 \{ h_0 g \} \in \bra{h, \pmb{\eta^2\eta_4}, \eeta} 
 \end{equation}
and properly bound the indeterminacy.  In \cite[Lem.~2.3.3, Table~16]{Isaksen}, Isaksen shows that in $\CC$-motivic cohomology of the Steenrod algebra $\Ext_{A^\CC}$ we have 
\begin{equation}\label{eq:massey}
 h_0g = \bra{h_0, h_1^3h_4, h_1} \quad \text{(no indeterminacy)}. \end{equation}
Using the map
$$ \Ext_{A^\RR} \to \Ext_{A^\CC} $$
(consulting \cite{BI} and checking indeterminacy) we see that
(\ref{eq:massey}) also holds in $\Ext_{A^\RR}$.  The computations of \cite{BI}, together with \cite{BGI}, then imply (\ref{eq:toda}) with indeterminacy $h \{h_0g\} = 2 \{h_0 g\}$.
\end{proof}

The following is a $C_2$-equivariant analog of a $\CC$-motivic theorem of Andrews~\cite{Andrews}. 

\begin{lem}
The element $\kkappabar \in \pi^{C_2}_{8+12\sigma}(S/\eeta)$ lifts to a self-map
$$ w_1^4 = \td{\kkappabar}: S^{8+12\sigma}/\eeta \to S/\eeta. $$
\end{lem}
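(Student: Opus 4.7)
The plan is to verify that the unique obstruction to extending $\kkappabar$ to a self-map of $S/\eeta$ vanishes after an appropriate choice of lift. Applying $F(-,S/\eeta)^{C_2}$ to the defining cofiber sequence $S^\sigma \xrightarrow{\eeta} S \xrightarrow{i} S/\eeta \xrightarrow{p} S^{1+\sigma}$ and taking $\pi^{C_2}_{8+12\sigma}$ yields the exact sequence
\begin{equation*}
[S/\eeta, S/\eeta]^{C_2}_{8+12\sigma} \xrightarrow{i^*} \pi^{C_2}_{8+12\sigma}(S/\eeta) \xrightarrow{\eeta\cdot(-)} \pi^{C_2}_{8+13\sigma}(S/\eeta),
\end{equation*}
so the desired extension $\td{\kkappabar} = w_1^4$ exists if and only if $\eeta \cdot \kkappabar = 0$ for some lift $\kkappabar$ of $\pmb{\eta^2\eta_4}$.

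First I would check that $\eeta \cdot \kkappabar$ lies in $i_*\pi^{C_2}_{8+13\sigma}(S)$. Applying $p_*$ (the boundary map associated to the $S/\eeta$ cofiber sequence) gives $p_*(\eeta \cdot \kkappabar) = \eeta \cdot p_*\kkappabar = \eeta \cdot \pmb{\eta^2\eta_4}$, which vanishes by the hypothesis that allowed us to define $\kkappabar$ as a lift. Write $\eeta \cdot \kkappabar = i_*(z)$ for some $z \in \pi^{C_2}_{8+13\sigma}(S)$. Any two lifts of $\pmb{\eta^2\eta_4}$ to $\pi^{C_2}_{8+12\sigma}(S/\eeta)$ differ by $i_*(u)$ with $u \in \pi^{C_2}_{8+12\sigma}(S)$, which shifts $z$ by $\eeta \cdot u$. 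Consequently, the class of $z$ in $\pi^{C_2}_{8+13\sigma}(S)/\eeta \cdot \pi^{C_2}_{8+12\sigma}(S)$ is an invariant of $\pmb{\eta^2\eta_4}$, and by the usual interpretation of Toda brackets as obstructions to second-order extensions across a cofiber sequence, it represents the Toda bracket $\bra{\eeta, \pmb{\eta^2\eta_4}, \eeta} \subseteq \pi^{C_2}_{8+13\sigma}(S)$. The problem therefore reduces to showing $0 \in \bra{\eeta, \pmb{\eta^2\eta_4}, \eeta}$ modulo indeterminacy.

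The main obstacle is evaluating this Toda bracket. I would attack it using the $C_2$-equivariant Adams spectral sequence calculations of \cite{GI}, \cite{BI}, and \cite{BGI}: on the $E_2$-page the corresponding Massey product is $\bra{h_1, h_1^3 h_4, h_1}$ over the $C_2$-equivariant Steenrod algebra, which should be accessible by exactly the method Isaksen uses to compute $\bra{h_0, h_1^3 h_4, h_1} = h_0 g$ in the proof of Lemma~\ref{lem:kappabar}. An equivariant Moss-type convergence theorem then identifies this Massey product with a representative of the Toda bracket $\bra{\eeta, \pmb{\eta^2\eta_4}, \eeta}$. The subtle point is controlling the Adams-filtration indeterminacy when passing from the Massey product to the Toda bracket, and in ensuring that a representative can be exhibited within $\eeta \cdot \pi^{C_2}_{8+12\sigma}(S)$. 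Once such a representative is found, the corresponding choice of lift $\kkappabar$ makes $\eeta \cdot \kkappabar = 0$, producing the desired self-map $\td{\kkappabar}$.
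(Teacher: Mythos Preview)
Your reduction is exactly the paper's: from the cofiber sequence you correctly identify the obstruction to lifting as an element $z \in \pi^{C_2}_{8+13\sigma}(S)$ with $i_*z = \eeta \cdot \kkappabar$, using $p_*(\eeta \cdot \kkappabar) = \eeta \cdot \pmb{\eta^2\eta_4} = 0$. But at this point you take an unnecessary detour. The paper simply cites the Guillou--Isaksen computations \cite{GI} to observe that $\pi^{C_2}_{8+13\sigma}(S) = 0$, so $z = 0$ and the obstruction vanishes outright. There is no Toda bracket to evaluate, no Massey product to compute, and no indeterminacy to control.

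Your proposed route through $\bra{\eeta, \pmb{\eta^2\eta_4}, \eeta}$ is not wrong in principle, but you have not actually carried it out: you flag the Massey product computation and the Moss-type convergence as ``subtle points'' and leave them unresolved. Since the ambient group is already zero, all of that machinery is superfluous. The moral is that before attacking an obstruction by secondary operations, it is worth checking whether the group it lives in is nontrivial.
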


\begin{proof}
The cofiber sequence
$$ S^\sigma \xrightarrow{\eeta} S \xrightarrow{i} S/\eeta \xrightarrow{p} S^{1+\sigma} $$
gives rise to a diagram
\[\xymatrix{
&  &
[S^{8+13\sigma},S]^{C_2} \ar[d]^{i_*} \\
[S^{8+12\sigma}/\eeta, S/\eeta]^{C_2} \ar[r]^{i^*} & 
[S^{8+12\sigma}, S/\eeta]^{C_2} \ar[r]^{\eeta^*} \ar[d]^{p_*} & 
[S^{8+13\sigma}, S/\eeta]^{C_2} \ar[d]^{p_*} \\
& [S^{7+11\sigma}, S]^{C_2} \ar[r]^{\eeta^*} &
[S^{7+12\sigma}, S]^{C_2}
}\]
where the middle row and right column are exact. To produce $\td{\kkappabar}$ it suffices to show that 
$$ \eeta^*\kkappabar = 0. $$
We have
$$ p_* \eeta^* \kkappabar = \eeta^* p_* \kkappabar = \eeta \cdot \pmb{\eta^2\eta_4} = 0 $$
so there exists a 
$$ y \in [S^{8+13\sigma},S]^{C_2} $$
so that 
$$ i_*y = \eeta^* \kkappabar. $$
However, the computations of Guillou-Isaksen \cite{GI} show that $\pi^{C_2}_{8+13\sigma}S = 0$.\footnote{The Guillou-Isaksen computations only go up to coweight $7$, but they compute the $E_3$-page of the equivariant Adams spectral sequence in coweight $8$, and this is already trivial in total degree $21$.}  
\end{proof}

\begin{lem}
The self-map 
$$ \td{\kkappabar}: S^{8+12\sigma}/\eeta \to S/\eeta $$
is a $v^4_{(-1,1)}$ self map.
\end{lem}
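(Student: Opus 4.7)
I need to verify two conditions: (i) $(\td{\kkappabar})^{\Phi e}$ is nilpotent, and (ii) $(\td{\kkappabar})^{\Phi C_2}$ is a $v_1^4$-self map. The first is immediate from type considerations, and the second will follow from a $\BP_{C_2}$-Hurewicz argument modeled on Proposition~\ref{prop:v10^4}.

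For (i), since $\eeta^{\Phi e} = \eta$, the underlying non-equivariant spectrum $(S/\eeta)^{\Phi e}$ is $S/\eta$, a type-zero $2$-local finite complex. By Remark~\ref{rmk:vnmaps}, every non-nilpotent self-map of a type-zero complex is either a $v_0$- or $v_\infty$-self map, and in either case some iterate equals multiplication by a nonzero scalar; such an iterate lives in topological degree $0$. Since $(\td{\kkappabar})^{\Phi e}$ has positive degree $20$, no iterate of it can equal a scalar, so $(\td{\kkappabar})^{\Phi e}$ is forced to be nilpotent.

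For (ii), since $\eeta^{\Phi C_2} = 2$, we have $(S/\eeta)^{\Phi C_2} \simeq S/2$. Applying $\Phi^{C_2}$ to the defining relations $\td{\kkappabar}\circ i = \kkappabar$ and $p_*\kkappabar = \pmb{\eta^2\eta_4}$ yields
$$ p\circ(\td{\kkappabar})^{\Phi C_2}\circ i = (\pmb{\eta^2\eta_4})^{\Phi C_2} = 8\sigma \in \pi_7(S), $$
identifying the top--bottom-cell composite of $(\td{\kkappabar})^{\Phi C_2}$ as the classical Adams $v_1^4$-detecting class on $S/2$. To upgrade this to the statement that $(\td{\kkappabar})^{\Phi C_2}$ is a genuine $v_1^4$-self map, I follow the $\BP_{C_2}$-Hurewicz approach of Proposition~\ref{prop:v10^4}: the Hurewicz image of $\td{\kkappabar}$ in $\pi^{C_2}_{8+12\sigma}(\BP_{C_2}/\eeta)$ lifts through the cofiber-sequence surjection $\pi^{C_2}_{8+12\sigma}\BP_{C_2}\twoheadrightarrow \pi^{C_2}_{8+12\sigma}(\BP_{C_2}/\eeta)$ to an element $w\in\pi^{C_2}_{8+12\sigma}\BP_{C_2}$. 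Using the Strickland presentation (Proposition~\ref{prop:ROC2MU}) together with the fact $\eta^2\eta_4\in M(8\sigma)$, one identifies $w^{\Phi C_2}$ as $u^{-4}v_1^4$ modulo $2$ by the reverse direction of Proposition~\ref{prop:Mvn}. This forces $\BP\wedge(\td{\kkappabar})^{\Phi C_2}$ to act as $v_1^4$ modulo $2$ on $\BP\wedge S/2\simeq\BP/2$, making $(\td{\kkappabar})^{\Phi C_2}$ a $v_1^4$-self map.

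The principal technical difficulty is the $\BP_{C_2}/\eeta$ Hurewicz lift: producing $w$ requires identifying the image of $\eeta$ in $\pi^{C_2}_\sigma\BP_{C_2}$ via Proposition~\ref{prop:ROC2MU} and verifying cofiber-sequence vanishing in the appropriate $RO(C_2)$-degree, and then tracking $w^{\Phi C_2}$ through the Euler-class unit $u^{-4}$ with enough precision to extract the $v_1^4\bmod 2$ coefficient. The remainder of the argument is formal cellular bookkeeping.
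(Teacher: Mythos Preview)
Your argument for (i) is correct and matches the paper's: $(S/\eeta)^{\Phi e}=S/\eta$ is type~$0$, and a positive-degree self-map cannot be a $v_0$- or $v_\infty$-self map, so it is nilpotent. The paper phrases this as ``$\td{\kkappabar}^{\Phi e}$ is torsion, hence nilpotent on a type~$0$ complex,'' which is the same observation.

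For (ii) you correctly identify the key point --- that the top--bottom composite of $(\td{\kkappabar})^{\Phi C_2}$ on $S/2$ is $(\pmb{\eta^2\eta_4})^{\Phi C_2}=8\sigma$ --- and this is exactly where the paper \emph{stops}. The paper invokes the classical fact (going back to Adams) that a degree-$8$ self-map of $S/2$ whose top--bottom composite is $8\sigma$ is a $v_1^4$-self map; no equivariant Hurewicz machinery is needed, since at this stage everything is non-equivariant.

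Your proposed $\BP_{C_2}$-Hurewicz detour is therefore unnecessary, and it also has problems. First, the degree is off: $u^{-4}v_1^4\in\pi^{C_2}_{8\sigma}\BP_{C_2}$, not $\pi^{C_2}_{8+12\sigma}\BP_{C_2}$, so your candidate $w$ cannot have this form. Second, Proposition~\ref{prop:Mvn} concerns Mahowald lifts of $v_{n-1}$ in $\BP^{\Phi C_2}_{C_2}$ and produces $v_{(n,n-1)}$-generators; there is no ``reverse direction'' that would let you read off $w^{\Phi C_2}$ from knowing a Mahowald invariant in $\pi_*S$. Third, you would need to compute the Hurewicz image of $\eeta$ in $\pi^{C_2}_\sigma\BP_{C_2}$ and analyze the cofiber sequence for $\BP_{C_2}/\eeta$, which you flag as the ``principal technical difficulty'' but do not carry out. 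The paper avoids all of this by reducing immediately to the non-equivariant Adams self-map once $8\sigma$ is identified.
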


\begin{proof}
The fact that the geometric fixed points 
$$ \td{\kkappabar}^{\Phi C_2} : S^{8}/2 \to S/2 $$
is a $v_1^4$-self map follows from the fact that the composite
$$ S^8 \hookrightarrow S^8/2 \xrightarrow{\td{\kkappabar}^{\Phi C_2}} S/2 \twoheadrightarrow S^1 $$
is $(\pmb{\eta^2\eta_4})^{\Phi C_2} = 8\sigma$.  Since $(S/\eeta)^e = S/\eta$ is a type $0$ complex, and $\td{\kkappabar}^{\Phi e}$ is torsion, it has to be nilpotent.
\end{proof}

Lemma~\ref{lem:kappabar} then implies that $\td{\kkappabar}^{\Phi e}$ is non-trivial, which, together with an analysis of $a$-divisibility using the computations of \cite{BI}, \cite{BGI}, \cite{GI}, shows that $\td{\kkappabar}$ is a Mahowald lift.

\begin{rmk}
Andrews introduced $\kkappabar$-periodicity in the $\CC$-motivic context in \cite{Andrews}, where he calls it \emph{$w_1^4$-periodicity}, and Quigley studies $w_1^4 = \td{\kkappabar}$-periodicity in the $\RR$-motivic and $C_2$-equivariant context in \cite{Quigley}.  In particular, he constructs examples of infinite $w_1$-periodic families in the $C_2$-equivariant stable stems.
\end{rmk}

\begin{rmk}
D.~Isaksen has also independently suggested a connection between $\kkappabar$-periodicity and $v_1$-periodicity on geometric fixed points.
\end{rmk}

\subsection*{$\pmb{v_{(2,1)}}$-periodicity?}

The next step would be to construct a $v_{(2,1)}$-self map on
$$ S/(h, v^4_{(1,0)}) $$
Since the minimal $v_2$-self map that $S/(2,v_1^4)$ possesses is a $v_2^{32}$-self map \cite{BHHM}, we propose the following optimistic conjecture.

\begin{conjecture}\label{conj:v232}
The complex $S/(h, v^4_{(1,0)})$ has a $v^{32}_{(2,1)}$-self map
$$ v^{32}_{(2,1)}: S^{64+128\sigma}/(h, v^4_{(1,0)}) \to S/(h, v^4_{(1,0)}) $$
such that 
$$ a^{32}v_{(2,1)}^{32} = \td{\kkappabar}^{8}. $$
\end{conjecture}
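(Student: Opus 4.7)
The plan is to mirror the proof of Proposition~\ref{prop:v10^4} one chromatic level up, applying Algorithm~\ref{alg:vnm} to the complex $X = S/(h, v^4_{(1,0)})$ with $\td{\kkappabar}$ playing the role of $\eeta$ and $v^4_{(1,0)}$ that of $h$. First I would verify that $X$ is of type $(2,1)$: the underlying complex $X^{\Phi e} = S/(2, v_1^4)$ is type $2$, and $X^{\Phi C_2}$ is the cofiber of the $v_0$-self map $(v^4_{(1,0)})^{\Phi C_2}$ on $(S/h)^{\Phi C_2} \simeq S \vee S^1$, hence type $1$. The chromatic bookkeeping matches the conjectured formula: $32$ is precisely the minimal exponent of a $v_2$-self map on $S/(2,v_1^4)$ by Behrens-Hill-Hopkins-Mahowald \cite{BHHM}, and the degree $|\td{\kkappabar}^8| = 64 + 96\sigma$ agrees with $|a^{32}\,v^{32}_{(2,1)}|$.

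The central step is the Mahowald-invariant calculation (Step~2 of Algorithm~\ref{alg:vnm}): fix a $v_1$-self map $v$ on $X^{\Phi C_2}$ and show that for some iterate the Mahowald invariant $M(v^j)$ (taken with respect to $DX \wedge X$) contains a $v_2^{32}$-self map. This is the direct equivariant analogue of the classical computation $\eta^2 \eta_4 \in M(2^4)$ of \cite{MR}, \cite{Behrens}, \cite{GIMI} used in Proposition~\ref{prop:v10^4}, promoted one redshift level. Granting it, the Mahowald lift yields a candidate $\td w : \Sigma^{64 + k\sigma} X \to X$ whose $\Phi e$ is the BHHM $v_2^{32}$-self map and whose $\Phi C_2$ is our iterate of $v$. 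An extra $a$-division step, exactly as at the end of the proof of Proposition~\ref{prop:v10^4}, then produces the candidate $v^{32}_{(2,1)} : \Sigma^{64 + 128\sigma} X \to X$: the obstruction to dividing $\td w$ by one further power of $a$ lives in the top-cell image, and that image vanishes because $v_2^{32}$ is $2$-power torsion on $S/(2,v_1^4)$ and the defining relations $h$ and $v^4_{(1,0)}$ annihilate the corresponding Hurewicz class via the $\cdot q_1$ short exact sequence implicit in Proposition~\ref{prop:ROC2MU}. Proposition~\ref{prop:Mvn} applied to the $\BP_{C_2}$-Hurewicz image then certifies that the result is a $v_{(2,1)}$-self map.

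The identity $a^{32}\,v^{32}_{(2,1)} = \td{\kkappabar}^8$ falls out of the construction, once one identifies the Mahowald lift we began with. The factorization $2 = h + a\eeta$ together with the $h$-divisibility of $a\eeta$ provides a canonical map $S/\eeta \to S/(h, v^4_{(1,0)})$, and under this map $\td{\kkappabar}^8$ pushes forward to the very class being unwound in the Mahowald lift; restoring the $32$ powers of $a$ that were stripped off by the lift returns $\td{\kkappabar}^8$. As in Proposition~\ref{prop:v10^4}, uniqueness up to the $q_1$-torsion indeterminacy can then be pinned down by matching $\BP_{C_2}$-Hurewicz images.

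The hard part will be the Mahowald-invariant computation underlying Step~2. The non-equivariant Mahowald invariant of $v_1^j$ (for the relevant $j$) on $S/(2, v_1^4)$ is essentially the $v_2$-redshift statement of the Mahowald-Ravenel root invariant conjecture and is not settled in the literature; on the equivariant side, the Guillou-Isaksen computations \cite{GI} stop far short of the stems (near $192$) and coweights (near $128$) required. A more realistic fallback is to use Corollary~\ref{cor:single}: produce \emph{some} $v^{32}_{(2,1)}$-self map on $X$ by any available route (for example, by the forthcoming periodicity work of Burklund-Hausmann-Levy-Meier alluded to in the introduction), and treat the precise identification with $\td{\kkappabar}^8$ as a separate comparison problem, most naturally tackled by matching Hurewicz images in $\tmf_{C_2}$ or $\BP\langle 2\rangle_{C_2}$.
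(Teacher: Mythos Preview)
The statement you are addressing is explicitly a \emph{conjecture} in the paper, not a theorem: the authors introduce it with ``we propose the following optimistic conjecture'' and offer no proof. There is therefore nothing in the paper to compare your proposal against.

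Your outline is exactly the strategy the paper itself advertises --- it is Algorithm~\ref{alg:vnm} run one chromatic level higher, with $\td{\kkappabar}$ in place of $\eeta$ and $v^4_{(1,0)}$ in place of $h$ --- and your degree bookkeeping is correct. But you are candid that Step~2, the Mahowald-invariant computation, is not available: the redshift statement $M(v_1^{j}) \ni v_2$-self map on $S/(2,v_1^4)$ is open, and the equivariant Adams spectral sequence data needed to verify the extra $a$-divisibility are far beyond current range. That is precisely why the paper leaves this as a conjecture. So what you have written is not a proof but a strategy sketch with an acknowledged gap at its heart; your fallback via Corollary~\ref{cor:single} and the forthcoming Burklund--Hausmann--Levy--Meier work would at best yield \emph{some} $v^{32}_{(2,1)}$-self map, not the specific one satisfying $a^{32}v^{32}_{(2,1)} = \td{\kkappabar}^8$, and the identification step you propose is itself a substantial open problem.

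One small point: the conjectured identity $a^{32}v^{32}_{(2,1)} = \td{\kkappabar}^8$ compares a self-map of $S/(h,v^4_{(1,0)})$ with a self-map of $S/\eeta$, so it must be read through some comparison map or after passing to a common target; your sketch gestures at this via a map $S/\eeta \to S/(h,v^4_{(1,0)})$, but the existence and properties of such a map would themselves need justification.
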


\begin{rmk}
P.~Bhattacharya suggested to the second author that there is an interesting connection of Proposition~\ref{prop:v10^4} and Conjecture~\ref{conj:v232} to nilpotence orders.  Indeed, Proposition~\ref{prop:v10^4} is related to the fact that $\eeta^4$ is $a$-divisible, and this implies
$$ \eta^4 = (\eeta^4)^{\Phi C_2} = 0 \in \pi_4 S. $$
Note that this is precisely the nilpotence order of $\eta \in \pi_1 S$.
The authors do not know the nilpotence order of $\kappabar \in \pi_{20}S$, but as Conjecture~\ref{conj:v232} is related to the $a$-divisibility of $\td{\kkappabar}^8$, it would imply
$$ \kappabar^8 = 0 \in \pi_{160}(S/\eta). $$
Note that $\kappabar^6 \in \pi_{120}(S)$ was shown to be non-trivial in \cite{BHHM2}, and we don't know whether $\kappabar^7$ is non-trivial.\footnote{Since the writing of the first version of this paper, the remarkable computer computations of \cite{LWX} have shown that $\kappabar^7 = 0$.}
\end{rmk}

\subsection*{$\pmb{v_{(n,n-1)}}$-periodicity and Mahowald redshift}

Mahowald and Ravenel have conjectured that the classical classical Mahowald invariant takes $v_{n-1}$-periodic families to $v_n$-periodic families \cite[Conj.~12]{MRglobal}, and this has been verified in numerous specific cases \cite{MS}, \cite{MR}, \cite{Sadofsky}, \cite{Behrensroot}, \cite{Behrens}.  We describe how this conjectural phenomenon is related to $v_{(n,n-1)}$-periodicity.

Suppose that $x \in \pi_{i}(S)$, and let $\td{x}$ be a Mahowald lift with corresponding Mahowald invariant $\bar{x} \in M(x)$.
\[
\xymatrix@R-2em{
\pi_{i+j}(S) & \pi_{i+j\sigma}^{C_2}(S) \ar[r]^{\Phi C_2} \ar[l]_{\Phi e} & \pi_i(S) \\
\bar{x} & \quad \td{x} \quad \ar@{|->}[r] \ar@{|->}[l] & {x} 
}
\]
Suppose that $\td{x}$ is $v_{(m,m-1)}$-torsion for $m < n$, in the sense that there inductively exist $v_{(m,m-1)}$-self maps and cofiber sequences
$$ S^{b_m+c_m\sigma}/I_{m} \xrightarrow{v^{k_m}_{(m,m-1)}} S/I_{m} \to S/I_{m+1} $$
and a lift
$$ \td{y} \in \pi_{i+d+(j+e)\sigma}^{C_2}(S/I_n) \xrightarrow{p_*} \pi_{i+d+(j+e)\sigma}^{C_2}(S^{d+e\sigma}) $$
(where $p: S/I_n \to S^{d+e\sigma}$ is projection to the top cell).
Suppose that $S/I_n$ has a $v_{(n,n-1)}$-self map
$$ v^k_{(n,n-1)}: S^{b+c\sigma}/I_n \to S/I_n $$
Then we have
\begin{align*}
 (S/I_n)^{\Phi e} & \simeq S/(2^{j_0}, \ldots, v^{j_{n-1}}_{n-1}), \\
 (S/I_{n})^{\Phi C_2} & \simeq S/(2^{j_1}, \ldots, v^{j_{n-1}}_{n-2}) \vee S^1/(2^{j_1}, \ldots, v^{j_{n-1}}_{n-2})
\end{align*}
and
\begin{align*}
(v^k_{(n,n-1)})^{\Phi e} & \simeq v_n^k, \\
(v^k_{(n,n-1)})^{\Phi C_2} & \simeq v_{n-1}^k.
\end{align*}
Consider the following diagram (for $s \ge 0$).
\[
\xymatrix{
\pi_{i+j}S &
\pi^{C_2}_{i+j\sigma}S \ar[l]_{\Phi e} \ar[r]^{\Phi C_2} &
\pi_i S 
\\
\pi_{i+j+d+e}(S/I_n)^{e} \ar[u]_{p^{\Phi e}_*} \ar[d]^{v^{sk}_{n}} &
\pi^{C_2}_{i+d+(j+e)\sigma}(S/I_n) \ar[l]_{\Phi e} \ar[r]^{\Phi C_2} \ar[u]_{p_*} \ar[d]^{v^{sk}_{(n,n-1)}}&
\pi_{i+d} (S/I_n)^{\Phi C_2} \ar[u]_{p^{\Phi C_2}_*} \ar[d]^{v^{sk}_{n-1}}
\\
\pi_{i+j+d+e+s(b+c))}(S/I_n)^{e} \ar[d]^{p^{\Phi e}_*}  &
\pi^{C_2}_{i+d+sb+(j+e+sc)\sigma}(S/I_n) \ar[l]_-{\Phi e} \ar[r]^-{\Phi C_2} \ar[d]^{p_*} &
\pi_{i+d+sb} (S/I_n)^{\Phi C_2} \ar[d]^{p^{\Phi C_2}_*} 
\\
\pi_{i+j+s(b+c))}S   &
\pi^{C_2}_{i+sb+(j+sc)\sigma}S \ar[l]_{\Phi e} \ar[r]^{\Phi C_2}  &
\pi_{i+sb} S  
}
\]
Suppose: 
\begin{enumerate}[label=(\theequation)]
\eitem The elements
$$ v^{sk}_{n-1} x := p_*^{\Phi C_2} v^{sk}_{n-1} \td{y}^{\Phi C_2} \in \pi_*S $$
are all non-trivial.  Then $x$ is $v_{n-1}$-periodic, and $v^{sk}_{n-1}x$ is the $v_{n-1}$-periodic family generated by $x$.

\eitem
The elements 
$$ p_* v^{sk}_{(n,n-1)} \td{y} \in \pi^{C_2}_\star S$$ 
are Mahowald lifts of the $v_{n-1}$-periodic family $v^{sk}_{n-1}x$.
\label{item:MLhyp}
\end{enumerate}
Then:
\begin{enumerate}[label=(\theequation)]
\eitem The elements
$$ v^{sk}_{n} \bar{x} := p_*^{\Phi e} v^{sk}_{n} \td{y}^{\Phi e} \in \pi_*S $$
are non-trivial and form a $v_n$-periodic family.  In particular, $\bar{x} \in M(x)$ is 
$v_n$-periodic.
\eitem The Mahowald invariant takes the $v_{n-1}$-periodic family generated by $x$ to the $v_n$-periodic family generated by $\bar{x}$:  
$$ v^{sk}_n \bar{x} \in M(v^{sk}_{n-1}x). $$

\eitem The elements
$$ v^{sk}_{(n,n-1)}\td{x} := p_* v^{sk}_{(n,n-1)} \td{y} \in \pi^{C_2}_\star S $$
are non-trivial, so form the \emph{$v_{(n,n-1)}$-periodic family generated by $\td{x}$}, and in particular $\td{x}$ is \emph{$v_{(n,n-1)}$-periodic}.

\end{enumerate} 

\begin{ex}
Mahowald lifts of
$$ 2,2^2, 2^3, 2^4 \in \pi_0(S) $$
are given by $h$-torsion elements
$$ \eeta, \eeta^2, \eeta^3, a^{-3}\eeta^4 $$
with corresponding Mahowald invariants
$$ \eta, \eta^2, \eta^3, 8\sigma \in \pi_*S. $$
The $v_{(1,0)}$-self map
$$ v^4_{(1,0)}: S^{8\sigma}/h \to S/h $$ 
of Proposition~\ref{prop:v10^4} generates $v^4_{(1,0)}$-periodic families 
$$ v^{4k}_{(1,0)}\eeta, \: v^{4k}_{(1,0)}\eeta^2, \: v^{4k}_{(1,0)}\eeta^3, \: v^{4k}_{(1,0)}a^{-3}\eeta^4 \in \pi^{C_2}_\star S $$  
which witness Mahowald invariants
\begin{align*}
v_1^{4k}\eta & \in M(2^{4k+1}), \\
v_1^{4k}\eta^2 & \in M(2^{4k+2}), \\
v_1^{4k}\eta^3 & \in M(2^{4k+3}), \\
v_1^{4k}8\sigma & \in M(2^{4k+4}).
\end{align*}
\end{ex}

\begin{ex}
Take $j_0 = 1$ and $v_{(0,-1)} = h$.  If inductively the self-maps $v^{j_m}_{(m,m-1)}$ exist for $m \le n$, and the appropriate form of \ref{item:MLhyp} holds, then the argument above shows that
$$ \alpha^{(n)}_{sj_n/j_{n-1},\ldots, j_1} \in M(\alpha^{(n-1)}_{sj_{n}/j_{n-1}, \ldots, j_1}) $$
where $\alpha^{(k)}$ refers to the $k$th Greek letter construction (see \cite{MRW}).
\end{ex}

Quigley has studied $C_2$-equivariant generalizations of Mahowald invariants and Mahowald redshift \cite{Quigley}.  

\begin{question}
Are Quigley's $C_2$-equivariant Mahowald invariant computations related to $C_2 \times C_2$-equivariant chromatic homotopy theory?
\end{question}

\bibliographystyle{amsalpha}
\bibliography{MUG}

\end{document}